\newtheorem{definition}{Definition}[section]
\newtheorem{theorem}[definition]{Theorem}
\newtheorem{proposition}[definition]{Proposition}
\newtheorem{lemma}[definition]{Lemma}
\newtheorem{observation}[definition]{Observation}
\newtheorem{expectation}[definition]{Expectation}
\newcommand{\bfm}{\mathbf{m}}
\newtheorem{cor}[definition]{Corollary}
\newtheorem{example}[definition]{Example}
\newtheorem{remark}[definition]{Remark}
\newtheorem{conjecture}[definition]{Conjecture}
\newtheorem{question}[definition]{Question}
\newtheorem{questions}[definition]{Questions}
\newtheorem{sublemma}[definition]{Sub-lemma}
\newcommand{\s}{^{\sharp}}
\newcommand{\n}{^{\natural}}
\newcommand{\sigi}{\sigma ^{-1}}
\newcommand{\cH}{\mathcal{H}}
\newcommand{\gH}{\mathfrak{H}}
\newcommand{\gI}{\mathfrak{I}}
\newcommand{\infgal}{\mathrm{Inf}\text{-}\mathrm{gal}\, }
\newcommand{\cinfgal}{\mathrm{CInf}\text{-}\mathrm{gal}\, }
\newcommand{\ncinfgal}{\mathrm{NCinf}\text{-}\mathrm{gal}\, }
\newcommand{\QSI}{$q$-SI $\sigma$-differential }
\newcommand{\com}{\mathbb {C}} 
\newcommand{\cZ}{\mathcal{Z}}
\newcommand{\gd}{\delta}
\newcommand{\D}{\mathcal{D}}
\newcommand{\N}{\mathbb{N}}
\newcommand{\G}{\mathbb{G}}
\newcommand{\calH}{\mathcal{H} \,}
\newcommand{\Hom}{\mbox{$\mathrm{Hom}$}}
\newcommand{\Id}{\mathrm{Id}}
\newcommand{\GL}{\mathrm GL}
\newcommand{\M}{\mathbf{M}} 
\newcommand{\Q}{\mathbb{Q}}
\newcommand{\by}{\mathbf{y}}
\newcommand{\Z}{\mathbb{Z}}
\newcommand{\K}{\mathcal{K}}
\newcommand{\eL}{\mathcal{L}}
\newcommand{\R}{\mathcal{R}}
\newcommand{\itqsi}{{\it qsi }}
\newcommand{\mapue}[1]{%
  \smash{\mathop{%
  \hbox to 1cm{\rightarrowfill}}\limits^{#1}}}
\newcommand{\NCF}{\mathcal{NCF}\sb{L/k}}
\newcommand{\NCA}{\text{$(NCAlg/L^{\natural})$}}
\begin{document}
\title{Toward quantization of Galois theory}
\author{Akira Masuoka\footnote{Institute of Mathematics, University of Tsukuba. E-mail: akira@math.tsukuba.ac.jp},\ Katsunori Saito\footnote{Graduate School of Mathematics, Nagoya University. E-mail: m07026e@math.nagoya-u.ac.jp}\ and\ 
Hiroshi Umemura\footnote{Graduate School of Mathematics, Nagoya University. E-mail: umemura@math.nagoya-u.ac.jp}}
%\date{}
\maketitle
\begin{abstract}
This article was born from our experiments, the first explorations of 
an unknown land of quantized Galois theory. 
We know Hopf-Galois theory for linear equations or Picard-Vessiot theory in terms of Hopf algebra 
\cite{amaetal09} 
that is a general Galois theory of linear equations with a set of non-commutative operators. 
The Hopf algebras in this theory are, however, essentially assumed to be {\it \/co-commutative. \/} 
In other words, they are interested in only commutative rings with operators. Consequently their Galois groups are linear algebraic groups and the Galois theory is not quantized. 
\par 
Heiderich \cite{hei10} discovered that we can combine the Hopf Galois theory for linear equations and our general Galois theory of non-linear equations. 
We apply this theory to some concrete examples and show that 
the quantization of 
Galois group happens in the Part I. 
\par 
In fact, quantization occurs even for linear equations. 
In the Part II, we analyze, one particular example 
of linear difference-differential equation
to show the unique existence of the 
non-commutative 
 Picard-Vessiot ring and asymmetric Tannaka theory. 
\par 
Studying our examples, 
we succeeded in generalizing 
the examples to any Hopf linear equations over a constant field.
So for any $C$-Hopf algebra $H$ and for any left $H$ module $M$ 
that is finite-dimensional over the base field $C$, we have a quantized Hopf Galois theory. 
See Part III.

%Dealing with linear equations in Hopf Galois theory, so far we did not encounter quantum group as Galois 
%group except for linear algebraic group. This made us 
%believe that it would be impossible to get a quantum group as the Galois group in linear Hopf Galois theory. 
%\par 
%So we are led to non-linear Hopf Galois theory to give examples %of non-linear equations 
%in which Galois theory is quantized.
%In th%e first part, we give three examples of
%non-linear difference-differential equations 
 %in which quantum groups 
 %that are neither commutative nor co-commutative 
 % naturally arise
 %as Galois groups.
%Moreover, one of the three examples is reduced to a linear %equation revealing our belief that considering linear %equations, we can not 
% quantize Galois theory is false (See Section \ref{140116a}). %It suggests that there would be quantum Picard-Vessiot theory. %In part II, we analyze this Example and we give further %Examples. 
 %Looking at these examples, Akira Masuoka 
 %\cite{mas14}
% proposed a new Picard-Vessiot theory for not necessary co-%commutative Hopf algebras.
 \end{abstract}
 \part{Quantization of non-linear \QSI equations}
\section{Introduction} 
%%%%%%%%%%%%%%%%%%%%%%%%%%%%%%%%%%%%%%%%%%%%%%%%%%%%%%%%%%%%%%%%
The pursuit of $q$-analogue of hypergeometric series goes back to 
Heine \cite{hein46}, 1846. However, 
Galois group of a $q$-hypergeometric series is not a quantum 
group but it is a linear algebraic group. 
This shows that so far as we consider 
the $q$-analogue of the hypergeometric equation according to Heine, Galois theory is not quantized. 
In fact, 
generally we know that the Galois group of a linear 
difference equation is a linear algebraic group. 
So we may as well consider that Heine's $q$-hypergeometric series would be unsatisfactory as a quantization. 
To be more precise, the following question comes into our mind.
\begin{question}\label{q1}
Does there exist a thorough quantization of 
hypergeometric series in such a way that 
Galois groups of quantized series 
are general quantum groups? \par
More generally, we wonder if there would be a quantized Galois theory. 
\end{question}
We owe Question \ref{q1} to 
Y.~Andr\'e \cite{and01} who first studied linear difference-differential equations in the 
framework of non-commutative geometry. He encountered only linear algebraic groups. Later, 
Hardouin \cite{har10} also studied Picard-Vessiot theory of $q$-skew iterative $\sigma$-differential field extensions but also in this theory, the Galois group is 
a linear algebraic group. 
%%%%%%
We clarified the situation in \cite{ume11}. 
Namely, so far as we study 
linear difference-differential equations, however twisted or non-commutative the ring of difference and differential operators might be, Galois group, according to general Hopf Galois theory, is a linear algebraic group.
\par 
All the attempts of answering affirmatively Question \ref{q1} had been so far failed. 
%%%%%%%%%%%%%
 Our results settle the Question \ref{q1} for linear equations with constant coefficients. 
 See Part III.
%%%%%%%% 
%At the end of the Part I, namely in subsections \ref{151007a}, %\ref{151007b}, \ref{151007c}, 
%we also give examples of non-linear equations of which Galois group is a quantum group that is not a linear algebraic group. %The first one can be interpreted as a linear case so that it is very much inspiring for general theory of linear \QSI equations. 
 
%%%%
Question \ref{q1} is vague and we have to start by clarifying the nature of Question \ref{q1}.
\subsection{What is $q$?}
In alphabet we have 26 letters. That is surly a small set if we compare it with the huge set of Chinese characters. 
It would be certainly by chance that the letter $q$ appears often in mathematics in different contexts. 
\begin{enumerate} 
\renewcommand{\labelenumi}{(\arabic{enumi})}
\item $q$ of $q$-analogue studied by L.~Euler \cite{eul48} 
and E.~Heine \cite{hein46} and many mathematicians after the last century. 
\item $q =p^n$. The number of elements of the finite field $\mathbb{F}\sb {q} = \mathbb{F}\sb {p^n}$. The letter $q$ comes after $p$ of the prime number.
\item $q= e^{2\pi \tau}$, where $\tau \in H
= \{ \tau \in \com \, | \, \Im (\tau) > 0 \}$. Elliptic modular.
\item $q$ of quantum physics. Non-commutative geometry. 
\end{enumerate}
These subjects were not a priori logically supposed to be related. However, there are surprising mathematical relations. For example, Euler \cite{eul48} proved the pentagonal identity in 1848 which belongs to (1) and Jacobi \cite{jac29} showed Euler's identity is a consequence of the triple product formula, 
revealing an unexpected relation between (1) and (3). So it is important to discover 
surprising relations among the subjects. 
Question \ref{q1} asks 
if there exist 
a Galois theoretic relations between (1) and (4).

%%%%%%%%
 % the first Example in Section \ref{10.4a} redices to a pair of linear difference-differential equations. 
 %%%%%%%%%%%%%%%%%%%%
 \subsection{Linear vs. non-linear equations}
 We believed for a long time that it was impossible to 
quantize Picard-Vessiot theory, 
Galois theory for 
linear difference or differential equations. 
Namely, there was no Galois theory for linear difference-differential equations, of which the Galois group is a quantum group that is, in general, 
neither commutative nor co-commutative. 
This is not correct as we are going to see examples in the Part II and general theory of the third Part of this note. 
Our mistake came from a misunderstanding of preceding 
works of Hardouin \cite{har10} and of Masuoka and Yanagawa \cite{masy}. 
\par 
{ \it Now it has been clear that 
the correct understanding of the picture is that despite they considered a set of non-commutative operators, 
as they assumed that the rings of functions on which the set of 
 non-commutative operators act were commutative, they did not arrive at a quantization of Galois theory.} 
 In fact, in their Picard-Vessiot theory, a Picard-Vessiot extension is a difference-differential field extension. 
 \par 
 With this misbelief, 
 it was natural to wonder how about considering non-linear difference-differential equations. We proposed to study the $q$-Painlev\'e equations in \cite{ume11}. %
 We elaborated and we can answer this question in the following way. 
 As we observe in the Part I, 
 quantization of Galois group happens for much simpler equations than the 
 $q$-Painlev\'e equations (Sections 
 \ref{10.4a}, \ref{10.4b} and \ref{10.4c}). 
% In the second part, Section \ref{140116a}, 
Moreover the First Example reduces to a pair of linear difference-differential equations breaking our wrong belief. 
%we show that 
%%%%%%%%%
In the Part I, 
after a brief review of our framework, 
we analyze three examples of difference-differential field extensions. In these examples, however, the Galois hulls or the 
normalizations are not commutative rings yielding 
quantum Galois group that are neither commutative nor co-commutative Hopf algebras. 
\par 
Among these three examples the first one is given by a pair of linear difference-differential equations. In the Part II, we analyze this example thoroughly. We show that the Picard-Vessiot ring exists uniquely and the asymmetric Tannaka theory holds for this particular example. 
Looking at this and further examples found in Section \ref{141223a}, we have established a general quantum Picard-Vessiot theory over a constant field in Part III.  
 %that we did not succeed in quantizing Galois theory 
%See Introduction to the second Part. 
\par
While in Parts I and II, 
we work exclusively over a constant field $C$ of characteristic $0$, 
in the third Part, the constant base field $C$ is of characteristic $p \ge 0$.
We consider $C$-algebras. 
Except for Lie algebras, all the rings or algebras are associative $C$-algebras and 
contain the unit element. 
So the field $C$ is in the center of the algebras. 
 Morphisms between 
them are unitary $C$-morphisms. For a commutative algebra
 $A$, we denote by $(Alg/ A)$ the category of commutative $A$-algebras, which we sometimes denote by $(CAlg/ A)$ 
to emphasize that we are dealing with commutative $A$-algebras. In fact, to study quantum groups, we have to also consider non-commutative $A$-algebras. We denote by $(NCAlg/A)$ the category of not necessarily commutative $A$-algebras $B$ such that 
$A$ ( or to be more logic, the image of $A$ in
 $B$) is contained in the center of $B$. 
 \par 
 %We thank Professors Akira Masuoka and Katsutoshi Amano for 
%teaching us their Galois theory and for 
 %valuable 
%discussions.

%%%%%%%%%%%%%%%%%%%%%%%%%%%%%%%%%%%%%%%%%%%%%%%%%%%%%%%%%%%%%%%%%%%%%%%% 
\section{Foundation of a general Galois theory \cite{ume96.2}, \cite{ume06}, \cite{ume07}}
\subsection{Notation}
\par 
Let us recall basic notation. 
Let $(R, \gd )$ be a differential ring so that $\gd:R \to R$ is a derivation of a
commutative ring $R$ of characteristic $0$. When there is no danger of confusion of the derivation 
 $\gd$, we simply say the differential ring $R$ without referring to the derivation 
 $\gd$. We often have to talk, however, about the abstract ring $R$ that we denote by $R\n$. 
 For a commutative ring $S$ of characteristic $0$, the power series ring 
 $S[[X]]$ with derivation $d/dX$ gives us an example of differential ring. 
%%%%%%%%%%%%%%%%%%%%%%%%% 
\subsection{General Galois theory of differential field extensions}
 Let us start by recalling our general Galois theory of differential field extensions.
 \subsubsection{Universal Taylor morphism}
 \par 
 Let $(R, \, \gd )$ be a differential algebra
 so that $R$ is a commutative $C$-algebra and 
 $\gd : R \to R$ is a $C$-derivation:
 \begin{enumerate}
 \renewcommand{\labelenumi}{(\arabic{enumi})}

 \item 
 $\gd : R \to R$ is a $C$-linear map.
 \item 
 $\gd (ab) = \gd (a) b + a\gd (b) $ for all $a, \, b \in R$.
 \end{enumerate}
 For the differential algebra $( R, \, \gd )$ and a commutative $C$-algebra $S$, 
 a Taylor morphism is a differential morphism 
 \begin{equation}\label{a2.1} 
 ( R, \,\gd ) \to (S[[X]], \, d/dX ).
 \end{equation}
 Given a differential ring $(R, \, \gd )$, 
 among the Taylor morphisms \eqref{a2.1}, there exists the universal one. 
 In fact, for an element $a\in R$, we define the power series 
\begin{align*}
 \iota (a) = \sum \sb{n=0} ^{\infty} \frac{1}{n!}\gd^{n}(a)X^n \in R\n [[X]].
\end{align*}
Then the map
\begin{equation}\label{9.19d}
\iota \colon (R, \, \gd ) \to (R\n [[X]],\, d/dX )
\end{equation}
is the universal Taylor morphism.
%%%%%%%%%%%%%%%% 
\subsubsection{Galois hull $\eL /\K$ for a differential field extension $L/k$}\label{9.19h}
Let $(L, \, \gd)/(k, \, \gd )$ be a differential field extension such that 
the abstract field $L\n $ is finitely
 generated over the abstract base field $k\n$. We constructed the Galois hull 
 $\eL / \K$ in the following manner. 

 We take a mutually commutative basis 
\begin{align*}
 \{ D \sb 1, \, D \sb 2 , \cdots 
 , D\sb d \}
\end{align*}
 of the $L\n$-vector space $\mathrm{Der} \, (L\n / k\n )$ 
 of $k\n$-derivations of the abstract field $L\n$. So we have 
\begin{align*}
[ D\sb i , D\sb j ] =D\sb i D\sb j - D\sb j D\sb i= 0 \qquad \text{\it \/for } 1 \le i, \, j \le d. 
\end{align*}
\par 
%%%%%%%%%%%%%%%%%%%%%%%%%%%%%%%%%%%%%%%%%%%%%%%%%%%%%%%%%%%%%%%%%%%%%%%%
Now we introduce a partial 
differential structure on the abstract field $L\n$ 
using the derivations $\{D\sb 1 , \, D\sb 2, \,
 \cdots , D\sb d \}$. 
 Namely we set 
$$
L\s := ( L\n , \,\{ D\sb 1 , \, D\sb 2, \, \cdots , D\sb d \} )
$$ 
that is a partial differential field. 
Similarly we define a differential structure on the power series ring 
$L\n [[X]]$ with coefficients in $L\n$ by considering  
the derivations $$\{D\sb 1 , \, D\sb 2, \,
 \cdots , D\sb d \}$$ 
 that operate on the coefficients of the power series. 
 In other words, 
 we work with the differential ring 
 $L\s [[X]] $. 
 So the power series ring 
 $L\s[[X]]$ has differential structure 
 defined by the differentiation $d/dX$ with respect to the variable $X$ 
 and the set 
 $$\{ D\sb 1 , \, D\sb 2, \, \cdots , D\sb d \}$$ 
 of derivations. Since there is no danger of confusion of the choice of the differential operator $d/dX$, 
 we denote this differential ring by 
 $$
 L\s [[X]]. 
 $$
 %%%%%%%%%%%%%%%%%%%%%%%%%%%%%%%%%%%%%%%%%%%%%%%%
We have the universal Taylor morphism 
\begin{equation}\label{m27.0.5}%m27.0.5
\iota \colon L \to L\n [[X]]
\end{equation}
that is a differential morphism. We added further the
$\{ D\sb 1 , \, D\sb 2, \, \cdots , D\sb d \}$-differential structure 
on $L\n [[X]] $ or we replace the target space 
$ L\n [[X]] $ of the universal Taylor morphism \eqref{m27.0.5}
 by $L\s [[X]] $ so that we have 
 $$
 \iota \colon L \to L\s [[X]] . 
 $$
\par 
In Definition \ref{a4.6} below, we work in the differential ring 
$L\s[[X]] $ with differential operators 
$d/dX $ and 
$
\{ D\sb 1 , \, D\sb 2, \, \cdots , D\sb d \}.
$
We identify the differential field $L\s$ 
with 
the set of power series
consisting only of constant terms. 
Namely, 
$$
L \s = \{ \sum \sb {n=0}^{\infty} a\sb n X^n \in L\s [[X]] \, | \, \,\text{\it \/The 
coefficients }
a\sb n =0 \text{\it \/ for every } n\ge 1 \}.
$$
Therefore $L\s$ is a differential sub-field of the differential 
ring $L\s [[X]] $. The differential operator $d/dX$ 
kills $L\s$. 
 Similarly, 
we set 
$$
k \s := \{
 \sum \sb {n=0}^{\infty} a\sb n X^n \in L\s [[X]] \, | \, \,\text{\it \/The 
coefficients $a\sb 0 \in k$ and }
a\sb n =0 \,\text{\it \/ for every } n\ge 1 \}.
$$
So all the differential operators 
$d/dX, \, D\sb 1, \, D\sb 2, \cdots , D\sb d$ 
 act trivially on 
$k\s$ and so $k\s$ is a differential sub-field of $L\s$ and hence 
of the differential algebra $L\s [[X]] $. 
\begin{definition}\label{a4.6}
The Galois hull $\eL / \K $ is the differential sub-algebra 
of $L\s[[X]] $, where 
$\eL$ is the differential sub-algebra generated by the 
image $\iota (L)$ and $L\s$ 
and $\K $ is the sub-algebra generated by the image $\iota (k ) $ 
and $ L\s$. So $\eL / \K$ is a differential algebra extension with differential operators $d/dX $ and 
$
\{ D\sb 1 , \, D\sb 2, \, \cdots , D\sb d \}.
$
\end{definition} 
\subsubsection{Universal Taylor morphism for a partial differential ring}
%We explain how we define our Galois group $\infgal (L/k)$. 
\par 
The universal Taylor morphism has a generalization for 
partial differential ring. 
Let $$(R, \,\{ \partial \sb 1 ,\, \partial \sb 2 ,\, \cdots , \partial \sb d \}) $$ be a partial differential ring. So $R$ is a commutative ring of characteristic $0$ and 
$\partial \sb i \colon R \to R$ are mutually commutative derivations. 
For a ring $S$, the power series ring 
$$
( S[[ X\sb 1,\, X\sb 2,\, \cdots , X\sb d ]], \,\{ 
\frac{\partial }{\partial X\sb 1 }, \,
\frac{\partial }{\partial X\sb 2 }, \, \cdots , 
 \frac{\partial }{\partial X\sb d } \}) 
$$
gives us an example of 
partial differential ring. 
\par   
A Taylor morphism is a differential morphism 
\begin{equation}\label{taylor}%taylor
(R,\, \{ \partial \sb 1 ,\, \partial \sb 2 ,\, \cdots ,\, \partial \sb d \}) 
\to 
( S[[ X\sb 1,\, X\sb 2,\, \cdots ,\, X\sb d ]],\, \{ 
\frac{\partial }{\partial X\sb 1 }, \,
\frac{\partial }{\partial X\sb 2 }, \, \cdots , 
 \frac{\partial }{\partial X\sb d } \}). 
\end{equation}
For a differential algebra 
$
(R,\, \{ \partial \sb 1 ,\, \partial \sb 2 ,\, \cdots ,\, \partial \sb d \}) , 
$
among Taylor morphisms \eqref{taylor}, 
 there exists the universal one $\iota \sb R$ given below. 
 \begin{definition}
 The universal Taylor morphism is a differential morphism 
 \begin{equation}\label{m28.1}
 \iota \sb R \colon (R,\,\, \{ \partial \sb 1 ,\, \partial \sb 2 ,\, \cdots ,\, \partial \sb d \}) 
 \to 
 ( R\n[[ X\sb 1,\, X\sb 2,\, \cdots ,\, X\sb d ]],\, \{ 
\frac{\partial }{\partial X\sb 1 }, \,
\frac{\partial }{\partial X\sb 2 }, \, \cdots , 
 \frac{\partial }{\partial X\sb d } \})
\end{equation}
%where the morphism $\iota \sb R$ is 
defined by the formal power series expansion 
%%%%%%%%%%%%%%%%%%%%%%%%%%%%%%%%%%%%%%%%%%%%%%%%%%%%%%%%%%%
\begin{equation*}
\iota\sb {R} (a) = \sum\sb {n \in \N ^d} \frac{1}{n!}
\partial ^n (a) \, X^n 
\end{equation*}
for an element $a \in R$, 
where we use the standard notation for multi-index. 
\par Namely, 
for $n= (n\sb 1,\, n\sb 2, \cdots , n\sb d) \in \N ^d$, 
$$
 |n| = \sum \sb {i=1}^d n\sb i, 
 $$
 $$
\partial ^n = \partial \sb 1 ^ {n\sb 1}
\partial \sb 2 ^ {n\sb 2} \cdots 
\partial \sb d ^ {n\sb d}
$$
$$
n! = n\sb 1! n\sb 2! \cdots n\sb d! 
$$
and 
$$
X^n = X\sb 1 ^{n\sb 1}X\sb 2 ^{n\sb 2}\cdots X\sb d ^{n\sb d}.
$$ 
 \end{definition}
 See Proposition (1.4) in Umemura \cite{ume96.2}.
 %%%%%%%%%%%%%%%%%%%%%%%%%%%%%%%%%%%%%%%%%%%%%%%%%%%%%%%
 \subsubsection{The functor 
 $\mathcal{F} \sb{ L/k}$ of infinitesimal deformations for a differential field extension}\label{9.25e}
%Combining the universal Euler morphism 
%and 
%%%%%%%%%%%%%%%%%%%%%%%%%%%%%%%%%%%%%%%%%%%%%%%%%%%
For the partial differential 
field $L\s$, 
we have the universal Taylor morphism
\begin{equation}\label{m27.1}%m27.1
\iota \sb {L\s} \colon L\s \to L\n [[ W\sb 1, W\sb 2, \cdots , W\sb d ]]
=L\n [[W]], 
\end{equation}
 where we replaced the variables $X$'s 
 in \eqref{m28.1}
 by the variables $W$'s for a notational reason. 
 The universal Taylor morphism \eqref{m27.1} gives a 
 differential morphism 
 \begin{equation}\label{m27.2}
 L\s[[X]] \to L\n [[ W\sb 1,\, W\sb 2, \cdots , W\sb d ]][[X]]. 
 \end{equation}
Restricting the morphism \eqref{m27.2} to the differential 
sub-algebra $\eL $ of $L\s[[X]] $, we get a differential 
morphism 
$\eL \to L\n [[ W\sb 1,\, W\sb 2,\, \cdots , \, W\sb d ]][[X]] $
that we denote by $\iota $. So we have the 
differential 
morphism 
\begin{equation}\label{m27.4}
\iota \colon \eL \to 
L\n [[ W\sb 1,\, W\sb 2,\, \cdots ,\, W\sb d ]][[X]].
\end{equation}
Similarly for every commutative $L\n$-algebra $A$, 
thanks to the differential morphism 
$$
L\n[[W]] \to A[[W]]
$$
arising from the structural morphism $L\n \to A$, 
we have the 
canonical differential morphism 
\begin{equation}\label{m27.3}
\iota \colon \eL \to A [[ W\sb 1,\, W\sb 2,\, \cdots ,\, W\sb d ]] [[X]].
\end{equation}
We define the functor 
$$
\mathcal{F}\sb {L/k} \colon (Alg/ L\n) \to (Set)
$$
from the category $(Alg/L\n)$ of commutative $L\n$-algebras to the category 
$(Set)$ of sets, by associating to an $L\n$-algebra $A$, the set of 
 infinitesimal deformations of the canonical morphism \eqref{m27.4}. 
So 
\begin{multline*}
\mathcal{F}\sb{L/k}(A)
= \{ f\colon \eL \to 
 A [[ W\sb 1,\, W\sb 2,\, \cdots ,\, W\sb d ]][[X]] \, | \, 
 f \,\text{\it \/ is a differential }
 \\
 \,\text{\it \/morphism 
 congruent to the canonical morphism } \iota 
 \text{\it \/ modulo nilpotent elements} \\ 
 \text{\it \/such that } 
 f = \iota \,\text{\it \/ when restricted on the sub-algebra } \K 
\}.
 \end{multline*}
% See Definition 2.13 in \cite{mo1}, 
% for a rigorous definition. 
 \subsubsection{Group functor $\infgal(L/k)$ of infinitesimal automorphisms for a differential field extension} 
The Galois group in our Galois theory is the group functor 
$$
\infgal (L/k) \colon (Alg/L\n ) \to (Grp)
$$
 defined 
 by 
 \begin{multline*}
 \infgal (L/k) (A) =
 \{
 \, 
 f \colon \eL \hat{\otimes} \sb {L\s} A[[W]] \to 
 \eL \hat{\otimes} \sb {L\s} A[[W]]
 \, | \, 
 f 
 \,\text{\it \/
 is a differential} \\ 
 \K \hat{\otimes} \sb {L\s}A[[W]] \text{\it-automorphism 
 continuous with respect to
 the $W$-adic topology} \\
 \,\text{\it \/ and congruent to the identity modulo nilpotent elements 
 }
 \}
 \end{multline*}
 for a commutative $L\n$-algebra $A$.
 Here the completion is taken with respect to the $W$-adic topology.
 See Definition 2.19 in \cite{mori09}. 
\par
Then the group functor $\infgal (L/k)$ operates on the functor 
$\mathcal{F}\sb {L/k}$ in such a way that 
the operation $(\infgal (L/k), \, \mathcal{F}\sb {L/k} )$ is a 
torsor (Theorem (5.11), \cite{ume96.2}). 
\subsubsection{Origin of the group structure}\label{9.28a}
For the differential equations, the Galois group is a group functor. 
We are going to generalize differential Galois theory in such a way that 
the Galois group is a quantum group. Quantum group is a generalization of affine algebraic group. We can not, however, regard a quantum group as a group functor. Therefore, we have to understand the origin of the group functor $\infgal$. We illustrate it by an example.
\begin{example}\label{14.5.29b} Let us consider a differential field extension 
$$
L/k:=( \com (y), \, \gd ) /\com 
$$
such that $y$ is transcendental over the field $\com$ and 
\begin{equation}\label{9.27f}
\gd (y) = y \qquad \text{ and }\qquad \gd (\com) = 0 
\end{equation}
so that $k=\com $ is the field of constants 
of $L$.
\end{example}
The universal Taylor morphism 
$$
\iota \colon L \to L\n[[X]]
$$ 
maps $y\in L$ to 
$$
Y :=y\exp X \in L\n[[X]].
$$ 
Since the field extension $L\n /k\n = \com (y)/\com$, taking $d/dy \in \mathrm{Der}(L\n/k\n)
 $ as a basis of $1$-dimensional $L\n$-vector space 
$\mathrm{Der}(L\n/k\n)$, we get 
$L\s := (L\n , \, d/dy)$. 
As we have relations 
\begin{equation}\label{9.27a}
\frac{\partial Y}{\partial X} = Y, \qquad 
y\frac{\partial Y}{\partial y} =Y 
\end{equation}
in the power series ring 
$L\s[[X]]$ 
so that the Galois hull $\eL /\K$ is 
\begin{equation}\label{9.27b}
\eL = \K .\com( \,\exp X\, ) , \quad \K =L\s \subset L\s[[X]] 
\end{equation}
by definition of the 
Galois hull. 
\par
Now let us look at the infinitesimal deformation functor $\mathcal{F}\sb{L/k}$. To this end, 
we Taylor-expand the coefficients of the 
power series in $L\s [[X]]$
to get 
$$
\iota :L \to L\s[[X]] \to L\n[[W]][[X]]= L\n[[W, \, X]]
$$
so that 
 $$
 \iota (y) = (y+W)\exp X \in L\n 
[[W, \, X]].
$$
 We identify $L\s[[X]]$ with its image 
in $L\n[[W]][[X]]=L\n[[W, X]]$. In particular we identify 
$Y = y\exp X \in L\s[[X]] $ with $Y(W, X) =(y +W)\exp X \in L\n[[W, X]]$. Equalities \eqref{9.27a} become in $L\n[[W, X]]$
\begin{equation}\label{9.27aa}
\frac{\partial Y(W, X)}{\partial X} = Y(W, X), \qquad 
(y+W)\frac{\partial Y(W, X)}{\partial W} =Y 
\end{equation}
It follows from \eqref{9.27aa}, 
for a commutative 
$L\n $-algebra $A$, an infinitesimal deformation 
$\varphi \in \mathcal{F} \sb{L/k}(A) $ is determined by the image 
\begin{equation}\label{140610a} 
\varphi (Y(W, X)) = c Y(W, X) \in A[[W, \, X]], 
\end{equation}
where $c\in A$. Conversely any invertible element $c\in A$ 
infinitesimally close to $1$ 
defines an infinitesimal deformation so that we conclude
\begin{equation}\label{11.14a} 
\mathcal{F}\sb {L/k}(A) = \{ c\in A \, | \, c-1 \,\text{\it \/ is nilpotent} \}. 
\end{equation}
\par
{ \it Where does the group structure come from?} 
\par There are two ways of answering to this question, which are closely related.
\par 
(I) Algebraic answer.
\par 
%The Galois hull $\eL$ is generated by $\iota (y) $ over 
%$\K =L\s$. So, since $\varphi $ is identity on $L\s$, the infinitesimal deformation $\varphi$ is 
%determined by the image $\varphi (y)$. 
%Let us set 
%$$
%\iota (y) = y \exp X\in L\s[[X]]
%$$ 
%that we identify with $(y+ W)\exp X\in L^n [[X]]$, by $Y(W, X)$
By \eqref{140610a}, 
we have 
$$
\varphi (y) = c(y +W )\exp X \in A[[W, X]], 
$$
 where $c- 1\in A$ is a nilpotent element. 
Consequently we have 
\begin{equation}
\varphi (y) = Y((c-1)y + cW, X).
\end{equation}
In other words $\varphi (y)$ coincides with
$$
Y(W, X)\, | \, \sb {W = (c-1)y + cW}. 
$$
Equivalently $\varphi (y) $ is obtained by substituting $(c-1)y + cW$ for $W$ in $Y(W, X)$. 
This is quite natural in view of differential equations
 \eqref{140610a}. We only have to look at the initial condition at $X=0$ 
 of the solutions $Y(W, X)$ and $\varphi (y)=
 c(y+ W)Y(W, X)$ of the differential equation $\partial {\rm Y} / \partial X = {\rm Y}$. 
 The transformation 
 \begin{equation}\label{140610b}
 W \mapsto (c-1)y + cW \text{ where $c\in A$ and $c-1$ is nilpotent, }
 \end{equation}
 is an infinitesimal coordinate transformation of the initial condition and the multiplicative structure of $c$ is nothing but the composite of coordinate transformations 
\eqref{140610b}. 
 \par 
(II) Geometric answer.
\par
To see this geometrically, we have to look at the dynamical system defined by the differential equation \eqref{9.27f}. Geometrically the differential equation \eqref{9.27f} gives us a dynamical system
on the line $\com $. 
$$
 y \mapsto Y=y\exp X 
 $$
 describes the dynamical system. 
 Observing the dynamical system through 
 algebraic differential equations, is equivalent to considering the 
 deformations of the Galois hull. 
 So the (infinitesimal) deformation functor 
 measures the ambiguity of the observation. 
 In other words, the result due to our method 
 is \eqref{11.14a}. In terms of the initial condition, it looks as 
 $$
 y \mapsto cY\, |\sb {X=0} = cy\exp X\, |\sb{X=0}= cy.
 $$ 
Namely, 
\begin{equation}\label{9.27d}
y \mapsto cy.
\end{equation} 
If we have two transformations \eqref{9.27d} 
 $$
 y \mapsto cy, \qquad y \mapsto c^\prime y 
 $$
 the composite transformation corresponds to the product 
 $$
 y\mapsto cc^\prime y.
 $$ 
 Our generalization depends on the first answer (I).
 See Section \ref{14.5.29a}. 
\subsection{Difference
Galois theory}\label{0829a} 
\par 
If we replace the universal Taylor morphism by the universal Euler morphism, we can 
construct 
a general Galois theory of difference equations (\cite{mori09}, 
\cite{morume09}).
\par
\subsubsection{Universal Euler morphism}\label{0820b}
 Let $(R,\, \sigma )$ be a $C$-difference algebra so that $\sigma :R \to R$ is a $C$-algebra automorphism of a commutative 
$C$-algebra $R$. See Remark \ref{141225b}. When there is no danger of confusion of the automorphism 
 $\sigma$, we simply say the $C$-difference algebra $R$ without referring to the automorphism $\sigma$. We often have to talk however about the abstract ring $R$ that we denote by $R\n$. 
 For a commutative ring $S$, we denote by $F(\Z , \, S)$ the ring of functions 
 on 
 the set of integers 
 $ 
 \Z % = \{\cdots , \, -1, \, 0, \, 1, \, 2, \cdots \}
 $ 
 taking values in the ring $R$. 
 For a function $f \in F(\Z , \, S)$, we define the shifted function 
 $\Sigma f \in F(\Z , \, S)$ by 
 $$
( \Sigma f)(n) = f( n + 1) \quad \,\text{\it \/ for every } n \in \Z.
 $$
 Hence 
 the shift operator 
 $$
 \Sigma :F(\Z , \, S) \to F(\Z , \, S)
 $$
 is an automorphism of the ring $F(\Z , \, S)$ 
 so that $(F(\Z , \, S), \, \Sigma )$ is a difference ring. 
\begin{remark}
In this Paragraph \ref{0820b}
 and the next \ref{9.19g}, 
 in particular for the existence of the universal Euler morphism, 
 we do not need the 
 commutativity assumption of the 
underlying ring. 
\end{remark}
\par 
 Let $(R,\, \sigma)$ be a difference ring and $S$ a ring. 
 An Euler morphism is a difference morphism 
 \begin{equation}\label{1a2.1} 
 ( R, \, \sigma ) \to ( F ( \Z , \, S), \, \Sigma ). 
 \end{equation}
 Given a difference ring $(R,\, \sigma )$, 
 among the Euler morphisms \eqref{1a2.1}, there exists the universal one. 
 In fact, for an element $a\in R$, we define the function $u[a] \in F (\Z , \, R\n)$ 
 by 
 $$
u[ a](n) = \sigma ^n (a) \quad \,\text{\it \/ for } n \in \Z .  
 $$ 
 Then the map
 \begin{equation}\label{9.19e}
\iota \colon (R, \, \sigma ) \to 
( F(\Z , \, R\n ), \, \Sigma ) \qquad a \mapsto u[a] 
\end{equation}
is the universal Euler morphism (Proposition 2.5, \cite{mori09}). 
\subsubsection{Galois hull $\eL /\K$ for a difference field extension $L/k$}\label{9.19g}
Let $(L, \,\sigma )/(k,\, \sigma )$ be a difference field extension such that 
the abstract field $L\n $ is finitely
 generated over the abstract base field $k\n$. We constructed the Galois hull 
 $\eL / \K$ 
 as 
 in the differential case. Namely, 
we take a mutually commutative basis 
 $$
 \{ D \sb 1, \, D \sb 2 , \cdots 
 , D\sb d \}
 $$
 of the $L\n$-vector space $\mathrm{Der} \, (L\n / k\n )$ 
 of $k\n$-derivations of the abstract field $L\n$. 
 %%%%%%%%%%%%%
%%%%%%%%%%%%%%%%%%%%%%%%%%%%%%%%%%%%%%%%%%%%%%%%%%%%%%%%%%%%%%%%%%%%%%%%
 We introduce the partial differential field 
$$
L\s := ( L\n , \, \{ D\sb 1 , \, D\sb 2, \, \cdots , D\sb d \} ). 
$$ 
Similarly we define a differential structure on the ring 
$F(\Z , \, L\n)$ of functions taking values in $L\n$ by considering  
the derivations $$\{D\sb 1 , \, D\sb 2, \,
 \cdots , D\sb d \} .$$ In other words, 
 we work with the differential ring 
 $F(\Z ,\, L\s )$. 
 So the ring 
 $F( \Z , \, L\n )$ has a difference-differential structure 
 defined by the shift operator $\Sigma$ and the set 
 $$\{ D\sb 1 , \, D\sb 2, \, \cdots , D\sb d \}$$ 
 of derivations. Since there is no danger of confusion of the choice of the difference operator $\Sigma$, 
 we denote this difference-differential ring by 
 $$
 F(\Z , \, L\s) . 
 $$
 %%%%%%%%%%%%%%%%%%%%%%%%%%%%%%%%%%%%%%%%%%%%%%%%
We have the universal Euler morphism 
\begin{equation}\label{1m27.0.5}%m27.0.5
\iota \colon L \to F (\Z , \, L \n)
\end{equation}
that is a difference morphism. We added further the
$\{ D\sb 1 , \, D\sb 2, \, \cdots , D\sb d \}$-differential structure 
on $F(\Z , \, L\n )$ or we replace the target space 
$F(\Z , \, L\n )$ of the universal Euler morphism \eqref{1m27.0.5}
 by $F(\Z , \, L\s )$ so that we have 
 $$
 \iota \colon L \to F ( \Z , \, L\s ). 
 $$
\par 
In Definition \ref{1a4.6} below, we work in the difference-differential ring 
$F(\Z , \, L\s)$ with difference operator 
$\Sigma $ and differential operators 
$
\{ D\sb 1 , \, D\sb 2, \, \cdots , D\sb d \}.
$
We identify with $L\s$ the set of constant functions on $\Z$. 
Namely, 
$$
L \s = \{ f \in F (\Z , L\s ) \, | \, f(0 ) = f(\pm 1) = f(\pm 2) = \cdots \in L\s \}.
$$
Therefore $L\s$ is a difference-differential sub-field of the 
difference-differential 
ring $F ( \Z , \, L\s )$. The action of the shift operator 
on $L\s$ 
 being trivial, the notation is adequate. Similarly, 
we set 
$$
k \s := \{ f \in F (\Z , L\s ) \, | \, f(0 ) = f(\pm 1) = f(\pm 2) = \cdots \in k \subset L\s \}.
$$
So both the shift operator and the derivations act trivially on 
$k\s$ and so $k\s$ is a difference-differential sub-field of $L\s$ and hence 
of the difference-differential algebra $F(\Z , \, L\s )$. 
\begin{definition}\label{1a4.6}
The Galois hull $\eL / \K $ is a difference-differential sub-algebra extension 
of $F( \Z , \, L\s)$, where 
$\eL$ is the difference-differential sub-algebra generated by the 
image $\iota (L)$ and $L\s$ 
and $\K $ is the sub-algebra generated by the image $\iota (k ) $ 
and $ L\s$. So $\eL / \K$ is a difference-differential algebra extension with difference operator $\Sigma $ and derivations 
$
\{ D\sb 1 , \, D\sb 2, \, \cdots , D\sb d \}.
$
\end{definition} 
%%%%%%%%%%%%%%%%%
 %%%%%%%%%%%%%%%%%%%%%%%%%%%%%%%%%%%%%%%%%%%%%%%%%%%%%%%
 \subsubsection{The functor $\mathcal{F} \sb{L/k}$ of infinitesimal deformations
 for a difference field extension}\label{9.25d}
%Combining the universal Euler morphism 
%and 
%%%%%%%%%%%%%%%%%%%%%%%%%%%%%%%%%%%%%%%%%%%%%%%%%%%
For the partial differential 
field $L\s$, 
we have the universal Taylor morphism
\begin{equation}\label{1m27.1}%m27.1
\iota \sb {L\s} \colon L\s \to L\n [[ W\sb 1, W\sb 2, \cdots , W\sb d ]]
=L\n [[W]]. 
\end{equation}
 The universal Taylor morphism \eqref{1m27.1} gives a 
 difference-differential morphism 
 \begin{equation}\label{1m27.2}
 F(\Z , L\s ) \to F(\Z , L\n [[ W\sb 1, W\sb 2, \cdots , W\sb d ]] ). 
 \end{equation}
Restricting the morphism \eqref{1m27.2} to the difference-differential 
sub-algebra $\eL $ of $F(\Z , L\s )$, we get a difference-differential 
morphism 
$\eL \to F(\Z , L\n [[ W\sb 1, W\sb 2, \cdots , W\sb d ]] )$
that we denote by $\iota $. So we have the 
difference-differential 
morphism 
\begin{equation}\label{1m27.4}
\iota \colon \eL \to 
F(\Z , L\n [[ W\sb 1, W\sb 2, \cdots , W\sb d ]] ).
\end{equation}
Similarly for every commutative $L\n$-algebra $A$, 
thanks to the differential morphism 
$$
L\n[[W]] \to A[[W]], 
$$
arising from the structural morphism $L\n \to A$, 
we have the 
canonical difference-differential morphism 
\begin{equation}\label{1m27.3}
\iota \colon \eL \to F(\Z , A [[ W\sb 1, W\sb 2, \cdots , W\sb d ]] ).
\end{equation}
We define the functor 
$$
\mathcal{F}\sb {L/k} \colon (Alg/ L\n) \to (Set)
$$
from the category $(Alg/L\n)$ of commutative $L\n$-algebras to the category 
$(Set)$ of sets, by associating to a commutative $L\n$-algebra $A$, the set of 
 infinitesimal deformations of the canonical morphism \eqref{1m27.4}. 
So 
\begin{multline*}
\mathcal{F}\sb{L/k}(A)
= \{ f\colon \eL \to 
F(\Z, \, A [[ W\sb 1, W\sb 2, \cdots , W\sb d ]] )\, | \, 
 f \text{\it is a difference-differential }
 \\
 \,\text{\it \/morphism 
 congruent to the canonical morphism } \iota 
 \,\text{\it \/ modulo nilpotent elements} \\ 
 \,\text{\it \/such that \/} 
 f = \iota \,\text{\it \/ when restricted on the sub-algebra } \K 
\}.
 \end{multline*}
See Definition 2.13 in \cite{mori09}, 
 for a rigorous definition. 
 \subsubsection{Group functor $\infgal(L/k)$ of infinitesimal automorphisms for a difference field extension} 
The Galois group in our Galois theory is the group functor 
$$
\infgal (L/k) \colon (Alg/L\n ) \to (Grp)
$$
 defined 
 by 
 \begin{multline*}
 \infgal (L/k) (A) =
 \{
 \, 
 f \colon \eL \hat{\otimes} \sb {L\s} A[[W]] \to 
 \eL \hat{\otimes} \sb {L\s} A[[W]]
 \, | \, 
 f 
 \,\text{\it \/
 is a difference-differential} \\ 
 \K \hat{\otimes} \sb {L\s}A[[W]] \,\text{\it \/-automorphism 
 continuous with respect to
 the $W$-adic topology} \\
 \,\text{\it \/ and congruent to the identity modulo nilpotent elements 
 }
 \}
 \end{multline*}
 for a commutative $L\n$-algebra $A$. 
 Here the completion is taken with respect to the $W$-adic topology.
 See Definition 2.19 in \cite{mori09}. 
\par
Then the group functor $\infgal (L/k)$ operates on the functor 
$\mathcal{F}\sb {L/k}$ in such a way that 
the operation $(\infgal (L/k), \, \mathcal{F}\sb {L/k} )$ is a 
torsor (Theorem2.20, \cite{mori09}). 
\par
The group functor $\infgal (L/k)$ arises from the same origin as in the 
differential case, namely from 
the automorphism of the initial conditions as we explained in 
\ref{9.28a}. 
In the quantum case too, where in Hopf Galois theory, the Galois hull $\eL$ is non-commutative.
 We we are going to see that we can apply this principle to define the Galois group that is a quantum group, in the quantum case. 
See Section \ref{10.4a}, The \ First Example, Section \ref{10.4b}, The Second Example and Section 
\ref{10.4c}, The 
Third Example.
\subsection{Introduction of more precise notations}
\label{9.25a}
So far, we explained general differential 
Galois theory and general difference Galois theory. 
To go further, we have to make our notations more precise.
\par
For example, we defined the Galois hull for a differential field extension 
in Definition \ref{a4.6} and the
Galois hull for a difference field extension in Definition \ref{1a4.6}. 
Since they are defined by the same principle, we denoted both of them by $\eL / \K$. 
 We have to, however, distinguish them. 
 \begin{definition} 
We denote the Galois hull
for a differential field extension by $\eL \sb \gd /\K \sb\gd$
and we use the symbol $\eL \sb \sigma / \K \sb \sigma $ 
for the Galois hull of a difference field extension. 
\end{definition}
We also have to distinguish the functors $\mathcal{F}\sb {L/k}$ 
and $\infgal 
(L/k)$
in the differential case and in the difference case: we add the suffix $\gd$ for the differential case 
and the suffix $\sigma$ for the difference case:
\begin{enumerate}
\renewcommand{\labelenumi}{(\arabic{enumi})}
\item 
We use 
$\mathcal{F}\sb {\gd L/k}$ and $\infgal \sb \gd
(L/k)$, when we deal with differential algebras.
\item 
We use $\mathcal{F}\sb {\sigma L/k}$ and $\infgal \sb \sigma 
(L/k)$ for difference algebras.
\end{enumerate}
\par 
We denoted, according to our convention, 
for a commutative algebra $A$ the category of commutative $A$-algebras by 
$(Alg/A)$. As we are going to consider the category of not necessarily commutative 
$A$-algebras. This notation is confusing. 
So we clarify the notation.
%\begin{definition} 
% We often denote the category of commutative 
%$A$-algebras by $(CAlg/A)$.
%\end{definition}
%%%%%%%%%%%%%%%%%
%%%%%%%%%%%%%%%%%
\section{Hopf Galois theory}\label{4.5.5a}
%So far we treated difference equations and differential equations. 
Picard-Vessiot theory is a Galois theory of linear differential or difference equations. 
The idea of introducing Hopf algebra in Picard-Vessiot theory is traced back to Sweedler \cite{swe69}. 
Specialists in Hopf algebra succeeded in unifying 
Picard-Vessiot theories for differential equations and difference equations \cite{amaetal09}. They further succeeded in generalizing the Picard-Vessiot 
theory for difference-differential equations, where the operators are 
not necessarily commutative. 
 Heiderich \cite{hei10} combined the idea of 
Picard-Vessiot theory via Hopf algebra with our general 
Galois theory for non-linear equations \cite{ume96.2}, \cite{mori09}. 
This is a wonderful idea. 
After our Examples, it becomes, however, apparent that 
his results require a certain modification
in the non-co-commutative case.
His general theory includes a wide class of difference and differential algebras. %It seems, however, that some algebras with operators are excluded of his theory. 
%Sesquilinear difference algebra in Andr\'e \cite{and} is such an example.  
\par
%%%%%%%%%%%%%%%%%%%%%%47
There are two major advantages in his theory. 
\begin{enumerate}
\renewcommand{\labelenumi}{(\arabic{enumi})}
\item Unified study of
differential equations and difference equations in non-linear case.
\item Generalization of universal Euler morphism and Taylor morphism.
\end{enumerate}
\par
$C$ being the field, 
 for $C$-vector spaces $M, \, N$, we denote 
by $\sb C \mathbf{M} (M, N)$ the set of $C$-linear maps from $M$ to $N$. 
%%%%%%%%%%%%%%%%%%%%%%%%%%46n
\begin{example}\label{9.18a}
Let $\calH : = C[\G\sb {a C}] = C [t]$ be the $C$-Hopf algebra of the coordinate 
ring of the additive group scheme $\G\sb {a C}$ over the field $C$. Let $A$ be a commutative $C$-algebra 
and 
$$
\Psi\in \, \sb C\M ( A\otimes\sb C \calH, A)  
= \, \sb C \M (A, \, \sb C\M (\calH , A))
$$ 
so that $\Psi$ defines two $C$-linear maps 
\begin{enumerate}
\renewcommand{\labelenumi}{(\arabic{enumi})}
\item $\Psi\sb 1 \colon A\otimes\sb C \calH \to A$,
\item $\Psi \sb 2 :A \to \, \sb C\M (\calH , A)$.
\end{enumerate}
\begin{definition}\label{141227a} 
We keep the notation of Example \ref{9.18a}
We say that $(A, \Psi )$ is an $\calH$-module algebra if the following equivalent conditions are satisfied. 
\begin{enumerate}
\renewcommand{\labelenumi}{(\arabic{enumi})}
\item The $C$-linear map $\Psi\sb 1 \colon A\otimes\sb C \calH \to A$
makes $A$ into a left $\calH$-module in such a way that we have in the algebra $A$, 
$$
h(ab) = \sum (h\sb {(1)}a) (h\sb{(2)}b) \in A, 
$$
for every element $h\in \mathcal{H}$ and $a, \, b \in A$, where 
we
use the sigma notation so that 
$$
\Delta (h) = \sum h\sb {(1)} \otimes h\sb{(2)}, 
$$
$\Delta :\calH \to \calH \otimes \calH$
being the co-multiplication of the Hopf algebra $\mathcal{H}$. 
\item The $C$-linear map $$\Psi \sb 2 :A \to \, \sb C\M (\calH , A)$$
%$\Delta : \mathcal{H} \to \mathcal{H} \otimes \mathcal{H}$ being yhe co-multiplication,.
is a $C$-algebra morphism, the dual 
$\sb C\M (\calH , A )$ 
of co-algebra $\calH$ being a $C$-algebra. 
\end{enumerate} 
cf. p.153 of Sweedler \cite{swe69}.
\end{definition}
Concretely the dual algebra $\sb C\M (\calH , A)$ is the formal power series ring $A[[X]]$.
\par It is a comfortable exercise to examine
that $( A, \Psi )$ is an 
$\calH$-module algebra if and only if 
$A$ is a differential algebra with derivation $\gd$ such 
that $\gd (C) =0$. 
 When the equivalent conditions are satisfied, for every element $a$ in the algebra $A$, 
$\Psi (a\otimes t) = \gd (a)$ and 
the $C$-algebra morphism 
$$
\Psi \sb 2 :A \to \, \sb C\M (\calH , A) =A[[X]]
$$
is the universal Taylor morphism. 
So 
$$
\Psi\sb{2}(a) = \sum\sb{n=0}^{\infty} \frac{1}{n!}\delta^{n}(a)X^{n}\,\, \in A[[X]]
$$ 
for every $a \in A$. 
See Heiderich \cite{hei10}, 2.3.4.
\end{example}
In Example \ref{9.18a}, we explained the differential case.
If we take 
the Hopf algebra $C[\G\sb {m C}]$ of the coordinate ring of the multiplicative group $\G\sb {m C}$ 
 for $\calH$, we get difference structure and the universal 
Euler morphism. See \cite{hei10}, 2.3.1. 
More generally we can take any Hopf algebra $\mathcal{H}$ to get 
an algebra $A$ with operation of the algebra $\mathcal{H}$ and a 
morphism 
$$
\Psi \sb 2 :A \to \, \sb C\M (\calH , A) 
$$
generalizing the universal 
Taylor morphism and Euler morphism. So we can 
define the Galois hull $\eL /\K $ and develop 
a general Galois theory for a field extension $L/k$ 
with operation of the algebra $\mathcal{H}$. 
In the differential case as well as in the difference case, the corresponding Hopf algebra $\mathcal{H}$ is co-commutative so that the dual algebra $ \sb C \mathrm {M}(\mathcal H, \, A)$
is a commutative algebra. Consequently the 
Galois hull $\eL /\K$ that are sub-algebras in the 
commutative algebra 
 $ \sb C \M(\mathcal H, \, A)$.
 In these cases, the Galois hull is an algebraic counterpart of 
the geometric object, algebraic Lie groupoid. See Malgrange \cite{mal01}. Therefore 
the most fascinating question is 
 \begin{question} %\label{14,6,29a}
Let us consider a non-co-commutative bi-algebra 
$\mathcal{H}$ and assume that the Galois hull $\eL/\K$ that 
is a sub-algebra of the dual algebra 
$ \sb C \mathrm {M}(\mathcal H, \, A)$, is not a commutative algebra. 
Does the Galois hull $\eL / \K$ quantize the 
algebraic Lie groupoid?
\end{question}  
We answer affirmatively the question by analyzing examples in 
\QSI field extensions. 
\begin{remark}
Looking at the works of Hardouin \cite{har10} and Masuoka and Yanagawa \cite{masy}, even if we consider a twisted Hopf algebra $\mathcal{H}$, so far as we consider linear difference-differential equations, the Galois hull $\eL $ often happens 
to be a commutative sub-algebra of the non-commutative algebra $\sb{C} M(\mathcal{H} ,\, A)$ and the Galois group is a linear algebraic group. See also \cite{ume11}. 
We show by examples that quantization of Galois theory really occurs for non-linear equations. We prove further that the first of our Examples reduces to a linear equation giving us 
the First Example of linear equation where quantization of Galois theory takes place. 
%How non-co-commutative the bialgebra $\mathcal{H}$ may be, so far as one considers linear equations, the Galois hull $\eL / \K$ is a commutative sub-algebra of the 
%non-commutative algebra 
%$ \sb C \mathrm {M}(\mathcal H, \, A)$. 
%Hence one does not encounter quantum groups, except for linear algebraic groups, studying generalized 
%Picard-Vessiot theories. See Hardouin \cite{har10} and Umemura \cite{ume11}. 
\end{remark}
%%%%%%%%%%%%%%%%%%%%%%%%%%%%%%%%%%%%%%%%%%%%%%%%%%%%%%%%%%%%%%%%%%41
%We consider mostly commutative algebras. So when we speak of an 
%algebra, without 
%mentioning that it is non-commutative, it is a commutative algebra. 
 % 

Let $q$ an element of the field $C$.
We use a standard notation of $q$-binomial coefficients. 
To this end, let $Q$ be a variable over the field $C$. 
\par 
We set $[n]\sb Q = \sum \sb{i=0} ^ {n-1} Q^i \in C[Q]$ for positive integer 
$n$. We need also $q$-factorial 
$$
[n]\sb Q ! :=\prod \sb{i=1} ^n [i]\sb Q \qquad 
\text{\it \/ for a positive integer $n \qquad$ and } \qquad %\qquad 
[0]\sb Q ! := 1 . 
$$ 
So $[n]\sb Q \in C[Q]$.
The $Q$-binomial coefficient is defined for $m, n \in \N $ by 
$$
\binom{m}{n}\sb Q = \begin{cases}
\frac{[m]\sb Q!}{[m-n]\sb Q ![n]\sb Q!} & \text{\it \/ if } m \ge n, \\
0 & \text{\it \/ if } m < n.
\end{cases}
$$
Then we can show that the rational function 
$$
\binom{m}{n}\sb Q \in C(Q)
$$
is in fact a polynomial or 
$$
\binom{m}{n}\sb Q \in C[Q].
$$
We have a ring morphism 
\begin{equation}\label{9.17c}
 C[Q] \to C[q], \qquad Q \mapsto q
 \end{equation}
 over $C$ and we denote 
the image of the polynomial 
$$\binom{m}{n}\sb Q $$ under morphism \eqref{9.17c} by 
$$
\binom{m}{n}\sb q.
$$ 
\par

 %%%%%%%%%%%%%%%%%%%%%%%%%%%%%%44
%%%%%%%%%%%%%%%%%%%%%%%%%%%%%%%%%%%%%%%%%%%%%%%%%%%%%%%%%%%%%%%%%%%%%%%%%%%
\subsection{$q$-skew iterative $\sigma$-differential algebra \cite{har10}, \cite{hay08} }%%%%%%%%%%%%%%%%%%%%
%%%%%%%%%%%%%%%%%%%%%%%%%%%%%%%%%%%%%%%%%%
The first non-trivial example of a Hopf Galois theory dependent on 
a non-co-commutative Hopf algebra is Galois theory of $q$-skew iterative $\sigma$-differential field extensions, 
abbreviated as \QSI field extensions. 
Furthermore we simply call them \itqsi field extensions.
\subsubsection{Definition of \QSI algebra}
\begin{definition}\label{a3.3} 
Let 
$q\not= 0$ be an element of the field $C$. 
A $q$-skew iterative $\sigma $-differential algebra 
$( A, \,\sigma ,\, \sigi , \, \theta ^* ) = ( A, \sigma , \{ \theta ^{(i)}\} \sb{i\in \N} )$, a 
%qSI$\sigma$-
\QSI algebra or a \itqsi qlgebra for short, 
 consists of a 
$C$-algebra $A$ that is eventually non-commutative,
a $C$-automorphism $\sigma :A \to A$ 
of the $C$-algebra $A$ and a family 
$$
\theta ^{(i)}\colon A \to A \qquad \text{\it \/ for $i \in \N$} 
$$
of $C$-linear maps, called derivations, 
satisfying the following conditions.
%\end{definition}
%%%%%%%%%%%%%%%%%%%%%%
\begin{enumerate}
\renewcommand{\labelenumi}{(\arabic{enumi})}
\item $\theta ^{(0)} = \Id\sb A$, 
\item $\theta ^{(i)}\sigma = q^i \sigma \theta ^{(i)} \qquad \text{ for every } i \in \N$, 
\item $\theta ^{(i)}(ab) =
\sum\sb{l+m= i, \, l, m \ge 0} \,
\sigma^{m}
(
\theta ^{(l)}(a) 
) 
\theta ^{(m)}(b)$ for every $i \in \N$ and $a,\, b \in A$,
\item $\theta ^{(i)}\circ \theta ^{(j)} =\binom{i+j}{i} \sb q \theta ^{(i+j)}$ for every $i, \, j \in \N$.
\end{enumerate}
%We denote the qSI$\sigma$-algebra 
%$(A, \sigma , \gd ^*) =
%(A, \sigma , \{ \gd ^{(i)}\}\sb {i \in \N})$ 
%or simply by $A$.
We say that 
an element $a$ of the \QSI algebra $A$ is a constant if 
$\sigma (a) = a$ and $\theta ^{(i)}(a) = 0$ for every $i\ge 1$. 
\par
A morphism of \QSI $C$-algebras is a 
$C$-algebra morphism compatible with the automorphisms $\sigma$ and the derivations 
$\theta ^ *$. 
\end{definition} 
\par 
Both differential algebras and difference algebras are \QSI algebras as we see below.
\begin{remark}\label{141225a}
There is also a weaker version of \QSI differential algebra, in which we do not require that $\sigma$ is a $C$-linear automorphism of $A$. 
\end{remark}

%\begin{align*}
%(1) \, & \theta ^{(0)} = \Id, \\ 
%(2) \, & \theta ^{(i)}\sigma = q^i \sigma \theta ^{(i)} \qquad \text{ for every } i \in \N, \\
%(3) \, & \theta ^{(i)}(ab) =
%\sum\sb{l+m= i} 
%\sigma^{m}
%(
%\theta ^{(l)}(a) 
%) 
%\theta ^{(m)}(b), \\
%(4)\, &\theta ^{(i)}\circ \theta ^{(j)} =\binom{i+j}{i}\sb q \theta ^{(i+j) 
%\end{align*}
%%%%%%%%%%%%%%%%%%%%%%%%%%%411
\subsubsection{Difference algebra and a \QSI algebra}\label{9.19c}
 Let $A$ be a commutative $C$-algebra and $\sigma :A \to A$ be a $C$-automorphism of the ring $A$. So $( A , \sigma )$ is a difference algebra. 
 If we set $\theta ^{(0)} = \Id \sb A$ and 
 $$
 \theta ^{(i)} (a) = 0 \text{\it \/ for every element $a\in A$ and for } i =1,\,2, \, 3, \, \cdots .
 $$ 
 Then $(A, \, \sigma , \, \sigi , \, \theta ^ *)$ is a 
 \QSI algebra. 
 \par 
 Namely we have a functor of the category $(Dif\!f'ce Alg/C)$ of $C$-difference algebras to the category $(q\text{-}SI \sigma \text{-}dif\!f'ial Alg/C)$ of 
 \QSI algebras over $C$: 
\begin{equation*}
(Dif\!f'ce Alg/C) \rightarrow (q\text{-SI} \sigma \text{-}dif\!f'ial Alg/C). 
\end{equation*} 
 \par 
 Let $t$ be a variable over the field $C$ and 
 let us now assume
\begin{equation}\label{8.8a}
q^n \not= 1 \qquad \text{\it \/for every positive integer } n.
\end{equation}
 We denote by $\sigma \colon C(t) \to C (t)$ the $C$-automorphism of the rational function field $C(t)$ sending the variable $t$ to $qt$. 
We consider a difference algebra extension $(A, \, \sigma )/(C(t), \, \sigma )$. 
If we set 
$$
\theta ^{(1)}(a)= \frac{\sigma (a) - a}{(q-1)t} \qquad 
\text{\it \/ for every element }a \in A
$$ 
and 
$$
\theta ^{(i)}=\frac{1}{[i]\sb q !}\{\theta ^{(1)}\}^i 
\qquad 
\text{\it \/ for } i=2, \, 3, \, \cdots .
$$
 Then $(A, \sigma , \theta^{*} )$ is a \QSI algebra. Therefore if $q \in C$ satisfies \eqref{8.8a}, then we have a functor 
\begin{equation}
(Dif\!f'ce Alg/(C(t),\sigma)) \rightarrow (q\text{-SI} \sigma \text{-}dif\!f'ial Alg). 
\end{equation}
\begin{remark}\label{141225b}
In coherence with Remark \ref{141225a}, when we speak of 
difference $C$-algebra $(A, \, \sigma )$, we assume that 
$\sigma :A \to A$ is a $C$-linear automorphism. 
\end{remark}

 \subsubsection{Differential algebra and \QSI algebra}
 \label{9.19b}
 Let $(A,\, \theta )$ be a $C$-differential algebra such that 
 the derivation $\theta : A \to A$ is $C$-linear. 
 % the field $C$ is a subfield of the ring $C\sb A$ of constants of the differential algebra $A$.
 We set 
\begin{align*}
\theta ^{(0)} &= \Id \sb A, \\
 \theta ^{(i)}&= \frac{1}{i!}\theta^ i\qquad \text{\it \/ for } i = 1, \, 2, \, 3, \cdots .
\end{align*}
 Then $(A, \, \Id \sb A ,\, \theta ^ *)$ is a \QSI algebra for $q=1$. 
 In other words, we have a functor 
 $$
 (Diff'ialAlg/C) \to (q\text{-}SI \sigma \text{-}diff'ial Alg/C)
 $$
 of the category of (\,commutative\,) differential $C$-algebras to the category of 
 \QSI algebras over $C$. 
 We have shown that both difference algebras and differential algebras are 
 particular instances of \QSI algebra.
 
 \subsubsection{Example of \QSI 
 algebra \cite{hei10}}\label{10.3a}
 We are going to see that 
 \QSI algebras live on the border between commutative algebras and non-commutative algebras. The example below seems to suggest 
 that 
 it looks natural to seek \QSI algebras in the category of non-commutative algebras. 
 \par
 An example of \QSI algebra arises from a commutative 
 $C$-difference algebra $(S, \, \sigma)$. We need, however, a non-commutative ring, 
 the twisted power series ring 
 $(S, \, \sigma )[[X]]$ over the difference ring $(S, \sigma )$ that has a natural \QSI algebra structure. 
 \par
 Namely, let $(S, \sigma )$ be the $C$-difference ring so that 
 $\sigma :S \to S$ is a $C$-algebra automorphism of the commutative ring $S$. 
 We introduce the following twisted formal power series ring 
 $(S, \, \sigma ) [[X]]$
 with coefficients in $S$ 
 that is the formal power series ring $S[[X]]$ 
 as an additive group 
 with the following commutation relation 
 $$
 aX = X \sigma (a) \qquad 
 and \qquad Xa = \sigma ^{-1} (a)X \qquad 
 \text {\it for every $a\in S$}. 
 $$
 So more generally 
 \begin{equation}\label{a11.1}
 a X^n = X^ n \sigma ^n (a) \qquad \text{\it and } \qquad 
 X^na = \sigma ^{-n}(a) X^n 
 \end{equation}
 for every $n \in \N$. 
 The multiplication of two formal power series is defined by extending \eqref{a11.1} by linearity. 
 Therefore the twisted formal power series ring $(S, \sigma )[[X]])$ is non-commutative in general. 
 By commutation relation \eqref{a11.1}, 
 we can identify 
 $$
 (S, \sigma )[[X]] = \{\sum \sb{i=0}^\infty X^i a\sb i \, |\, 
 a\sb i \in S \text{\it \/ for every } i\in \N \}
 $$
 as additive groups. 
 \par 
 We are going to see that 
 the twisted formal 
 power series ring 
 has a natural 
 \QSI structure. We define first a ring automorphism 
$$
\hat{\Sigma} \colon (S, \, \sigma ) [[X]] \to 
 (S, \, \sigma )[[X]]
 $$
 by setting 
 \begin{equation}\label{a5.1}
 \hat{\Sigma} (\sum \sb{i=0}^\infty X^i a\sb i ) = \sum 
 \sb{i=0}^\infty X^i q ^i\sigma (a\sb i) \qquad \text{\it \/ for every } i \in \N , 
 \end{equation} 
 for every element 
 $$
 \sum \sb{i=0}^\infty X^i a\sb i \in (S,
 \sigma )[[X]]. 
 $$ 
 As we assume that $\sigma : A \to A$ is an isomorphism, 
 the $C$-linear map, 
$$ \hat{\Sigma}: (A, \sigma )[[X]] \to (A, \sigma )[[X]]$$   is an automorphism of the $C$-linear space.  
The operators 
 $\Theta ^* =\{ \Theta ^{(l)} \}\sb{
 l\in \N}$ are defined by 
 \begin{equation}\label{a4.5}
 \Theta ^{(l)}(\sum \sb{i=0}^\infty X^ia\sb i ) = 
 \sum \sb{i=0}^\infty X^i \binom{i+l}{l}\sb q a\sb {i+ l}
 \qquad \text{\it \/ for every }l \in \N. 
 \end{equation}
 Hence the twisted formal power series ring $((S, \, \sigma ) [[X]], \hat{\Sigma} , \Theta ^* ) $ is a non-commutative \QSI 
 ring. We denote this \QSI ring simply by $(S, \sigma )[[X]]$.
 See \cite{hei10}, 2.3. 
 %%%%%%%%%%%%%
 %%%%%%%%%%%%%%%%%%%%%%%%%%%%%%%%%%%%%%%%%%%%%%%%%%%%%%%%%%%%%%%%%%%%%%%%%%%%%%%%%%%%%%%%%%%%%%%%%%%%%%%%%%%%%%%%%%%%%%%%%%%%%
 In particular, if we take as the coefficient difference ring $S$ 
 the difference ring 
$$(F(\Z , A), \, \Sigma )$$
 of functions on $\Z$ taking values in a ring $A$ defined in 
 \ref{0820b}, %%?
 where 
 $$
 \Sigma :F(\Z , \, A ) \to F(\Z , \, A)
 $$ 
 is the shift operator, we obtain the \QSI ring 
 \begin{equation}\label{10.1a}
 \left( ( F( \Z , A ), \Sigma ) [[X]], \, \hat{\Sigma}
, \, \Theta ^* \right) .
 \end{equation}
\begin{remark}
We assumed that the coefficient difference ring 
$(S, \, \sigma)$ is commutative. The commutativity assumption on the ring $S$ is not necessary. 
Consequently we can use non-commutative ring $A$ in \eqref{10.1a}. 
\end{remark}
\subsubsection{Hopf algebra for \QSI structures}
 As we explained for differential algebras in Definition \ref{141227a}, 
a \QSI structure is nothing but a $\cH \sb q$-module algebra structure for a Hopf 
algebra $\mathcal{H}\sb q$. 
\begin{definition}%??
Let $q \neq 0$ be an element of the field $C$. 
Let $\cH \sb q$ is a $C$-algebra generated over the field $C$ 
 by $s$, $s^{-1}$ and the $t\sb i$'s for $i\in \N$ subject to the relations %??
$$
%\begin{multline}
t\sb 0 =1, \qquad 
ss^{-1} = s^{-1} s = 1, \qquad 
t\sb i s=q^i s t\sb i, \qquad 
q^i t\sb i s^{-1}= s^{-1} t\sb i, \qquad 
t\sb i t\sb j = \binom{i+j}{i}\sb q t\sb {i+j}
$$ 
%\text{ 
for every 
%} 
$i, 
\, j \in \N$.
%%\end{multline}
We define a co-algebra structure $\Delta :\mathcal{H}\sb q \to 
\mathcal{H}\sb q \otimes \sb C \mathcal{H}\sb q$
 by 
$$
\Delta (s) = s\otimes s , \qquad 
\Delta (s^{-1}) = s^{-1}\otimes s^{-1} , \qquad 
\Delta (t\sb l )=\sum\sb {i= 0}^ l s^i t\sb {l-i}\otimes t\sb i 
$$
for every $l \in \N$.
In fact $\mathcal {H}\sb q $ is a Hopf algebra with 
co-unit $\epsilon : \mathcal{H}\sb q \to C$ defined by 
$$
\epsilon (s) = \epsilon (s^{-1}) =1, \qquad \epsilon (t\sb i) = 0
$$
for every $i \in \N$.
Antipode is an anti-automorphism 
$ S:\mathcal {H}\sb q \to \mathcal{H}\sb q $ of the 
$C$-algebra 
$\cH \sb q$
given by 
$$
S(s ) = s^{-1}, \qquad S ( s^{-1}) =s , \qquad 
S(t\sb i )=(-1) ^ i q^{i (i+1)/2} t\sb i s ^{-i} 
$$
for every $i \in \N$.
\end{definition}
\begin{proposition} 
For a not necessarily commutative $C$-algebra $A$, 
there exists a $1:1$ correspondence between the elements of the following two sets. 
\begin{enumerate}
\renewcommand{\labelenumi}{(\arabic{enumi})}
\item 
The set of \QSI algebra structures on the $C$-algebra $A$. 
\item 
The set of $\cH \sb q$-module algebra structures on the $C$-algebra $A$.
 \end{enumerate}
\end{proposition}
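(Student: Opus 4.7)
The plan is to exhibit the correspondence explicitly on generators and then show that the QSI axioms and the module-algebra axioms translate into each other term-by-term.

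First, given a \QSI algebra structure $(A,\sigma,\sigma^{-1},\theta^{*})$, I would define a $C$-linear map $\cH\sb q\otimes\sb C A\to A$ by prescribing the action on the generators of $\cH\sb q$: let $s$ act as $\sigma$, $s^{-1}$ as $\sigi$, and $t\sb i$ as $\theta^{(i)}$, and extend multiplicatively. To see that this is well defined as an $\cH\sb q$-module structure, I would check that the four defining relations of $\cH\sb q$ are sent to true identities of $C$-linear operators on $A$: $t\sb 0\mapsto\Id\sb A$ matches axiom (1); $ss^{-1}=s^{-1}s=1$ is the assumption that $\sigma$ is invertible; $t\sb i s=q^i s t\sb i$ and $q^i t\sb i s^{-1}=s^{-1} t\sb i$ are precisely axiom (2) and its reformulation obtained by applying $\sigi$ on both sides; and $t\sb i t\sb j=\binom{i+j}{i}\sb q t\sb{i+j}$ is axiom (4). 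Thus the algebra relations among the generators exactly mirror the \QSI relations.

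Next, I would verify the module-algebra property $h(ab)=\sum (h\sb{(1)}a)(h\sb{(2)}b)$. Since $\cH\sb q$ is generated as an algebra by $s,s^{-1}$ and the $t\sb i$, it suffices to check this on the generators. For $s$ and $s^{-1}$, using $\Delta(s)=s\otimes s$ and $\Delta(s^{-1})=s^{-1}\otimes s^{-1}$, the condition reduces to $\sigma$ being a $C$-algebra automorphism, which is built into the definition of a \QSI algebra. For $t\sb l$, using $\Delta(t\sb l)=\sum\sb{i=0}^l s^i t\sb{l-i}\otimes t\sb i$, the required identity is
\[
\theta^{(l)}(ab)=\sum\sb{i=0}^l \sigma^i(\theta^{(l-i)}(a))\,\theta^{(i)}(b),
\]
which is precisely axiom (3). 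A small closure argument (the set of $h\in\cH\sb q$ for which the module-algebra identity holds is a subalgebra containing the generators, hence equals $\cH\sb q$) finishes this direction.

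Conversely, given an $\cH\sb q$-module algebra structure on $A$, I would define $\sigma(a):=sa$, $\sigi(a):=s^{-1}a$, and $\theta^{(i)}(a):=t\sb i a$. Applying the module-algebra identity to $s,s^{-1}$ shows that $\sigma,\sigi$ are mutually inverse $C$-algebra endomorphisms, and applying it to $t\sb l$ reproduces the twisted Leibniz rule (3). The four algebra relations in $\cH\sb q$ give, respectively, axioms (1), the invertibility of $\sigma$, axiom (2), and axiom (4). Finally, the two constructions are manifestly inverse to each other because each is determined by the values on the generators $s,s^{-1},t\sb i$.

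The only slightly delicate point, and the one I would expect to double-check most carefully, is the coherence of the Leibniz axiom (3) with the coproduct formula for $\Delta(t\sb l)$: one must match the factor $\sigma^m$ in axiom (3) against the factor $s^{l-m}$ appearing in the left tensorand of $\Delta(t\sb l)$, and verify that the $q$-binomials implicit in iterating these Leibniz rules are consistent with axiom (4) via co-associativity of $\Delta$. Once this bookkeeping is in place, the correspondence is a bijection, which is the content of the proposition.
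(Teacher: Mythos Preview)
Your proposal is correct and follows exactly the approach the paper indicates: the paper does not give a proof but merely states the result is well known (citing Heiderich) and records the generator correspondence $s\mapsto\sigma$, $s^{-1}\mapsto\sigma^{-1}$, $t_i\mapsto\theta^{(i)}$, which is precisely what you unwind in detail. One small slip: in your final paragraph you write that $\sigma^m$ should match ``the factor $s^{l-m}$'' in the left tensorand of $\Delta(t_l)$, but in fact it matches $s^m$ (the $l-m$ is the subscript of the accompanying $t$); your earlier displayed computation has this right, so the slip is cosmetic.
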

This result is well-known. See Heiderich \cite{hei10}. We recall 
for a \QSI algebra $A$, the corresponding 
left $\mathcal{H}\sb q $-module structure is given by 
$$
s \mapsto \sigma, \qquad s^{-1} \mapsto \sigma ^{-1}, \qquad 
 t\sb i \mapsto 
\theta ^{(i)} \text{ for every } i \in \N.
$$

\subsubsection{Universal Hopf morphism for a \QSI algebra}\label{4.5.5b} 
We introduced in \ref{0820b} the 
difference 
ring of functions $(F(\Z , \, A), \,\Sigma )$ on the set $\Z$ taking values in a ring $A$. 
It is useful to denote the function $f$ by a matrix
\begin{equation*}
\left[\begin{array}{ccccccc}
\cdots &-2 & -1 & 0 & 1 & 2 & \cdots \\
\cdots & f(-2) & f(-1) & f(0) & f(1) & f(2) & \cdots
\end{array}\right] .
\end{equation*}
\par 
For an element $b$ 
of a 
difference algebra $(R,\, \sigma )$ or a 
\QSI algebra 
$( R, \sigma , \, \theta ^* )$, we denote 
 by 
$u[b]$ a function on $\Z$ taking values in the abstract ring $R\n$
such that 
$$
u[b](n) = \sigma ^ n (b) \qquad \text{\it \/ for every } n \in \Z 
$$
so that 
\begin{equation*}
u[b] = \left[\begin{array}{ccccccc}
\cdots & -2& -1 & 0 & 1 & 2 & \cdots \\
\cdots & \sigma ^{-2} (b) & \sigi (b) &  b & \sigma^1 (b) & \sigma^2 (b) & \cdots
\end{array}\right] .
\end{equation*}

Therefore $u[b] \in F(\Z , R\n )$.

 \begin{proposition}[Proposition 2.3.17, Heiderich \cite{hei10}]
 \label{a4.1}
 For a \QSI algebra $(R, \, \sigma , \, \theta ^* )$, %hence in particular for an iterative $q$-difference ring $R$, 
 there exists 
 a canonical morphism, which we call the universal Hopf 
 morphism 
\begin{equation}\label{9.19a}
\iota \colon (R, \, \sigma , \, \theta ^* ) 
\to 
 \left((F(\Z , R\n ), \Sigma )[[X]],\, \hat{\Sigma}, \, \hat{\Theta} ^* \right), 
 \qquad a \mapsto \sum \sb{i=0}
^\infty X^i u[\theta ^{(i)}(a)] 
\end{equation}
of \QSI algebras. 
 \end{proposition}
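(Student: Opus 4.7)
The plan is to verify directly that the $C$-linear map
\[
\iota(a) \;=\; \sum_{i=0}^{\infty} X^{i}\, u[\theta^{(i)}(a)]
\]
is a morphism of \QSI algebras, by checking the three pieces of structure: (i) it is a unital $C$-algebra map, (ii) it intertwines $\sigma$ with $\hat{\Sigma}$, and (iii) it intertwines each $\theta^{(l)}$ with $\hat{\Theta}^{(l)}$. Each check turns out to correspond to exactly one of the four axioms defining a \QSI algebra, so the shape of the proof is to match each axiom with one compatibility.

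First I would unpack the target ring. An element of $(F(\Z,R\n),\Sigma)[[X]]$ has the normal form $\sum_{i\ge 0}X^{i}a_{i}$ with $a_{i}\in F(\Z,R\n)$; multiplication is determined by $fX=X\Sigma(f)$, i.e.\ $f\cdot X^{j}=X^{j}\Sigma^{j}(f)$, and the multiplication of coefficient functions in $F(\Z,R\n)$ is pointwise. For $b\in R$ one has, by definition, $u[b](n)=\sigma^{n}(b)$ and therefore $\Sigma^{j}(u[b])(n)=\sigma^{n+j}(b)$. Unit preservation is immediate since $\theta^{(0)}=\Id$ and $u[1]$ is the constant function $1$. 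For multiplicativity, I compute
\[
\iota(a)\iota(b) \;=\; \sum_{i,j}X^{i}u[\theta^{(i)}(a)]\,X^{j}u[\theta^{(j)}(b)] \;=\; \sum_{k}X^{k}\!\!\sum_{i+j=k}\!\Sigma^{j}(u[\theta^{(i)}(a)])\,u[\theta^{(j)}(b)],
\]
and, evaluating the coefficient of $X^{k}$ at $n\in\Z$, obtain
\[
\sum_{i+j=k}\sigma^{n+j}(\theta^{(i)}(a))\,\sigma^{n}(\theta^{(j)}(b)) \;=\; \sigma^{n}\!\Bigl(\sum_{i+j=k}\sigma^{j}(\theta^{(i)}(a))\,\theta^{(j)}(b)\Bigr) \;=\; \sigma^{n}(\theta^{(k)}(ab)),
\]
where the key step is axiom (3) (the twisted Leibniz rule). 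Since the right-hand side is exactly $u[\theta^{(k)}(ab)](n)$, the coefficients of $X^{k}$ in $\iota(a)\iota(b)$ and in $\iota(ab)$ agree.

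Next I would check compatibility with $\sigma$. Axiom (2), $\theta^{(i)}\sigma=q^{i}\sigma\theta^{(i)}$, yields $u[\theta^{(i)}(\sigma a)](n)=\sigma^{n}(q^{i}\sigma\theta^{(i)}(a))=q^{i}\Sigma(u[\theta^{(i)}(a)])(n)$, so $u[\theta^{(i)}(\sigma a)]=q^{i}\Sigma(u[\theta^{(i)}(a)])$; summing in $X^{i}$ and comparing with the definition of $\hat{\Sigma}$ in \eqref{a5.1} gives $\iota(\sigma a)=\hat{\Sigma}(\iota(a))$. Compatibility with the derivations is even shorter: axiom (4) gives $\theta^{(i)}\theta^{(l)}=\binom{i+l}{l}_{q}\theta^{(i+l)}$, hence $u[\theta^{(i)}(\theta^{(l)}(a))]=\binom{i+l}{l}_{q}u[\theta^{(i+l)}(a)]$, and comparing with the definition of $\hat{\Theta}^{(l)}$ in \eqref{a4.5} gives $\iota(\theta^{(l)}(a))=\hat{\Theta}^{(l)}(\iota(a))$.

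The only nontrivial step is the algebra-morphism check: one must be careful that the $X^{j}$'s move past the coefficient functions by the $\Sigma$-twist of the ambient power series ring, while simultaneously the pointwise multiplication on $F(\Z,R\n)$ inserts a further $\sigma^{n}$. The pleasant coincidence that makes the proof work is that applying $\sigma^{n}$ to the $\sigma^{j}$-twisted Leibniz rule of axiom (3) produces exactly the combination $\sigma^{n+j}(\theta^{(i)}(a))\sigma^{n}(\theta^{(j)}(b))$ that the twisted multiplication on the target has already produced for us. Once that bookkeeping is done, the remaining verifications for $\hat{\Sigma}$ and $\hat{\Theta}^{(l)}$ are purely formal consequences of axioms (2) and (4).
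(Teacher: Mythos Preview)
Your proof is correct. The paper does not supply its own proof of this proposition; it simply cites Heiderich \cite{hei10}, Proposition 2.3.17, and moves on. Your direct verification---matching axiom (3) to multiplicativity, axiom (2) to $\hat{\Sigma}$-compatibility, and axiom (4) to $\hat{\Theta}^{(l)}$-compatibility---is exactly the expected argument and is carried out cleanly.
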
 %
% Here is the definition of the universal twisted Taylor morphism. 
% $$
% \iota (a) = \sum \sb {i=0}^\infty u[\delta ^ {(i)}(a)]X^i,
% $$
% where we use the following notation. 
% For an element $b \in R$, 
% we denote by $u(b)$ a function on $\N$ taking values in $R$
 %such that 
 %$$
% u[b](n) = \sigma ^n(b) \qquad \text{ for every } n \in \N 
% $$ 
 %so that $u[b] \in F(\N , R\n)$. 
%%%%%%%%%%%%%%%%%%%%%%%%%%%%%%%%%%%%%%%%%%%%%%%%%%%%
\par 
 We can also characterize the universal Hopf morphism as the 
 solution of a universal mapping property. 
 \par 
 When $q=1$ and $\sigma =\Id \sb R$ 
 and $R$ is commutative 
 so that 
 the \QSI ring $(R, \, \Id\sb R , \, \theta ^* )$ is simply a 
 differential algebra as we have seen in \ref{9.19b}, the universal Hopf morphism \eqref{9.19a} is the universal 
 Taylor morphism in \eqref{9.19d}. 
 Similarly a commutative difference ring is a \QSI algebra 
 with trivial derivations as we noticed in \ref{9.19c}. In this case the universal Hopf morphism \eqref{9.19a} is nothing but 
 the universal 
 Euler morphism \eqref{9.19e}. 
%$$ 
%u(\delta ^ {(i)}(a))\in F(\N , R\n )$$
% is a function on $\N$ 
%taking values in $R\n$ such that 
%$$ u(\delta ^ {(i)}(a))(n)= \sigma ^n (\delta ^ {(i)}(a))$.
 Therefore the
 universal Hopf morphism unifies the universal Taylor morphism and the Universal Euler morphism. 
 \par 
 Let us recall the following fact. 
 \begin{lemma}
 Let $(R, \, \sigma , \, \theta ^* )$ be a \QSI domain. 
 If the endomorphism $\sigma \colon R \to R$ is an automorphism, then 
 the field $Q(R)$ of fractions of $R$ has the unique structure of 
 \QSI field extending that of $R$.
 \par 
 %If moreover $R$ is an \iqd algebra, %%%419
 %then the field $Q(R)$ of fractions of $R$ is also an \iqd field.  
 \end{lemma}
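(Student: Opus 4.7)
The plan is to extend $\sigma$ and each $\theta^{(i)}$ to $Q(R)$ separately, exploiting that the twisted Leibniz rule (axiom (3) of Definition \ref{a3.3}) forces the values on quotients once the values on $R$ are fixed. Uniqueness is built into the forcing, and well-definedness on equivalence classes of fractions is the one place where real work is needed; the remaining axioms will reduce, after some bookkeeping, to the corresponding identities on $R$.

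\textbf{Extension.} Since $\sigma\colon R\to R$ is a $C$-algebra automorphism of a domain, it extends uniquely to an automorphism of $Q(R)$ by $\sigma(a/b) = \sigma(a)/\sigma(b)$; this is classical. For the derivations I would proceed by induction on $i$, taking $\theta^{(0)}=\mathrm{Id}$. Assuming $\theta^{(0)},\dots,\theta^{(i-1)}$ are already extended to $Q(R)$, applying axiom (3) to the identity $a = (a/b)\cdot b$ forces
\[
\theta^{(i)}(a/b)\cdot b \;=\; \theta^{(i)}(a) \;-\; \sum_{m=1}^{i} \sigma^{m}\!\bigl(\theta^{(i-m)}(a/b)\bigr)\,\theta^{(m)}(b),
\]
which determines $\theta^{(i)}(a/b)\in Q(R)$ since $b\neq 0$ and the right-hand side only involves already-defined quantities. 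I would take this equation as the definition. This simultaneously proves the uniqueness clause of the lemma.

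\textbf{Well-definedness.} If $a/b = a'/b'$, i.e.\ $ab' = a'b$ in $R$, I would apply $\theta^{(i)}$ (on $R$) to both sides of $ab'=a'b$ via axiom (3), obtaining an identity among $\theta^{(l)}(a),\theta^{(l)}(a'),\theta^{(l)}(b),\theta^{(l)}(b')$. Using the inductive hypothesis $\theta^{(i-m)}(a/b)=\theta^{(i-m)}(a'/b')$ for $1\le m\le i$, this identity—after clearing with $bb'$—rearranges to $\bigl(\theta^{(i)}(a/b)-\theta^{(i)}(a'/b')\bigr)\cdot bb'=0$ in $Q(R)$, hence the two candidates agree. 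This is the main technical step; the bookkeeping is routine but unavoidable because the twisted Leibniz rule mixes $\sigma^{m}$-shifts with $\theta^{(l)}$'s.

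\textbf{Verification of the \QSI axioms on $Q(R)$.} Axiom (1) is built in. Axiom (3) holds for pairs of the form $(a/b,b)$ by construction; a symmetric computation using the already-proved twisted Leibniz rule extends it to pairs $(a/b,c/d)$. For axioms (2) and (4), both sides of $\theta^{(i)}\sigma = q^{i}\sigma\theta^{(i)}$ and of $\theta^{(i)}\theta^{(j)} = \binom{i+j}{i}_{q}\theta^{(i+j)}$ are $C$-linear operators on $Q(R)$ that agree on $R$ by hypothesis; applying each side to $a/b$, expanding using axiom (3) (now known to hold on $Q(R)$), and inducting on $i+j$ reduces the required equality to its known counterpart on the ring elements $a,b$. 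The main obstacle is the well-definedness step; once that is cleared, each remaining verification is a mechanical unwinding.
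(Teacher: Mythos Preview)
The paper does not give its own proof; it simply cites Proposition 2.5 of \cite{hay08}. Your approach---extend $\sigma$ to $Q(R)$ by multiplicativity, then define $\theta^{(i)}$ inductively via the formula forced by the twisted Leibniz rule applied to $a=(a/b)\cdot b$, and verify the axioms by induction---is exactly the standard argument one expects that reference to contain, and your outline is correct. The only places that need care are the ones you flag: well-definedness (which is cleanest if you first check invariance under $(a,b)\mapsto(ac,bc)$ and then use that any two representatives have a common refinement) and the passage from ``Leibniz for $(a/b,b)$'' to ``Leibniz for arbitrary pairs,'' which genuinely requires an intermediate step (Leibniz for $(x,c)$ with $x\in Q(R)$, $c\in R$) before you can bootstrap to $(x,y)$ with both in $Q(R)$.
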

 \begin{proof}
 See for example, Proposition 2.5 of \cite{hay08}. 
 \end{proof}
\par 
We can interpret the Example in \ref{10.3a} from another 
view point. We constructed there from a difference ring 
$(S, \, \sigma )$ a \QSI algebra $((S, \, \sigma )[[X]], \, 
\hat{\Sigma} , \, \hat{\Theta} ^* ) $. We notice that this procedure 
is a particular case of Proposition \ref{a4.1}. 
In fact, given a difference ring $(S, \, \sigma) $. So as in \ref{9.19c}, by adding the trivial derivations, we 
get the \QSI algebra $(S, \, \sigma , \, \theta ^*)$, where 
\begin{align*} 
\theta ^{(0)} &= \Id \sb S, \\ 
\theta ^{(i)} &= 0 \qquad \text{\it \/ for }i \ge 1. 
\end{align*}
Therefore we have the universal Hopf morphism 
$$
(S, \, \sigma , \, \theta ^*) \to \left(F(\Z , \, S\n )[[X]], \hat{\Sigma} , \, \hat{\Theta} ^* \right) 
$$
by Proposition \ref{a4.1}.
So we obtained the \QSI algebra $ \left(F(\Z , \, S\n )[[X]], \, \hat{\Sigma}, \, \hat{\Theta} ^* \right) $ as a result of composite of 
two functors. Namely, 
\begin{enumerate}
\renewcommand{\labelenumi}{(\arabic{enumi})}
\item 
The functor$\colon $(\,Category of Difference algebras\,) $\to$
 (\,Category of \QSI algebras\,) of adding trivial derivations
\item
The functor $\colon$ (\,Category of \QSI algebras\,) $\to $ 
(\,Category of 
\QSI algebras\,), $A \mapsto B$ if there exists the universal
Hopf morphism $A \to B$.
\end{enumerate}

 \subsubsection{Galois hull 
$ \eL /\K$ for a \QSI field extension}\label{9.25c} 
 We can develop a general Galois theory for 
 \QSI
 field extensions analogous to our theories in \cite{ume96.1}, \cite{ume06} and \cite{ume07} thanks to the universal Hopf morphism. 
 Let $L/k$ be an extension of 
 \QSI 
 fields such that 
 the abstract field $L\n$ is finitely generated over the abstract field $k\n$. 
 Let us assume that we are in characteristic $0$. General theory in 
 \cite{hei10} %and 
 %\cite{hei2} 
 works, however, also in characteristic $p\ge 0$. 
%%%%%%%%%%%%%%%%%%%%%%%%%%%%%%%%%%%%%%%%%%%%%%%%%%%%%%%%%%%%%%%%%%%%%%%% 
%%%%%%%%%%%%%%%%%%%%%%%%%%%%%%%%%%%%
We have by Proposition \ref{a4.1} the universal Hopf morphism 
\begin{equation} 
 \iota :(L, \, \sigma , \, \theta ^* ) \to \left( (F(\Z , L\n ), \Sigma )[[X]], 
\, \hat{\Sigma},\, \hat{\Theta} ^* \right)
 \end{equation}
 so that the image $\iota (L)$ is a copy of 
 the \QSI field $L$. We have another copy of $L\n$. 
The set 
%$\{ f = \sum \sb {i=0} a\sb i X^i \in F(\N , L\n )[[X]] \, | \, 
\begin{multline}\label{a1.1}
 \{ f = \sum \sb {i=0}^\infty X^i a\sb i \in F(\Z , L\n )[[X]] \, | \, 
a\sb i = 0 \text{\it \/ for every } i \ge 1 \text{\it \/ and }
 \, \Sigma (a\sb 0 )= a\sb 0
 \} \\
 =\{ f \in F(\Z , L\n )[[X]] \, | \, \hat{\Sigma}(f) = f , \, \hat{\Theta}^{(i)} (f) = 0 \text{\it \/ for every } i \ge 1 \} 
\end{multline}
forms the sub-ring 
of constants in the
\QSI algebra of the 
 twisted power series 
 $$\left(
 (
 F(\Z , \, L\n ) , \Sigma 
 )
[[X]],\, \hat{\Sigma}, \, \hat{\Theta}^* 
\right) .
$$ 
We identify $L\n$ with the ring of constants through
the following morphism. 
For an element $a\in L\n$, we denote the constant function $f\sb a$ on $\Z$ taking the value $a \in L\n$ 
so that 
\begin{equation}\label{9.24a}
L\n \to \left(
 (
 F(\Z , \, L\n ) , \Sigma 
 )
[[X]],\, \hat{\Sigma}, \, \hat{\Theta}^* 
\right) 
 , \qquad a \mapsto f\sb a
\end{equation}
is an injective ring morphism.
We may denote the sub-ring in \eqref{a1.1} by $L\n$. In fact, as an abstract ring 
it is isomorphic to the abstract field $L\n$ and the endomorphism $\hat{\Sigma}$ and 
the derivations $\hat{\Theta} ^{(i)}, \, (i \ge 1 )$ operate trivially on the 
sub-ring. %So we have chosen the sub-ring $L\n$ in the twisted formal power series ring $F(\N , L\n ), \Sigma )[[X]].$ 
 \par 
 We are now exactly in the same situation as in 
\ref{9.19h} of the differential case and in \ref{9.19g} of the difference case.
%%%%%%%%%%%%%%%%%%%%%%%%%
 We choose a mutually commutative basis $\{ D\sb 1, D\sb 2, \cdots , D\sb d \}$ of the 
 $L\n$-vector space $\mathrm{Der}(L\n / k\n )$ of $k\n$-derivations. 
 So $L\s := (L\n, \{ D\sb 1, D\sb 2, \cdots , D\sb d \})$ is a 
differential field. 
%%%%%%%%%%%%%%%%%%%%%%%%%%%%%%%%%%%%%%%%%%%%%%%%%%%%%%%%%%%%%%%%%%%%%%41\par 
\par 
So we introduce derivations $D\sb 1,\, D\sb 2 , \, \cdots,\, D\sb d $ 
operating on the coefficient ring $F(\Z , \, L\n )$. In other words,
we replace the target space 
$F(\Z , \, L\n )[[X]]$ by 
 $F(\Z ,\, L\s )[[X]]$. 
Hence the universal Hopf morphism in Proposition \ref{a4.1} becomes 
$$
\iota :L \to F(\Z , \, L\s ) [[X]]. 
 $$
In the twisted formal power series ring
$(F(\Z , L\s)[[X]], \hat{\Sigma}, \, \hat{\Theta} ^* )$, 
we add differential operators 
$$
D\sb 1, D\sb 2 , \cdots, D\sb d .
$$
So we have a set $\mathcal{D}$ of the following operators on the ring 
$(F(\Z , L\s ), \Sigma )[[X]] $.
\begin{enumerate}
\renewcommand{\labelenumi}{(\arabic{enumi})}
%\arabic{en\renewcommand{\labelenumi}{(umi})}
\item The endomorphism $\hat{\Sigma}$. 
$$%%45
\hat{\Sigma} (\sum \sb {i=0}^\infty X^ i a\sb i ) = \sum \sb {i=0}^
\infty 
 X^i q^ i (\Sigma ( a\sb i )),
$$
$\Sigma :F(\Z , L\s ) \to F(\Z , L\s )$ being the shift operator of the ring of functions on $\Z$. 
\item 
The $q$-skew $\hat{\Sigma}$-derivations $\hat{\Theta} ^{(i)}$'s in \eqref{a4.5}. 
\begin{equation*}%\label{a4.5}
 \hat{\Theta} ^{(l)}(\sum \sb{i=0}^\infty X^i a\sb i ) = 
 \sum \sb{i=0}^\infty 
 X^i \binom{l+ i}{l}\sb q a\sb {i+ l} \qquad \text{\it \/ for every } l \in \N .
 \end{equation*}
\item 
The derivations $D\sb 1,\, D\sb 2 ,\, \cdots,\, D\sb d $ 
operating 
through the coefficient ring $F(\Z , L\s )$ as in \eqref{9.19a}. 
 \end{enumerate} 
 \par 
 Hence we may write $( F(\Z , L\s ), \, \mathcal{D} )$, where 
 $$
 \mathcal{D} = \{ \hat{\Sigma} , \, D\sb 1 ,\ D\sb 2,\, \cdots , \, D\sb d, \, \hat{\Theta} ^ * \} \text{\it \/ and } \hat{\Theta} ^ *
 = \{ 
 \hat{\Theta} ^{(i)} \} \sb {i \in \N }. 
 $$
 \par
% For an element $a\in L\s$, we denote by $a\s $ the constant function on $\N $ taking the value $a$ so that 
%$$
%a\s (n) = a \qquad \text{ for every } n \in \Z. 
%$$
We identify using inclusion \eqref{9.24a}
$$
L\s \to F(\Z ,\, L\s )[[X]]. 
$$
%Therefore $a\s \in F(\N , L\s)$. 
% The latter is a sub-ring of the ring of twisted power series ring $F(\N, L\s)[[X]]$. 
% We can canonically identify the ring 
 %$$
 %F(\N , L\s) =\{ \sum\sb{i=0}^\infty X^i a\sb i \, |\, a\sb i = 0 \text{ for every } i \in \N^ *\}.  
 %$$
 % Namely, we have canonical inclusions 
 %$$ 
 %L\s \to F(\N , L\s ) \to F(\N , L\s )[[X]].
 %$$
 We sometimes denote the image $f\sb a$ of an element $a\in L\s$ by 
 $a\s$. 
 \par
 We are ready to define Galois hull as in Definition \ref{a4.6}. 
\begin{definition}\label{4.5.4b}
The Galois hull $\eL / \K $ is a $\mathcal{D}$-invariant sub-algebra 
of $F( \Z , \, L\s)[[X]]$, 
 where 
$\eL$ is the $\mathcal{D}$-invariant sub-algebra generated by the 
image $\iota (L)$ and $L\s$ 
and $\K $ is the 
$\mathcal{D}$-invariant sub-algebra generated by the image $\iota (k ) $ 
and $ L\s$. So $\eL / \K$ is a $\mathcal{D}$-algebra extension. \par 
As in \ref{9.25a}, if we have to emphasize that we deal with \QSI algebras, we denote the Galois hull by 
$\eL\sb {\sigma\, \theta }/
\K \sb{\sigma \, \theta}$.
\end{definition} 
 \par 
We notice that we are now in a totally new situation. 
In the differential case, the universal 
Taylor morphism maps the given fields to the commutative algebra 
of the formal power series ring so that 
the 
Galois hull is an extension of commutative algebras. 
Similarly for the universal Euler morphism of a difference rings. The commutativity of the Galois hull comes from the fact in the differential and the difference case, the theory depends on the co-commutative Hopf algebras. 
When we treat the \QSI algebras, the Hopf algebra 
$\mathcal{H}$ 
is not co-commutative so that the 
Galois hull $\eL /\K$ that is an algebra extension in 
the non-commutative algebra of 
twisted formal power series algebra, 
the dual algebra of $\mathcal{H}$. 
So even if we start from a (\,commutative\,) field extension $L/k$, the 
Galois hull can be non-commutative. See the Examples in Sections 
\ref{10.4a}, \ref{10.4b} and \ref{10.4c}.
We also notice that when $L/k$ is a Picard-Vessiot 
extension fields in \QSI algebra, the 
Galois hull is commutative \cite{ume11}.
\par 
As the Galois hull is a non-commutative, 
if we limit ourselves to the category of commutative $L\n$-algebras $(Alg/L\n)$, 
we can not detect non-commutative nature of the \QSI 
field extension. So it is quite natural to extend the 
functors over the category of not necessarily commutative algebras. 
%%%%%%%%%%%%%%%%%%%%%%%%%%%%%%% 
\subsubsection{Infinitesimal deformation functor 
$\mathcal{F} \sb{L/k}$ for a \QSI field extension.} 
 We pass to the task of defining the infinitesimal deformation functor $\mathcal{F}\sb {L/k}$ and 
 the Galois group functor. The latter is a subtle object and we postpone discussing it until Section \ref{14.5.29a}. Instead we define 
 naively the 
 infinitesimal automorphism functor $\infgal (L/k)$, which does not seem useful in general. 
 \par 
%%%%%%%%%%%%%%%%%%%%%%%%%%%%%%%%%%%%%%%%%%%%%%%%%
 We have the universal Taylor morphism 
 \begin{equation}\label{a4.10}
 \iota \sb{L\s}\colon L\s \to (
 L\n[[W\sb 1,\, W\sb 2, \, \cdots ,\, W\sb d ]], 
 \{ 
 \frac{\partial}{\partial W\sb 1}, \, \frac{\partial}{\partial W\sb 2} ,\, \cdots , \, \frac{\partial}{\partial W\sb d} 
 \}
 )   
\end{equation}
 as in \eqref{m27.1}.
 So by \eqref{a4.10}, we have the canonical morphism 
\begin{equation}\label{a4.11}
 (F(\Z , L\s )[[X]], \, \mathcal{D} ) \to (F(\Z , \, L\n [[W]])[[X]],\, \mathcal{D} ), 
 \end{equation}
 where in the target space 
 $$
 \mathcal{D} = 
 \{ \hat{\Sigma} , 
 \frac{\partial}{\partial W\sb 1},\, \frac{\partial}{\partial W\sb 2} , \, \cdots ,\, \frac{\partial}{\partial W\sb d}, \, 
 \hat{\Theta}^ * \} 
 $$
 by abuse of notation. 
 \par
 For an $L\n$-algebra $A$, the structure morphism 
 $ L\n \to A$ induces 
 the canonical morphism 
 \begin{equation}\label{a4.12}
 (F(\Z ,\, L\n [[W]])[[X]],\, \mathcal{D} ) \to (F(\Z , \, A [[W]])[[X]],\, \mathcal{D} ).
 \end{equation}
The composite of the $\mathcal{D}$-morphisms \eqref{a4.11} and \eqref{a4.12} 
gives us the canonical morphism 
\begin{equation}\label{a4.13}
 (F(\Z , \, L\s )[[X]],\, \mathcal{D} ) \to 
 (F(\Z , \, A [[W]])[[X]],\, \mathcal{D} ).
\end{equation}
 The restriction of the morphism 
 \eqref{a4.13} to the $\D$-invariant sub-algebra $\eL$  
 gives us the canonical morphism 
\begin{equation}
\iota \colon (\eL,\, \mathcal{D} ) \to (F(\Z , \, A [[W]])[[X]], \, \mathcal{D} ).
\end{equation}
 We can define the functors exactly as in Paragraphs \ref{9.25e} for the 
 differential case and \ref{9.25d} for the difference case.
 \begin{definition}[Introductory definition]\label{9.25f}
 We define the functor 
$$
\mathcal{F}\sb {L/k} \colon (Alg/ L\n) \to (Set)
$$
from the category $(Alg/L\n)$ of commutative $L\n$-algebras to the category 
$(Set)$ of sets, by associating to an $L\n$-algebra $A$, the set of 
 infinitesimal deformations of the canonical morphism \eqref{a4.13}. 
\par
Hence 
\begin{multline*}
\mathcal{F}\sb{L/k}(A)
= \{ f\colon (\eL ,\, \D ) \to 
( F(\Z ,\, A [[ W\sb 1, \, W\sb 2, \, \cdots ,\, W\sb d ]] )[[X]],\, \D ) \, | \, 
 f \text{ is an algebra }\\
\text{ morphism compatible with }
 \mathcal{D}, 
 \text{ congruent to}\\
 \text{the canonical morphism } \iota 
 \text{ modulo nilpotent elements} \\ 
 \text{such that } 
 f = \iota \text{ when restricted to the sub-algebra } \K 
\}.
 \end{multline*}
 \end{definition} % Definition in an appropriate notations.} 
The introductory definition \ref{9.25f} is exact, 
analogous to Definitions in \ref{9.25e} 
and \ref{9.25d}, and 
easy to understand.
As we explained in \ref{9.25c}, we,
however, 
 have to consider also deformations over non-commutative algebras, the notation is confusing. 
\par 
We have to treat both the category of 
commutative $L\n$-algebras and that of not necessarily commutative $L\n$-algebras. 
\begin{definition}\label{4.5.7b}
All the associative algebras that we consider 
are unitary and the morphisms between them are assumed to be unitary. For a commutative algebra $R$, we denote by $(CAlg/R )$ the category of 
associative commutative $R$-algebras.
We consider also the category $(NCAlg/R)$ of not necessarily commutative $R$-algebras %To be more precise 
%we denote by $(NCalg/R)$ the category of associative $R$-algebras 
$A$ such that (\,the image in $A$ of\,) $R$ is in the center of $A$. When there is no danger of confusion the category of commutative algebras is denoted simply by $(Alg/R)$. 
\end{definition}
Let us come back to the \QSI field extension $L/k$. We can now give the infinitesimal deformation functors in an appropriate language. 
\begin{definition}
%From now on, we denote 
%the category of commutative $L\n$-algebras by 
 %$(Calg/L\n)$ {\rm when we want to emphasize that 
 %we study only commutative algebras,} 
 % and the category of 
%non-commutative $L\n$-algebras 
%by $(NCAlg/L\n)$. 
The functor $\mathcal{F}\sb {L/k}$ defined in \ref{9.25f} will be denoted by 
$\mathcal{CF}\sb {L/k}$. So we have 
$$
\mathcal{CF}\sb{L/k}\colon (CAlg/L\n) \to (Set). 
 $$
 We extend formally the functor $\mathcal{CF}\sb {L/k}$ in \ref{9.25f}
from the category $(CAlg /L\n)$ to the category 
$(NCAlg/L\n)$.
Namely, we define the functor 
\begin{equation*}
\NCF \colon \NCA \to (Set)
\end{equation*}
by setting 
%%%%%%%%%%%%%%%%%%%%%%%%%%%%%%%%%%%%%%%%%%%%%%%%%%%%%% 
\begin{multline*}
\mathcal{F}\sb{L/k}(A)
= \{ f\colon (\eL ,\, \D ) \to 
( F(\Z ,\, A [[ W\sb 1, \, W\sb 2, \, \cdots ,\, W\sb d ]] )[[X]],\, \D ) \, | \, 
 f \text{ is an algebra }\\
\text{ morphism compatible with }
 \mathcal{D}, 
 \text{ congruent to}\\
 \text{the canonical morphism } \iota 
 \text{ modulo nilpotent elements} \\ 
 \text{such that } 
 f = \iota \text{ when restricted to the sub-algebra } \K 
\}
 \end{multline*}
%%%%%%%%%%%%%%%%%%%%%%%
for 
$A \in ob\, (NCAlg\sb {L/k})$. 
%$$
%\mathcal{NCF}\sb {L/k} \, |\, {(CAlg/L\n)} = \mathcal{CF}\sb{L/k}.
%$$
\end{definition}
In the examples, we consider \QSI structure, differential structure and difference structure of a given field extension $L/k$ and we study Galois groups with respect to the structures. 
So we have to clarify which structure is  
in question. 
For this reason, when we treat \QSI structure, we sometimes 
add suffix $\sigma \, \theta^{*} $ to indicate 
that we treat the \QSI structure 
as in \ref{9.25a}.
For example 
$\mathcal{NCF} \sb {\sigma \theta ^* L/k}$.
\subsubsection{Definition of commutative Galois group functor $\cinfgal(L/k)$}
%The definition of the group functor $\infgal (L/k)$ is similar.
Similarly to the Galois group functor $\infgal (L/k)$ in the differential and the difference cases, 
we may introduce the group functor $\cinfgal (L/k)$ called commutative 
Galois group functor, on the category $(CAlg/L\n)$. 
 \begin{definition}
 In the differential case and in the difference case, the Galois group in our Galois theory is the group functor 
$$
\cinfgal (L/k) \colon (CAlg/L\n ) \to (Grp)
$$
 defined 
 by 
 \begin{multline*}
 \cinfgal (L/k) (A) =
 \{
 \, 
 f \colon \eL \hat{\otimes} \sb {L\s} A[[W]] \to 
 \eL \hat{\otimes} \sb {L\s} A[[W]]
 \, | \, \\
 f 
 \text{\it \/ 
 is a } 
 \K \otimes \sb {L\s}A[[W]] 
 \text{-automorphism} 
 \text{\it \/ compatible with 
 $\mathcal{D}$, } \\
\text{\it \/ continuous with respect to
 the $W$\!-adic topology} \\
 \text{\it \/ and congruent to the identity modulo nilpotent elements 
 }
 \}
 \end{multline*}
 for a commutative $L\n$-algebra $A$. See Definition 2.19 in \cite{mori09}. 
\end{definition}
\par
Then the group functor $\cinfgal (L/k)$ would operates on the functor 
$\mathcal{CF}\sb {L/k}$ in such a way that 
the operation $(\cinfgal (L/k), \mathcal{F}\sb {L/k} )$ is a 
torsor. 
\begin{remark}
For a \QSI field extension $L/k$, the Galois hull 
$\eL/ \K$ is, in general, a non-commutative
algebra extension so that the commutative Galois group functor 
$\cinfgal (L/k)$
 on the category $(CAlg/L\n)$ is not adequate for the following two reasons.
\begin{enumerate}
\renewcommand{\labelenumi}{(\arabic{enumi})}
%\arabic{en\renewcommand{\labelenumi}{(umi})}
\item 
If we measure the extension $\eL /\K$ over the category $(CAlg /L\n)$
by the commutative Galois group functor $\cinfgal (L/k)$, 
the non-commutative data of the extension $\eL /\K$ are lost. 
\item
We hope to get a quantum group as a Galois group. A quantum group is, however, in any sense not a group functor on the category $(NCAlg/L\n)$ of non-commutative $L\n$-algebras.
\end{enumerate}
\end{remark}
In the three coming Sections, we settle these points for three concrete Examples. Looking at these Examples, we are led to a general Definition in Section \ref{14.5.29a}. 
The idea is to look at the coordinate transformations of initial conditions. 
As it is easier to understand it with examples, we explain the definition there. See Questions \ref{14.5.31a}. 
\par 
%%%%%%%%%%%
%%%%%%%
%%%%%%%%%%
%%%%%%%%%%%%%%%%%%%45n
 %%%%%%%%%%%%%%%%%%%%%%%%%%%%%%%%%%%%%%%%%%%%%%%%%%%%%%%%%%%%%%%%%%%%%%%%%%%%%%%%%%%%%%%%%%%%
\section{The First Example, the field extension $\com(t)/\com$}
\label{10.4a}
From now on, we assume $C =\com$. The arguments below work for an algebraically closed field $C$ of characteristic $0$. So $q$ is a non-zero complex number. 
\subsection{Analysis of the example}\label{10.10a}
Let $t$ be a variable over $\com$. The field $\com(t) $ of rational functions has various structures: 
the differential field structure, the $q$-difference field structure and the \QSI field structure that we are going to define. 
We are interested in the Galois group of the field extension $\com(t)/\com$ with respect to these structures. 
Let $\sigma \colon \com(t) \rightarrow \com(t)$ be the $\com$-automorphism of the rational function field $\com(t)$ sending $t$ to $qt$. 
So $(\com(t), \,\sigma)$ is a difference field. We assume $q^{n}\neq 1 $ fore every positive integer $n$. We define a $\com$-linear map 
$\theta^{(1)}\colon \com(t) \rightarrow \com(t)$ by 
\[
 \theta^{(1)}(f(t)) := \frac{\sigma(f)-f}{\sigma(t) - t} = \frac{f(qt) -f(t)}{(q-1)t} \qquad \text{\it \/ for }f(t)\in \com(t).
\]
For an integer $n \geq 2$, we set
\[
 \theta^{(n)}:= \frac{1}{[n]\sb{q}!}\left(\theta^{(1)}\right)^{n}.
\]
It is convenient to define 
\[
 \theta^{(0)} = \Id\sb{\com(t)}. 
\]
It is well-known and easy to check that $(\com(t),\, \sigma, \, \theta^{*}) = (\com(t), \, \sigma, \, \{\theta^{(i)}\}\sb{i \in \N})$ is a \QSI algebra. 
\par 
We have to clarify a notation.
For an algebra $R$, a sub-algebra $S$ of $R$ and a sub-set $T$ of $R$, we denote by 
$S \langle T \rangle\sb{alg}$ the sub-algebra of $R$ generated over $S$ by $T$.
\begin{lemma}\label{8.30a}
The difference field extension $(\com(t), \, \sigma)/ (\com, \, \Id\sb{\com})$ is a Picard-Vessiot extension. Its Galois group is the multiplicative group $G\sb{m\, \com}$. 
%The Galois group of the difference Picard-Vessiot field extension $(\com(t), \, \sigma)/ (\com, \, \Id\sb{\com})$ is the multiplicative group $\G\sb{m\,\com}$. 
\end{lemma}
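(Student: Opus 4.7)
The plan is to recognize $(\com(t),\sigma)/(\com,\Id_\com)$ as the splitting field of the scalar linear difference equation $\sigma(y) = qy$ and then compute its Galois group directly. First I would observe that $t \in \com(t)$ is a nonzero solution of this equation, and since the equation has order one, a fundamental system of solutions is just $\{t\}$. The extension $\com(t) = \com\langle t,t^{-1}\rangle$ is generated as a $\sigma$-difference $\com$-algebra by this fundamental solution together with the inverse of its ``Wronskian'' (here just $t^{-1}$), so it is presented in exactly the shape required of a Picard-Vessiot ring/field.

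The key hypothesis to use is the assumption that $q$ is not a root of unity. From this I would deduce that the field of $\sigma$-constants of $\com(t)$ equals $\com$: writing $f/g \in \com(t)$ in lowest terms with $\sigma(f/g) = f/g$, we get $f(qt)g(t) = f(t)g(qt)$; by unique factorization in $\com[t]$, every root $\alpha$ of $f$ would force $q^n\alpha$ to be a root of $f$ for all $n\in\Z$, which is impossible for $\alpha\ne 0$ since $q$ is not a root of unity. So $f$ and $g$ have no nonzero roots, i.e.\ they are monomials $ct^m$, and $\sigma$-invariance then forces the exponent to be $0$. Hence $\com(t)^{\sigma} = \com = k^{\sigma}$, completing the verification that $\com(t)/\com$ is a Picard-Vessiot extension for $\sigma(y) = qy$.

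For the Galois group, let $R$ be any commutative $\com$-algebra and let $\tau$ be an $R$-point of the Galois group, i.e.\ a $\sigma$-automorphism of $\com(t)\otimes_\com R$ over $R$ (after passing to total rings of fractions in the usual Picard-Vessiot sense). Then $\tau(t)$ must again satisfy $\sigma(\tau(t)) = q\tau(t)$, so $\tau(t)/t$ lies in the constants of $\com(t)\otimes_\com R$, which by the constants computation above is $R$. Thus $\tau(t) = c_\tau\cdot t$ with $c_\tau \in R^\times$, and conversely any $c \in R^\times$ defines such an automorphism. This gives a functorial isomorphism $\mathrm{Gal}(\com(t)/\com)(R) \cong R^\times = \G_{m,\com}(R)$, identifying the Galois group scheme with $\G_{m,\com}$.

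The only mildly delicate step is the constants computation, since everything else is the standard translation between Picard-Vessiot data and symmetries of the fundamental solution. The non-torsion hypothesis $q^n\ne 1$ is exactly what prevents $\sigma$-invariant rational functions with nontrivial zeros or poles, and without it (e.g.\ if $q$ were a primitive $N$th root of unity) the constant field would enlarge to $\com(t^N)$ and the conclusion would fail; so I would make sure the argument invokes this hypothesis explicitly.
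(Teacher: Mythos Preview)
Your proposal is correct and follows essentially the same approach as the paper: identify $t$ as a fundamental solution of the linear equation $\sigma(y)=qy$, check that the constants of $(\com(t),\sigma)$ are $\com$, and read off the Galois group from the action on $t$. The paper's proof is extremely terse --- it simply asserts that $C_{\com(t)}=\com$ and that the Galois group ``follows from the definition'' --- whereas you have filled in the details, in particular the constants computation via the non-root-of-unity hypothesis on $q$.
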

\begin{proof}
 Since $t$ satisfies the linear difference equation $\sigma(t) = qt$ over $\com$ and the field $C\sb{\com(t)}$ of constant of $\com(t)$ is $\com$, the extension
 $(\com(t),\, \sigma )/ (\com, \, \Id\sb{\com})$ is a difference Picard-Vessiot extension. 
The result follows from the definition of the Galois group. \end{proof} 
When $q\rightarrow 1$, the limit of the \QSI ring $(\com(t), \, \sigma, \, \theta^{*})$ is the differential algebra $(\com(t), \, d/dt)$. 
We denote by $AF\sb {1\, k}$, the algebraic group of affine transformations of the affine line so that 
\[
 AF\sb{1\, \com } = \left\{ \left.
 \begin{bmatrix}
 a & b \\
 0 & 1
 \end{bmatrix} \right| 
 a, \, b \in \com ,\, a \neq 0
 \right\} . 
\] Then
\[
 AF\sb{1\, \com }\simeq \G\sb{m\, \com} \ltimes \G\sb{a\,\com}, 
\]
where 
\begin{align*}
 \G\sb{m\,\com} &\simeq \left\{ \left.
 \begin{bmatrix}
 a & 0\\
 0 & 1
 \end{bmatrix} \in AF\sb{1\, \com } \right| a\in \com ^{*} \right\}, \vspace{1ex}\\
 \G\sb{a\,\com} &\simeq \left\{ \left.
 \begin{bmatrix}
 1 & b\\
 0 & 1
 \end{bmatrix} \in AF\sb{1\,\com} \right| b\in \com \right\}.
\end{align*}
\begin{lemma}\label{8.30b}
The Galois group of differential Picard-Vessiot 
extension $(\com(t), \, d/dt)/\com$ is $\G\sb{a\,\com}$.
\end{lemma}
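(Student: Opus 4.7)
The plan is to verify that $(\com(t),d/dt)/\com$ is a Picard-Vessiot extension and then compute its differential Galois group explicitly via the action on the generator $t$.

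First I would observe that the field $\com$ of constants of the differential field $(\com(t),d/dt)$ equals $\com$ (an element $f\in \com(t)$ with $df/dt=0$ is necessarily constant), so the base field of constants is the full constant field of the extension. The element $t$ satisfies the homogeneous linear differential equation $y''=0$ of order two, whose solution space in $\com(t)$ is the two-dimensional $\com$-vector space $\com\cdot 1+\com\cdot t$. Since $\com(t)$ is generated as a $\com$-algebra by $t$ and $t^{-1}$, and since $1,t$ is a full system of fundamental solutions of $y''=0$, the differential field $\com(t)$ is a Picard-Vessiot extension of $\com$ for this equation. (Equivalently, the integral $\int 1\, dt = t$ is an antiderivative of a constant, a prototypical $\G_a$-torsor.)

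Next I would determine $\mathrm{Gal}(\com(t)/\com)$ pointwise. For any commutative $\com$-algebra $A$, a differential $A$-algebra automorphism
$$\varphi\colon (\com(t)\otimes_{\com}A,\,d/dt)\to (\com(t)\otimes_{\com}A,\,d/dt)$$
is determined by $\varphi(t)\in \com(t)\otimes_{\com}A$. The compatibility with $d/dt$ forces
$$\frac{d}{dt}\varphi(t)=\varphi\!\left(\frac{dt}{dt}\right)=\varphi(1)=1,$$
so $d(\varphi(t)-t)/dt=0$, i.e.\ $\varphi(t)-t$ lies in the constants of $\com(t)\otimes_{\com}A$, which is $A$. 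Hence $\varphi(t)=t+c$ for some $c\in A$. Conversely, for each $c\in A$ the assignment $t\mapsto t+c$ extends uniquely to a differential $A$-algebra automorphism of $\com(t)\otimes_{\com}A$ (it fixes the base and commutes with $d/dt$, and it is invertible with inverse $t\mapsto t-c$). This sets up a functorial bijection
$$\mathrm{Gal}(\com(t)/\com)(A)\;\xrightarrow{\;\sim\;}\;A,\qquad \varphi\longmapsto \varphi(t)-t,$$
and composition of automorphisms $t\mapsto t+c$, $t\mapsto t+c'$ translates into addition $c+c'$ in $A$. This is exactly the functor of points of $\G_{a\,\com}$, so $\mathrm{Gal}(\com(t)/\com)\simeq \G_{a\,\com}$.

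The only subtle point is the identification of the field of constants, because a priori one might worry whether enlarging scalars to $A$ could introduce new $d/dt$-constants that are not in $A$; this is a standard verification using the fact that $A\to \com(t)\otimes_{\com}A$ is faithfully flat and that the kernel of $d/dt$ on $\com(t)$ is exactly $\com$, so tensoring yields $A$ as the kernel on $\com(t)\otimes_{\com}A$. With this in hand, the argument above is essentially computational and presents no real obstacle; the result is the expected $\G_{a\,\com}$, complementary to Lemma \ref{8.30a}.
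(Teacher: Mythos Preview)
Your overall strategy is the same as the paper's (which simply writes the equation as a first-order system with fundamental matrix $\begin{bmatrix} 1 & t \\ 0 & 1 \end{bmatrix}$ and declares the result well-known), and you give more detail, which is welcome. But there is a genuine gap in your functor-of-points computation.

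The problem is the converse direction: you claim that for each $c \in A$, the assignment $t \mapsto t + c$ extends to an $A$-algebra automorphism of $\com(t) \otimes_{\com} A$. This is false for general $c$. Take $A = \com[x]$ and $c = x$; then $\com(t) \otimes_{\com} A \simeq \com(t)[x]$, whose units are exactly $\com(t)^{\times}$, so $t + x$ is not invertible there. Hence $t \mapsto t + x$ cannot be extended over the localization $\com[t] \hookrightarrow \com(t)$: the element $1/t$ would have to map to $(t+x)^{-1}$, which does not exist in $\com(t)[x]$. So your map $\mathrm{Gal}(\com(t)/\com)(A) \to A$ is injective, but you have not shown it is surjective, and in fact with $\com(t)\otimes_{\com}A$ as written it is not.

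The fix is to work with the Picard-Vessiot \emph{ring} $R = \com[t]$ rather than the Picard-Vessiot field. For every $c \in A$, the map $t \mapsto t + c$ is a genuine $A$-algebra automorphism of $R \otimes_{\com} A = A[t]$ with inverse $t \mapsto t - c$, and your argument then goes through cleanly to give the functorial bijection with $\G_{a\,\com}(A)$. (If instead you insist on the field and restrict to nilpotent $c$, as in the paper's $\infgal$ framework, then $t+c$ is invertible and you recover only the formal completion $\hat{\G}_{a}$, not the full algebraic group.)
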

\begin{proof}
We consider the linear differential equation
\begin{equation}\label{8.10a}
Y^\prime = \left[ \begin{array}{cc}
0 & 1 \\
0 & 0
\end{array} \right] Y,
\end{equation}
where $Y$ is a $2 \times 2$-matrix with entries in a differential extension field of $\com$. 
Then $\com(t)/\com $ is the Picard-Vessiot extension for \eqref{8.10a}, 
$$
Y = \left[ \begin{array}{cc}
1 & t \\
0 & 1
\end{array} \right]
$$
being a fundamental solution of \eqref{8.10a}. 
The result is well-known and follows from, the definition of Galois group. 
\end{proof}
The \QSI field extension $(\com(t), \, \sigma, \, \theta^{*})/\com$ is not a Picard-Vessiot extension 
in the sense of Hardouin \cite{har10} and Masuoka and Yanagawa \cite{masy} 
so that we can not treat it in the framework of Picard-Vessiot theory. We can apply, however, Hopf Galois theory of Heiderich \cite{hei10}. 
\begin{proposition}\label{5.14a}
The commutative Galois group 
$\cinfgal ((\com (t),\, \sigma,\, \theta^{*})/\com )$
 of the extension 
$(\com(t),\,\sigma, \, \theta^{*})/\com$
 is isomorphic to the formal completion $\hat{\G}\sb{m\,\com}$ of the multiplicative group $\G\sb{m\,\com}$. 
\end{proposition}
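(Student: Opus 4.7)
\medskip

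\noindent\textbf{Proof proposal.} The plan is to compute the Galois hull $\eL/\K$ explicitly, then read off $\cinfgal$ by a direct, equation-by-equation analysis of the constraints imposed on an infinitesimal $\mathcal{D}$-automorphism.

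\emph{Step 1: The universal Hopf morphism on the generator $t$.} A direct calculation gives $\theta^{(1)}(t)=(\sigma(t)-t)/((q-1)t)=1$ and hence $\theta^{(i)}(t)=0$ for $i\ge 2$. By the formula in Proposition \ref{a4.1}, $\iota(t)=u[t]+X\cdot u[1]$. Writing $\eta\in F(\Z,\com)$ for the function $n\mapsto q^n$ and identifying $L\s\subset F(\Z,L\s)[[X]]$ with the constant functions, one has $u[t]=t\s\eta$, so
\[
\iota(t)=t\s\eta+X \qquad \text{in } (F(\Z,L\s),\Sigma)[[X]].
\]

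\emph{Step 2: Identification of $\eL/\K$.} Take $D_1=d/dt$ as a basis of $\mathrm{Der}(\com(t)/\com)$, so $L\s=(\com(t),d/dt)$. Applying the coefficient-wise derivation $D_1$ to $\iota(t)$ yields $\eta\in\eL$, and then $X=\iota(t)-t\s\eta\in\eL$. Because $L$ is a field, $\iota(1/t)=(t\s\eta+X)^{-1}$ lies in $\eL$, which forces $\eta^{-1}\in\eL$ as well. The non-commutativity is governed by $\Sigma(\eta)=q\eta$, giving the relation $\eta X=qX\eta$ (a consequence of $aX=X\Sigma(a)$). Hence $\eL$ is generated as a $\mathcal{D}$-invariant $L\s$-algebra by $\eta^{\pm 1}$ and $X$ subject to $\eta X=qX\eta$. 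Since $\iota|_\com=\mathrm{id}$, one has $\K=L\s$.

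\emph{Step 3: Determining $\cinfgal(L/k)(A)$ for $A\in(CAlg/L\n)$.} An element $\varphi$ is a $\mathcal{D}$-automorphism of $\eL\hat{\otimes}_{L\s}A[[W]]$ fixing $\K\hat{\otimes}_{L\s}A[[W]]=A[[W]]$ and congruent to the identity modulo nilpotents, so it is determined by $\varphi(\eta)$ and $\varphi(X)$. The constraints unwind as follows:
\begin{itemize}
\item $\hat\Theta^{(l)}(\eta)=0$ for $l\ge 1$ forces $\varphi(\eta)$ to have no $X^i$-component for $i\ge 1$, and $\hat\Theta^{(1)}(X)=1$, $\hat\Theta^{(l)}(X)=0$ for $l\ge 2$, forces $\varphi(X)=X+a_0$ with $a_0\in F(\Z,A[[W]])$.
\item $\hat\Sigma$-equivariance gives $\Sigma(\varphi(\eta))=q\varphi(\eta)$ and $\Sigma(a_0)=qa_0$, so $\varphi(\eta)=c\eta$ and $a_0=\beta\eta$ for some $c,\beta\in A[[W]]$.
\item $D_1$-equivariance together with $D_1(\eta)=D_1(X)=0$ yields $\partial_W c=\partial_W\beta=0$, hence $c,\beta\in A$.
\item Preservation of the defining relation: $\varphi(\eta)\varphi(X)-q\varphi(X)\varphi(\eta)=(c\eta)(X+\beta\eta)-q(X+\beta\eta)(c\eta)=(1-q)c\beta\,\eta^2$, which must vanish. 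Since $q\ne 1$ and $c$ (being congruent to $1$ modulo nilpotents) is a unit, this forces $\beta=0$.
\item The nilpotency condition forces $c-1$ nilpotent.
\end{itemize}

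\emph{Step 4: Group structure.} The above identifies $\varphi$ with an element $c\in A^\times$ such that $c-1$ is nilpotent, via $\eta\mapsto c\eta$, $X\mapsto X$. Composition gives $(\varphi_{c_1}\circ\varphi_{c_2})(\eta)=c_1c_2\,\eta$, so the group law corresponds to multiplication. This identification is functorial in $A$, yielding $\cinfgal((\com(t),\sigma,\theta^*)/\com)\cong\hat{\G}_{m\,\com}$.

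\emph{Main obstacle.} The delicate point is Step 2—correctly pinning down the generators and the relation $\eta X=qX\eta$ of $\eL$ from the abstract definition—combined with the non-commutative check in Step 3 showing that preservation of this relation is precisely what kills the $\G_a$-direction $\beta$ and leaves only the multiplicative piece. Once these are in hand, the remaining verifications are routine unwinding of the definitions.
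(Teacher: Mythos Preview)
Your argument is essentially correct and follows the same route as the paper: compute $\iota(t)=tQ+X$ (your $\eta$ is the paper's $Q$), extract $Q$ and $X$ as generators, and use the commutation relation $QX=qXQ$ together with commutativity of $A$ to kill the additive parameter. The paper phrases the computation in terms of the deformation functor $\mathcal{F}_{L/k}$ (Sublemma~\ref{5.15}) and then invokes the torsor principle, whereas you compute $\cinfgal$ directly; the constraint analysis is identical.

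Two points deserve tightening. First, your claim that $\iota(1/t)\in\eL$ ``forces $\eta^{-1}\in\eL$'' is not justified: taking the $X^0$-coefficient is not one of the operators in $\mathcal{D}$, and indeed Proposition~\ref{10.17a} describes $\eL$ as $L^\natural\langle X,\,Q,\,(c+tQ+X)^{-1}\rangle_{c\in\com,\,alg}$ without $Q^{-1}$. This does not affect your Step~3, since the automorphism is already determined by its values on $\eta$ and $X$. Second, and more substantively, you establish only the necessary conditions on $c$; you do not verify that $\eta\mapsto c\eta$, $X\mapsto X$ actually extends to a $\mathcal{D}$-automorphism of all of $\eL\hat\otimes_{L^\sharp}A[[W]]$, in particular that it sends each $(c_0+t\eta+X)^{-1}$ back into the ring. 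The paper handles this carefully by introducing an auxiliary over-ring $\tilde{\eL}\supset\eL$ on which the substitution $Q\mapsto eQ$ is transparently well-defined (proof of Sublemma~\ref{5.15}(3)). In your setting the same can be done directly: with $c-1$ nilpotent one has the terminating expansion
\[
(c_0+tc\eta+X)^{-1}=\sum_{j\ge 0}(-1)^j\bigl((c_0+t\eta+X)^{-1}\,t(c-1)\eta\bigr)^j(c_0+t\eta+X)^{-1},
\]
each term lying in $\eL\otimes A$. Adding this sentence (or the $\tilde{\eL}$ argument) completes the surjectivity direction.
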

Before we start the proof, we explain the behavior of the Galois group under specializations. 
Theory of Umemura \cite{ume96.2} and Heiderich\cite{hei10} single out only the Lie algebra. 
Proposition \ref{5.14a} should be understood in the following manner. 
We have two specializations of the \QSI field extension $(\com(t),\, \sigma, \, \theta^{\ast})/\com$. 
\begin{enumerate}
\renewcommand{\labelenumi}{(\roman{enumi})}
\item $q \rightarrow 1$ giving the differential field extension $(\com(t), \, d/dt)/\com$. 
See \ref{9.19h}. 
\item Forgetting $\theta ^*$, 
or equivalently specializing 
$$
\theta ^{(i)} \to 0 \qquad \text{\it \/ for } 
i \ge 1,
$$
 we get the difference field extension $(\com(t),\, \sigma)/\com$. See \ref{9.19c}.
\end{enumerate}
We can summarize the behavior of the 
Galois group under the specializations.
\begin{enumerate}
\renewcommand{\labelenumi}{(\arabic{enumi})}
\item Proposition \ref{5.14a} says that the commutative Galois group
$${\rm C}\infgal\sb{\sigma\, \theta^*}(L/k)$$
 of $(\com(t),\, \sigma, \, \theta^{*})/\com$ is the formal completion of the multiplicative group $\G\sb{m\,\com}$. This describes the 
Galois group at the generic point. 
\item By Lemma \ref{8.30a}, the Galois group of the specialization (i) is 
the formal completion % $\hat{\G}\sb{m\,\com}$ of $\hat{\G}\sb{m\,\com}$.
of 
the multiplicative group. 
\item The Galois group of the specialization (ii) is the additive group $\G\sb{a\,\com}$ by Lemma \ref{8.30b}.
\end{enumerate}
\begin{proof}[Proof of Proposition \ref{5.14a}]
Let us set $L = (\com(t),\, \sigma, \, \theta^{*})$ and $k = (\com, \, \sigma, \, \theta^{*})$. 
By definition of the universal Hopf morphism 
\eqref{9.19a}, 
\[
 \iota \colon (L, \, \sigma , \, \theta^* ) 
 \rightarrow 
 \left( F(\Z,L\n)[[X]],\, \hat{\Sigma}, \, \hat{\Theta}^* \right), \qquad \iota(t) = tQ + X \in F(\Z,L\n)[[X]], 
 \]
where 
\[
 Q \in F(\Z, L\n)
\]
is a function on $\Z$ taking values in $\com \subset L\n$ such that 
\[
 Q(n) = q^{n} \qquad \text{\it \/ for } n \in \Z.
\]
We denote the function $Q$ by the matrix
\[
 Q = \begin{bmatrix}
  \cdots & -2 & -1 & 0 & 1 & 2 & \cdots \\
  \cdots & q^{-2} & q^{-1} & 1 & q & q^{2} & \cdots
  \end{bmatrix}
\]
according to the convention. 
We take the derivation $\partial/\partial t \in \text{Der}(L\n/k\n)$ as a basis of the 1-dimensional $L\n$-vector space $\text{Der}(L\n/k\n)$ of $k\n$-derivations of $L\n$.
So $(\partial/\partial t) (\iota(t)) = Q$ is an element of the Galois hull $\eL$. Therefore 
\[
 \eL \supset \eL ^o := L\s\langle X,\, Q \rangle\sb{alg}, 
\]
which is the $L\s$-sub-algebra of 
$F(\Z, \, L\s )[[X]]$ generated by $X$ and $Q$. So the algebra $\eL^o$ is invariant under $\hat{\Sigma} , \, \hat{\Theta} ^*$ and $ \partial /\partial t$. 
Since $QX = q XQ$, the Galois hull $\eL$ is a non-commutative $L\n$-algebra. 
Now we consider the universal Taylor expansion 
\[
 (L\n, \partial/\partial t) \rightarrow L\n[[W]]
\]
and consequently we get the canonical morphism 
\begin{equation}\label{10.3d}
 \iota \colon \eL \rightarrow F(\Z, L\s)[[X]] \rightarrow F(\Z,L\n[[W]])[[X]]. 
\end{equation}
We study infinitesimal deformations of $\iota$ in \eqref{10.3d} 
over the category $(CAlg/L\n )$ of commutative $L\n$-algebras. 
Let A be a commutative $L\n$-algebra and 
\[
 \varphi \colon \eL \rightarrow F(\Z,\, A[[W]])[[X]]
\]
be an infinitesimal deformation of the canonical morphism 
$$
\iota :\eL \to F(\Z , \, A[[W]])[[X]]. 
$$
%for a commutative $L\n$-algebra $A$ so that 
%$$
%\varphi \in \mathcal{F} \sb {L/k}(A).
%$$

\begin{sublemma}\label{5.15} %Let $A$ be a commutative algebra in $Ob(CAlg/L\n$. 
We keep the notation above. 
\begin{enumerate}
\renewcommand{\labelenumi}{(\arabic{enumi})} 
\item There exists a nilpotent element $n \in A$ such that $\varphi(Q) = (1+ n)Q$ and $\varphi(X) = X$. 
\item 
The commutative infinitesimal deformation $\varphi$ is determined by the nilpotent element $n$ such that 
$\varphi (Q) = (1+n)Q$. 
\item
 Conversely, for every nilpotent element $n \in A$, there exists a unique commutative infinitesimal deformation $\varphi \sb e \in \mathcal{F}\sb {L/k}(A)$ such that 
 %$%\varphi \sb e (X) =X$, %and 
$\varphi \sb e (Q) = eQ$, 
where we set $e= 1+n $. 
\end{enumerate}
\end{sublemma}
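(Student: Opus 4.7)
The strategy is to pin down $\varphi(Q)$ and $\varphi(X)$ by exploiting the compatibility of $\varphi$ with the operators in $\mathcal{D}=\{\hat\Sigma,\,\partial/\partial W,\,\hat\Theta^{*}\}$, and then to use the twisted relation $QX=qXQ$ together with the commutativity of $A$ to eliminate the remaining ambiguity. Parts (2) and (3) will follow from (1) by straightforward verification.

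First I would analyse $\varphi(Q)$. The element $Q$ lies in $F(\Z,L\s)\subset F(\Z,L\s)[[X]]$ and has no $X$-dependence; its values lie in $\com\subset L\n$, so it acquires no $W$-dependence after the universal Taylor expansion; and it satisfies $\hat\Sigma(Q)=qQ$ together with $\hat\Theta^{(l)}(Q)=0$ for every $l\ge 1$. Since $\varphi$ is a $\mathcal{D}$-morphism, $\varphi(Q)$ inherits all these constraints inside $F(\Z,A[[W]])[[X]]$: the $\hat\Theta^{*}$- and $\partial/\partial W$-conditions force $\varphi(Q)\in F(\Z,A)$, and the shift identity $\Sigma(\varphi(Q))=q\varphi(Q)$ then gives $\varphi(Q)(m)=q^{m}\varphi(Q)(0)$ for every $m\in\Z$, so $\varphi(Q)=cQ$ with $c:=\varphi(Q)(0)\in A$. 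Congruence of $\varphi$ to $\iota$ modulo nilpotent elements yields $c=1+n$ with $n\in A$ nilpotent.

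An analogous analysis for $\varphi(X)$ gives $\varphi(X)\in F(\Z,A)[[X]]$; the identities $\hat\Theta^{(1)}(X)=1$ and $\hat\Theta^{(l)}(X)=0$ for $l\ge 2$ reduce $\varphi(X)$ to the form $X+b_{0}$ with $b_{0}\in F(\Z,A)$, and the shift condition $\hat\Sigma(X)=qX$ forces $b_{0}=\alpha Q$ for some $\alpha\in A$. The crucial step is to impose $\varphi(Q)\varphi(X)=q\varphi(X)\varphi(Q)$: expanding both sides using the twisting rule $aX=X\Sigma(a)$ and the centrality of $A$, the difference collapses to $(1-q)c\alpha Q^{2}$, and because $c=1+n$ and $Q^{2}$ are units while $q\ne 1$, this forces $\alpha=0$. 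Hence $\varphi(X)=X$, proving (1). Statement (2) is then immediate: $\varphi|_{\K}=\iota$ already fixes $\varphi$ on $L\s$, and combined with $\varphi(Q)=(1+n)Q$ and $\varphi(X)=X$ this determines $\varphi$ on the generators of $\eL^{o}$, hence uniquely on $\eL$ since ring morphisms preserve whatever inverses have been adjoined.

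For (3), I would simply verify that the prescription $\varphi_{e}|_{L\s}=\iota$, $\varphi_{e}(Q)=eQ$, $\varphi_{e}(X)=X$ defines a consistent $\mathcal{D}$-compatible algebra morphism: the twisted relation reads $(eQ)X=e\cdot qXQ=qX\cdot(eQ)$ using that $e\in A$ is central; $\hat\Sigma$-compatibility follows from $\hat\Sigma(eQ)=e\cdot qQ=q(eQ)$ since $e$ is shift-invariant; and the $\hat\Theta^{*}$- and $\partial/\partial W$-compatibilities are trivial because $e$ is a constant function with values in $A$. The main obstacle I anticipate is the step forcing $\alpha=0$: this is the sole point where the non-commutativity of the target $F(\Z,A[[W]])[[X]]$ genuinely interacts with the commutativity of the test algebra $A$, and it is precisely what cuts the commutative deformation space down to the one-parameter family $\{1+n\mid n\in A\text{ nilpotent}\}$.
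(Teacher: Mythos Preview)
Your argument for part (1) is essentially the paper's: you derive $\varphi(Q)=eQ$ and $\varphi(X)=X+\alpha Q$ from the operator compatibilities, and then kill $\alpha$ by playing the non-commutativity $QX=qXQ$ against the commutativity of $A$. This is correct.

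Parts (2) and (3), however, skip a genuine step. You treat $\eL$ as if it were the algebra presented over $L\s$ by generators $Q,X$ and the single relation $QX=qXQ$. It is not. The Galois hull contains the images $\iota(L)=\iota(\com(t))$, and in particular all the elements $\iota((t+c)^{-1})=(c+tQ+X)^{-1}$ for $c\in\com$; these are honest infinite power series in $X$ with coefficients in $L\s(Q)$. Your prescription ``$\varphi_e(Q)=eQ$, $\varphi_e(X)=X$'' does not, by itself, tell you what $\varphi_e$ does to such a series, nor does checking the one relation $QX=qXQ$ guarantee a well-defined algebra map on all of $\eL$. Similarly, your uniqueness claim in (2) (``ring morphisms preserve whatever inverses have been adjoined'') presupposes that $\eL$ is obtained from $\eL^o=L\s\langle Q,X\rangle_{alg}$ by adjoining exactly those inverses; that description of $\eL$ is true, but it has to be proved, not assumed.

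The paper closes this gap by introducing an intermediate algebra
\[
\tilde{\eL}=\Bigl\{\sum_{n\ge 0} X^n a_n\in F(\Z,L\s)[[X]]\ \Big|\ a_n\in L\s(Q)\ \text{for all }n\Bigr\},
\]
proving that $\eL\subset\tilde{\eL}$ (via an explicit expansion of $(c+tQ+X)^{-1}$), and then defining the substitution $Q\mapsto eQ$ \emph{on all of $\tilde{\eL}$}, coefficient by coefficient, using continuity in the $X$-adic topology. The restriction of this map to $\eL$ is the desired $\varphi_e$. To make your proof complete you need either this construction or an equivalent one: an explicit identification of $\eL$ as a localization of $\eL^o$ together with a verification that $\varphi_e(\iota(t+c))$ remains invertible in the target, so that the inverse can be hit.
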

Sublemma proves Proposition\ref{5.14a}. 
\end{proof}
\begin{proof}[Proof of Sublemma.] The elements 
$X, Q \in \eL$ satisfy the following equation. 
\begin{align*}
 &\frac{\partial X}{\partial W} = \frac{\partial Q}{\partial W} = 0,\\
 &\hat{\Sigma}(X) = qX,\qquad\hat{\Sigma}(Q) = qQ, \\
 &\hat{\Theta}^{(1)}(X) = 1, \qquad \hat{\Theta}^{(i)}(X) = 0 \qquad \text{\it \/ for } i\geq 2,\\
 &\hat{\Theta}^{(i)}(Q) = 0 \,\,\text{\it \/ for } i \geq 1. 
\end{align*}
So $\varphi(X), \, \varphi(Q)$ satisfy the same equations as above, which shows %[us $X$ and $Q$ ******* with respect to these operators. ]%where?
\begin{align*}
\varphi(X) &= X + fQ \in F(\Z,A[[W]])[[X]],\\
\varphi(Q) &= eQ \in F(\Z,A[[W]])[[X]],\\ 
\end{align*}
where $f,e \in A$. Since $\varphi$ is an infinitesimal deformation of $\iota$, $f$ and $e-1$ are nilpotent elements in $A$. 
We show the first $f = 0$. In fact, it follows from the equation
\[
 QX = qXQ
\]
that 
\[
 \varphi(Q)\varphi(X) = q\varphi(X)\varphi(Q)
\]
or
\[
 eQ(X + fQ) = q(X + fQ)eQ.
\]
So we have 
\[
 eQfQ = qfQeQ
\]
and so 
\[
 efQ^{2} = qfeQ^{2}.
\]
Therefore
\[
 ef = qfe.
\]
Since $e$ is a unit, $e-1$ being nilpotent in $A$,
\[
f - qf = 0,
\]
so that 
\[
(1-q)f = 0.
\]
As $1-q$ is a non-zero complex number, $f = 0$. So we proved (1). In other words, we determined the restriction of $\varphi$ to the sub-algebra $\eL ^o = L\s \langle X, \, Q\rangle\sb{alg}\subset \eL$. To prove (2), we have to show that $\varphi$ is determined by its restriction on $\eL ^o$. To this end, we take two commutative infinitesimal deformations 
$\varphi , \, \psi \in \mathcal{F}\sb {L/k}(A)$ such that 
$$
\varphi (Q) = eQ \text{ and } \psi (Q) = eQ,
 $$
where $n$ is a nilpotent element in $A$ and we set $e= 1+n$. Since 
$$
\eL = L\s . \iota(\com (t))\langle X, \, Q\rangle\sb{alg} =
L\s \langle X,\, Q, \, \iota ((t + c)^{-1})\rangle\sb { c\in \com \, alg},
$$
and since $\varphi$ is a $\K=L\s$-morphism, 
it is sufficient to show that
$$
\varphi (( t + c )^{-1}) = \psi (( t + c )^{-1})
$$
for every complex number $c\in \com$. 
Since $\iota (t +c) \in \eL ^o$, 
$\varphi (t+c)= \psi (t+ c) $ and so 
$$
\varphi (( t + c )^{-1}) =\varphi ( t + c )^{-1} = \psi ( t + c )^{-1}
= \psi ( (t + c )^{-1}).
$$
This is what we had to show. 
\par 
Now we prove (3).
We introduce another sub-algebra 
$$
\tilde{\eL}:=\{ 
\sum \sb {n=0}^\infty X^n a\sb n \in F(\Z , \, L\s)[[X]] \, | \, a\sb n \in L\s (Q) \text{ for every $n \in \N$}
\}  
$$ 
so that, by commutation relation \eqref{a11.1}, 
 $\tilde{\eL}$ is a sub-algebra of 
$
F(\Z , \, L\s)[[X]]
$
invariant under $\hat{\Sigma}, \, \hat{\Theta}^{*}$ and the derivation $\partial /\partial t$. 
We show $\eL \subset \tilde{\eL}$. Since the sub-algebra $\eL$ is generated by $\iota (L)$ and $L\s$ along with operators 
$\hat{\Sigma}, \, \hat{\Theta}^{*}$ and $\partial /\partial t$. So it is sufficient to notice 
$L\s $ and $\iota (L)$ are sub-algebras of $\tilde{\eL}$. 
The first inclusion $L\s\subset \tilde{\eL}$ being trivial, it remains to show the second inclusion: 
$$\iota(L)= \iota (\com (t)) \subset \tilde{\eL}.$$
 We have to show 
that (i) $\iota (t) \in \tilde{\eL}$, and (ii) $\iota (t+ c) ^{-1} \in \tilde{\eL}$ for every complex number $c \in \com$. The first assertion (i) follows from 
the equality $\iota (t) = tQ + X$. As for the assertion (ii), we notice 
\begin{align}
\iota ((t+c)^{-1})&= \iota (t+c) ^{-1} \notag \\
     &=(tQ+X + c)^{-1} \notag \\
     &= (tQ + c)^{-1}( 1 + (tQ +c)^{-1}X)^{-1} \notag \\
     &= (tQ + c)^{-1}(1- A)^{-1} \notag \\
     &= (tQ + c)^{-1}\sum\sb {n=0} ^\infty A^n,\label{146.6a}
\end{align}
where we set $A=-(tQ +c)^{-1}X$.
Upon writing $a(Q):=-(tQ +c)^{-1}$, we have 
$$
A = Xa(qQ), \ A^2 = X^2a(qQ)a(q^2Q),\ \cdots , \ 
A^n = X^n\prod \sb{i=1}^{n}a(q^iQ),\ \cdots 
$$
by commutation relation \eqref{a11.1}. 
%$$
%A^n = (-1)^ nX^n (tq^nQ^n +c).
%$$
Hence, by \eqref{146.6a}, $\iota (t+c)^{-1} \in \tilde{\eL}$. 
Thus we proved the inclusion 
$
\eL ^o \subset \tilde{\eL}.
$
 \par 
 To complete the proof of (3), 
a nilpotent element $n$ of the
 algebra $A$ being given, 
we set $e= 1+n$. 
As we have $qXeQ =eQX$, 
by the commutation relation \eqref{a11.1}, there exists an infinitesimal deformation 
$$
\psi \sb e : \tilde{\eL} \to F(\Z , \, A[[W]])[[X]]
$$ 
such that $\psi\sb e (X) = X$ and $\psi \sb e (Q) = eQ$ and continuous with respect to the $X$-adic topology. 
Therefore to be more concrete $\psi \sb e$ maps an element 
of the algebra $\tilde {\eL}$ 
$$
%\psi \sb e
\sum \sb{n=0}^\infty X^n a\sb n (Q) \text{ {\it with } $a\sb n(Q) \in L\s(Q)$ {\it for every } $n\in \N$ }
$$
%of $\tilde{\eL}$
to
 an element 
$$
\sum \sb{n=0}^\infty X^n a\sb n (eQ) \in F(\Z , \, A[[W]])[[X]].
$$ 
If we denote the restriction $\psi \sb e \, |\sb{\eL} $ to 
$\eL$ by $\varphi \sb e$, then $\varphi \sb e$ satisfies all the required conditions except for the uniqueness. The uniqueness follows from (2) that we have already proved above. 
%%%%%%%%%%%%%%%%%%%%%%%%%%%%%%%%%%%%%%%%%%%%%%%%
%we consider the infinitesimal deformation 
%$$
%\varphi \sb e :\eL^o \to F( \N , \, A[[w]])[[X]]
%$$
 %of $\iota | \eL ^o$ 
%sending $X$ to $X$ and $Q$ to $eQ$. We show that 
%$\varphi\sb e$ extends uniquely to an 
% infinitesimal deformation $ \varphi :\eL \to F(\N \, A[[W]])[[X]]$.
%First, we notice that we can uniquely extend $\varphi \sb n $ to 
%$L\n (\iota (t))<X, \, Q>\sb{alg}$. 
%To this end, it is sufficient
% to 
%notice that if it is extended to an infinitesimal deformation
%$$
%\varphi :L\n (\iota (t))<X, \, Q>\sb{alg} \to F(\N , \, A[[W]])[[X]],
%$$ 
%then we have, for every $c\in \com$, 
 %\begin{align*}
%\varphi ((t - c)^{-1}) &= (\varphi (t)-c )^{-1} \\
%&= (e(t + W)Q + X -c )^{-1} 
%\end{align*}
%that is a well-determined element of $F(\N , A[[W]])[[X]]$. 
%This shows the extension over 
%the sub-algebra $L\n (\iota (t) <Q, X>\sb{alg}$ 
%uniquely exists.
%Now, for 
%every element 
%$$h\in L\n \iota ( (t)) <X,\, Q>\sb{alg},$$ since 
%\begin{align*}
%\varphi (
%\%frac{
 %\partial h
 %}
 %{
 %\partial W 
 %}
% ) &= 
%  \frac{\partial \varphi (h)
 % }
%  {\partial W }, \\
%\varphi (
%\hat{\Theta}^{(i)}(h) 
%)
%&=
% \hat{\Theta}^{(i}(\varphi (h)) \text{ for every $i \in %\N$}, \\
%\varphi (\hat{\Sigma}(h) &=  
 % \hat{\Sigma}(\varphi (h)).
 % \end{align*}
% Hence once $\varphi$ is determined on $\L\n <Q, X>\sb%{alg}$, 
 % it is determined al over $\eL$. 
 \end{proof}
%%%%%%%%%%%%%%%%%%%%%%%%%%%%%%%%%%%%%%%%
We have shown that the functor
\[
\mathcal{F}\sb{L/k} \colon (Alg/L\n) \rightarrow (Set)
\]
is a torsor of the group functor $\hat{G}\sb{m\,\com}$. 
 For origin of the group structure, see Paragraph \ref{9.28a} as well as Paragraph \ref{9.28e} below. 
%\begin{definition}
%For a commutative algebra $R$, we denote by $(NCAlg/R)$ the category of not necessarily commutative $R$-algebras $A$ such that the image of $R$ is in the center of $A$. %for an algebra R. 
%To avoid confusions, we denote the category $(Alg/R)$ of commutative $R$-algebras by $(CAlg/R)$ and the infinitesimal deformation functor $\mathcal{F}\sb{L/k}$ by $C\mathcal{F}\sb{L/k}$. 
%\end{definition}
%%%%%%%%%%%%%%%%%%%
%We investigate non-commutative infinitesimal deformations of the Galois hull $\eL/\K$. So we define a functor 
%\[
 %\NCF \colon (NCAlg) \rightarrow (Set)
%\]
%by 
%\begin{align*}
% \NCF (A) = \left\{ \! \begin{array}{cc}
%\varphi \colon \eL \rightarrow F(\N, A[[W]])[[X]] & 
%\left| \begin{array}{l} 
%\text{\it \/ \it $\varphi$ is a \QSI }\\
%\text{\it \/ \it algebra morphism compatible with} \\ 
%\text{\it the derivation $\partial/\partial W$ and $\varphi|\sb{\K} = \iota|\sb{\K}$.}
%\end{array} \right. 
%\end{array} \! \right\}.
%\end{align*}
\par
In the course of the proof of Proposition \ref{5.14a}, we
have proved the following 
\begin{proposition}\label{10.17a}
The Galois hull $\eL$ coincides with the sub-algebra 
$$
L\n\langle X, \, Q, \, (c+ tQ + X)^{-1}\rangle\sb {c\in \com\, alg}
$$
of $F(\Z, \, L\s)[[X]]$ generated by $L\s , \, X ,\, Q$ 
and the set $\{ (c + tQ + X)^{-1}\, |\, c\in \com \}$. The commutation relation of $X$ and $Q$ is 
$$
QX=qXQ. 
$$ 
In particular, if $q\not= 1$, then the Galois hull is non-commutative.
\end{proposition}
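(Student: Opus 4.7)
The plan is to establish the two inclusions between $\eL$ and the sub-algebra $B := L\s\langle X, Q, (c+tQ+X)^{-1} : c\in\com\rangle\sb{alg}$ of $F(\Z, L\s)[[X]]$; the commutation relation $QX = qXQ$ and the non-commutativity claim will then follow from the twisted multiplication in the target ring.

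For the inclusion $B \subseteq \eL$, I would essentially read off what was already established in the proof of Proposition \ref{5.14a}: $L\s$ lies in $\K$, while $X = \iota(t) - tQ$ and $Q = (\partial/\partial t)\iota(t)$ belong to $\eL$ because $\eL$ is $\D$-invariant and contains $\iota(L)$; and $(c+tQ+X)^{-1} = \iota((t+c)^{-1}) \in \iota(L) \subseteq \eL$ for every $c \in \com$.

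For the reverse inclusion $\eL \subseteq B$, I would show that $B$ itself contains $\iota(L) \cup L\s$ and is closed under the operators in $\D$, so that minimality of the Galois hull forces $\eL \subseteq B$. The containment $\iota(L) \subseteq B$ is immediate because $\com(t)$ is generated as a $\com$-algebra by $t$ together with the $(t+c)^{-1}$, and $\iota$ sends these into the specified generators of $B$. For $\D$-invariance I would check the generators individually: on $L\s$ both $\hat{\Sigma}$ and $\hat{\Theta}^{(i)}$ ($i \geq 1$) act trivially and $\partial/\partial t$ preserves $L\s$; on $X$ and $Q$ a direct computation via \eqref{a5.1} and \eqref{a4.5} gives values in $B$; on the inverses, the key identity $\hat{\Theta}^{(i)}((c+tQ+X)^{-1}) = \iota(\theta^{(i)}((t+c)^{-1}))$ provided by Proposition \ref{a4.1} reduces to $\iota$ of a rational function of $t$, while $\hat{\Sigma}((c+tQ+X)^{-1}) = q^{-1}(c/q + tQ + X)^{-1}$ and $(\partial/\partial t)((c+tQ+X)^{-1}) = -(c+tQ+X)^{-1}Q(c+tQ+X)^{-1}$ lie in $B$ by inspection. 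Extension from generators to arbitrary elements of $B$ goes by the algebra-morphism property of $\hat{\Sigma}$, the $q$-Leibniz rule for $\hat{\Theta}^{(i)}$, and the ordinary Leibniz rule for $\partial/\partial t$.

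Finally, the commutation $QX = qXQ$ follows from the twisted rule $aX = X\Sigma(a)$ in $F(\Z, L\s)[[X]]$ applied to $a = Q$, noting that $\Sigma(Q)(n) = q^{n+1} = qQ(n)$, so $\Sigma(Q) = qQ$; when $q \neq 1$ this forces $XQ \neq QX$, and hence $\eL$ is non-commutative. I expect the main obstacle to be the $\hat{\Theta}^{(i)}$-invariance check on the inverse generators, which requires invoking the \QSI morphism property of $\iota$ and a careful use of the non-commutative identity $(\partial/\partial t)(f^{-1}) = -f^{-1}(\partial f/\partial t)f^{-1}$, but these should be handled cleanly by the setup already in place.
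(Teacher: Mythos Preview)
Your proof is correct and follows essentially the same route as the paper, which derives the proposition from the computations embedded in the proof of Sub-lemma \ref{5.15}: the identifications $\iota(t)=tQ+X$, $Q=(\partial/\partial t)\iota(t)$, $\iota((t+c)^{-1})=(c+tQ+X)^{-1}$, and the description $\eL=L\s\langle X,Q,\iota((t+c)^{-1})\rangle_{c\in\com\,alg}$. Your double-inclusion argument is in fact more explicit than the paper's, since you spell out the $\mathcal{D}$-invariance of $B$ on the inverse generators (via $\hat{\Sigma}((c+tQ+X)^{-1})=q^{-1}(c/q+tQ+X)^{-1}$, the qsi-morphism property for $\hat{\Theta}^{(i)}$, and the non-commutative derivative formula for $\partial/\partial t$), a step the paper leaves implicit when asserting the equality in the proof of part (2) of the Sub-lemma.
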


\subsection{Non-commutative deformation functor $\mathcal{NCF}\sb {\sigma \, \theta^*\,L/k}$
for $L/k=\com (t) /\com$} 
We are ready to describe the non-commutative deformations. 
Let $A\in ob(NCAlg/L\n)$. 
\begin{lemma}\label{8.30c}
If $q \neq 1$, we have 
\[
 \NCF (A)= \left\{ \, 
( e, \, f) \in A ^2 \, | \, qfe=ef \text{\it \/ and } e-1,\, f \text{\it \/ are nilpotent \,}
 \right\}
 \]
 for every $A\in ob (NCAlg/L\n)$.
\end{lemma}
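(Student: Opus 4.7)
The plan is to adapt Sublemma \ref{5.15} to non-commutative test algebras. Given $A \in \NCA$ and $\varphi \in \NCF (A)$, the defining equations for $X$ and $Q$ in $\eL$ under the operators $\partial/\partial t$, $\hat{\Sigma}$, and $\hat{\Theta}^*$, together with the requirement that $\varphi$ be congruent to $\iota$ modulo nilpotents, determine
\[
\varphi(X) = X + fQ, \qquad \varphi(Q) = eQ,
\]
for some $e, f \in A$ with $e - 1$ and $f$ nilpotent. That step of Sublemma \ref{5.15}(1) nowhere invokes commutativity of $A$, so it transfers verbatim.

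The only place where commutativity of $A$ entered Sublemma \ref{5.15} was in extracting $f = 0$ from the twisted relation $QX = qXQ$. Translating this relation through $\varphi$, using that $Q$ is central in the coefficient ring $F(\Z, A[[W]])$ because its values lie in $\com$, while $eQ \cdot X = qX eQ$ by the twisted commutation, one arrives at $ef Q^2 = q fe Q^2$, whence $ef = qfe$. In the commutative setting one continues $ef = qfe = qef$, so $(1-q) ef = 0$, and invertibility of $e$ forces $f = 0$. In the non-commutative setting this collapse does not occur, and one is left with precisely the relation $ef = qfe$ asserted in the statement.

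For the converse, given any pair $(e, f)$ with $ef = qfe$ and $e - 1$, $f$ nilpotent, I would imitate part (3) of Sublemma \ref{5.15} and construct $\varphi_{e,f}$ by prescribing $\varphi_{e,f}(Q) = eQ$, $\varphi_{e,f}(X) = X + fQ$, and $\varphi_{e,f}|_{L\s}$ equal to the canonical inclusion. The hypothesis $ef = qfe$ is exactly what is needed for $(eQ)(X + fQ) = q(X + fQ)(eQ)$ to hold, so the prescription is consistent with the twisted commutation of the target. Compatibility with $\hat{\Sigma}$, $\hat{\Theta}^*$, and $\partial/\partial t$ is then immediate, since $eQ$ satisfies the same equations as $Q$ and $X + fQ$ satisfies the same equations as $X$ under each of these operators.

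The main obstacle is showing that $\varphi_{e,f}$ extends across the elements $\iota((t+c)^{-1}) \in \eL$ for $c \in \com$, i.e., that $(te+f)Q + c + X$ is a unit in $F(\Z, A[[W]])[[X]]$. Its value at $n \in \Z$ is $(te+f)q^n + c = (tq^n + c) + (\text{nilpotent of } A)$, and since $tq^n + c$ is a nonzero element of the central subfield $L\n \subset A$, hence a unit, the whole coefficient function is invertible in $F(\Z, A[[W]])$. The geometric-series expansion analogous to \eqref{146.6a} then converges in the $X$-adic topology and furnishes the required extension. Uniqueness of $\varphi_{e,f}$ is part (2) of Sublemma \ref{5.15}: since $\K = L\s$ and the generators of $\eL$ over $\eL^o = L\s \langle X, Q \rangle\sb{alg}$ are inverses of elements already in $\eL^o$, the morphism is determined by the pair $(e, f)$.
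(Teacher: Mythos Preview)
Your proof is correct and follows essentially the same route as the paper: derive $\varphi(X)=X+fQ$, $\varphi(Q)=eQ$ from the operator equations, read off $ef=qfe$ from $QX=qXQ$, and for the converse imitate Sublemma~\ref{5.15}(3). You supply more detail than the paper does---in particular the explicit invertibility check for $((t+W)e+f)Q+c+X$ in the target, which the paper leaves implicit in its appeal to the Sublemma---but the strategy is the same.
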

\begin{proof}
Since $q \not= 1$, 
it follows from the argument of the proof of Sublemma \ref{5.15} that if we take 
$$
\varphi \in 
\mathcal{NCF}\sb {L/k} (A) \text{\it \/ for } A \in
ob \, \NCA , 
$$ 
then $\varphi(X) = X + fQ$ and $\varphi(Q) = eQ$ for $f,e \in A$. \par
Since $\varphi$ is an infinitesimal deformation of $\iota$, $f$ and $e-1$ are nilpotent. \par
It follows from $QX = qXQ$ that 
\[
eQ(X+fQ) = q(X + fQ)eQ
\]
so $ef = qfe$. 
\par 
Suppose conversely that elements $e,\, f \in A$ such
that $e-1, \, f$ are nilpotent and such that 
$ef =qfe$ are given. Then the argument of the proof of Sublemma \ref{5.15} allows us to show the unique existence of 
the infinitesimal deformation $\varphi \in \mathcal{NCF}\sb{L/k}(A)$ such that 
$$
\varphi (X)= X + fQ, \, \varphi (Q) = eQ. 
$$ 
\end{proof}
We are going to see in \ref{9.28e} that
theoretically, 
 we can identify 
%the set $\mathcal{NCF}\sb {L/k}$ with the set 
\begin{equation}
 \NCF(A) = \left\{ \begin{array}{cc}
 \left. \begin{bmatrix}
 e & f\\
 0 & 1
 \end{bmatrix} \right| 
 e ,\ f \in A,\ qfe=ef \text{\it \/ and } e-1, f \text{\it \/ are nilpotent}
 \end{array}
 \right\} . 
\end{equation}
%%%%%%%%%%%%%%%%%%%%%%%%%%%%%%%%%%%%%%%%%%%%%%%%%%%%%
\begin{cor}
[Corollary to the proof of Lemma \ref{8.30c}]
When $q = 1$ that is the case excluded in our general study, we consider the \QSI field 
$$
(\com(t),\, \Id , \, \theta^{\ast})
$$ 
as in \ref{9.19b}. So $\theta^{\ast}$ is the iterative derivation; 
\begin{align*}
\theta^{(0)} &= \Id, \\
 \theta^{(i)} &= \dfrac{1}{i!}\dfrac{d^i}{dt^i}\qquad \text{\it \/ for } i\geq 1 .
\end{align*}
%%%%%%%%%%%%%%%%%%
Then we have 
\begin{align}
 & \eL\sb{\Id \, \theta ^*} \simeq \eL\sb{d/dt}, 
 \label{9.26a} \\ 
& \mathcal{NCF}
\sb{\Id \, \theta^{\ast}\, \com(t)/\com
}(A)
= \{ f \in A \, | \, f \text{\it \/ is a nilpotent element }
\} \label{9.26b}
\end{align}
 for $ A \in ob \NCA$. 
\end{cor}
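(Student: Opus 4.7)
The plan is to specialise the arguments of Proposition \ref{10.17a} and Lemma \ref{8.30c} to $q=1$, $\sigma=\Id_{\com(t)}$, and observe that the twisted structure collapses to the classical differential one.

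First I would compute the universal Hopf morphism \eqref{9.19a} in this degenerate case. Since $\sigma=\Id$, the function $u[a]\in F(\Z,L\n)$ is the constant function $a$ for every $a\in L$; in particular the element $Q$ appearing in the proof of Proposition \ref{5.14a} equals the constant function $1$, because $q^n=1$ for all $n$. Since $\theta^{(0)}(t)=t$, $\theta^{(1)}(t)=1$ and $\theta^{(i)}(t)=0$ for $i\ge 2$, we still have $\iota(t)=t+X$. Inspecting \eqref{a5.1} and \eqref{a4.5} at $q=1$, one sees that $\hat{\Sigma}=\Id$ and $\hat{\Theta}^{(l)}=\frac{1}{l!}(\partial/\partial X)^l$, and the ambient twisted power series algebra collapses to the commutative ring $L\s[[X]]$ with operator set $\{\partial/\partial X,\,\partial/\partial t\}$. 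By Proposition \ref{10.17a}, the Galois hull $\eL_{\Id\,\theta^*}$ is then the $L\s$-subalgebra of $L\s[[X]]$ generated by $X$ and the inverses $(c+t+X)^{-1}$ for $c\in\com$; this coincides with the classical differential Galois hull $\eL_{d/dt}$, since the universal Taylor expansion of $(\com(t),d/dt)$ also sends $t\mapsto t+X$. This establishes \eqref{9.26a}.

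For \eqref{9.26b} I would re-run the argument of Sublemma \ref{5.15} and Lemma \ref{8.30c}. The constraints imposed by $\mathcal{D}$ on $\varphi\in\mathcal{NCF}(A)$ force, exactly as before, $\varphi(X)=X+fQ$ and $\varphi(Q)=eQ$ with $e-1$ and $f$ nilpotent. Now two things degenerate: first, $Q=1$, so $\varphi(Q)=\varphi(1)=1$ forces $e=1$; second, the relation $QX=qXQ$ becomes the tautology $X=X$, so the commutation constraint $ef=qfe$ that pinned $f$ to $0$ in Lemma \ref{8.30c} is vacuous here. Hence $\varphi$ is determined precisely by the single nilpotent element $f\in A$, via $\varphi(X)=X+f$. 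Conversely every such nilpotent $f$ yields a unique deformation, by exactly the $\tilde{\eL}$ approximation carried out in part (3) of Sublemma \ref{5.15}, now trivialised because the target algebra is commutative.

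I do not foresee a serious obstacle, as the whole corollary is an exercise in specialisation. The point that must be verified with care is the simultaneous collapse of $\hat{\Sigma}$, $\hat{\Theta}^*$ and $Q$ to their classical counterparts at $q=1$, $\sigma=\Id$; once this is in place, both assertions are immediate from the proofs already given in the case $q\ne 1$.
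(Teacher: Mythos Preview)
Your proposal is correct and follows essentially the same route as the paper: observe that at $q=1$ the function $Q$ collapses to the constant $1\in\K$, so $\eL$ is generated over $\K$ by $X$ alone and coincides with the differential Galois hull; then, since $Q=1\in\K$ is fixed by any $\varphi$, only the parameter $f$ survives. The paper's own proof is just this, stated in two lines.

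One small slip in your write-up: you say the commutation constraint $ef=qfe$ ``pinned $f$ to $0$ in Lemma \ref{8.30c}''. It did not --- Lemma \ref{8.30c} is the \emph{non-commutative} description, and there $f$ remains a free nilpotent subject only to $ef=qfe$. The vanishing $f=0$ was derived in Sublemma \ref{5.15}, the commutative case, from $ef=fe$ together with $ef=qfe$ and $q\ne1$. Your logical point (that at $q=1$ the relation $ef=qfe$ becomes vacuous and $e=1$ is forced by $Q=1$) is unaffected by this misattribution.
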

\begin{proof}
In fact, if $q = 1$, then 
\begin{equation*}
Q = 
\left[\begin{array}{ccccccc}
\cdots & -2 & -1 & 0 & 1 & 2 & \cdots \\
\cdots & 1 & 1 & 1 & 1 & 1 & \cdots
\end{array} \right] = 1 \in \com.
\end{equation*}
So $\eL\sb{\Id \, \theta^{\ast}}$ is generated by $X$ over $\K$. Therefore $\eL\sb{\Id \, \theta^*} \simeq \eL\sb{d/dt}$. 
Since $Q = 1 \in \K, \, \varphi(Q) = Q$ for an infinitesimal deformation 
 $$
 \varphi \in \mathcal{NCF}\sb{\Id \, \theta^{\ast}\, \com(t)/\com}(A)
 $$ 
 and we get \eqref{9.26b}. 
\end{proof}
\subsubsection{Quantum group enters}\label{9.28b}
To understand Lemma \ref{8.30c}, it is convenient to introduce a quantum group. 
\begin{definition}
We work in the category $(NCAlg/\mathbb{C})$. Let $A$ be a not necessarily commutative $\mathbb{C}$-algebra. 
We say that two sub-sets $S,\, T$ of $A$ are mutually commutative if for every $s\in S,\, 
 t \in T$, we have $[s,t]=st-ts = 0$.
\end{definition}
For $A \in ob \NCA$, we set
\[
H\sb{q}(A) =\left\{ \left.
 \begin{bmatrix}
e & f \\
0 & 1 
\end{bmatrix} \, \right| \, %\begin{array}{l}
e,\, f \in A, \text{\it \/ $e$ is invertible in $A$, } ef=qfe\right\}
. \]
\begin{lemma} For two matrices
\[
Z\sb{1} = \begin{bmatrix}
e\sb{1} &f\sb{1} \\
 0 & 1 
\end{bmatrix},\quad 
Z\sb{2} = \begin{bmatrix}
e\sb{2} & f\sb{2}\\
0  & 1 
\end{bmatrix} \in H\sb{q}(A), 
\]
if $\{e\sb{1},\, f\sb{1}\}$ and $\{e\sb{2},\, 
f\sb{2}\}$ are mutually commutative, then the product matrix 
\[
Z\sb{1} Z\sb{2} \in H\sb{q}(A).
\]
\end{lemma}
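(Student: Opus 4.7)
The plan is to verify by direct computation that the product $Z_1 Z_2$ is of the required form with the upper-left entry invertible and satisfying the $q$-commutation relation with the upper-right entry. Multiplying,
\[
Z_1 Z_2 \;=\; \begin{bmatrix} e_1 e_2 & e_1 f_2 + f_1 \\ 0 & 1 \end{bmatrix},
\]
so setting $E := e_1 e_2$ and $F := e_1 f_2 + f_1$, I need to show (i) that $E$ is invertible in $A$ and (ii) that $EF = qFE$. Assertion (i) is immediate: since $e_1$ and $e_2$ are invertible, $E^{-1} = e_2^{-1} e_1^{-1}$ exists in $A$.

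For (ii), I would simply expand both sides, using the hypothesis that $\{e_1, f_1\}$ and $\{e_2, f_2\}$ are mutually commutative (giving the four relations $e_1 e_2 = e_2 e_1$, $e_1 f_2 = f_2 e_1$, $f_1 e_2 = e_2 f_1$, $f_1 f_2 = f_2 f_1$) together with the individual $q$-commutation relations $e_i f_i = q f_i e_i$ for $i = 1, 2$. On the left,
\[
EF \;=\; e_1 e_2 e_1 f_2 + e_1 e_2 f_1 \;=\; e_1^{2} e_2 f_2 + e_1 f_1 e_2 \;=\; q\bigl(e_1^{2} f_2 e_2 + f_1 e_1 e_2\bigr),
\]
while on the right,
\[
FE \;=\; e_1 f_2 e_1 e_2 + f_1 e_1 e_2 \;=\; e_1^{2} f_2 e_2 + f_1 e_1 e_2,
\]
so $EF = qFE$, as required.

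There is essentially no obstacle: once the mutual commutativity is invoked to move $e_2$ past $e_1$ and $f_1$, and $e_1$ past $f_2$, the $q$-relation $e_i f_i = q f_i e_i$ can be applied to extract a single factor of $q$ from each summand of $EF$. The hypothesis of mutual commutativity is precisely what is needed to prevent any mixed term $e_1 f_2$ or $f_1 e_2$ from producing an unwanted factor of $q$ that would break the symmetry between the two summands.
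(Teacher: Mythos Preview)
Your proof is correct and follows exactly the same approach as the paper: compute the product matrix and verify directly that the new entries satisfy the required invertibility and $q$-commutation relation using the mutual commutativity together with $e_if_i=qf_ie_i$. The paper's proof is in fact terser than yours, simply stating that the identity $e_1e_2(e_1f_2+f_1)=q(e_1f_2+f_1)e_1e_2$ follows from these relations without writing out the intermediate manipulations you supply.
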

\begin{proof}
Since 
\[
Z\sb{1} Z\sb{2} = \begin{bmatrix}
e\sb{1}e\sb{2} & e\sb{1}f\sb{2} + f\sb{1}\\
 0 & 1
\end{bmatrix}, 
\]
we have to prove 
\[
e\sb{1}e\sb{2}(e\sb{1}f\sb{2}+f\sb{1}) = q(
e\sb{1}f\sb{2} + f\sb{1})e\sb{1}e\sb{2}
. \]
This follows from the mutual commutativity of $\{e\sb{1},\, f\sb{1}\}$,and $\{e\sb{2},\, f\sb{2}\}$, 
and the conditions $e\sb{1}f\sb{1} = qf\sb{1}e\sb{1},\, e\sb{2}f\sb{2} = qf\sb{2}e\sb{2}$. 
\end{proof}
\begin{lemma}
For a matrix
\[
Z = \begin{bmatrix}
e & f\\
0 & 1
\end{bmatrix} \in H\sb{q}(A),
\]
if we set
\[
\tilde{Z} = \begin{bmatrix}
e^{-1} & -e^{-1}f \\
 0 & 1
\end{bmatrix} \in M\sb{2},
\]
then 
\[
\tilde{Z} \in H\sb{q^{-1}}(A) \text{\it \/ and } 
\tilde{Z}Z = Z\tilde{Z} = I\sb{2}. 
 \]
\end{lemma}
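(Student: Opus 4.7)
The plan is to verify the two assertions by direct computation, using only the defining relation $ef = qfe$ together with the invertibility of $e$ that characterize $Z \in H_q(A)$. No deep theory is needed; the lemma is essentially the $q$-commutative analogue of the classical $2\times 2$ affine-matrix inversion formula.

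First I would derive the auxiliary commutation rule between $e^{-1}$ and $f$. Starting from $ef = qfe$ and multiplying on the left and on the right by $e^{-1}$, one obtains
\[
fe^{-1} = q e^{-1} f, \qquad \text{equivalently} \qquad e^{-1} f = q^{-1} f e^{-1}.
\]
This single identity is the only algebraic ingredient needed for both parts, and its derivation is the conceptual heart of the lemma (it is what forces $q$ to be replaced by $q^{-1}$).

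Next, to show $\tilde{Z} \in H_{q^{-1}}(A)$, write the entries of $\tilde{Z}$ as $e' := e^{-1}$ and $f' := -e^{-1}f$. Clearly $e'$ is invertible in $A$, with inverse $e$, so it only remains to verify the commutation $e' f' = q^{-1} f' e'$. Expanding the left-hand side gives $-e^{-2} f$, and applying the auxiliary identity once to the inner factor yields $-e^{-1}(e^{-1}f) = -q^{-1} e^{-1} f e^{-1} = q^{-1} f' e'$, as required.

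Finally, the identity $\tilde{Z}Z = Z\tilde{Z} = I_2$ is a straightforward $2\times 2$ matrix multiplication that requires no commutation hypotheses at all. The diagonal entries are trivially $ee^{-1} = e^{-1}e = 1$; the off-diagonal entries of $\tilde{Z}Z$ give $e^{-1}f + (-e^{-1}f)\cdot 1 = 0$, and those of $Z\tilde{Z}$ give $e\cdot(-e^{-1}f) + f = -f + f = 0$. The whole argument is mechanical, and I do not foresee any genuine obstacle — the only substantive step is the single application of the rule $e^{-1}f = q^{-1} f e^{-1}$, which also explains conceptually why the inverse lives in $H_{q^{-1}}(A)$ rather than in $H_q(A)$.
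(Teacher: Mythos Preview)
Your proof is correct and follows essentially the same approach as the paper: the paper's proof simply says ``We can check it by a simple calculation'' and defers the verification of $\tilde{Z}\in H_{q^{-1}}(A)$ to the subsequent Remark, where exactly the computation you wrote out (setting $\tilde{e}=e^{-1}$, $\tilde{f}=-e^{-1}f$ and using $fe^{-1}=qe^{-1}f$ to obtain $\tilde{e}\tilde{f}=q^{-1}\tilde{f}\tilde{e}$) is carried out. Your write-up is in fact more complete than the paper's, since you also spell out the matrix multiplications $\tilde{Z}Z=Z\tilde{Z}=I_2$.
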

\begin{proof}
We can check it by a simple calculation. See also Remark \ref{140224a}, where the first assertion is proved.
\end{proof}
\begin{remark}\label{140224a}
If $q^{2}\neq 1$, for $f \neq 0$, $\tilde{Z} \not\in H\sb{q}(A)$. In fact, let us set
\[
\tilde{Z} = \begin{bmatrix}
\tilde{e} & \tilde{f}\\
0 & 1
\end{bmatrix}
\]
so that, 
%thanks to commutation relation \eqref{a11.11},
 $\tilde{e}=e^{-1}, \tilde{f} = -e^{-1}f$. 
Then $\tilde{e}\tilde{f} = e^{-1}(-e^{-1}f) = -e^{-2}f$ and $\tilde{f}\tilde{e}= -e^{-1}fe^{-1} = -qe^{-2}f$. 
So 
\begin{equation}\label{140224b}
\tilde{e}\tilde{f} = -e^{-2}f =q^{-1} \tilde{f}\tilde{e}
\end{equation}
showing 
$$
\tilde{Z}\in H\sb{q^{-1}}(A).
$$
%and hence if $q^{2} \neq 1$ and if $f\not= 0$,
Now we assume to the contrary that $\tilde{Z}\in H\sb q (A)$. We show that it would lead us to a contradiction. 
The assumption would imply that we have 
\begin{equation}\label{140224c}
\tilde{e}\tilde{f} = q \tilde{f}\tilde{e}.
\end{equation}
It follows from \eqref{140224b} and \eqref{140224c}
\begin{equation}
q\tilde{f}\tilde{e} = q^{-1} \tilde{f}\tilde{e}. 
\end{equation}
so that we would have 
\begin{equation}
q^2 \tilde{f}\tilde{e} = \tilde{f}\tilde{e}. 
\end{equation}
Since $\tilde{e}= e^{-1}$ is invertible in $A$,
$$
(q^2 -1)\tilde{f} = 0.
$$
As the algebra $A$ is a $\com$-vector space and $\tilde{f} \not= 0$, 
the complex number $q^2 -1 =0$ which is a contradiction. 
% This implies $q^2 = 1$ which contradicts the assumption on the number $q$. 
\end{remark}
%%%%%%%

%%%%%%%%%%%%%%%%127
\begin{lemma}
Let $u$ and $v$ be symbols over $\com$.
We have shown that we find a $\mathbb{C}$-Hopf algebra 
\begin{equation} \label{hopfa1}
\gH\sb q = \mathbb{C}\langle u,\, u^{-1}, \, v\rangle \sb{alg}/(uv -qvu)
\end{equation}
as an algebra so that
$$
uu^{-1} = u^{-1} u = 1 , \qquad u^{-1} v = q^{-1}vu^{-1}.
$$
Definition of the algebra $\gH\sb q$ determines the multiplication
\[
m \colon \gH \sb q\otimes\sb{\mathbb{C}}\gH\sb q \to \gH\sb q, 
\]
the unit 
\[
\eta \colon \mathbb{C} \to \gH\sb q, 
\]
that is the composition of natural
 morphisms $$\com \rightarrow \com \langle u, \, u^{-1}, \, v\rangle\sb{alg}$$ and $$\com\langle u, \, u^{-1}, \, v\rangle\sb{alg} \rightarrow \com\langle u, \, u^{-1}, \, v \rangle\sb{alg}/(uv-qvu)=\gH\sb q.$$ 
 The product of matrices gives 
the co-multiplication
\[
\Delta \colon \gH\sb q \to \gH\sb q\otimes\sb{\mathbb{C}}\gH\sb q, 
\] 
that is a $\com$-algebra morphism 
defined by
\[
\Delta (u)=u\otimes u, \qquad \Delta (u^{-1})=
u^{-1}\otimes u^{-1}, 
\qquad \Delta (v)=u\otimes v + v\otimes 1,
\]
for the generators $u,\, u^{-1}, \, v$ of the algebra ${\gH\sb q}$, 
the co-unit is a $\com$-algebra morphism 
\[
\epsilon \colon \gH\sb q \to \mathbb{C}, \qquad \epsilon(u)=\epsilon (u^{-1})=1, \, \epsilon(v)=0 
 \]
 for the generators $u, \, u^{-1}, \, v$ of the algebra 
${\gH\sb q}$.
The antipode 
$$
S\colon {\gH\sb q} \to {\gH\sb q}
$$
is a $\com$-anti-algebra morphism given by 
$$
S(u)=u^{-1}, \qquad S(u^{-1})= u, \qquad S(v) = -u^{-1}v. 
$$
\end{lemma}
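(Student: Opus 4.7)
The plan is to verify that the prescribed structure maps define a well-defined Hopf algebra, in the following order: (i) descent of $\Delta$, $\epsilon$, and $S$ to the quotient algebra $\gH_q$; (ii) coassociativity and counit axioms on generators; (iii) antipode axioms on generators. Since $\Delta$ and $\epsilon$ are specified as algebra morphisms, $S$ as an anti-algebra morphism, and all are determined by their values on the generators $u, u^{-1}, v$, all verifications reduce to finite checks.

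First I would record the auxiliary identity $u^{-1}v = q^{-1}vu^{-1}$, which follows formally from $uv = qvu$ by conjugating with $u^{-1}$ on both sides; this identity is the workhorse for the non-trivial calculations below. Well-definedness of $\Delta$ then reduces to checking $\Delta(u)\Delta(v) = q\Delta(v)\Delta(u)$ in $\gH_q \otimes \gH_q$: expanding, the left side equals $u^2\otimes uv + uv\otimes u$ and the right side equals $u^2 \otimes qvu + qvu\otimes u$, so they agree precisely because $uv = qvu$. Well-definedness of $\epsilon$ is trivial since $\epsilon(v) = 0$. For the anti-algebra morphism $S$ one must verify $S(v)S(u) = qS(u)S(v)$: the left is $-u^{-1}vu^{-1} = -q^{-1}vu^{-2}$ by the auxiliary identity, and the right is $-qu^{-2}v = -q \cdot q^{-2} v u^{-2} = -q^{-1} v u^{-2}$ after applying the auxiliary identity twice to push $v$ past $u^{-2}$, so both sides match.

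Next I would check the coalgebra axioms on generators. For the group-like generators $u$ and $u^{-1}$, coassociativity and the counit axioms are immediate. For $v$ one computes
\[
(\Delta\otimes\mathrm{id})\Delta(v) = u\otimes u\otimes v + u\otimes v\otimes 1 + v\otimes 1\otimes 1 = (\mathrm{id}\otimes\Delta)\Delta(v),
\]
and $(\epsilon\otimes\mathrm{id})\Delta(v) = \epsilon(u) v + \epsilon(v)\cdot 1 = v$, symmetrically on the right. For the antipode axiom, the group-like case is $S(u)u = u^{-1}u = 1 = \epsilon(u)$, while for $v$:
\[
m(S\otimes\mathrm{id})\Delta(v) = S(u)v + S(v)\cdot 1 = u^{-1}v - u^{-1}v = 0 = \eta\epsilon(v),
\]
\[
m(\mathrm{id}\otimes S)\Delta(v) = u \cdot S(v) + v \cdot S(1) = -v + v = 0.
\]

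The only step requiring real attention is the descent of the anti-algebra morphism $S$, since the needed cancellation $-u^{-1}vu^{-1} + qu^{-2}v = 0$ depends on applying the commutation identity twice; all other verifications are either direct substitution or propagate automatically from the generators via the universal property of the presentation. I therefore expect no genuine obstacle beyond careful bookkeeping of the $q$-powers arising from reordering $u$'s and $v$'s.
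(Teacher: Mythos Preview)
Your verification is correct. The paper states this lemma without proof, treating the Hopf-algebra structure as a standard construction (with a reference to Manin for generalities on antipodes), so your explicit check of well-definedness, coassociativity, counit, and antipode axioms on generators is precisely the content that was left to the reader; there is nothing to compare against.
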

Let us set
$$
\gH\sb {q\, L\n} := \gH\sb q\otimes \sb{\com} L\n
$$
so that $\gH\sb{q \, L\n}$ is an $L\n$-Hopf algebra. We 
notice that for an $L\n$-algebra $A$ 
$$
\begin{array}{rl}
\gH\sb{q\, L\n}(A) &:= \Hom\sb{L\n\text{\it -algebra}}
(\gH\sb{q \,L\n}, \, A) \\ 
&= 
 \left\{ \left. 
\begin{bmatrix}
\begin{array}{cc}
e & f \\
0 & 1
\end{array} 
\end{bmatrix} \, \right| \, e, f \in A, \, ef=q fe, \text{\it \/ $e$ is invertible}
 \right\}.
\end{array}
$$

\begin{remark}
 We know by general theory that 
 the antipode $S\colon H \to H$ that is a linear map 
 making a few diagrams commutative, is necessarily an anti-endomorphism of the algebra $H$ so
 that 
 $$
 S (ab) = S(b) S(a) 
 \text{\it\/ for all elements $a, \, b \in H$ and \/ }
S(1) =1.
 $$
\end{remark} 
See Manin \cite{man88}, Section 1,\, 2. 
\par 
The Hopf algebra $\gH\sb q$ is a $q$-deformation of the affine algebraic group $AF\sb {1\, \com}$ of affine transformations of the affine line. 
\par 
Anyhow, we notice that the quantum group 
appears in this very simple example showing that 
quantum groups are indispensable for a Galois theory of \QSI field extensions.

\subsection{Observations on the Galois structures 
of the field extension $\com (t)/ \com $}
\label{151007a}
\subsubsection{Where does quantum group structure come from?}\label{9.28e} 
%\label{10.5a}
%%%%%%%%%%%%%%%%%%%%%%%%%%%%%%%%%%%%%%%%%%%%%%%%%%%%%%%%%%%%%%%%%%%%%%%%%%%%%%%%%%%%%%%%%
Let us now examine that the group structure in \ref{9.28a} 
arising from the variation of initial conditions coincides with the 
quantum group structure defined in \ref{9.28b}.
\par 
To see this, we have to clearly understand 
the initial condition of 
a formal series 
$$
f(W, \, X) = \sum\sb {i= 0}^ \infty X^i a\sb i (W) \in 
F(\Z , \, A[[W]] )[[X]]
$$
so that the coefficients $a\sb i$'s, which are elements of 
$F(\Z ,\, A[[W]])$, 
are 
functions on $\Z$ taking values in the formal power series ring $A[[W]]$.
The initial condition of $f(W,\, X)$ is the value of 
the function $f(W, \, 0)=a\sb{0} (W) \in F(\Z, \, A)$ at $n=0$ which we may denote by 
$$
f(W, \, 0)|\sb {n=0}\in A[[W]]. 
$$
\par
As in Example \ref{14.5.29b}, we set 
$$
T(W, X):=\iota (t) = (t+W)Q + X \in F(\Z , L\n [[W]])[[X]].
$$
For $A \in ob(NCAlg/L\n)$, we take an 
infinitesimal deformation 
$
\varphi \in \NCF (A)
$
so that the morphism   
$
\varphi \colon \eL \to F(\Z,\, A[[W]])[[X]]
$
is determined by the image 
$$
\tilde{T}(W, X) := \varphi (t) \in F(\Z , A[[W]])[[X]]
$$ 
of 
$t \in L \subset \eL$, the \QSI field 
$L$ being a sub-algebra of $\eL$ 
by the universal Hopf morphism. 
It follows from Lemma \ref{8.30c} that there exist $e,\, f \in A$
such that $ef=qfe$, 
 the elements $e-1, \, f$ are nilpotent and such that 
\begin{equation}\label{9.28d}
\varphi (t) = \left( e(t + W) +f\right) Q +X.
\end{equation}
Therefore, 
$$
\tilde{T}(W, X) = T((t(e-1) +f) +eW, X).
$$
Since $T(W, X)$ and $\tilde{T}(W, X)$ satisfy 
$$
\hat{\Sigma} ({\mathrm T})=q{\mathrm T}\text { and } \hat{\Theta}^{(1)}({\mathrm T})= 1,
$$
their difference is measured at the initial conditions. 
The initial condition of $T(W, X)$ is $t + W$ and that of 
$\tilde{T}(W, X)$ is $et +f + W$. Namely, the infinitesimal 
deformation $\varphi$ arises from the coordinate transformation 
$$
t + W \mapsto et + f +eW
$$ 
or equivalently 
$$
W \mapsto t(e-1) +f +eW.
$$
%The composition of two coordinate transformations of this type coincides 
%with the product of two triangular matrices.

%The above equality \eqref{9.28d} says 
%that in the level of the initial condition, 
%the dynamical system 
%$ 
%y \mapsto \varphi (y)
%$
%looks 
%as
%\begin{equation}\label{9.28e}
%t \mapsto \text{\it \/ the initial condition of }\varphi (y) =
%et + f.
%\end{equation}
%The composition of two mutually commutative transformations 
%of the form \eqref{9.28e} is nothing but the multiplication of $2\times 2$ matrices. 
%Therefore the quantum group structure is the same as in 
%the group structure in 
%\ref{9.28a}.
%\par
%%%%%%%%%
We answer the question above in Observation \ref{obs2}. 
\subsubsection{Quantum Galois group $\mathrm {NC}\infgal
\sb {\sigma \, \theta^*} (\com (t)/\com )$}
The Hopf algebra $\gH_{q}$ in \ref{9.28b} defines a 
functor 
$$
\hat{\gH}\sb{q \, L\n}\colon (NCAlg/L\n )\to (Set)
$$
such that 
\begin{multline*}
\hat{\gH}\sb{q\, L\n}(A) =\{ \psi :\gH\sb q\otimes \sb\com L\n \to A \, |\, \psi \text{\it \/ is a $L\n$-algebra morphism }\\
\text{\it \/ such that }\psi (u) -1, \, 
\psi (v) \text{\it \/ are nilpotent} \} 
\end{multline*}
for every $A \in (NCAlg/L\n)$.
In other words $\hat{\gH}\sb{q\, L\n}$ is the formal completion of the quantum group $\gH\sb q \otimes \sb \com L\n=
\gH\sb{q\, L\n}$. 
We can summarize our results in the following form. 
\begin{theorem}\label{14.6.23a}
The quantum formal group $\hat{\gH}\sb{q\, L\n}$ operates on the functor $\NCF$ in such a way that there exists a functorial isomorphisms 
\begin{equation}\label{140220a}
\NCF \simeq \hat{\gH}\sb{q\, L\n}. 
\end{equation}
The restriction of the functor $\NCF$ on the sub-category $(CAlg/L\n)$ gives the functorial isomorphism 
\begin{equation*}
\NCF\, |_{(CAlg/L\n)} \simeq \hat{\mathbb{G}}\sb{m\, L\n}.
\end{equation*}
Or equivalently, 
\begin{enumerate}
\renewcommand{\labelenumi}{(\arabic{enumi})}
\item %%%%%
We have not only isomorphism \eqref{140220a} of functors on the category $(NCAlg/L\n )$ taking 
values in the category $(Set)$ of sets, but also we can identify, 
by this isomorphism, the co-product of the quantum formal group 
$\hat{\gH} \sb{q\, L\n}$ 
arising from the multiplication of triangular matrices in 
 \ref{9.28b} with composition of coordinate transformations of the initial condition in \ref{9.28e}. 
For these two reasons, we say that the quantum infinitesimal Galois group of the \QSI 
field extension $(\com (t),\, \sigma , \, \theta^{\ast} ) / \com$ is the quantum formal group $\hat{\gH}\sb {q\, L\n}$. 
Namely, 
$$
\ncinfgal ((\com (t), \sigma , \, \theta^*)/\com) \simeq \hat{\gH}\sb{q \,L\n }.
$$
 %%%%%%% 
%Thanks to \ref{9.28e} and \eqref{140220a}, we may say that $\hat{\gH}\sb {q\, L\n}$ is the quantum infinitesimal Galois group of the \QSI 
%field extension $(\com (t),\, \sigma , \, \theta^{\ast} ) / \com$. %
%In other words, 
%the quantum Galois group functor of the \QSI extension 
%$(\com (t),\, \sigma , \, \theta^{\ast} ) / \com$ on the category 
%$(NCAlg/L\n )$ 
% of not necessarily commutative $L\n$-algebra is isomorphic to the 
 %quantum formal group $\hat{\gH}\sb{q \, L\n}$. 
 %Through this isomorphism, the co-product of the quantum formal group arising from the multiplication of triangular matrices in 
 %\ref{9.28b} coincides with composition of coordinate transformations of the initial condition.
\item The commutative Galois group functor 
$\cinfgal\sb{\sigma \theta^*}(L/k)$ of the \QSI extension $(\com (t), \sigma , \theta^{\ast} ) / \com$ on the category $(Alg / L\n )$ of commutative $L\n$-algebras 
 is isomorphic to the formal group 
$\hat{\G}\sb{m}$. 
\end{enumerate}
\end{theorem}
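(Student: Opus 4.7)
\medskip

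The plan is to build directly on Lemma \ref{8.30c} and Sublemma \ref{5.15}, which already identify $\NCF(A)$ as the set of pairs $(e, f) \in A^2$ with $ef = qfe$ and $e-1, f$ nilpotent. First I would define the natural transformation $\Phi \colon \NCF \to \hat{\gH}\sb{q\, L\n}$ by sending an infinitesimal deformation $\varphi$, parameterized by such a pair $(e,f)$, to the $L\n$-algebra morphism $\psi \colon \gH\sb{q} \otimes\sb{\com} L\n \to A$ determined by $\psi(u) = e$, $\psi(u^{-1}) = e^{-1}$, $\psi(v) = f$. This is well-defined because $e-1$ nilpotent forces $e$ to be invertible (so a unique $e^{-1}$ exists in $A$), the relation $uv = qvu$ in $\gH\sb q$ lifts exactly to $ef = qfe$ in $A$, and the nilpotency conditions on $e-1, f$ are built into both sides. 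Naturality in $A$ is immediate from the explicit formula. Injectivity is the content of Sublemma \ref{5.15}(2), while surjectivity is the content of Sublemma \ref{5.15}(3). This settles the set-theoretic isomorphism \eqref{140220a}.

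Next I would verify the identification of the group structure. The content of \ref{9.28e} expresses the action of $\varphi \in \NCF(A)$ on the universal solution $T(W,X) = (t+W)Q + X$ via the coordinate transformation of initial data $W \mapsto (e-1)t + f + eW$. If two such deformations correspond to $(e\sb 1, f\sb 1)$ and $(e\sb 2, f\sb 2)$, composing the coordinate transformations gives
\begin{equation*}
W \; \mapsto \; (e\sb 1 e\sb 2 - 1)t + (e\sb 1 f\sb 2 + f\sb 1) + e\sb 1 e\sb 2 W,
\end{equation*}
which corresponds to the pair $(e\sb 1 e\sb 2,\; e\sb 1 f\sb 2 + f\sb 1)$. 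This is precisely the matrix product of the upper-triangular matrices in $H\sb q(A)$ from \ref{9.28b}, and hence precisely the co-multiplication $\Delta(u) = u \otimes u$, $\Delta(v) = u \otimes v + v \otimes 1$ on $\gH\sb q$. Thus the transport of the coordinate-transformation operation through $\Phi$ coincides with the Hopf algebra product $\hat{\gH}\sb{q\,L\n}(A) \times \hat{\gH}\sb{q\,L\n}(A) \to \hat{\gH}\sb{q\,L\n}(A)$, which is what is needed for (1).

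For part (2), I restrict $\Phi$ to $(CAlg/L\n)$. If $A$ is commutative then $ef = fe$; combined with $ef = qfe$ we get $(1-q) fe = 0$, and invertibility of $e$ together with $q \neq 1$ forces $f = 0$. Hence on commutative $A$ only the pair $(e, 0)$ with $e-1$ nilpotent survives, and $\Phi$ reduces to the tautological bijection with $\hat{\G}\sb{m\,L\n}(A) = \{ e \in A\sp{\times} \mid e-1 \text{ nilpotent}\}$. This is consistent with Proposition \ref{5.14a} and gives the commutative statement.

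The main obstacle, in my view, is not the algebraic bookkeeping but phrasing the action of $\hat{\gH}\sb{q\,L\n}$ on $\NCF$ cleanly enough so that the identification with coordinate transformations of initial conditions is both rigorous and visibly the co-multiplication of $\gH\sb q$; in particular one has to check carefully that composing two deformations really corresponds to substituting one coordinate change into the other \emph{before} passing to the initial value at $n=0$, and that the order of composition matches the order in the matrix product rather than its opposite. Once that is set up correctly, everything else reduces to the explicit computation above, so the theorem follows by combining this with Lemma \ref{8.30c}, its corollary and Proposition \ref{5.14a}.
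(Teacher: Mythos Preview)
Your proposal is correct and follows essentially the same approach as the paper, which presents the theorem as a summary of the preceding results (Lemma~\ref{8.30c}, Sublemma~\ref{5.15}, and the discussion in \ref{9.28b} and \ref{9.28e}) rather than giving a separate proof. One small imprecision: you cite Sublemma~\ref{5.15}(2),(3) for injectivity and surjectivity in the non-commutative case, but that Sublemma is stated only for commutative $A$; the non-commutative bijection is Lemma~\ref{8.30c}, whose proof explicitly extends the Sublemma's argument. Also, as the paper notes in Remark~\ref{10.24a}, the product on $\hat{\gH}_{q\,L^\natural}(A)$ is only defined when the two sets of entries are mutually commutative, a point your last paragraph alludes to but which should be stated explicitly when you formalize the operation.
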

\par 
The operation of quantum formal group requires a 
precision. 
\begin{remark}\label{10.24a}
We should be careful about the 
operation of quantum formal group. To be more precise, for $\varphi \in 
\mathcal{F}\sb{L/k}(A)$ and
$
 \psi \in \hat{\gH}\sb{q\, L\n}(A)
$ 
so that 
we have 
$$
\varphi (t) = (e(t + W) +f)Q +X \in F(\Z , \,A[[W]])[[X]]
$$
with $e, \, f\in A$ and 
 we imagine the
matrix 
$$
\left[ \begin{array}{cc}
\psi(u) & \psi(v) \\
 0 & 1 
\end{array} \right] \in M\sb 2 (A)
$$
corresponding to $\psi$. 
If 
the sub-sets of the algebra $A$, $\{ \psi(u), \, \psi(v) \}$ and $\{ e, \, f \}$ are mutually commutative,
the product 
$$
\psi \cdot \varphi = \varpi \in \mathcal{F}\sb{L/k}(A)
$$
is defined 
to be
$$
\varpi (t) = (\psi(u) e (t+ W)+ (\psi(u)f + \psi (v) )Q + X \in F(\Z , \, A[[W]])[[X]].
$$ 
\end{remark}
\subsubsection{Non-commutative Picard-Vessiot ring}
So far we analyzed the First Example, which is a non-linear \QSI equation, 
according to general principle of Hopf Galois theory. We finally arrived at Theorem \ref{14.6.23a} that shows a quantum formal group appears as a Galois group. 
Our experiences of dealing Picard-Vessiot theory in our general 
framework done in our previous works \cite{ume11}, \cite{ume96.2}, 
teach us that 
we discovered here 
a new phenomenon, a non-commutative Picard-Vessiot extension.
\par 
We work in the \QSI ring 
$(F(\Z, \com (t) )[[X]], \, \hat{\Sigma} , \hat{\Theta}^{*})$. 
We are delighted to assert that a non-commutative \QSI ring extension 
\begin{equation}\label{14.6.24c}
(\com \langle Q,\, Q^{-1}, \, X \rangle\sb{alg}, \, \hat{\Sigma} , \, \hat{\Theta}^* )
/\com )
\end{equation}
is a non-commutative Picard Vessiot ring with quantum Galois group 
$\gH \sb q$. 
%where we work in \QSI algebra $F(\N, \com (t) \n )[[X]]$.
We consider the fundamental system 
$$
Y := 
\begin{bmatrix}
\begin{array}{cc}
Q & X \\
0 & 1
\end{array} 
\end{bmatrix} \in M\sb 2 (\com \langle Q, \, Q^{-1}, \, X\rangle\sb {alg})
$$
so that the homogeneous linear \QSI equations 
is 
\begin{equation}\label{14.6.23b}
\hat{\Sigma} (Y) = 
\begin{bmatrix}
\begin{array}{cc}
q & 0 \\
0 & 1
\end{array} 
\end{bmatrix}Y, \qquad \hat{\Theta}^{(1)}(Y) =
\begin{bmatrix}
\begin{array}{cc}
0 & 1 \\
0 & 0
\end{array} 
\end{bmatrix}Y.
\end{equation}
In fact, we can check the first equation in \eqref{14.6.23b}:
$$
\hat{\Sigma} (Y) = 
\begin{bmatrix}
\begin{array}{cc}
\hat{\Sigma} (Q) & \hat{\Sigma} (X) \\
\hat{\Sigma} (0) & \hat{\Sigma} (1)
\end{array} 
\end{bmatrix}
=
\begin{bmatrix}
\begin{array}{cc}
qQ & qX \\
0 & 1
\end{array} 
\end{bmatrix}
=
\begin{bmatrix}
\begin{array}{cc}
q & 0 \\
0 & 1
\end{array} 
\end{bmatrix}
\begin{bmatrix}
\begin{array}{cc}
Q & X \\
0 & 1
\end{array} 
\end{bmatrix} 
=
\begin{bmatrix}
\begin{array}{cc}
q & 0 \\
0 & 1
\end{array} 
\end{bmatrix}Y.
$$
The second equality of \eqref{14.6.23b} is also checked easily. 
%%%%%%%%%%%%%
\par 
Leaving heuristic reasoning totally aside, 
we study the Picard-Vessiot extension 
\eqref{14.6.24c}
in detail in Sections \ref{14.6.25b} and \ref{141223b}.
\section{The Second Example, the \QSI field extension $(\com(t,\, t^{\alpha}) , \,\sigma,\, \theta^{\ast})/\com$ }\label{10.4b}
\subsection{Commutative deformations}\label{1027a}
As in the previous Section, let $t$ be a variable over $\com$ 
and we assume that the complex number $q$ is not a root of unity if we do not mention other assumptions on $q$. Sometimes we write the condition that 
$q$ is not a root of unity, simply to 
recall it. 
We work under the condition that $\alpha $ is an irrational complex number so that $t$ and $t^\alpha$ are algebraically independent over $\com$. 
Therefore the field $\com (t, \, t^\alpha )$
is isomorphic to the rational function field of two variables over $\com$. We denote 
by 
$\sigma$ the $\com$-automorphism of the field $\com (t, \, t^\alpha )$ such that 
$$
 \sigma (t) = qt \ \text{\it and }\ 
 \sigma (t^\alpha ) =q^\alpha t^\alpha .
$$
Let us set $\theta^{(0)} := \Id\sb{\com(t,t^{\alpha})}$, the map
\[
 \theta^{(1)}: = \frac{\sigma - \Id_{\com(t,\, t^{\alpha})}}{(q-1)t} \colon \com(t,\, t^{\alpha}) \rightarrow \com(t,\, t^{\alpha})
\]
and
\[
 \theta^{(n)} = \frac{1}{[n]\sb{q}!} \left( \theta^{(1)} \right)^{n}\qquad \text{ for}\qquad n = 2, 3, \cdots .
\]
So the $\theta^{(i)}$'s are $\com$-linear operators on $\com(t,\, t^{\alpha})$ and 
\[
 L \colon = (\com(t,\, t^{\alpha}), \,\sigma,\, \theta^{\ast})
\]
is a \QSI field. The restriction of $\sigma$ and $\theta^{\ast}$ to the subfield $\com$ are trivial. 
We denote the \QSI field extension $L/\com$ by $L/k$. We denote $t^{\alpha}$ by $y$ so that 
as we mentioned above, 
the abstract field $\com(t,t^{\alpha}) = \com(t,\, y)$ is isomorphic to the rational function field of $2$ variables over $\com$. We take the derivations $\partial/ \partial t$ and $\partial / \partial y$ as a basis of the $L\n$-vector space $\mathrm{Der}(L\n/k\n)$ of $k\n$-derivations of $L\n$. Hence $L\s= (L\n,\{\partial/\partial t, \partial/\partial y \})$ as in \cite{ume11}. \par
Let us list the fundamental equations.
%\begin{equation}
\begin{align}
&\sigma(t) = qt, \qquad \sigma(y) = q^{\alpha}y, \label{5.24a}\\
&\theta^{(1)}(t) = 1, \qquad 
      \theta^{(1)}(y) = [\alpha]\sb{q} \frac{y}{t}. \label{5.24d}
\end{align}
We explain below the notation $[\alpha ]\sb q$.
%\end{equation}
%$$ \cum $$
We are going to determine the Galois group 
$$\mathrm{NC}\infgal(L/k).$$
Before we start, we notice
that 
by Proposition \ref{10.17a}, the 
Galois hull of the extension $(\com (t), \, \sigma , \, \theta ^* )/\com $ is not a commutative algebra and 
since $ \com (t) $ is a sub-field of $\com ( t, \, t^\alpha )$, the Galois hull of the \QSI field extension 
$(\com (t, \, t^\alpha ), \, \sigma ,\,\theta ^*)/\com $ is not a commutative algebra either. 
Consequently the \QSI field extension 
$\com (t, \, t^\alpha )/\com $ is not a Picard-Vessiot extension (See \cite{har10}, \cite{masy}, \cite{ume11}).
So we have to go beyond the general theory of Heiderich \cite{hei10}, Umemura \cite{ume11} 
 for the definition of the Galois group ${\rm NC}\infgal(L/k)$. \par
It follows from general definition that the universal Hopf morphism 
\[
\iota \colon L \rightarrow F(\Z, L\n)[[X]]
\]
is given by 
\[
 \iota(a) = \sum\sb{n=0}^{\infty}X^{n}u[\theta^{(n)}(a)] \in F(\Z,L\n)[[X]]
\]
for $a \in L$. Here for $b \in L$, we denote by $u[b]$ the element
\[
u[b] = \left[ \begin{array}{ccccccc}
\cdots & -2 & -1 & 0 & 1 & 2 & \cdots\\
 \cdots & \sigma^{-2}(b) & \sigma^{-1}(b) & b & \sigma(b) & \sigma^{2}(b) & \cdots
\end{array} \right]
\in F(\Z,L\n)
. \]
It follows from the definition above of the universal Hopf morphism $\iota$, 
\[
 \iota(y) = \sum\sb{n=0}^{\infty}X^{n}{\binom{\alpha}{n}}\sb{q}t^{-n}Q^{\alpha - n}y, 
\]
where we use the following notations. For a complex number $\beta \in \alpha + \Z$ , 
\[
 [\beta]\sb{q} = \frac{q^{\beta} - 1}{q-1}
\]
and
\[
 \binom{\alpha}{n}\sb{q} = \frac{[\alpha]\sb{q}[\alpha -1]\sb{q} \cdots [\alpha -n + 1]\sb{q}}{[n]\sb{q}!}. 
\]
\[
Q = \left[ \begin{array}{ccccccc}
\cdots & -2 & -1 & 0 & 1 & 2 & \cdots\\
 \cdots & q^{-2} & q^{-1}&1 & q & q^{2} & \cdots
\end{array} \right]
%\qquad
 \text{ and }%\qquad 
Q^{\alpha} = \left[ \begin{array}{ccccccc}
\cdots & -2 & -1 & 0 & 1 & 2 & \cdots\\
 \cdots & q^{-2\alpha} & q^{-\alpha} & 1 & q^{\alpha} & q^{2\alpha} & \cdots
\end{array} \right]
. \]
We set
\[
 Y\sb{0} := \sum\sb{n=0}^{\infty}X^{n}\binom{\alpha}{n}\sb{q} t^{-n}Q^{\alpha -n}
\]
so that 
\begin{equation}\label{5.21b}
 \iota(y)= Y\sb{0}y \qquad \text{ in } F(\Z,L\n)[[X]]. 
\end{equation}
Considering $k\n$-derivations $\partial/\partial t, \partial /\partial y $ in $L\n$ and therefore in $F(\Z,L\n)$ or in $F(\Z,L\n)[[X]]$, 
we generate the Galois hull $\eL$ by $\iota(L)$ and $L\n$ so that $\eL \subset F(\Z,L\n)[[X]]$ is invariant under 
$\hat{\Sigma}$, 
the $\hat{\Theta}^{(i)}$'s and $\{ \partial/\partial t, \partial/ \partial y \}$. 
We may thus consider
\[
 \eL \hookrightarrow F(\Z, L\s)[[X]]
. \]
By the universal Taylor morphism 
\[
 L\s = (L\n, \{\partial / \partial t, \partial/ \partial y \}) \rightarrow L\s[[W\sb{1},W\sb{2}]]
, \]
we identify $\eL$ by the canonical morphism
\[
 \iota \colon \eL \rightarrow F(\Z, L\s)[[X]] \rightarrow F(\Z, L\n[[W\sb{1},W\sb{2}]])[[X]]. 
\]
We study first the infinitesimal deformations $\mathcal{CF}\sb{L/k}$ of $\iota$ on the category $(CAlg/L\n)$ of commutative $L\n$-algebras and then generalize the argument to the category $\NCA$ of not necessarily commutative $L\n$-algebras. \par
For a commutative $L\n$-algebra $A$, let $\varphi \colon \eL \to F(\Z,A[[W\sb{1},W\sb{2}]])[[X]]$ 
be an infinitesimal deformation of the canonical morphism $\iota \colon \eL \rightarrow F(\Z, L\n[[W\sb{1},W\sb{2}]])[[X]]$ 
so that both $\iota$ and $\varphi$ are compatible with operators $\{ \hat{\Sigma}, \hat{\Theta}^{\ast}, \partial /\partial W\sb{1},\partial / \partial W\sb{2} \}$. 
\begin{lemma}\label{402.3}
The infinitesimal deformation $\varphi$ is determined by the images $\varphi(Y\sb{0}),\, \varphi(Q)$ and $\varphi(X)$. 
\end{lemma}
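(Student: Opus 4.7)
The plan is to exhibit $\{X,\, Q,\, Y_0\}$ as (essentially) a set of $L\s$-algebra generators of $\eL$, so that the conclusion follows from the fact that a $\K$-algebra morphism commuting with the operators in $\mathcal{D}$ is determined by its values on a generating set (together with its already-fixed restriction to $\K$, on which $\varphi$ coincides with $\iota$ by being an infinitesimal deformation).

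First I would verify that $X$, $Q$, $Y_0$ all lie in $\eL$. From the First Example, $\iota(t) = tQ + X$, so $Q = (\partial/\partial t)(\iota(t)) \in \eL$ and then $X = \iota(t) - tQ \in \eL$; from \eqref{5.21b}, $\iota(y) = Y_0 y$, whence $Y_0 = \iota(y) y^{-1} \in \eL$ since $y \in L\s \subset \K$. Conversely, both $\iota(t)$ and $\iota(y)$ lie in the $L\s$-subalgebra $L\s\langle X, Q, Y_0\rangle\sb{alg}$ (suitably completed in $X$ to accommodate the power series $Y_0$), and hence so does $\iota(f)$ for every polynomial $f \in \com[t, y]$. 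Since $\eL$ is built from $\iota(L)$ and $L\s$ by closing under the operators in $\mathcal{D}$, and since $\iota(L)$ is obtained from its polynomial part by adjoining inverses $\iota(f)^{-1}$ for nonzero $f$, only inverses and the $\mathcal{D}$-action remain to handle.

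For inverses, the ring-morphism property of $\varphi$ gives $\varphi(\iota(f)^{-1}) = \varphi(\iota(f))^{-1}$, and the right-hand side is a well-defined unit because $\varphi$ is an infinitesimal deformation of $\iota$, which already sends $\iota(f)$ to a unit. For the $\mathcal{D}$-action, compatibility of $\varphi$ with each operator $D \in \mathcal{D}$ yields $\varphi(Du) = D(\varphi(u))$, which adds no new data. Consequently $\varphi$ is determined by its values on $X$, $Q$, $Y_0$.

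The main obstacle is a clean verification that the $L\s$-subalgebra generated by $X, Q, Y_0$ together with the required inverses is already $\mathcal{D}$-stable, so that its $\mathcal{D}$-closure contributes nothing beyond what is already reachable from these three generators and $L\s$. This is carried out through the explicit formulas $\hat{\Sigma}(X) = qX$, $\hat{\Sigma}(Q) = qQ$, $\hat{\Sigma}(Y_0) = q^{\alpha} Y_0$, $\hat{\Theta}^{(1)}(X) = 1$ with higher $\hat{\Theta}^{(i)}$ annihilating $X$ and all $\hat{\Theta}^{(i)}$ annihilating $Q$, together with explicit formulas for $\hat{\Theta}^{(i)}(Y_0)$ obtained by applying $\iota$ to $\theta^{(i)}(y) \in L$ via the $q$-Leibniz rule of Definition \ref{a3.3}(3), and the vanishing of $\partial/\partial W_1, \partial/\partial W_2$ on $X, Q, Y_0$.
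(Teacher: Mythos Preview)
Your approach is the same as the paper's: the Galois hull $\eL$ is generated over $\K=L\s$ by $\iota(t)=tQ+X$ and $\iota(y)=Y_0\,y$, together with closure under the operators in $\mathcal{D}$ and under localization, and since $\varphi$ is a $\K$-algebra morphism commuting with $\mathcal{D}$ and respecting inverses, it is determined by its values on $X$, $Q$, $Y_0$. Your middle paragraphs capture this correctly.

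However, your final paragraph contains a factual slip and pursues an unnecessary goal. The derivation $\partial/\partial W_1$ (i.e.\ $\partial/\partial t$) does \emph{not} vanish on $Y_0$: explicitly
\[
Y_0=\sum_{n\ge 0} X^n\binom{\alpha}{n}_q t^{-n}Q^{\alpha-n},
\]
and the coefficients $t^{-n}$ carry nontrivial $t$-dependence. Consequently the $L\s$-subalgebra generated by $X,Q,Y_0$ (with inverses) is \emph{not} $\mathcal{D}$-stable; in fact the paper later (Lemma~\ref{5.24n}) describes $\eL$ as a localization of $L\s\langle Q,X,(tQ+X)^{-1}\rangle_{alg}\langle \partial^l Y_0/\partial t^l\rangle_{alg,\,l\in\N}$, so all the $t$-derivatives of $Y_0$ really are needed as generators. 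The point is that this does not matter for the Lemma: you already observed in the preceding paragraph that $\varphi(D u)=D(\varphi(u))$ is determined for every $D\in\mathcal{D}$, so each $\varphi(\partial^l Y_0/\partial t^l)=\partial^l\varphi(Y_0)/\partial W_1^l$ is fixed by $\varphi(Y_0)$ alone. Drop the $\mathcal{D}$-stability claim entirely; the argument preceding it already suffices.
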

\begin{proof}
The Galois hull $\eL/\K$ is generated over $\K= L\s$ by $\iota(t) = tQ + X$ and $\iota(y) = Y\sb{0}y$ with operators $\hat{\Theta}^{\ast}, \, \hat{\Sigma} $ and $\partial / \partial t, \, \partial /\partial y$ along with localizations. This proves the Lemma. 
\end{proof}
Let us set $Z\sb{0} := \varphi(Y\sb{0}) \in F(\Z, A[[W\sb{1},W\sb{2}]])[[X]]$ and expand it into a formal power series in $X$:
\[
 Z\sb{0} = \sum\sb{n = 0}^{\infty}X^{n}a\sb{n}, \,\, \text{\it \/ with \/} a\sb{n}\in F(\Z,A[[W\sb{1},W\sb{2}]]) 
\qquad \text{\it \/ for every \/ } n \in \N 
. \]
It follows from \eqref{5.24a} and \eqref{5.21b} 
\[
 \hat{\Sigma} (Z\sb{0}) = q^{\alpha}Z\sb{0}
\]
so that 
\begin{equation}\label{6.2a}
 \sum\sb{n=0}^{\infty}X^{n}q^{n}\hat{\Sigma} (a\sb{n}) = q^{\alpha} \sum\sb{n=0}^{\infty} X^{n}a\sb{n}. 
\end{equation}
Comparing the 
coefficient of the $X^{n}$'s in \eqref{6.2a} we get
\[
 \hat{\Sigma} (a\sb{n}) = q^{\alpha -n}a\sb{n} \,\, \text{\it \/ for } n \in \N.
\]
So $a\sb{n} = b\sb{n}Q^{\alpha - n}$ with $b\sb{n} \in A[[W\sb{1},W\sb{2}]]$ for $n \in \N$. Namely we have 
\begin{equation}\label{5.24g}
 Z\sb{0} = \sum\sb{n=0}^{\infty}X^{n}b\sb{n}Q^{\alpha -n} \qquad \text{\it \/ with}\qquad b\sb{n} \in A[[W\sb{1},W\sb{2}]] .
\end{equation}
It follows from \eqref{5.24d}, 
\[
 \sigma(y) - y = \theta^{(1)}(y)(q-1)t
\]
and so by \eqref{5.24a}
\[
(q^{\alpha} - 1)y = \theta^{(1)}(y)(q-1)t.
\]
Applying the canonical morphism $\iota$ and the deformation $\varphi$, we get 
\begin{equation}
(q^{\alpha}-1)Y\sb{0}= \hat{\Theta}^{(1)} (Y\sb{0})(q-1)(tQ+X) \label{5.24e}
\end{equation}
as well as by the argument of First Example, 
\begin{equation}
(q^{\alpha}-1)Z\sb{0}= 
\hat{\Theta}^{(1)} (Z\sb{0})(q-1)(teQ+X) \label{5.24f}
\end{equation}
where $e \in A$ is an invertible element congruent to $1$ modulo nilpotent elements. 

Substituting \eqref{5.24g} into \eqref{5.24f}, we get a recurrence relation among the $b\sb{m}$'s; 
\[
 b\sb{m+1} = \frac{[\alpha - m]\sb{q}}{[m+1]\sb{q}(e(t + W\sb{1}))}b\sb{m}. 
\]
Hence 
\begin{equation}\label{11.16a}
 b\sb{m} = \binom{\alpha}{m}\sb{q}(e(t+W\sb{1}))^{-m}b\sb{0} \qquad \text{\it \/ for every \/} m \in \N,
\end{equation}
where $b\sb{0} \in A[[W\sb{1}, W\sb{2}]]$ and every coefficient of the power series $b\sb{0}-1$ are nilpotent. \\
Since 
\[
 \frac{\partial Y\sb{0}}{\partial y} 
= \frac{\partial}{\partial W\sb 2}\left( \sum\sb{n=0}^{\infty} X^{n} \binom{\alpha}{n}\sb{q}(t+W\sb{1})^{-n} Q^{\alpha -n} \right) = 0,
\]
we must have 
$$
0 =\varphi \left( \frac{\partial Y\sb{0}}{\partial y}\right)
=\frac{\partial \varphi(Y\sb{0})}{\partial W\sb 2}=
\frac{\partial Z\sb{0}}{\partial W\sb 2}
$$
 and consequently 
\[
 \frac{\partial b\sb{0}}{\partial W\sb{2}} = 0
\]
so that 
\[
 b\sb{0} \in A[[W\sb{1}]]
. \]
 by \eqref{5.24g}.
Therefore, we have determined the image 
\begin{equation}\label{402.3b}
Z\sb 0 = \varphi(Y\sb{0}) = \sum\sb{n=0}^{\infty} X^{n} \binom{\alpha}{n}\sb{q}(e(t+W\sb{1}))^{-n} Q^{\alpha -n}b\sb{0}
\end{equation}
by \eqref{11.16a}, 
where all the coefficients of the power series $b\sb{0}(W\sb 1) -1$ are nilpotent. 
\subsection{Commutative deformation functor
$\mathcal{CF}\sb{L/k}$ for $\com (t,\, t^{\alpha})/\com$}\label{4.5.1d}
%$\mathcal{CF} \sb{ L/k}$ of infinitesimal deformations on the category $(CAlg/k)$ of commutative $L\n$-algebras for \QSI field exten sion} 
%and
%$\infgal\sb{\sigma \theta ^*} (L/k)$ }
In the Second Example, when we deal with the \QSI field extension $L/k$, the Galois 
hull $\eL / \K$ is a non-commutative algebra extension. 
So we have to consider the functor $\mathcal{NCF}\sb {L/k}$
on the category $(NCAlg/ L\n)$ of not 
necessarily commutative $L\n$-algebras. 
It is, however, easier to understand first the commutative deformation functor 
$\mathcal{CF}\sb{L/k}$ that is the restriction on the sub-category $(CAlg/ L\n)$ of the functor $\mathcal{NCF}\sb{L/k}$. 
%The restricted functor $\mathcal{NCF}\sb {L/k}\, | \sb{(CAlg/L^n )}$ is by definition $\mathcal{CF}\sb{L/k} = \mathcal{F}\sb {L/k} $.
We using the notation of Lemma \ref{402.3}, it follows from 
\eqref{402.3b} the following Proposition. 
\begin{proposition}\label{402b}
We set 
\begin{align}
Y\sb 1 (W\sb 1, \, W\sb 2\, ; \, X \, ) &:= (t+ W\sb 1)Q + X, \\
 Y\sb 2 (W\sb 1, \, W\sb 2\, ; \, X \, ) &:= \sum\sb{n=0}^{\infty} X^{n} 
 \binom{\alpha}{n}\sb{q}(t+W\sb{1})^{-n} Q^{\alpha -n}(y+ W\sb 2 ).
\end{align}
Then we have
\begin{align}
 \iota (t) & = Y \sb 1 (W\sb 1, \, W\sb 2\, ; \, X \, ) , \\
 %&:= Y \sb 1 ((e-1)t+ eW\sb 1, \, ( b\sb 0(W\sb 1) -1)y + b\sb 0 (W\sb 1) W\sb 2) \\
 \iota (y) & = Y\sb 2 (W\sb 1, \, W\sb 2\, ; \, X \, ) % &:= Y \sb 1 ((e-1)t+ eW\sb 1, \, ( b\sb 0(W\sb 1) -1)y + b\sb 0 (W\sb 1) W\sb 2) 
\end{align} 
and 
\begin{align}
\varphi (Y \sb 1 (W\sb 1, \, W\sb 2; \, X)) &:= Y \sb 1 ((e-1)t+ eW\sb 1, \, [ b\sb 0(W\sb 1) -1 ]y + b\sb 0 (W\sb 1) W\sb 2 ; \, X), \\
 \varphi (Y\sb 2 (W\sb 1, \, W\sb 2; \, X)) &:= Y \sb 2 ((e-1)t+ eW\sb 1, \, [ b\sb 0(W\sb 1) -1]y + b\sb 0 (W\sb 1) W\sb 2;\, X). 
\end{align}
In other words the infinitesimal deformation $\varphi$ is given by the coordinate transformation of the initial conditions 
$$
(W\sb 1 , \, W\sb 2) \mapsto (\varphi \sb 1 (W\sb 1,\, W\sb 2), \, \varphi \sb 2 (W\sb 1,\, W\sb 2)), 
$$ 
where 
\begin{align}\label{140226a}
\varphi \sb 1 (W\sb 1 , \, W\sb 2)&= (e-1)t + eW\sb 1 , \\ 
\label{140226b}
\varphi \sb 2 (W\sb 1 , \, W\sb 2)&= [ b\sb 0(W\sb 1) -1]y + b\sb 0 (W\sb 1) W\sb 2.
\end{align}
\end{proposition}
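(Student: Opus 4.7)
The plan is to verify the four equalities in the order they appear. The first two are a direct Taylor expansion of the formulas for the universal Hopf morphism established earlier in this section, while the last two repackage the calculations already carried out in the paragraphs leading up to equation \eqref{402.3b}.

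For the identities $\iota(t) = Y_1$ and $\iota(y) = Y_2$, recall from the analysis of the First Example that $t \in L$ has image $tQ + X \in \eL \subset F(\Z, L\s)[[X]]$ under the universal Hopf morphism, and by \eqref{5.21b} the element $y = t^{\alpha}$ has image $Y_0 \cdot y$. The canonical morphism $\iota \colon \eL \to F(\Z, L\n[[W_1, W_2]])[[X]]$ is the composition with the universal Taylor expansion along the basis $\{\partial/\partial t, \partial/\partial y\}$ of $\mathrm{Der}(L\n/k\n)$. Under this expansion, $t \mapsto t + W_1$ and $y \mapsto y + W_2$, whereas $Q$ and $X$ are left unchanged ($Q$ takes values in $\com \subset L\n$, and $X$ is the formal variable). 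Substitution gives immediately $\iota(t) = (t + W_1)Q + X = Y_1$ and $\iota(y) = \sum_{n \ge 0} X^{n} \binom{\alpha}{n}\sb{q} (t + W_1)^{-n} Q^{\alpha - n}(y + W_2) = Y_2$.

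For $\varphi(Y_1)$ and $\varphi(Y_2)$, I would combine three inputs that are already on the table: (a) the restriction $\varphi|_{\K} = \iota|_{\K}$ forces $\varphi(t) = t + W_1$ and $\varphi(y) = y + W_2$, since $t, y \in L\s \subset \K$; (b) the argument of Sublemma \ref{5.15} transplanted into the present ambient ring and specialized to the commutative category yields $\varphi(X) = X$ and $\varphi(Q) = eQ$ for a unit $e \in A$ with $e - 1$ nilpotent, the $f$-term vanishing automatically from $QX = qXQ$ and $q \neq 1$ — this is the very invertible element $e$ introduced in \eqref{5.24f}; (c) equation \eqref{402.3b} supplies the explicit form $\varphi(Y_0) = Z_0 = \sum_{n \ge 0} X^n \binom{\alpha}{n}\sb{q} (e(t+W_1))^{-n} Q^{\alpha - n} b_0(W_1)$. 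Putting these together, $\varphi(\iota(t)) = \varphi(t)\varphi(Q) + \varphi(X) = (t + W_1)eQ + X$, which after rewriting the scalar factor as $(t + W_1)e = t + ((e-1)t + eW_1)$ is precisely $Y_1((e-1)t + eW_1,\, (b_0(W_1)-1)y + b_0(W_1)W_2;\, X)$ (the second argument being inert since $Y_1$ does not depend on it). Likewise $\varphi(\iota(y)) = \varphi(Y_0)\varphi(y) = Z_0 (y + W_2)$, and setting $\tilde W_1 := (e-1)t + eW_1$ and $\tilde W_2 := (b_0(W_1) - 1) y + b_0(W_1) W_2$ so that $t + \tilde W_1 = e(t + W_1)$ and $y + \tilde W_2 = b_0(W_1)(y + W_2)$, direct comparison gives $\varphi(\iota(y)) = Y_2(\tilde W_1, \tilde W_2; X)$.

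The only delicate point is bookkeeping inside the twisted power series ring, where $QX = qXQ$; but the scalar factors $e$ and $b_0(W_1)$ belong to the commutative coefficient ring $A[[W_1]]$ and therefore commute with $t, y, W_1, W_2$ and with the coefficients of the series, so no non-commutativity obstruction arises at this stage (we are working over $(CAlg/L\n)$). This identifies the infinitesimal deformation $\varphi$ with the coordinate transformation \eqref{140226a}, \eqref{140226b} of the initial condition $(W_1, W_2)$, which is the substance of the proposition.
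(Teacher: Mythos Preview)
Your proposal is correct and follows the same line as the paper, which simply states that the Proposition follows from \eqref{402.3b} together with the preceding analysis; you have faithfully unpacked that claim by computing $\varphi(tQ+X)$ and $\varphi(Y_0 y)$ from the known values $\varphi(Q)=eQ$, $\varphi(X)=X$, $\varphi(Y_0)=Z_0$ and then recognizing the result as $Y_1,\,Y_2$ evaluated at the substituted variables $\tilde W_1=(e-1)t+eW_1$, $\tilde W_2=(b_0(W_1)-1)y+b_0(W_1)W_2$. The only minor notational wrinkle is that $\varphi$ is defined on $\eL$ rather than on its image, so ``$\varphi(\iota(t))$'' should strictly read ``$\varphi(tQ+X)$'' with $t\in L^\sharp$, but the paper commits the same conflation and your computation is unaffected.
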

The set of transformations in the form of \eqref{140226a}, \eqref{140226b}
forms a group.
\begin{lemma}\label{40206a}
For a commutative $L\n$-algebra $A$, we set 
\begin{multline}
\hat{G}\sb{II}(A) :=
\{ 
 (( e-1)t + eW\sb 1, \, [b(W\sb 1) -1]y + b(W\sb 1)W\sb 2 ) 
 \in A[[W\sb 1, \, W\sb 2 ]]\times A[[W\sb 1. W\sb 2]]\, |\, \\
 e\in A, \, b(W\sb 1) \in A[[W\sb 1]], 
  \text{\it \/ all the coefficients of } b(W\sb 1) -1 
  \text{\it \/ and } e-1 \text{\it \/ are nilpotent } \}. 
\end{multline}
Then the set $\hat{G}\sb{II}(A)$
 is a group, the group law being the composition of coordinate transformations. 
\end{lemma}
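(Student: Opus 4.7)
The plan is to verify the group axioms directly for $\hat{G}\sb{II}(A)$ under composition of coordinate transformations. Associativity is automatic, since composition of self-maps of $A[[W\sb 1, W\sb 2]]\times A[[W\sb 1, W\sb 2]]$ is always associative, so the substance of the lemma is closure, existence of an identity, and existence of inverses.

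For closure, I will take two elements $\varphi \leftrightarrow (e, b(W\sb 1))$ and $\psi \leftrightarrow (e', b'(W\sb 1))$ of $\hat{G}\sb{II}(A)$ and compute the coordinates of $\varphi\circ\psi$ by substituting $\psi$'s output into $\varphi$. A short expansion of the first coordinate yields $(ee'-1)t + ee' W\sb 1$, so the new parameter is $e'' := ee'$; and $e'' - 1 = (e-1)(e'-1) + (e-1) + (e'-1)$ is nilpotent because in commutative $A$ the nilradical is an ideal. Setting $\tilde b(W\sb 1) := b((e'-1)t + e'W\sb 1)$, the second coordinate reduces algebraically to $[\tilde b(W\sb 1) b'(W\sb 1) - 1]y + \tilde b(W\sb 1) b'(W\sb 1) W\sb 2$, so the new data are $b''(W\sb 1) := \tilde b(W\sb 1)b'(W\sb 1)$. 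I then need to check that all coefficients of $b'' - 1$ are nilpotent: substituting the series $(e'-1)t + e'W\sb 1$ (whose $W\sb 1$-constant term $(e'-1)t$ is nilpotent) into $b$ produces $\tilde b(W\sb 1)$ whose constant term lies in $1 + \mathrm{Nil}(A)$ and whose higher $W\sb 1$-coefficients are sums of products each carrying a nilpotent factor; multiplication by $b'(W\sb 1) = 1 + (\text{coefficientwise nilpotent})$ preserves this.

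The identity element is visibly $(e, b) = (1, 1)$. For inverses, given $(e, b(W\sb 1))$, I will set $e' := e^{-1}$, which is well-defined with $e^{-1} - 1$ nilpotent via the finite geometric series $\sum\sb{n\geq 0}(1-e)^n$. I then set $b'(W\sb 1) := b((e^{-1}-1)t + e^{-1}W\sb 1)^{-1}$, where the inversion is legitimate in $A[[W\sb 1]]$ because the $W\sb 1$-constant term $b((e^{-1}-1)t)$ is a unit (it equals $1$ modulo nilpotents), and the same substitution-analysis as above shows $b' - 1$ has all coefficients nilpotent. Running the closure computation with these choices yields $(e, b)\circ(e^{-1}, b') = (1,1)$, and analogously on the other side.

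The primary obstacle, such as it is, lies in bookkeeping: one must confirm that coefficientwise nilpotency is preserved simultaneously under substitution of one power series into another, under multiplication, and under inversion of power series. All three reductions become routine once one exploits the fact that in the commutative $L\n$-algebra $A$ the nilpotent elements form an ideal, so that any finite sum of products, at least one factor of each being nilpotent, is itself nilpotent. There is no deeper analytical or algebraic obstruction.
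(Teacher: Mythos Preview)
Your proof is correct and follows essentially the same route as the paper: verify closure by computing the composite explicitly as $\big((ee'-1)t+ee'W_1,\ [b((e'-1)t+e'W_1)\,b'(W_1)-1]y + b((e'-1)t+e'W_1)\,b'(W_1)W_2\big)$, identify $(1,1)$ as the identity, and exhibit the inverse with $e^{-1}$ and $b'(W_1)=b((e^{-1}-1)t+e^{-1}W_1)^{-1}$. The only methodological difference is that the paper cites a general result from \cite{ume96.2} (that infinitesimal coordinate transformations congruent to the identity modulo nilpotents already form a group) and then merely checks $\hat G_{II}(A)$ is a subgroup, whereas you supply the nilpotency bookkeeping for substitution, multiplication, and inversion directly; your version is thus self-contained, while the paper's is shorter by appeal to the ambient group.
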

\begin{proof} 
We have shown in Umemura \cite{ume96.2} that the set of coordinate transformations of $n$-variables with coefficients in a commutative ring that are congruent to the identity 
modulo nilpotent elements forms a group under the composite of 
transformations. 
So it is sufficient to show: 
\begin{enumerate}
\renewcommand{\labelenumi}{(\arabic{enumi})}
\item The set
 $\hat{G}\sb{II}(A)$ is closed under the composition. 
 \item The identity is in $\hat{G}\sb{II}(A)$.
 \item The inverse of every element in $\hat{G}\sb{II}(A)$ is in $\hat{G}\sb{II}(A)$.
 \end{enumerate}
 In fact, let 
 $$ 
%\begin{align*}
( ( e-1)t + eW\sb 1, \, [b(W\sb 1) -1]y + b(W\sb 1)W\sb 2 ), \quad
 (( f-1)t + fW\sb 1, \, [c(W\sb 1) -1]y + c(W\sb 1)W\sb 2 ) 
% \end{align*}
 $$
 be two elements of $\hat{G}\sb{II}(A)$. 
 We mean by their composite 
 \begin{equation}\label{140226e}
( (ef-1)t + efW\sb 1,\, [ 
b((f-1)t +fW\sb1)c(W\sb 1) -1 ]y + b((f-1)t +fW\sb1)c(W\sb 1) W\sb 2)
%( (ef-1)t + efW\sb 1,\, [b(W\sb1)c((e-1)t +eW\sb1) -1 ]y + [b(W\sb1)c((e-1)t +eW\sb1)] W\sb 2)
 \end{equation}
 that is an element of $\hat{G}\sb {II}(A)$. Certainly the identity 
 $(W\sb 1, \, W\sb 2)$ is expressed for $e=1$ and $b(W\sb 1) =1$.
As for the inverse 
$$
(( e-1)t + eW\sb 1, \, [b(W\sb 1) -1]y + b(W\sb 1)W\sb 2 ) ^{-1} 
=((e^{-1}-1)t + e^{-1} W\sb 1, \, [c(W\sb 1) -1]y + c(W\sb 1)W\sb 2),
$$
where 
$$ 
c(W\sb 1)=\frac{1}
{b((e^{-1}-1)t + e^{-1}W\sb 1 )}. 
$$
\end{proof}
We can summarize what we have proved as follows. 
\begin{proposition}\label{5.24i}
There exists a functorial inclusion on the category 
$(CAlg/L\n)$ of commutative $L\n$-algebras 
$$
\mathcal{CF}\sb{L/k}(A) :=\mathcal{NCF} 
\sb{L/k} 
|\sb{(CAlg/L\n)}
 \left( A\right) 
 \hookrightarrow \hat{G}\sb{II}(A) 
$$
that sends infinitesimal deformation $\varphi$ 
to 
$$
\left( (e-1)t + eW\sb 1, \, [b\sb 0 (W\sb 1) -1]y + b\sb 0(W\sb 1) W\sb 2 \right) \in \hat{G}\sb{II}(A)
$$ 
 for every commutative $L\n$-algebra $A$. 
\end{proposition}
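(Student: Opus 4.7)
The plan is to assemble the proposition from the two pieces already prepared: Lemma \ref{402.3} (which pins down what data determines an infinitesimal deformation) and Proposition \ref{402b} (which reinterprets that data as a coordinate transformation of the initial conditions). Essentially all the computational content has been carried out in Section \ref{1027a} above; what remains is to check that the assignment is well-defined, injective, and functorial.

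I would begin by fixing $\varphi \in \mathcal{CF}\sb{L/k}(A)$ and invoking Lemma \ref{402.3}: the whole deformation is determined by $\varphi(X)$, $\varphi(Q)$ and $\varphi(Y\sb 0)$. For the first two, the argument from the First Example, specialized to commutative $A$, gives $\varphi(X) = X$ and $\varphi(Q) = eQ$ with $e - 1 \in A$ nilpotent; here the twisted commutation $QX = qXQ$ together with $q \neq 1$ and the commutativity of $A$ force $\varphi(X) = X$, exactly as in Sublemma \ref{5.15}. For $\varphi(Y\sb 0)$ I would follow the recursive computation already displayed in the text: $\hat{\Sigma}$-equivariance forces the shape \eqref{5.24g}, equation \eqref{5.24f} delivers the recurrence yielding \eqref{11.16a}, and $\partial/\partial y$-equivariance (using $\partial Y\sb 0/\partial y = 0$) forces $b\sb 0$ to lie in $A[[W\sb 1]]$. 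This produces the explicit formula \eqref{402.3b} for $Z\sb 0 = \varphi(Y\sb 0)$.

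Next I would apply Proposition \ref{402b} to translate the pair $(e, b\sb 0(W\sb 1))$ into the coordinate transformation \eqref{140226a}, \eqref{140226b} of the initial conditions. The nilpotency of $e-1$ and of the coefficients of $b\sb 0 - 1$ exactly match the defining conditions of $\hat{G}\sb{II}(A)$ in Lemma \ref{40206a}, so the proposed map lands in $\hat{G}\sb{II}(A)$. Injectivity is immediate from Lemma \ref{402.3}, since $(e, b\sb 0)$ recovers $\varphi(Q)$, $\varphi(X)$ and $\varphi(Y\sb 0)$, and hence determines $\varphi$ itself. Functoriality in $A$ reduces to the observation that a morphism $A \to B$ in $(CAlg/L\n)$ acts on the coefficients of the power series in the evident way and commutes with the extraction of the data $(e, b\sb 0)$.

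The main obstacle I expect is the verification that $\varphi(X) = X$, rather than $\varphi(X) = X + fQ$ for some nonzero nilpotent $f$; this is where the commutativity of $A$ enters crucially, since $QX = qXQ$ transported through $\varphi$ becomes the scalar relation $(1-q)f = 0$, which kills $f$ only because $q \neq 1$. Once that step is cleanly in place, the remaining assertions are bookkeeping around formulas already displayed.
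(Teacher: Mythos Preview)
Your proposal is correct and follows essentially the same approach as the paper. Indeed, the paper introduces Proposition \ref{5.24i} with the phrase ``We can summarize what we have proved as follows'' and gives no separate proof environment: the content is exactly the computations in Section \ref{1027a} culminating in equation \eqref{402.3b} and Proposition \ref{402b}, together with Lemma \ref{402.3} for injectivity, which is precisely what you have assembled.
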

In the Definition of the group functor $\hat{G}\sb{II}$ in Lemma \ref{40206a}, 
we can eliminate the variable $W\sb 2$. 
\begin{lemma}\label{40208a}
We introduce a group functor 
$$
\hat{G}\sb 2\, :\, (CAlg/L\n) \to (Grp),
$$
setting 
\begin{multline*}
\hat{G}\sb 2 \, (A)=\, \{ \, (e, \, b(\,W\sb 1\, )) \in A \times A[[W\sb 1]] \, | \\
 \, \text{ \it \/ All the coefficients of } b(\, W\sb 1 \,) -1 \,\text{\it \/ and $e-1$ are nilpotent} \}   
\end{multline*}
for every $A \in ob(CAlg/L\n)$.
The group law, the identity and the inverse are given as below. 
\par
For two elements $(e, \, b(W\sb 1), \, (f, \, c(W\sb 1))$, their product is by definition 

\begin{equation} \label{140226c} 
\left( ef, \,b((f-1)t+ fW\sb 1)c(W\sb 1)\right) .
\end{equation}
 The identity 
is $(1,\, 1)$ and the inverse 
$$
(e, \, b(W\sb 1))^{-1} = \left( \frac{1}{e}, \, \frac{1}{b((e^{-1} -1)t + e^{-1}W\sb 1)} \right). 
$$
Then the there exists an isomorphism of group functors.
$$
\hat{G}\sb{II} \simeq \hat{G}\sb 2. 
$$
\end{lemma}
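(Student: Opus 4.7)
The plan is to exhibit an explicit natural bijection
$\Phi_A\colon \hat{G}_{II}(A) \to \hat{G}_2(A)$
for each commutative $L\n$-algebra $A$, and to transport the group structure on $\hat{G}_{II}(A)$ verified in Lemma \ref{40206a} along this bijection, thereby simultaneously verifying the group axioms for $\hat{G}_2$ and establishing the claimed isomorphism $\hat{G}_{II}\simeq \hat{G}_2$. Concretely, $\Phi_A$ sends a transformation
$((e-1)t + eW_1,\; [b(W_1)-1]y + b(W_1)W_2)$
to the pair $(e,\, b(W_1))$.

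First I would check that $\Phi_A$ is well defined and bijective. The element $e \in A$ appears as the coefficient of $W_1$ in the first component and the power series $b(W_1) \in A[[W_1]]$ is the coefficient of $W_2$ in the second component, so $\Phi_A$ is injective; surjectivity is immediate since any admissible pair $(e, b(W_1))$ defines an element of $\hat{G}_{II}(A)$ by construction. Functoriality of $\Phi_A$ in $A$ with respect to $L\n$-algebra morphisms is transparent. Next I would compute the image under $\Phi_A$ of the composite \eqref{140226e} of two elements with parameters $(e, b)$ and $(f, c)$: this gives the pair $(ef,\; b((f-1)t + fW_1)\, c(W_1))$, which is precisely \eqref{140226c}. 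Transporting the multiplication through $\Phi_A$ therefore produces exactly the law stated for $\hat{G}_2$. The identity $(W_1, W_2) \in \hat{G}_{II}(A)$ maps to $(1, 1)$, and the inverse formula established in the proof of Lemma \ref{40206a} translates to the stated inverse in $\hat{G}_2(A)$.

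The main subtlety is essentially bookkeeping: one needs to recognize that the substitution $W_1 \mapsto (f-1)t + fW_1$ appearing inside the factor $b(\cdot)$ of the $\hat{G}_2$-product \eqref{140226c} is precisely the first-coordinate substitution induced by the outer transformation $(f, c)$ in $\hat{G}_{II}$. Once this identification is made, associativity and the remaining group axioms for $\hat{G}_2$ are inherited for free from $\hat{G}_{II}$ through $\Phi_A$, so no independent verification is required. There is no genuine obstacle beyond this translation; the content of the lemma is simply that the variable $W_2$ carries no extra information once $(e, b(W_1))$ are fixed, so it can be suppressed and $\hat{G}_{II}(A)$ is faithfully parameterized by $\hat{G}_2(A)$.
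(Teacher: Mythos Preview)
Your proposal is correct and follows essentially the same approach as the paper: the paper simply writes down the same map $((e-1)t+eW_1,\,(b(W_1)-1)y+b(W_1)W_2)\mapsto (e,\,b(W_1))$ and declares it an isomorphism of group functors. You have in fact supplied more detail than the paper does, explicitly checking bijectivity and that the transported group law matches \eqref{140226c}.
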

\begin {proof}
In fact, for a every commutative algebra $A \in ob(CAlg/L\n)$, the map 
\begin{align}
\hat{G}\sb{II}(A) &\to \hat{G}\sb 2 (A), \\
((e-1)t +eW\sb1, \, ( b(W\sb 1) -1)y + b(W\sb 1) W\sb 2) &\mapsto (e, \, b(W\sb 1) )
\end{align}
gives an isomorphism of group functors. 
\end{proof}
\begin{remark}\label{140226d}
In the composition laws 
for $\tilde{G}\sb {II}$
 \eqref{140226e} and for $\tilde{G}\sb{2}$
 \eqref{140226c}, we substitute in the variable $W\sb 1$
the linear polynomial $(e-1)t + W\sb 1$ in the power series $c(W\sb 1) $
to get $c((e-1)t + eW\sb 1)$. Since $c(W\sb 1 )$ is a power series, in order that the substitution has sense, we can not avoid the condition that $e -1 $ is nilpotent. We can neither define the global group functors $G\sb{II}$ nor 
$G\sb 2$ whose completions are $\hat{G}\sb {II}$, $\hat{G}\sb 2$ respectively. 
\end{remark}

It is natural to wonder what is the image of the inclusion map in Proposition \ref{5.24i}.
\begin{conjecture}\label{5.24j}
If $q$ is not a root of unity, 
the inclusion in Proposition \ref{5.24i} is the equality. 
\end{conjecture}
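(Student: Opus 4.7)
Since injectivity of the inclusion $\mathcal{CF}_{L/k}(A)\hookrightarrow\hat{G}_{II}(A)$ has been established in the course of Proposition~\ref{402b} and Lemma~\ref{402.3}, the substance of the Conjecture is surjectivity under the standing hypothesis that $q$ is not a root of unity. Given a pair $(e,b(W_1))\in\hat{G}_{II}(A)\simeq\hat{G}_{2}(A)$ (using Lemma~\ref{40208a}), my plan is to construct an explicit infinitesimal deformation $\varphi_{e,b}\in\mathcal{CF}_{L/k}(A)$ whose image under the map of Proposition~\ref{5.24i} is the given pair. The strategy adapts the proof of Sublemma~\ref{5.15} from the First Example.

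First I would introduce an auxiliary twisted-power-series sub-algebra $\tilde{\eL}\subset F(\Z,L\s)[[X]]$ containing $\eL$ and stable under $\D=\{\hat{\Sigma},\hat{\Theta}^{*},\partial/\partial t,\partial/\partial y\}$; the inclusion $\eL\subset\tilde{\eL}$ follows by expanding $\iota((t+c)^{-1})$ and $\iota((y+c)^{-1})$ as twisted geometric series in $X$, exactly as in~\eqref{146.6a}. Next, define a $\K$-algebra morphism $\psi_{e,b}\colon\tilde{\eL}\to F(\Z,A[[W_1,W_2]])[[X]]$ by the assignments $\psi_{e,b}(X)=X$, $\psi_{e,b}(Q)=eQ$, $\psi_{e,b}(\iota(t))=e(t+W_1)Q+X$, together with the prescription for $\psi_{e,b}(\iota(y))$ dictated by Proposition~\ref{402b}, and on $L\s$ via the universal Taylor morphism composed with the substitution $W_1\mapsto(e-1)t+eW_1$, $W_2\mapsto[b(W_1)-1]y+b(W_1)W_2$. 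Setting $\varphi_{e,b}:=\psi_{e,b}|_{\eL}$, the conditions $\varphi_{e,b}|_\K=\iota|_\K$, $\varphi_{e,b}\equiv\iota$ modulo nilpotent elements, and the correspondence $\varphi_{e,b}\mapsto(e,b(W_1))$ under Proposition~\ref{5.24i} are immediate from the construction.

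The substantive task is then to verify that $\varphi_{e,b}$ commutes with all operators in $\D$. Compatibility with $\hat{\Sigma}$ and the $\hat{\Theta}^{(i)}$ is forced by the identity $(q^\alpha-1)Y_0=(q-1)\hat{\Theta}^{(1)}(Y_0)\iota(t)$ of~\eqref{5.24e}, whose $\psi_{e,b}$-image is the same identity with $\iota(t)$ replaced by $e(t+W_1)Q+X$; and compatibility with $\partial/\partial y$ reduces, exactly as in the derivation leading to Proposition~\ref{402b}, to $\partial b(W_1)/\partial W_2=0$, which holds by hypothesis. The decisive obstacle I expect is compatibility with $\partial/\partial t$ on the generator $\iota(y)$: a direct chain-rule computation of $\partial/\partial W_1$ applied to $\psi_{e,b}(\iota(y))$ produces, in addition to $\psi_{e,b}(\partial_t\iota(y))$, a spurious contribution $Y_0(e(t+W_1))\,b'(W_1)(y+W_2)$. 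The key technical step will be to express $\partial_t Y_0$, as an element of $\eL$, in a form not requiring the extraction of $Q^\alpha$ (which need not lie in $\eL$) — for instance, by differentiating the identity for $\hat{\Theta}^{(1)}(Y_0)$ with respect to $t$ — and to show that the $\psi_{e,b}$-image of this expression supplies exactly the missing $b'(W_1)$-term. If this cancellation holds, the Conjecture follows; otherwise the correct target group functor is a strict sub-functor of $\hat{G}_{II}$ cut out by the resulting differential constraint on $b(W_1)$ relative to $e$.
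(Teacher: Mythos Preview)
The statement you are attempting to prove is a \emph{Conjecture} in the paper, not a theorem: the paper does not supply a proof. What the paper does provide (in Remark~\ref{5.24l} and the paragraph following Lemma~\ref{5.24n}) is a heuristic reduction. Lemma~\ref{5.24n} shows that the Galois hull $\eL$ is a localization of $L^\sharp\langle Q,X,(tQ+X)^{-1}\rangle_{alg}\langle \partial^l Y_0/\partial t^l\rangle_{alg,\,l\in\N}$, and the paper then remarks that the conjecture would follow \emph{if} one could show that the partial derivatives $\partial^n Y_0/\partial t^n$, $n\in\N$, satisfy no non-trivial algebraic relations over $L^\sharp$. The authors say explicitly that this expectation is based on ``experience'' that they ``could not find any non-trivial algebraic relations,'' and they leave it open.

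Your proposal runs into exactly this unresolved point, and you have identified it correctly. The obstacle you flag---compatibility of $\varphi_{e,b}$ with $\partial/\partial t$ on $\iota(y)$---is not a mere technical wrinkle to be resolved by a chain-rule cancellation. For $\varphi_{e,b}$ to be a deformation of the Galois hull $\eL$, it must respect every relation holding in $\eL$ among $Y_0$ and its $t$-derivatives; conversely, if those derivatives are algebraically independent (as the paper conjectures), then the coefficients of $b(W_1)$ are unconstrained and your construction goes through. Thus your ``decisive obstacle'' is equivalent to the paper's open algebraic-independence question, and your proposed cancellation argument cannot bypass it: the ``spurious'' term $Y_0(e(t+W_1))\,b'(W_1)(y+W_2)$ must be absorbed consistently at every order in $W_1$, which amounts to requiring that applying $\partial/\partial t$ to the generators of $\eL$ imposes no relations beyond those already present. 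There is also a secondary gap in your sketch: you propose to define $\psi_{e,b}$ on $L^\sharp$ by substituting $(e-1)t+eW_1$ for $W_1$ and $[b(W_1)-1]y+b(W_1)W_2$ for $W_2$ in the Taylor expansion, but a deformation in $\mathcal{CF}_{L/k}$ is required to restrict to $\iota$ on $\K=L^\sharp$, so $\psi_{e,b}$ must act as the identity on $L^\sharp$, not via this substitution.
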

%We are guided to the following \\
\begin{proposition}
\label{9.28aa} Origin of the group structure teaches us that 
if the Conjecture \ref{5.24j} is true, then 
the group functor 
$$
\hat{G}\sb {II} :\, (CAlg/L\n) \to (Grp)
$$
 operates on the functor 
$$
\mathcal{CF}\sb {L/k}: \, (CAlg/ L\n) \to (Set)
$$
through the transformations of the initial conditions $(W\sb 1, \, W\sb 2)$, 
in such a way that $$(\, \hat{G}\sb{II}, \, \mathcal{CF}\sb{L/k}\, )$$
is a torsor. So we may say that the Galois group functor 
 %We have an isomorphism of group functors 
$$
\cinfgal \, ((\com (t, \, t^\alpha \, \sigma , \theta^{*} )/ \com ) \simeq \hat{G}\sb{II}. 
$$
\end{proposition}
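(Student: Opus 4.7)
The strategy is to exhibit an action of $\hat{G}\sb{II}(A)$ on $\mathcal{CF}\sb{L/k}(A)$ that, under the inclusion of Proposition \ref{5.24i}, becomes left translation in $\hat{G}\sb{II}(A)$; the torsor property then reduces to the surjectivity of that inclusion, which is exactly Conjecture \ref{5.24j}.

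First I would define the action. By Proposition \ref{402b}, every $\varphi \in \mathcal{CF}\sb{L/k}(A)$ is determined by a pair
$\Phi(\varphi) := ((e-1)t + eW\sb 1,\, (b\sb 0(W\sb 1) - 1)y + b\sb 0(W\sb 1)W\sb 2) \in \hat{G}\sb{II}(A)$,
which acts on the universal formulas $Y\sb 1, Y\sb 2$ by substitution of $(W\sb 1, W\sb 2)$. Given $g \in \hat{G}\sb{II}(A)$, I declare $g \cdot \varphi$ to be the deformation whose associated coordinate transformation is the composite (in the sense of \eqref{140226e}) of $g$ with $\Phi(\varphi)$. That this composite again lies in $\hat{G}\sb{II}(A)$ is built into Lemma \ref{40206a}; that it really comes from a morphism $\eL \to F(\Z, A[[W\sb 1, W\sb 2]])[[X]]$ is granted, under Conjecture \ref{5.24j}, by the surjectivity of $\Phi$.

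Second, I would verify that this is a well-defined action compatible with the $\mathcal{D}$-structure. Associativity and the neutrality of the identity $(1,1)$ are immediate from associativity of composition of formal coordinate changes, already stated in Lemma \ref{40206a}. Compatibility with $\hat{\Sigma}, \hat{\Theta}^{\ast}$ and the partial derivatives is automatic: the formulas $Y\sb 1$ and $Y\sb 2$ satisfy the fundamental equations \eqref{5.24a}--\eqref{5.24f} independently of the choice of initial conditions, and substituting $(W\sb 1, W\sb 2) \mapsto (\varphi\sb 1, \varphi\sb 2)$ preserves these equations. Third, I would deduce the torsor property: Proposition \ref{5.24i} gives injectivity of $\Phi$, and Conjecture \ref{5.24j} promotes it to a bijection, so the action is transported under $\Phi$ to the left regular action of $\hat{G}\sb{II}(A)$ on itself, which is manifestly free and transitive. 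This identifies $\mathcal{CF}\sb{L/k}$ as an $\hat{G}\sb{II}$-torsor and, in line with the philosophy of Paragraph \ref{9.28a}, justifies the designation $\cinfgal((\com(t, t^{\alpha}), \sigma, \theta^{\ast})/\com) \simeq \hat{G}\sb{II}$.

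The main obstacle is Conjecture \ref{5.24j} itself: only the injectivity of $\Phi$ is established in Proposition \ref{5.24i}, while surjectivity requires, for every $(e, b(W\sb 1)) \in \hat{G}\sb 2(A)$, the construction of an actual $\mathcal{D}$-compatible deformation $\varphi\colon \eL \to F(\Z, A[[W\sb 1,W\sb 2]])[[X]]$ realizing the prescribed coordinate transformation. Prescribing $\varphi$ on the generators $tQ + X$ and $Y\sb 0 y$ is straightforward, but extending to the localizations present in $\eL$ (inverses of elements like $c + tQ + X$, compare Proposition \ref{10.17a}), verifying that commutation relations such as $QX = qXQ$ survive, and checking nilpotency of the deformation parameters in $A[[W\sb 1, W\sb 2]]$, is the delicate point the conjecture leaves open.
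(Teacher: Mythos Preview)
Your proposal is correct and follows the approach the paper itself only sketches: the paper does not give an explicit proof of this proposition, treating it as an immediate consequence of the ``origin of the group structure'' discussion (Paragraph \ref{9.28a}), Proposition \ref{402b}, Proposition \ref{5.24i}, and Lemma \ref{40206a}, conditioned on Conjecture \ref{5.24j}. Your three-step plan (define the action by composing coordinate transformations of the initial conditions, check compatibility, deduce the torsor property from the bijectivity of $\Phi$) is exactly the unpacking of that implicit argument, and your identification of Conjecture \ref{5.24j} as the sole missing ingredient matches the paper's own stance.
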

%%%%%%%%%%%%%%%
%%%%%%%%%%%%%%%%%%%%%%%%%%%%%
\begin{remark}\label{5.24l}
We explain a background of Conjecture \ref{5.24j}. 
\end{remark}
\begin{lemma}\label{5.24n} The Galois hull $\eL$ is a localization of the following ring
\[
 L\s\langle Q, X, \frac{1}{tQ + X}\rangle \sb{alg} \langle\frac{\partial^{l}}{\partial t^{l}}Y\sb{0}\rangle \sb{alg, \, l\in\N}. 
\]
\end{lemma}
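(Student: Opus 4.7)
Writing $R$ for the ring $L\s\langle Q,\, X,\, (tQ+X)^{-1}\rangle_{alg}\langle \partial^{l}Y_{0}/\partial t^{l} : l\in\N\rangle_{alg}$, the plan is to prove three facts: (i) $R\subseteq\eL$, (ii) $R$ is stable under every operator of $\mathcal{D}=\{\hat{\Sigma},\,\hat{\Theta}^{*},\,\partial/\partial t,\,\partial/\partial y\}$, and (iii) the localization $R[S^{-1}]$ at the multiplicative set $S=\{\iota(f):0\ne f\in\com[t,y]\}$ contains $\iota(L)$. Combined with the minimality of $\eL$ among $\mathcal{D}$-stable subalgebras of $F(\Z,L\s)[[X]]$ containing $\iota(L)$ and $L\s$, these will give $\eL=R[S^{-1}]$, which is what the statement asserts.

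For (i), the identity $\iota(t)=tQ+X\in\eL$ yields $Q=\partial\iota(t)/\partial t\in\eL$, hence $X=\iota(t)-tQ\in\eL$ and $(tQ+X)^{-1}=\iota(t^{-1})\in\eL$; likewise $Y_{0}=y^{-1}\iota(y)\in\eL$ and $\partial^{l}Y_{0}/\partial t^{l}\in\eL$ for every $l$ because $\eL$ is stable under $\partial/\partial t$. For the easy part of (ii), $\partial/\partial y$ annihilates every generator; $\partial/\partial t$ sends $\partial^{l}Y_{0}/\partial t^{l}$ to $\partial^{l+1}Y_{0}/\partial t^{l+1}$ and annihilates $Q,X$; and $\hat{\Sigma}$ multiplies $Q,X$ by $q$, $(tQ+X)^{-1}$ by $q^{-1}$, and each $\partial^{l}Y_{0}/\partial t^{l}$ by $q^{\alpha}$, using that $\partial/\partial t$ commutes with $\hat{\Sigma}$.

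The crux of (ii) is the action of $\hat{\Theta}^{*}$. From equation \eqref{5.24e} one reads
\[
\hat{\Theta}^{(1)}(Y_{0}) = [\alpha]_{q}\, Y_{0}\,(tQ+X)^{-1}\in R,
\]
while applying the $q$-Leibniz rule to the relation $(tQ+X)^{-1}(tQ+X)=1$, together with $\hat{\Theta}^{(1)}(tQ+X)=1$ and $\hat{\Sigma}(tQ+X)=q(tQ+X)$, yields $\hat{\Theta}^{(1)}((tQ+X)^{-1})=-q^{-1}(tQ+X)^{-2}\in R$. An induction on $i$ using $\hat{\Theta}^{(i)}=(\hat{\Theta}^{(1)})^{i}/[i]_{q}!$ and the $q$-Leibniz rule then shows that each $\hat{\Theta}^{(i)}(Y_{0})$ is a polynomial in $Y_{0}$, $Q$, and $(tQ+X)^{-1}$, and the same conclusion holds for $\hat{\Theta}^{(i)}(\partial^{l}Y_{0}/\partial t^{l})$ because $\hat{\Theta}^{(i)}$ commutes with $\partial/\partial t$ and $\partial/\partial y$.

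For (iii), the subring $\iota(\com[t,y])$ generated over $\com$ by $\iota(t)=tQ+X$ and $\iota(y)=Y_{0}y$ lies in $R$; since $\iota\colon L\to F(\Z,L\s)[[X]]$ is a ring morphism and $L=\mathrm{Frac}(\com[t,y])$, inverting the elements of $S$ inside $R$ produces all of $\iota(L)$, so $R[S^{-1}]$ is a $\mathcal{D}$-stable subalgebra containing $\iota(L)$ and $L\s$. By minimality $\eL\subseteq R[S^{-1}]$, and the reverse inclusion follows from (i) since each $\iota(f)\in S$ is already a unit in $\eL$. The main obstacle is the inductive step in (ii): the noncommutativity $QX=qXQ$ together with the $q$-Leibniz rule produces explicit but rather involved polynomial expressions for $\hat{\Theta}^{(i)}(Y_{0})$, and one must keep careful track of the ordering of factors to ensure no element outside $R$ is produced.
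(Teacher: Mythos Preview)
Your proposal is correct and follows essentially the same strategy as the paper: show $R\subseteq\eL$, then verify $R$ is closed under $\hat{\Sigma}$, $\hat{\Theta}^{(1)}$, $\partial/\partial t$, $\partial/\partial y$ by exploiting the commutation of $\hat{\Sigma}$ and $\hat{\Theta}^{(1)}$ with $\partial^{n}/\partial t^{n}$ together with the identity $\hat{\Theta}^{(1)}(Y_{0})=[\alpha]_{q}\,Y_{0}\,(tQ+X)^{-1}$ coming from \eqref{5.24e}. The paper leaves the localization step (your (iii)) implicit, whereas you spell it out; and you explicitly check $\hat{\Theta}^{(1)}((tQ+X)^{-1})\in R$, which the paper uses tacitly---but the core argument is the same.
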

\begin{proof} Since $\iota(t) = t Q + X$, as we have seen in the First Example, 
\[
 L\s \langle Q, X \rangle \sb{alg}\langle \frac{\partial^{l}}{\partial t^{l}}Y\sb{0}\rangle \sb{alg, \, l\in\N} \subset \eL.
\]
We show that the ring
\[
L\s\langle Q, X\rangle \sb{alg}\langle \frac{\partial^{l}}{\partial t^{l}}Y\sb{0}\rangle \sb{alg, \,l\in\N}
\]
is closed under the operations $\hat{\Sigma} , \hat{\Theta}^{(i)}, \partial/\partial t$ and $\partial/\partial y$ of $F(\Z, \, L\s)[[X]]$. Evidently the ring is closed under the last two operators. Since the operators $\hat{\Sigma} $ and $\partial^{n} / \partial t^{n}$ operating on $F(\Z, L\s)[[X]]$ mutually commute, it follows from \eqref{5.24a}
\[
 \hat{\Sigma} \left( \frac{\partial^{n} Y\sb{0}}{\partial t^{n}} \right) = \frac{\partial^{n}}{\partial t^{n}} \hat{\Sigma} (Y\sb{0}) = \frac{\partial^{n}}{\partial t^{n}}(q^{\alpha}Y\sb{0}) = q^{\alpha}\frac{\partial^{n} Y\sb{0}}{\partial t^{n}}
. \]
So the ring is closed under $\hat{\Sigma} $. 
Similarly since the operators $\hat{\Theta}^{(1)}$ and $\partial^{n} / \partial t^{n}$ mutually commute on $F(\Z, L\s)[[X]]$,
\begin{align*}
\hat{\Theta}^{(1)}\left( \frac{\partial^{n} Y\sb{0}}{\partial t^{n}} \right) 
&=\frac{\partial^{n}}{\partial t^{n}} \hat{\Theta}^{(1)}(Y\sb{0})\\
&=\frac{1}{y}\frac{\partial^{n}}{\partial t^{n}} \hat{\Theta}^{(1)}(Y\sb{0}y)\\
&=\frac{1}{y}\frac{\partial^{n}}{\partial t^{n}} \hat{\Theta}^{(1)}(\iota(y))\\
&=\frac{1}{y}\frac{\partial^{n}}{\partial t^{n}} \iota(\theta^{(1)}(y))\\
&=\frac{1}{y}\frac{\partial^{n}}{\partial t^{n}} \iota \left( \frac{\sigma(y) - y}{(q-1)t}\right)\\
&=\frac{1}{y}\frac{\partial^{n}}{\partial t^{n}} \left( \frac{q^{\alpha}Y\sb{0}y - Y\sb{0}y}{(q-1)(tQ+X)}\right)\\
&=\frac{1}{y}\frac{\partial^{n}}{\partial t^{n}} \left( \frac{q^{\alpha}Y\sb{0} - Y\sb{0}}{(q-1)(tQ+X)}\right),
\end{align*}
which is an element of the ring. 
\end{proof}
Conjecture \ref{5.24j} arises from {\it experience that if $q$ is not a root of unity, 
we could not find any non-trivial algebraic relations among
 the partial derivatives} 
 $$
\frac{\partial^{n} Y\sb{0}}
{\partial t^{n}}\qquad
\text{\it \/ for \/} n \in \N 
$$
{ \it over $L\s$
 so that we could guess that there would be none.} 
\par 
In fact, assume that we could prove our guess. Let $\varphi \colon \eL \rightarrow 
F(\Z, A[[W\sb{1}, W\sb{2}]])[[X]]$ be an infinitesimal deformation of $\iota$. So as we have seen
\[
 Z\sb{0} = \varphi(Y\sb{0}) = \sum\sb{n=0}^{\infty} X^{n} \binom{\alpha}{n}\sb{q}(et)^{-n}Q^{\alpha-n}b(W\sb{1})
\]
with $b(W\sb{1}) \in A[[W\sb{1}]]$. 
%Since the observation says that $\partial^{n} Y\sb{0}/\partial t^{n},\, n \in \N$ are algebraically independent over $L\s$, there is 
There would be no constraints among the partial derivatives 
$\partial^{n} b(W\sb{1})/\partial W\sb{1}^{n},\, n \in \N$ and hence we could choose any power series $b(W\sb{1}) \in A[[W\sb{1}]]$ such that every coefficient of the power series $b(W\sb 1) -1$ is nilpotent. 
\subsection{Non-commutative deformation functor $\NCF$ for $\com(t,\, t^{\alpha})/\com$}\label{11.23a}
We study the functor $\NCF(A)$ of non-commutative deformations 
\[
\NCF \colon \NCA \rightarrow (Set).
\]
For a not necessarily commutative $L\n$-algebra $A \in ob\NCA$, let
\begin{equation}\label{9.13a}
 \varphi \colon \eL \rightarrow F(\Z,
 \, A[[W\sb{1}, W\sb{2}]])[[X]] 
\end{equation}
be an infinitesimal deformation of the canonical morphism
\[
\iota \colon \eL \rightarrow F(\Z, A[[W\sb{1},W\sb{2}]])[[X]]
. \]
Both $t$ and $y$ are elements of the field $\com(t,t^{\alpha} )= \com(t,y)$ so that $[t,y] = ty -yt = 0$. So for the deformation $\varphi \in \NCF(A)$ we must have 
\begin{equation}\label{6.4a}
[\varphi(t),\varphi(y)] = \varphi(t)\varphi(y) - \varphi(y)\varphi(t) = 0.
\end{equation}
When we consider the non-commutative deformations, the commutativity \eqref{6.4a} gives a constraint for the deformation. 
To see this, we need a Lemma. 
\begin{lemma}\label{6.2g}
For every $l\in \N$, we have
\[
 q^{l}\binom{\alpha}{l}\sb{q} + \binom{\alpha}{l-1}\sb{q} = \binom{\alpha}{l}\sb{q} + 
 q^{\alpha -l +1}\binom{\alpha}{l-1}\sb{q}. 
\]
\end{lemma}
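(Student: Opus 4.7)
\medskip
\noindent\textbf{Proof plan.} I would first rearrange the claimed identity by moving $\binom{\alpha}{l}_q$ to the left side and $\binom{\alpha}{l-1}_q$ to the right, so that it becomes
$$
(q^{l}-1)\binom{\alpha}{l}_{q} \;=\; (q^{\alpha-l+1}-1)\binom{\alpha}{l-1}_{q}.
$$
This is the form one wants, because the extension of $[\,\cdot\,]_q$ to arbitrary complex exponents defined just before the statement gives $q^{l}-1 = (q-1)[l]_q$ and $q^{\alpha-l+1}-1 = (q-1)[\alpha-l+1]_q$. Dividing by the nonzero factor $(q-1)$, the identity reduces to
$$
[l]_q\binom{\alpha}{l}_{q} \;=\; [\alpha-l+1]_q\binom{\alpha}{l-1}_{q}.
$$

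The plan is then to verify this last equation by a direct appeal to the definition
$$
\binom{\alpha}{l}_{q} \;=\; \frac{[\alpha]_q[\alpha-1]_q\cdots[\alpha-l+1]_q}{[l]_q!}
$$
recalled above Lemma~\ref{6.2g}. Namely, $[l]_q\binom{\alpha}{l}_q$ produces $[l]_q/[l]_q! = 1/[l-1]_q!$ in the denominator together with the full numerator $[\alpha]_q[\alpha-1]_q\cdots[\alpha-l+1]_q$; pulling out the last factor $[\alpha-l+1]_q$ of the numerator, the remaining quotient is exactly $\binom{\alpha}{l-1}_q$. This gives the desired equality.

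\medskip
\noindent\textbf{Where the difficulty lies.} There is essentially no obstacle: once the identity is rewritten in the symmetric form $(q^l-1)\binom{\alpha}{l}_q = (q^{\alpha-l+1}-1)\binom{\alpha}{l-1}_q$, the only content is the elementary $q$-Pascal style recursion $[l]_q\binom{\alpha}{l}_q = [\alpha-l+1]_q\binom{\alpha}{l-1}_q$, which is immediate from the definition. The only mild point is to notice that, although $\alpha$ is irrational, the symbols $[\alpha-j]_q$ and $\binom{\alpha}{l}_q$ make sense by the explicit formula $[\beta]_q = (q^\beta-1)/(q-1)$ introduced just before the statement, and the manipulation above is purely formal in these symbols. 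Thus the proof reduces to a short computation with no combinatorial subtlety.
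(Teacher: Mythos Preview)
Your proposal is correct and is precisely the computation that the paper has in mind: the paper's own proof consists of the single line ``This follows from the definition of $q$-binomial coefficient,'' and your rearrangement to $(q^{l}-1)\binom{\alpha}{l}_{q}=(q^{\alpha-l+1}-1)\binom{\alpha}{l-1}_{q}$ followed by the recursion $[l]_q\binom{\alpha}{l}_q=[\alpha-l+1]_q\binom{\alpha}{l-1}_q$ is exactly how one unpacks that definition.
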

\begin{proof} 
This follows from the definition of $q$-binomial coefficient. 
\end{proof}
\begin{lemma}\label{9.13b}
Let $A$ be a not necessarily commutative $L\n$-algebra in $ob\, (NCAlg/L\n)$. Let $e, \, f \in A$ such that $e-1$ and $f$ are nilpotent. We set 
$$
\mathcal{A} : = (e(t+W\sb{1}) + f)Q + X
$$ 
and for a power series $b(W\sb{1})\in A[[W\sb{1}]]$, we also set
\[
 \mathcal{Z} := \sum\sb{n=0}^{\infty} X^{n} 
 \binom{\alpha}{n}\sb{q}(e(t+W\sb{1})+f)^{-n}
 Q^{\alpha -n}b(W\sb{1})
\]
so that $\mathcal{A}$ and $\mathcal{Z}$ are elements of $F(\Z,A[[W\sb{1}]])[[X]]$. 
The following conditions are equivalent. 
\begin{enumerate}
\renewcommand{\labelenumi}{(\arabic{enumi})}
\item $[\mathcal{A},\, \mathcal{Z}] := \mathcal{A}\mathcal{Z} - \mathcal{Z}\mathcal{A} = 0$. 
\item $[e(t + W\sb{1}) + f, \, b(W\sb{1})] = 0$. 
\end{enumerate}
\end{lemma}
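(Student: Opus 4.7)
The plan is to reduce everything to a single closed-form expression for the commutator $[\mathcal{A}, \mathcal{Z}]$ in $F(\Z, A[[W\sb 1]])[[X]]$, from which both implications become transparent. Abbreviate $P := e(t+W\sb 1) + f \in A[[W\sb 1]]$ and $c\sb n := \binom{\alpha}{n}\sb q$. First I tabulate the commutation rules inherited from the twisted power-series structure: since $P$ and $b(W\sb 1)$ are constant functions on $\Z$ they are fixed by $\hat{\Sigma}$, so $PX = XP$ and $b(W\sb 1)X = Xb(W\sb 1)$; on the other hand $\hat{\Sigma}(Q^{\alpha - n}) = q^{\alpha - n} Q^{\alpha - n}$, hence $Q^{\alpha - n}X = q^{\alpha - n}X Q^{\alpha - n}$ and in particular $QX^n = q^n X^n Q$. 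Inside the subring $F(\Z, A[[W\sb 1]])$ all of $P$, $Q$, $Q^{\alpha}$, and $b(W\sb 1)$ commute with each other.

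Using these rules I expand $\mathcal{A}\mathcal{Z} = PQ\mathcal{Z} + X\mathcal{Z}$ and $\mathcal{Z}\mathcal{A} = \mathcal{Z}PQ + \mathcal{Z}X$ term by term, pushing every $X^n$ to the left and every power of $Q$ to the right. After a straightforward reindexing each of these four series takes the form $\sum\sb n X^n (\cdots) Q^{\alpha - n + 1}$ with coefficients in $A[[W\sb 1]]$. Collecting the coefficient of $X^n Q^{\alpha - n + 1}$ for $n \geq 1$ gives
\[
\bigl[(q^n c\sb n + c\sb{n-1} - q^{\alpha - n + 1} c\sb{n-1})\, P^{1-n} b(W\sb 1) \;-\; c\sb n P^{-n} b(W\sb 1) P\bigr],
\]
and the $q$-binomial identity of Lemma~\ref{6.2g} collapses the scalar combination in the first bracket to $c\sb n$. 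The $n = 0$ coefficient is directly $Pb(W\sb 1) - b(W\sb 1)P$. Assembling,
\[
[\mathcal{A}, \mathcal{Z}] \;=\; \sum\sb{n \geq 0} c\sb n\, X^n\, P^{-n}\, [P,\, b(W\sb 1)]\, Q^{\alpha - n + 1}.
\]

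From this single formula both implications are immediate. If (2) holds then $[P, b(W\sb 1)] = 0$ kills every summand, so (1) holds. Conversely, if (1) holds then the $X^0$-coefficient vanishes, giving $[P, b(W\sb 1)]\, Q^{\alpha + 1} = 0$; since $Q^{\alpha + 1}$ is invertible in $F(\Z, A[[W\sb 1]])$ with inverse $Q^{-(\alpha + 1)}$, we conclude $[P, b(W\sb 1)] = 0$ in $A[[W\sb 1]]$, which is (2). The main technical subtlety is that $A$ is not assumed commutative, hence $b(W\sb 1)$ and $P$ need not commute in $A[[W\sb 1]]$; one must therefore never push $b(W\sb 1)$ through $P$ during the expansion, and the bookkeeping must be arranged so that $[P, b(W\sb 1)]$ emerges intact as the very last step. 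Once that care is taken, the computation is driven entirely by the cancellation provided by Lemma~\ref{6.2g}.
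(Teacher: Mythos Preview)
Your proof is correct and follows the same route as the paper's: expand $[\mathcal{A},\mathcal{Z}]$ coefficient by coefficient in $X$ and invoke Lemma~\ref{6.2g}; you package this one step more neatly into a single closed formula for the commutator, but the paper likewise reads off $(1)\Rightarrow(2)$ from the degree-$0$ term and uses Lemma~\ref{6.2g} for the converse. One small slip in your setup: you assert that $P$, $Q$, $Q^{\alpha}$, $b(W_1)$ all commute inside $F(\Z,A[[W_1]])$, but of course $P$ and $b(W_1)$ need not --- as you yourself stress two sentences later; the intended statement is that $Q$ and $Q^{\alpha}$ are central there.
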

\begin{proof}
We formulate condition $(1)$ in terms of coefficients of the power series in $X$. Assume condition $(1)$ holds so that we have 
\begin{equation}\label{6.2d}
\begin{split}
((e(t+W\sb{1}) + f)Q + X)\left( \sum\sb{n= 0}^{\infty}X^{n} \binom{\alpha}{n}\sb{q}(e(f+W\sb{1})+ f)^{-n}Q^{\alpha -n }b(W\sb{1}) \right)\\
=\left( \sum\sb{n= 0}^{\infty}X^{n} \binom{\alpha}{n}\sb{q}(e(f+W\sb{1})+ f)^{-n}Q^{\alpha -n }b(W\sb{1}) \right)
(( e(t +W\sb{1}) + f )Q + X ). 
\end{split}
\end{equation}
Comparing degree $l$ terms in $X$ of \eqref{6.2d}, we fined condition $(1)$ is equivalent to 
\begin{equation}\label{6.2e}
\begin{split}
&q^{l}\binom{\alpha}{l}\sb{q}(e(t+ W\sb{1})+ f)^{-l+1}Q^{\alpha -l+1}b(W\sb{1}) \\
&\hspace*{5em}
+ \binom{\alpha}{l-1}\sb{q}(e(t+W\sb{1})+f)^{-l+1}Q^{\alpha -l+1} b(W\sb{1})\\
&=
\binom{\alpha}{l}\sb{q}(e(t+ W\sb{1})+ f)^{-l}b(W\sb{1}) (e(t+ W\sb{1})+ f)Q^{\alpha -l +1}\\
&\hspace*{5em}
+ \binom{\alpha}{l-1}\sb{q}q^{\alpha -l+1}(e(t+W\sb{1})+f)^{-l+1}Q^{\alpha -l+1} b(W\sb{1}).
\end{split}
\end{equation}
So the condition $(1)$ is equivalent to 
\begin{equation}\label{6.2f}
\begin{split}
&\hspace*{2em}q^{l}\binom{\alpha}{l}\sb{q}(e(t+ W\sb{1})+ f)b(W\sb{1}) \\
&\hspace*{5em}+ \binom{\alpha}{l-1}\sb{q}(e(t+W\sb{1})+f)b(W\sb{1})\hspace{5em}\\
&=\binom{\alpha}{l}\sb{q}b(W\sb{1}) (e(t+ W\sb{1})+ f)\\
&\hspace*{5em}+ \binom{\alpha}{l-1}\sb{q}q^{\alpha -l+1}(e(t+W\sb{1})+f)b(W\sb{1})
\end{split}
\end{equation}
for every $l \in \N$. Condition \eqref{6.2f} for $l = 0$ is condition (2). Hence condition (1) implies condition (2). 
Conversely condition (1) follows from (2) in view of \eqref{6.2f} and Lemma \ref{6.2g}. 
\end{proof}
%\subsection{Non-commutative deformations $\NCF$}
Now let us come back to the infinitesimal deformation \eqref{9.13a}
%\[
 %\varphi \colon \eL \rightarrow F(\N,A[[W\sb{1},W\sb{2}]])[[X]]
%\]
of the canonical morphism $\iota$.
%\[
%\iota \colon \eL \rightarrow F(\N, L\n[[W\sb{1},W\sb{2}]])[[X]]\rightarrow F(\N, A[[W\sb{1},W\sb{2}]])[[X]].
%\]
The argument in Section \ref{10.4a} allows us to determine the restriction $\varphi$ on the sub-algebra generated by $\iota(t) = tQ + X$
 over $L\s$ 
 invariant under the $\hat{\Theta}^{(i)}$'s, 
 $\hat{\Sigma} $ and $\{\partial/ \partial t, \partial/ \partial y \}$ in $F(\Z, L\s)[[X]]$. So there exist $e, \, f \in A$ such that $ef= qfe,\, e-1, \, f$ are nilpotent and such that
\[
 \varphi(Q) = eQ \,\text{\it \/ and \/}\varphi(X) = X + fQ,
\]
that are equations in $F(\Z,A[[W\sb{1},W\sb{2}]])[[X]]$. 
In particular 
\[
 \varphi(t) = \varphi(tQ+X) = (et+f)Q+ X = (e(t+W\sb{1})+ f)Q + X,
\]
where we naturally identify rings
\[
F(\Z,L\s)[[X]] \rightarrow F(\Z, L\n[[W\sb{1},W\sb{2}]])[[X]] \rightarrow F(\Z, A[[W\sb{1},W\sb{2}]])[[X]]
\]
through the canonical maps. \par
Then the argument in 
the commutative case allows us to show that there exists a power series $b\sb{0}(W\sb{1}) \in A[[W\sb 1]]$ such that 
\[
 \varphi(Y\sb 0) = \sum\sb{n=0}^{\infty} X^{n} \binom{\alpha}{n}\sb{q}(e(t+ W\sb{1})+ f)^{-n}Q^{\alpha - n}b\sb{0}(W\sb{1}).
\]
such that all the coefficients of the power series $b\sb 0(W\sb 1) -1 $ are nilpotent.
As we deal with the not necessarily commutative algebra $A$, the commutation relation in $L$ gives a constraint. Namely since $\iota(y) = yY\sb{0}$ and $t y = y t$ in $L$ so that $\iota(t)\iota(y) = \iota(y)\iota(t)$, we get $\iota(t)(yY\sb{0}) = (yY\sb{0})\iota(t)$ in $\eL$ and 
$
\varphi (tQ + X ) \varphi (Y\sb 0) =\varphi (Y\sb 0) \varphi ( tQ + X)
$
.
So we consequently have 
\begin{equation}\label{6.8a}
\mathcal{A}\mathcal{Z}\sb 0 = \mathcal{Z}\sb{0}\mathcal{A} \qquad \text{\it \/ in \/ } F(\Z, A[[W\sb{1}, \, W\sb{2}]])[[X]], 
\end{equation}
setting 
\[
 \mathcal{A} : = (e(t+W\sb{1}) + f)Q + X, \qquad \mathcal{Z}\sb 0 := \sum\sb{n=0}^{\infty} X^{n} \binom{\alpha}{n}\sb{q}(e(t+W\sb{1})+f)^{-n}Q^{\alpha -n}b\sb 0 (W\sb{1}).
\]
\begin{lemma}\label{10.23a}
We have 
\[ [e(t + W\sb{1}) + f,\, b\sb 0 (W\sb{1})] = 0.
\]
\end{lemma}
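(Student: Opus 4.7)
The plan is to recognize that Lemma \ref{10.23a} is a direct consequence of Lemma \ref{9.13b} combined with the commutativity relation \eqref{6.8a}. First, I would verify that the elements $\mathcal{A}$ and $\mathcal{Z}_0$ appearing in the paragraph preceding the lemma have exactly the form required by the hypothesis of Lemma \ref{9.13b}, with the power series $b(W_1)$ there taken to be $b_0(W_1) \in A[[W_1]]$. This is immediate from the formulas displayed just before \eqref{6.8a}, and the required nilpotency of $e-1$ and $f$ has already been established in the non-commutative deformation analysis (these come from the First Example argument applied to $\varphi(Q) = eQ$ and $\varphi(X) = X + fQ$).

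Next, I would observe that equation \eqref{6.8a}, namely $\mathcal{A}\mathcal{Z}_0 = \mathcal{Z}_0\mathcal{A}$, is precisely condition (1) of Lemma \ref{9.13b}. The derivation of \eqref{6.8a} itself proceeds from the commutativity $ty = yt$ in the field $L$, which upon applying the algebra morphism $\varphi$ yields $\varphi(t)\varphi(y) = \varphi(y)\varphi(t)$; unpacking $\iota(t) = tQ+X$ and $\iota(y) = yY_0$ and using that $\varphi$ is an algebra morphism gives the stated relation in $F(\Z, A[[W_1,W_2]])[[X]]$.

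Finally, I would invoke Lemma \ref{9.13b} to conclude that condition (2) holds, i.e., $[e(t+W_1)+f,\, b_0(W_1)] = 0$, which is exactly the assertion of the lemma. The main (and essentially only) content of the proof is the correct identification of the data on both sides, so there is no real obstacle: all the nontrivial computation has been absorbed into Lemma \ref{9.13b}, whose proof handled the comparison of coefficients of $X^l$ via the $q$-binomial identity of Lemma \ref{6.2g}.
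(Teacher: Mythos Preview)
Your proposal is correct and takes essentially the same approach as the paper: the paper's proof is the single sentence ``This follows from \eqref{6.8a} and Lemma \ref{9.13b},'' and your write-up simply unpacks why those two ingredients suffice.
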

\begin{proof}
This follows from \eqref{6.8a} and Lemma 
\ref{9.13b}. 
\end{proof}
\begin{definition}\label{10.19a}
We define a functor 
\[
 QG\sb{2\, q} \colon \NCA \rightarrow (Set)
\]
by putting 
\begin{multline*}
 QG\sb{2 \, q}(A) 
 = \{ ( \begin{bmatrix}
 e & f\\
 0 & 1
 \end{bmatrix}, \, b(W\sb{1}) 
 ) \in M\sb {2} (A)\times A[[W\sb 1]] 
\, |\, 
 e, \, f \in A,\, ef = qfe, \\
 e \text{\it \/ is invertible in } A , \, 
 b(W\sb 1 ) \in A[[W\sb 1]],\, [e(t+W\sb{1}) + f, \, \, b(W\sb{1})] = 0
 \, 
\} 
\end{multline*}
%\end{definition}
for $A\in ob\,(NCAlg/l\n)$. 
\par The functor $QG\sb {2\, q}$ is almost 
a quantum group in usual sense of the word. See Remark \ref{140226d}. 
We also need the formal completion $\widehat{QG}\sb {2\, q}$ of the quantum group functor $QG\sb{2\, q}$ so that 
$$
\widehat{QG}\sb {2\, q}:(NCAlg/L\n) \to (Set)
$$
is given by 
\begin{multline*}
\widehat{ QG}\sb{2 \, q}(A) 
= \{ ( \begin{bmatrix}
 e & f \\
 0 & 1
 \end{bmatrix}, \, b(W\sb{1})) \in QG\sb {2\, q}(A) \, \\ 
 |\, 
 e-1,\, f \text{\it \/ and all the coefficients of } 
 b(W\sb 1 ) -1 \, \text { are nilpotent}
 \} 
\end{multline*}
for $A\in ob (NCAlg/L\n)$. 
\end{definition}
Studying commutative deformations of the Galois hull 
$\eL/\K$ of $( \com (t, \, t^\alpha ), \, \sigma , \theta^ * )/ \com $, we introduced in Lemma 
\ref{40206a} the functor $\hat{G}\sb{II}$ and in 
Lemma \ref{40208a} the functor $\hat{G}\sb{2}$. 
They are isomorphic. The former involves the variable $W\sb 2$ but the latter does not. 
The functor $\widehat{QG}\sb {2\, q}$ does not involve the variable 
$W\sb 2$. As you imagine, we also have another functor $\widehat{QG}\sb {II\, q}$ equivalent to 
the functor $\widehat{QG}\sb {2 \, q}$ and 
 involving the variable $W\sb 2$. 
%%%%%%%%%%%%%
%%%%%%%%%%%%%%%%
Using Definition \ref{10.19a}, we can express what we
have shown. 
\begin{proposition}\label{6.8i}
There exists a functorial inclusion 
\[
 \NCF(A) \hookrightarrow \widehat{QG}\sb{2 \, q}(A)
\]
 sending $\varphi \in \NCF(A)$ to 
\[
 ( \begin{bmatrix}
 e & f\\
 0 & 1
 \end{bmatrix},\, b\sb 0 (W\sb{1}) )\in \widehat{QG}\sb{2 \, q}(A). 
\]
\end{proposition}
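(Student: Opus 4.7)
The plan is to show that the assignment $\varphi \mapsto \bigl(\bigl[\begin{smallmatrix}e & f\\ 0 & 1\end{smallmatrix}\bigr], b_0(W_1)\bigr)$ constructed during the analysis of Section \ref{11.23a} is well-defined with image in $\widehat{QG}_{2,q}(A)$, injective, and natural in $A \in ob\,\NCA$. Most of the work has already been carried out in the preceding subsection, so the proof essentially amounts to assembling the pieces.

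First I would record the construction. For $\varphi \in \NCF(A)$, the argument of the proof of Lemma \ref{8.30c} applied to the $\mathcal{D}$-stable sub-algebra $L\s\langle Q, X\rangle_{alg}$ of $\eL$ produces unique $e, f \in A$ with $\varphi(Q) = eQ$, $\varphi(X) = X + fQ$, satisfying $ef = qfe$, and with $e-1$ and $f$ nilpotent (so $e$ is a unit in $A$). Transposing the recursive derivation \eqref{11.16a} to the non-commutative setting---invertibility of $e(t+W_1)+f$ in $A[[W_1]]$ is granted since $e$ is a unit and $f$ is nilpotent---and using $\partial Y_0/\partial y = 0$, one obtains a unique $b_0(W_1) \in A[[W_1]]$ with coefficients of $b_0(W_1) - 1$ nilpotent, such that
\[
\varphi(Y_0) = \sum_{n=0}^\infty X^n \binom{\alpha}{n}_q \bigl(e(t+W_1)+f\bigr)^{-n} Q^{\alpha - n} b_0(W_1).
\]
The only remaining condition for the pair to lie in $\widehat{QG}_{2,q}(A)$ is $[e(t+W_1)+f, b_0(W_1)] = 0$, which is precisely Lemma \ref{10.23a}: the relation $ty = yt$ in $L$ gives $\iota(t)\iota(y) = \iota(y)\iota(t)$ in $\eL$, application of $\varphi$ yields \eqref{6.8a}, and Lemma \ref{9.13b} translates it into the required commutator identity.

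Next, I would establish injectivity via Lemma \ref{5.24n}, which presents $\eL$ as a localization of $L\s\langle Q, X\rangle_{alg}\langle \partial^\ell Y_0/\partial t^\ell\rangle_{alg,\,\ell\in\N}$. Since $\varphi$ is a $\K$-morphism compatible with $\mathcal{D}$, it is determined by its values on these generators; and since $\varphi$ intertwines $\partial/\partial t$ with $\partial/\partial W_1$ on the target, knowing $\varphi(Y_0)$ automatically determines $\varphi(\partial^\ell Y_0/\partial t^\ell) = \partial^\ell \varphi(Y_0)/\partial W_1^\ell$ for every $\ell \in \N$. Hence the data $(e, f, b_0(W_1))$ recover $\varphi$ uniquely. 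Naturality in $A$ is immediate: any morphism $g: A \to A'$ in $\NCA$ induces coefficient-wise a $\mathcal{D}$-equivariant ring map $F(\Z, A[[W_1, W_2]])[[X]] \to F(\Z, A'[[W_1, W_2]])[[X]]$, which transports $\varphi$ to an infinitesimal deformation over $A'$ whose associated datum is $(g(e), g(f), g_*(b_0(W_1)))$.

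The only delicate point is the first one: extending the commutative-case derivation of the formula for $\varphi(Y_0)$ to non-commutative $A$. Writing $Z_0 = \sum_n X^n b_n Q^{\alpha - n}$ with $b_n \in A[[W_1, W_2]]$ and extracting the recurrence from \eqref{5.24f} is a purely left-multiplicative manipulation of the coefficients, so commutativity of $A$ is nowhere used. The ambiguity in the ordering of factors in the resulting closed-form expression is resolved a posteriori by the commutation $[e(t+W_1)+f, b_0(W_1)] = 0$ furnished by Lemma \ref{10.23a}.
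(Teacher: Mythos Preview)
Your proposal is correct and follows essentially the same approach as the paper. The paper gives no separate proof of this proposition: it simply remarks ``Using Definition \ref{10.19a}, we can express what we have shown,'' and indeed you have correctly assembled the ingredients from the preceding discussion in Section \ref{11.23a}---the determination of $(e,f)$ via the First-Example argument, the recursion for $b_0$, the commutation relation from Lemma \ref{10.23a}, and injectivity. One small presentational difference: for injectivity the paper's more immediate reference is Lemma \ref{402.3} (stated in the commutative subsection but whose proof is insensitive to commutativity of $A$), whereas you route through the more explicit structural description of $\eL$ in Lemma \ref{5.24n}; both work.

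Your final paragraph on the ordering of $b_0$ and $(e(t+W_1)+f)^{-m}$ is an honest acknowledgment of a genuine subtlety the paper glosses over. To make it airtight without appealing to Lemma \ref{10.23a} a posteriori, note that in $L$ the identity $(q^\alpha-1)y=\theta^{(1)}(y)(q-1)t$ can equally be written $(q^\alpha-1)y=(q-1)t\,\theta^{(1)}(y)$; applying $\varphi\circ\iota$ to the first form yields the recursion with $b_0$ on the left of $c^{-m}$, and to the second form yields it on the right. Comparing the two already forces $[c,b_0]=0$, which both justifies the paper's displayed form of $\varphi(Y_0)$ and makes Lemma \ref{9.13b} directly applicable.
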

We show that $\widehat{QG}\sb{2 \, q}$ is a quantum formal group over $L\n$. In fact, we take two elements 
\[
(G,\, \xi(W\sb{1})) =
 ( \begin{bmatrix}
 e & f\\
 0 & 1
 \end{bmatrix},\, \xi (W\sb{1}) ),\qquad 
(H,\, \eta(W\sb{1})) =
 ( \begin{bmatrix}
 g & h\\
 0 & 1
 \end{bmatrix}, \, \eta (W\sb{1}) )
\]
of $\widehat{QG}\sb{2 \, q}(A)$ so that $e,\,f, g,\, h \in A$ satisfying 
\[
 ef = qfe, \qquad gh= qhg,
\]
the elements $ e - 1,\, g - 1$ and $f, \, h$ are nilpotent 
 and such that
\begin{equation}\label{6.6e}
[e(t+ W\sb 1) +f, \, \xi(W\sb{1})] = 0, \qquad [g(t+W\sb{1}) + h, \, \eta(W\sb{1})] = 0.
\end{equation}
When the following two sub-sets of the ring $A$ 
\begin{align}\label{14426a}
%\begin{enumerate}
%\renewcommand{\labelenumi}{(\arabic{enumi})}
\{e, f \} \,\cup &
\text{ 
 (the sub-set consisting of all the coefficients of the power series $\xi(W\sb{1})$)}, 
 \\ 
 \label{14426b}
 \{g,\, h \}\,\cup &
 \text{ (the sub-set consisting of 
all the coefficients of the 
power series $\eta(W\sb{1})$)}, 
\end{align}
 %?\which are two subsets of $A$, 
 are mutually commutative, 
we define the product of $(G, \, \xi(W\sb{1}))$ and $(H, \, \eta(W\sb{1}))$ by
\[
(G, \xi(W\sb{1})) * (H, \, \eta(W\sb{1})) := (GH,\, \xi((g-1)t + h +gW\sb 1)\eta(W\sb{1})).
\]
%which are subsets of $A$, are mutually commutative, 

\begin{lemma} \label{14320a}
The product $(GH,\, \xi((g-1)t + h +gW\sb{1})\eta(W\sb{1}))$ is indeed an element of $\widehat{QG}\sb{2 \, q}(A)$. 
\end{lemma}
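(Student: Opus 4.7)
My plan is to verify the four defining conditions of $\widehat{QG}_{2\,q}(A)$ for the pair $(GH,\,\xi'\eta)$, where $\xi':=\xi((g-1)t+h+gW_{1})$: (i) $GH$ has the matrix form $\begin{bmatrix} eg & eh+f \\ 0 & 1 \end{bmatrix}$ with $eg$ invertible and $(eg)(eh+f)=q(eh+f)(eg)$; (ii) $eg-1$ and $eh+f$ are nilpotent; (iii) $\xi'\eta\in A[[W_{1}]]$ is well-defined and all coefficients of $\xi'\eta-1$ are nilpotent; and (iv) $[(eg)(t+W_{1})+(eh+f),\,\xi'\eta]=0$.

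For (i), I would multiply out $GH$ and then expand $(eg)(eh+f)$ using $ef=qfe$, $gh=qhg$, and the mutual commutativity of $\{e,f\}$ with $\{g,h\}$ from \eqref{14426a} and \eqref{14426b}, to obtain $q(eh+f)(eg)$; the invertibility of $eg$ is immediate from that of $e$ and $g$. For (ii), I would write $eg-1=e(g-1)+(e-1)$ as a sum of two commuting nilpotents, and separately observe that $(eh)f=e(fh)=(ef)h=qf(eh)$, so that the $q$-binomial theorem applies to the $q$-commuting nilpotent pair $(eh,f)$. For (iii), I would first verify that $(g-1)t+h$ lies in the nilpotent part of $A$, using the centrality of $t$ and the relation $gh=qhg$, so that the power-series substitution $W_{1}\mapsto(g-1)t+h+gW_{1}$ is well-defined on $A[[W_{1}]]$; nilpotency of the coefficients of $\xi'\eta-1$ then follows routinely from that of the coefficients of $\xi-1$ and $\eta-1$, again using the mutual commutativity.

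The core of the proof is (iv), which I would handle by a substitution principle. Setting $W_{1}':=(g-1)t+h+gW_{1}$, one computes $t+W_{1}'=g(t+W_{1})+h$ and hence $e(t+W_{1}')+f=(eg)(t+W_{1})+(eh+f)$. The hypothesis $[e(t+W_{1})+f,\,\xi(W_{1})]=0$ is an identity in $B[[W_{1}]]$, where $B\subset A$ denotes the subalgebra generated by $\{e,f\}$ and the coefficients of $\xi$; by the mutual commutativity of \eqref{14426a} and \eqref{14426b}, every element of $B$ commutes with $g$, $h$, and $t$, hence with $W_{1}'$. Thus $W_{1}\mapsto W_{1}'$ extends to an algebra homomorphism $B[[W_{1}]]\to A[[W_{1}]]$ that transports the hypothesis to $[(eg)(t+W_{1})+(eh+f),\,\xi(W_{1}')]=0$. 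Separately, I would multiply the hypothesis $[g(t+W_{1})+h,\,\eta(W_{1})]=0$ on the left by $e$ (which commutes with both factors) and add $[f,\eta(W_{1})]=0$ to obtain $[(eg)(t+W_{1})+(eh+f),\,\eta(W_{1})]=0$. Since $(eg)(t+W_{1})+(eh+f)$ commutes with both $\xi(W_{1}')$ and $\eta(W_{1})$, it commutes with their product $\xi'\eta$, giving (iv).

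The main obstacle is not the raw computation but the careful bookkeeping in (iv): one must identify precisely the subalgebra in which each hypothesis commutation identity lives, and verify that the mutual commutativity of \eqref{14426a} and \eqref{14426b} makes $W_{1}'$ central with respect to the relevant subalgebra, so that the substitution $W_{1}\mapsto W_{1}'$ is genuinely a ring homomorphism rather than merely a map of sets. This is precisely what the dual commutativity hypothesis in the statement is designed to guarantee.
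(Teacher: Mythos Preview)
Your proposal is correct and follows essentially the same route as the paper. The heart of both arguments is the algebraic identity you write as $e(t+W_1')+f=(eg)(t+W_1)+(eh+f)$ with $W_1'=(g-1)t+h+gW_1$; this is exactly the paper's equation \eqref{6.6c}, and your substitution argument is precisely what the paper invokes when it says ``This follows from the first equation of \eqref{6.6e} and the mutual commutativity of the sub-sets \eqref{14426a} and \eqref{14426b}.'' Your treatment of $[(eg)(t+W_1)+(eh+f),\eta(W_1)]=0$ likewise matches the paper's step \eqref{6.8d}.

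Two small remarks. First, the paper also records the auxiliary commutation $[\xi',\eta]=0$ (equation \eqref{6.6x}); you omit this, and you are right to do so, since the implication ``$a$ commutes with $b$ and with $c$ $\Rightarrow$ $a$ commutes with $bc$'' needs nothing about $[b,c]$. Second, both you and the paper assert that $(g-1)t+h$ is nilpotent without giving details; note that $(g-1)t$ and $h$ do \emph{not} commute, so ``sum of two nilpotents'' is not enough. One clean way to see it: in the finite $C[t]$-module spanned by $\{(g-1)^{i}h^{j}:0\le i<M,\ 0\le j<N\}$ (using $(g-1)^{M}=0$, $h^{N}=0$ and $hg=q^{-1}gh$), left multiplication by $(g-1)t+h$ is block lower-triangular with respect to the $h$-degree filtration, with nilpotent diagonal blocks; hence $((g-1)t+h)^{MN}=0$. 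This fills the only point where your sketch (and the paper's) is a bit thin.
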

\begin{proof}
First of all, we notice that
the constant term $(g-1)t +h$ of 
 the linear polynomial in $W\sb 1$ 
\begin{equation}\label{10.22a}
 (g-1)t + h + gW\sb{1} 
\end{equation}
is nilpotent so that we can substitute 
\eqref{10.22a}
into the power series $\xi (W\sb 1 )$. Therefore 
 $$
 \xi((g-1)t + h +gW\sb{1} )\eta(W\sb{1} )
 $$
is a well-determined element of the power series ring 
$A[[W\sb 1]]$.
We have seen in Section \ref{10.4a} that if 
 $\{ e, \, f \}$
 and $\{g, \, h \} $
 are mutually commutative, then 
the product $GH$ of matrices $G, \, H \in \gH\sb{q \, L\n}(A)$ 
 is in 
 $\gH\sb{q \, L\n}(A)$. 
 Since 
$$
GH = \begin{bmatrix}
 eg & eh + f\\
 0 & 1
\end{bmatrix}, 
$$
it remains to show 
\begin{equation}\label{6.6a}
[eg(t+W\sb{1})+ eh +f, \, \xi((g-1)t + h +gW\sb{1})\eta(W\sb{1})] = 0. 
\end{equation}
The proof of \eqref{6.6a} is done in several steps. \par
First we show 
\begin{equation}\label{6.6x}
[\xi((g-1)t + h +gW\sb{1}), \, \eta(W\sb{1})]= 0. 
\end{equation}
This follows, in fact, from the mutual commutativity 
of the sub-sets \eqref{14426a} and \eqref{14426b} above, and the second equation of \eqref{6.6e}. \par
Second, we show 
\begin{equation}\label{6.6g}
[eg(t+W\sb{1})+ eh +f,\, \xi((g-1)t + h +gW\sb{1})] = 0.
\end{equation}
%This is a consequence of 
%the firt equation of \eqref{6.6e} and the mutual commutativity of the sets (1) and (2). 
To this end, we notice
\begin{equation}\label{6.6c}
eg(t+W\sb{1})+ eh +f = e((g-1)t + h +gW\sb{1}) + et + f.
\end{equation}
So we have to show 
\begin{equation}
[e((g-1)t + h +gW\sb{1}) + et + f, \, 
\xi((g-1)t + h +gW\sb{1})]=0.
\end{equation}
This follows from 
the first equation of \eqref{6.6e}
and the mutual commutativity of the sub-sets \eqref{14426a} and \eqref{14426b}. \par
We prove third 
\begin{equation}\label{6.8d}
[eg(t+W\sb{1})+ eh +f, \, \eta(W\sb{1})]=0 .
\end{equation}
In fact, by mutual commutativity of sub-sets \eqref{14426a} and 
\eqref{14426b}, 
\begin{align*}
[e(t+ gW\sb 1)+ eh +f , \, \eta (W\sb 1)] &= [e(t+ gW\sb 1)+ eh , \, \eta (W\sb 1)] \\
%by the mutual commutativity of the sub-sets \eqref{14426a} and 
%\eqref{14426b}, 
%
%= [e(t+ gW\sb 1)+ eh , \eta (W\sb 1)]
& =[e(t+ gW\sb 1+ h) , \, \eta (W\sb 1) ],
\end{align*}
which is equal to $0$ thanks to mutual commutativity of the sub-sets \eqref{14426a} and \eqref{14426b} and the second equality of 
\eqref{6.6e}. 
%this a consequence of the second equation of 
%\eqref{6.6e} and the mutual commutativity of thesub-sets \eqref{14426a} and \eqref{14426b}.
%\eqref{6.6c} and \eqref{6.6b}. \par
%Equality \eqref{6.6a} is a consequence of \eqref{6.6x}, \eqref{6.6g} and \eqref{6.6c}. 
\end{proof}
One can check associativity for the multiplication by a direct calculation. The unit element is given by 
\[
(I\sb{2}, 1) \in \widehat{QG}\sb{2 \, q}(L\n). 
\]
The inverse is given by the formula below. 
For an element 
$$
(G, \, b(W\sb 1) ) =
( \begin{bmatrix}
e & f \\ 
0 & 1
\end{bmatrix}, \, b(W\sb 1) )
\in \widehat{QG}\sb{2\, q}(A),    
$$
 we set 
$$
(G, \, b(W\sb 1 ) )^{-1}: = ( 
\begin{bmatrix}
e^{-1} & -e^{-1}f\\
0& 1
\end{bmatrix}, \,
b(e^{-1}W\sb 1) + (e^{-1}t -e^{-1}f) ^{-1}
 ) \in \widehat{QG}\sb{2\, q^{-1}}(A), 
$$
then we have 
$$
(G, \, b(W\sb 1) )^{-1} * (G, \, b(W\sb 1) ) =
(G, \, b(W\sb 1) ) * (G, \, b(W\sb 1) )^{-1} = (I\sb 2 , \, 1 ).
$$
\begin{conjecture}\label{6.8j}
If $q$ is not a root of unity, 
the injection in Proposition \ref{6.8i} is bijective for every $A \in ob(NCAlg/L\n)$. 
\end{conjecture}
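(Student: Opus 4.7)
\noindent\textit{Proof proposal.} The plan is to construct an explicit preimage under the inclusion of Proposition \ref{6.8i} by reversing the analysis of Section \ref{11.23a}. Given $(G, \, b(W\sb 1)) \in \widehat{QG}\sb{2\, q}(A)$ with $G = \begin{bmatrix} e & f \\ 0 & 1 \end{bmatrix}$ satisfying $ef = qfe$, the compatibility $[e(t+W\sb 1)+f, \, b(W\sb 1)] = 0$, and the nilpotency of $e-1$, $f$ and all coefficients of $b(W\sb 1) - 1$, I would first define a candidate morphism $\varphi$ on the sub-algebra
$$
\tilde{\eL} := L\s \langle Q, \, X, \, (tQ+X)^{-1} \rangle\sb{alg} \langle \partial^l Y\sb 0/\partial t^l \rangle\sb{alg, \, l \in \N}
$$
of Lemma \ref{5.24n} by prescribing
$$
\varphi(Q) = eQ, \quad \varphi(X) = X + fQ,
$$
$$
\varphi(Y\sb 0) = \sum\sb{n=0}^{\infty} X^n \binom{\alpha}{n}\sb q (e(t+W\sb 1)+f)^{-n} Q^{\alpha-n} b(W\sb 1),
$$
the induced values on the $\partial^l Y\sb 0/\partial t^l$ being obtained by applying $\partial^l/\partial t^l$ termwise, and with $\varphi|\sb\K = \iota|\sb\K$. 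The inverse $(tQ+X)^{-1}$ would be handled by the geometric-series expansion already used in Sublemma \ref{5.15} (3) of the First Example.

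The next step is to verify that $\varphi$ commutes with all operators in $\D$ and preserves the manifest relations. Compatibility with $\hat\Sigma$ follows from $\hat\Sigma(\varphi(Y\sb 0)) = q^\alpha \varphi(Y\sb 0)$; compatibility with $\partial/\partial y$ holds because $\varphi(Y\sb 0)$ involves only $W\sb 1$; compatibility with $\hat\Theta^{(1)}$ is exactly the content of Lemma \ref{9.13b}, which is precisely why the commutator constraint was built into the definition of $\widehat{QG}\sb{2\, q}$ in Definition \ref{10.19a}; and the commutation $QX = qXQ$ is preserved thanks to $ef = qfe$. The nilpotency hypotheses guarantee that $\varphi \equiv \iota$ modulo nilpotents and that the power series in $(e(t+W\sb 1)+f)^{-n}$ converge in $F(\Z, \, A[[W\sb 1, \, W\sb 2]])[[X]]$.

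The main obstacle, and the reason this statement is stated as a conjecture, is the step of showing that $\varphi$ extends consistently to the full Galois hull $\eL$: every algebraic relation over $L\s$ among the partial derivatives $\partial^n Y\sb 0/\partial t^n$, $n \in \N$, must be preserved by $\varphi$. This is precisely the algebraic-independence question heuristically discussed in Remark \ref{5.24l} and underlying Conjecture \ref{5.24j}. When $q$ is not a root of unity, the expectation is that no non-trivial $L\s$-algebraic relation among the $\partial^n Y\sb 0/\partial t^n$ exists, so that no further constraint on $b(W\sb 1)$ is imposed beyond $[e(t+W\sb 1)+f, \, b(W\sb 1)] = 0$. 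A natural route would be to exploit the non-commutative Picard--Vessiot presentation \eqref{14.6.24c}, identifying $\tilde{\eL}$ as a principal homogeneous object for $\gH\sb q$ over a suitable base, and using the non-root-of-unity hypothesis to rule out the collapse of $q$-binomial coefficients which would otherwise force unwanted finite-dimensional relations. Once this algebraic-independence is settled, the candidate $\varphi$ constructed above lies in $\NCF(A)$ and maps under the inclusion of Proposition \ref{6.8i} to the prescribed $(G, \, b(W\sb 1))$, establishing surjectivity for every $A \in ob\, \NCA$.
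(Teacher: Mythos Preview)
The paper does not prove this statement: it is explicitly labeled a \emph{Conjecture}, and no proof is supplied anywhere in the text. The paper's only further discussion is the Proposition immediately following it, which shows that Conjecture \ref{6.8j} implies the commutative Conjecture \ref{5.24j}, together with the heuristic in Remark \ref{5.24l} that ``we could not find any non-trivial algebraic relations among the partial derivatives $\partial^n Y_0/\partial t^n$''. So there is nothing to compare your argument against.

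Your proposal correctly isolates the genuine obstruction: extending the candidate $\varphi$ from the generators to the full Galois hull $\eL$ requires knowing that no $L^\sharp$-algebraic relations among the $\partial^n Y_0/\partial t^n$ are violated, and this algebraic-independence statement is precisely what the paper leaves open. Your outline of the construction on generators is in line with the paper's own analysis in Section \ref{11.23a}. One small misattribution: Lemma \ref{9.13b} is not about $\hat\Theta^{(1)}$-compatibility but about the commutator $[\mathcal{A},\mathcal{Z}]$ arising from $ty=yt$ in $L$; the $\hat\Theta^{(1)}$-compatibility of your candidate $\varphi(Y_0)$ is instead encoded in the recurrence \eqref{5.24f} that produced the explicit form \eqref{402.3b}. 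The suggested route via the Picard--Vessiot presentation \eqref{14.6.24c} is speculative and is not pursued in the paper; nothing in Part II or Part III settles the independence question, so the statement remains open.
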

\begin{proposition}
Conjecture \ref{6.8j} implies Conjecture \ref{5.24j}. 
\end{proposition}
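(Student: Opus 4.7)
The plan is to deduce Conjecture~\ref{5.24j} by restricting the assumed non-commutative bijection of Conjecture~\ref{6.8j} to the sub-category $(CAlg/L\n)$ of commutative $L\n$-algebras. Since the functor $\NCF$ was introduced as the formal extension of $\mathcal{CF}\sb{L/k}$ from commutative to not necessarily commutative $L\n$-algebras, its restriction to $(CAlg/L\n)$ recovers $\mathcal{CF}\sb{L/k}$. So if I can identify the restriction of $\widehat{QG}\sb{2\, q}$ to $(CAlg/L\n)$ with $\hat{G}\sb{II}$, Conjecture~\ref{6.8j} will transport into Conjecture~\ref{5.24j}.

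The key computation is therefore the commutative specialization of $\widehat{QG}\sb{2\, q}$. An element of $\widehat{QG}\sb{2\, q}(A)$ is a pair $\left(\begin{bmatrix} e & f \\ 0 & 1 \end{bmatrix}, b(W\sb 1)\right)$ subject to $ef = qfe$, with $e-1$ and $f$ nilpotent, all coefficients of $b(W\sb 1)-1$ nilpotent, and $[e(t+W\sb 1)+f, b(W\sb 1)] = 0$. When $A$ is commutative, $ef = fe$ forces $(1-q)ef = 0$; since $e$ is invertible (being congruent to $1$ modulo nilpotents) and $q \neq 1$ (as $q$ is not a root of unity), I would conclude $f = 0$. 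The commutator condition then becomes automatic, so the pair reduces to $(e, b(W\sb 1))$ with $e-1$ and the coefficients of $b(W\sb 1)-1$ nilpotent. This is precisely the description of $\hat{G}\sb 2(A)$ in Lemma~\ref{40208a}, which is functorially isomorphic to $\hat{G}\sb{II}(A)$.

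Combining these two identifications with Conjecture~\ref{6.8j} yields, for each $A \in ob\,(CAlg/L\n)$, a chain of bijections
$$
\mathcal{CF}\sb{L/k}(A) \;\simeq\; \widehat{QG}\sb{2\, q}(A) \;\simeq\; \hat{G}\sb 2(A) \;\simeq\; \hat{G}\sb{II}(A),
$$
and what remains is to verify that this composite coincides with the inclusion of Proposition~\ref{5.24i}. The step I expect to be the main obstacle is exactly this compatibility check: one must trace an infinitesimal deformation $\varphi$ through Proposition~\ref{6.8i}, producing the pair $(G, b\sb 0(W\sb 1))$ via $\varphi(Q) = eQ$, $\varphi(X) = X + fQ$, $\varphi(Y\sb 0) = Z\sb 0$, then specialize $f = 0$, and finally match the resulting pair $(e, b\sb 0(W\sb 1))$ with the coordinate transformation $(W\sb 1, W\sb 2) \mapsto ((e-1)t + eW\sb 1,\, [b\sb 0(W\sb 1)-1]y + b\sb 0(W\sb 1)W\sb 2)$ of Proposition~\ref{402b} under the isomorphism $\hat{G}\sb 2 \simeq \hat{G}\sb{II}$ of Lemma~\ref{40208a}. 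Once commutativity of this diagram is checked, surjectivity in Conjecture~\ref{6.8j} transfers to surjectivity in Proposition~\ref{5.24i}, establishing Conjecture~\ref{5.24j}.
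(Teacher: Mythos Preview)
Your proposal is correct and follows essentially the same route as the paper. The paper's proof is terser: it simply takes $(e,\xi(W_1))\in \hat G_2(A)$ for commutative $A$, views it as the element $\left(\begin{bmatrix} e & 0\\ 0 & 1\end{bmatrix},\xi(W_1)\right)\in \widehat{QG}_{2,q}(A)$ (the commutator condition being vacuous in a commutative ring), and invokes Conjecture~\ref{6.8j} to produce a deformation. Your additional observation that $ef=qfe$ forces $f=0$ when $A$ is commutative shows that $\widehat{QG}_{2,q}(A)\to \hat G_2(A)$ is in fact a bijection, not merely that $\hat G_2(A)$ embeds; this is correct but not strictly needed, since only surjectivity of the inclusion in Proposition~\ref{5.24i} is at stake. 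The compatibility you flag as the ``main obstacle'' is immediate: Propositions~\ref{5.24i} and~\ref{6.8i} assign to $\varphi$ the same data $(e,b_0(W_1))$ by construction, so the diagram commutes on the nose.
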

\begin{proof}
Let us assume Conjecture \ref{6.8j}. Take an element $(e,\, \xi(W\sb{1}))\in \hat{G}\sb{2}(A)$ for $A \in ob\, (Alg/L\n)$. 
Since $A$ is commutative, the commutation relation 
in Lemma \ref{10.23a}
imposes no condition on $\xi(W\sb{1})$, 
\[
(e, \, \xi(W\sb{1}))=(
\begin{bmatrix}
e & 0\\
0 & 1
\end{bmatrix},\, \xi(W\sb{1})
)\in \widehat{QG}\sb{2 \, q}(A). 
\]
Conjecture \ref{6.8j} says that 
if $q$ is not a root of unity, 
 $(e, \, \xi(W\sb{1}))$ arise from an infinitesimal deformation
\[
 \iota \colon \eL \rightarrow F(\Z,\, A[[W\sb{1},W\sb{2}]])[[X]]. 
\]
\end{proof}
Conjecture \ref{6.8j} says that we can identify the functor $\NCF$ with the quantum formal group $\widehat{QG}\sb{2\, q}$. 
 To be more precise, 
the argument in the First Example studied in \ref{10.4a} allows us to
define a formal $\com$-Hopf algebra $\hat{\mathfrak{I}}\sb q$ 
and hence 
$$
\hat{\mathfrak{I}}\sb{q\, L\n}:= \widehat{\mathfrak{I}}\sb q \hat{\otimes} \sb \com L\n, 
$$
%as well as its formal completion $\hat{\gI}\sb{ q\, L\n}$
which is a functor on the category $(NCAlg/L\n )$\
so that we have a functorial isomorphism 
$$
\hat{\gI}\sb {q\, L\n}(A) \simeq \widehat{QG}\sb{2\, q}(A)
\quad \text{\it \/ for every } L\n\text{\it \/ -algebra }A 
\in ob\, (NCAlg/L\n).
$$ 
%%%%%%%%%%%%%%%%%%%%%%
\begin{definition} We define a functor 
$$
\widehat{QG}\sb{II \, q} : \NCA \to (Set)
$$ 
by setting 
\begin{multline*}
\widehat{QG}\sb{II \, q}(A) := \{ \left( (e-1)t + f + eW\sb 1, \, (b(W\sb 1) -1)y + b(W\sb 1)W\sb 2 \right) \in \\ 
A[[W\sb 1, \, W\sb 2]] \times A[[W\sb 1, \, W\sb 2]] \, | \, 
e, \, f \in A \text{\it \/ and } b(W\sb 1) \in A[[W\sb 1]], \\
[(e-1)t + f + eW\sb 1, \, (b(W\sb 1)-1)y + b(W\sb 1)W\sb 2 ]= 0, 
\\
 e-1, \, f \, \text{ and all the coefficients of the power series } b(W\sb 1 ) -1\, \text{\it are nilpotent} \}
\end{multline*}
for every $A \in ob(NCAlg/L\n )$. 
\end{definition}
\begin{lemma}\label{4.5.2f} The functor $\widehat{QG}\sb{II \, q}$ is a quantum formal group. Namely, 
for two elements
\begin{align*}
u :=(u\sb 1 , u\sb 2) :=((e-1)t + f + eW\sb 1, \, &(b(W\sb 1) -1)y + b(W\sb 1)W\sb 2 ) , 
\\
v:=(v\sb 1,\, v\sb 2):= ( (g-1)t + h + gW\sb 1, \, &(c(W\sb 1) -1)y + c(W\sb 1)W\sb 2 )
\end{align*}
of $\widehat{QG}\sb{II\, q}(A)$, 
we consider the following two sub-sets of the ring $A$ :
\begin{enumerate}
\renewcommand{\labelenumi}{(\arabic{enumi})}
\item
The sub-set $ S\sb u$ of the coefficients of the two power series $u\sb 1,\, u\sb 2$ of $u$ and 
\item
the sub-set $S\sb v$ of the coefficients of the two power series $v\sb 1, \, v\sb 2$ in $v$. 
\end{enumerate}
If the sets $S \sb 1$ and $S\sb 2$ are mutually commutative, 
we define their product $u * v$ by 
$$
((eg -1)t + eh + f + egW\sb 1 , \, (b((g-1)t + h +gW\sb{1})c(W\sb{1}) -1)y + b((g-1)t + h +gW\sb{1})c(W\sb{1})W\sb2). 
$$
that is the composite of coordinate transformations 
\begin{align*}
(W\sb 1, \, W\sb 2 ) &\mapsto 
((e-1)t + f + eW\sb 1, \, (b(W\sb 1) -1)y + b(W\sb 1)W\sb 2 )
 \text{\it \/ and} \\
 (W\sb 1, \, W\sb 2) &\mapsto ( (g-1)t + h + gW\sb 1, \, (c(W\sb 1) -1)y + c(W\sb 1)W\sb 2 ), &
\end{align*}
then the product $u * v$ is an element of $\widehat{QG}\sb{II \, q}$. The co-unit is given by the identity transformation of $(W\sb 1, \, W\sb 2)$. 
\end{lemma}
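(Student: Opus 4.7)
The plan is to establish three things in turn: (a) the displayed formula for $u * v$ coincides with the composition of the two coordinate transformations in the stated order; (b) $u * v$ actually lies in $\widehat{QG}\sb{II\, q}(A)$; (c) the identity coordinate transformation $(W\sb 1, W\sb 2)$ is a two-sided unit. Associativity of $*$ is then inherited from the general fact that composition of power-series coordinate transformations congruent to the identity modulo nilpotents is associative (cf.\ Umemura \cite{ume96.2}).

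For (a), direct substitution of the image of $v$ into $u$ produces, in the first coordinate, $(e-1)t + f + e\bigl((g-1)t + h + gW\sb 1\bigr) = (eg-1)t + eh + f + egW\sb 1$, as stated. In the second coordinate it gives $\bigl(b(P(W\sb 1)) - 1\bigr)y + b(P(W\sb 1))\bigl[(c(W\sb 1)-1)y + c(W\sb 1)W\sb 2\bigr]$ with $P(W\sb 1) := (g-1)t + h + gW\sb 1$; regrouping the $y$-terms collapses this to $\bigl(b(P(W\sb 1))c(W\sb 1) - 1\bigr)y + b(P(W\sb 1))c(W\sb 1)W\sb 2$. Nilpotency of $eg - 1 = (e-1)g + (g-1)$ and of $eh + f$ follows from the individual nilpotency hypotheses by applying the $q$-binomial theorem (valid because $eh$ and $f$ satisfy $(eh)f = q f(eh)$, using $ef = qfe$ together with mutual commutativity of $h$ with $e,f$); nilpotency of the coefficients of $b(P(W\sb 1))c(W\sb 1) - 1$ is similar, and the substitution $W\sb 1 \mapsto P(W\sb 1)$ into $b$ is legitimate because the constant term $(g-1)t + h$ is nilpotent.

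The heart of the proof, and the step I anticipate as the main obstacle, is the commutation relation for $u * v$. Since $t, y \in L\n$ are central in $A$, a short Leibniz-rule computation yields $[\mathcal{A}, (U-1)y + UW\sb 2] = [\mathcal{A}, U](y + W\sb 2)$, where $\mathcal{A} := (eg-1)t + eh + f + egW\sb 1$ and $U := b(P(W\sb 1))c(W\sb 1)$, so it suffices to prove $[\mathcal{A}, U] = 0$. This reduces to Lemma \ref{14320a}: writing $\tilde{\mathcal{A}} := eg(t + W\sb 1) + eh + f$, centrality of $t$ gives $\mathcal{A} = \tilde{\mathcal{A}} - t$ and hence $[\mathcal{A}, U] = [\tilde{\mathcal{A}}, U]$, which is precisely the commutator shown to vanish in Lemma \ref{14320a} for the product in $\widehat{QG}\sb{2\, q}$. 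Unfolding that argument, one decomposes $\mathcal{A} = eP(W\sb 1) + (e-1)t + f$ and splits via Leibniz into $[\mathcal{A}, b(P(W\sb 1))]c(W\sb 1) + b(P(W\sb 1))[\mathcal{A}, c(W\sb 1)]$. The first commutator vanishes by substituting $W\sb 1 \mapsto P(W\sb 1)$ into the relation $[(e-1)t + f + eW\sb 1, b(W\sb 1)] = 0$ for $u$; this substitution preserves the identity because the coefficients of $P(W\sb 1)$ lie in $S\sb v \cup L\n$ while those of $(e-1)t + f + eW\sb 1$ and $b(W\sb 1)$ lie in $S\sb u \cup L\n$, and $S\sb u$ commutes with $S\sb v$ by hypothesis. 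The second commutator expands, via the same decomposition of $\mathcal{A}$, to $e[P(W\sb 1), c(W\sb 1)] + [e, c(W\sb 1)]P(W\sb 1) + [(e-1)t + f, c(W\sb 1)]$, whose middle and last summands vanish by mutual commutativity and whose first summand vanishes by the commutation relation for $v$.

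Finally, for (c), taking $e = g = 1$, $f = h = 0$, $b = c = 1$ shows that the identity transformation lies in $\widehat{QG}\sb{II\, q}(L\n)$; a direct application of the formula for $*$ then gives $u * \mathrm{id} = u = \mathrm{id} * u$. An inverse of an element $(u\sb 1, u\sb 2)$ is constructed by inverting the affine change in the first coordinate and solving the resulting power-series recursion in the second, paralleling the explicit inverse formula displayed just before Conjecture \ref{6.8j} for $\widehat{QG}\sb{2\, q}$.
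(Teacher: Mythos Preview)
Your proof is correct and follows essentially the same approach as the paper. The paper does not give an explicit proof of this lemma; it is stated as a consequence of the surrounding machinery, namely Lemma~\ref{14320a} (closure of the product in $\widehat{QG}_{2\,q}$) together with the functorial isomorphism $\widehat{QG}_{2\,q}\simeq\widehat{QG}_{II\,q}$ of Proposition~\ref{4.5.1c}. Your reduction of the commutator $[\mathcal{A},U]$ to the commutator $[\tilde{\mathcal{A}},U]$ of equation~\eqref{6.6a} via the central shift by $t$ is exactly how one unwinds that isomorphism at the level of the relevant identity, and the three-term splitting you carry out reproduces the steps \eqref{6.6x}--\eqref{6.8d} of Lemma~\ref{14320a}.
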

The quantum formal group $\widehat{QG}\sb {II\, q}$ 
arises as symmetry of the initial conditions 
of \QSI equations.
\begin{align*}
\begin{array}{ll}
\sigma (t) = qt,& \sigma (t^\alpha ) = q^\alpha t^\alpha, \vspace{1ex}\\
\theta^{(1)}(t) = 1,& \theta^{(1)}(t^\alpha ) = [\alpha ]\sb q t^ \alpha.
\end{array}
\end{align*}
%%%%%%%%%%%%%%%%%
 \begin{proposition}\label{4.5.1c}
 For every algebra $A \in (NCAlg/L\n)$, 
we have a functorial isomorphism of quantum formal 
group
$$
\widehat{QG}\sb{2\, q}(A) \to \widehat{QG}\sb{II\, q}(A)
$$
sending an element $$( \begin{bmatrix}
e & f \\
0& 1
\end{bmatrix}, \, b(W\sb{1})
) \in \widehat{QG}\sb{2\, q}(A) \text{ to } ( (e-1)t +f+ eW\sb{1}, \, b(W\sb 1)W\sb 2 + (b(W\sb{ 1}) -1)y)\in \widehat{QG}\sb{II, q}(A).
$$
 \end{proposition}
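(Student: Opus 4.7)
The plan is in three steps: (i) verify that the stated map lands in $\widehat{QG}\sb{II\,q}(A)$; (ii) construct an explicit inverse to establish bijectivity; and (iii) check that the partial multiplication in $\widehat{QG}\sb{2\,q}(A)$, given by the matrix product together with the substitution rule on the $b$-factor, corresponds under the map to the composition of coordinate transformations that defines multiplication in $\widehat{QG}\sb{II\,q}(A)$. The key structural remark, used throughout, is that since $A \in ob\,(NCAlg/L\n)$ has $L\n$ in its center, the elements $t,\, y \in L\n$ commute with everything in $A[[W\sb 1, W\sb 2]]$; the variables $W\sb 1,\, W\sb 2$ are also central; and $y + W\sb 2 = y(1 + y^{-1}W\sb 2)$ is a unit, hence a non-zero-divisor, in $A[[W\sb 1, W\sb 2]]$.

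For well-definedness, set $u\sb 1 := (e-1)t + f + eW\sb 1$ and $u\sb 2 := (b(W\sb 1)-1)y + b(W\sb 1)W\sb 2$. I would rewrite $u\sb 1 = [e(t+W\sb 1)+f] - t$ and $u\sb 2 = b(W\sb 1)(y+W\sb 2) - y$, and use the centrality of $t$, $y$, and $y+W\sb 2$ to reduce the commutator to $[u\sb 1, u\sb 2] = [e(t+W\sb 1)+f,\, b(W\sb 1)]\,(y+W\sb 2)$, which vanishes by the defining relation of $\widehat{QG}\sb{2\,q}$. The nilpotency conditions on $e-1,\, f$ and on the coefficients of $b(W\sb 1)-1$ transfer verbatim, so $(u\sb 1, u\sb 2) \in \widehat{QG}\sb{II\,q}(A)$.

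For bijectivity, I would construct the inverse by reading off coefficients: given $(u\sb 1, u\sb 2) \in \widehat{QG}\sb{II\,q}(A)$, the prescribed linear form of $u\sb 1$ in $W\sb 1$ determines $e = \partial u\sb 1/\partial W\sb 1$ and $f = u\sb 1|\sb{W\sb 1 = 0} - (e-1)t$ uniquely, while $b(W\sb 1)$ is the coefficient of $W\sb 2$ in $u\sb 2$. That the recovered triple $(e, f, b(W\sb 1))$ satisfies $[e(t+W\sb 1)+f,\, b(W\sb 1)] = 0$ is obtained by running the commutator computation of (i) in reverse, exploiting that $y+W\sb 2$ is a non-zero-divisor in $A[[W\sb 1, W\sb 2]]$; surjectivity is immediate from the defining form of elements of $\widehat{QG}\sb{II\,q}(A)$.

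For multiplicative compatibility, let $u = (G, \xi)$ and $v = (H, \eta)$ with $G = \begin{bmatrix} e & f \\ 0 & 1 \end{bmatrix}$, $H = \begin{bmatrix} g & h \\ 0 & 1 \end{bmatrix}$ and the relevant coefficient sets mutually commutative. The product $u * v$ in $\widehat{QG}\sb{2\,q}(A)$ is $\bigl(\begin{bmatrix} eg & eh+f \\ 0 & 1 \end{bmatrix},\, \xi((g-1)t+h+gW\sb 1)\eta(W\sb 1)\bigr)$. Applying the map, its image has first component $(eg-1)t + (eh+f) + egW\sb 1$ and second component $(\xi((g-1)t+h+gW\sb 1)\eta(W\sb 1) - 1)y + \xi((g-1)t+h+gW\sb 1)\eta(W\sb 1)W\sb 2$. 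A direct substitution shows this is exactly the composite of the coordinate transformations attached to the images of $v$ (applied first) and $u$ (applied second) as defined in Lemma \ref{4.5.2f}: substituting $(g-1)t+h+gW\sb 1$ for $W\sb 1$ in $(e-1)t+f+eW\sb 1$ gives $(eg-1)t+(eh+f)+egW\sb 1$, while substituting $(\eta(W\sb 1)-1)y + \eta(W\sb 1)W\sb 2$ for $W\sb 2$ and the shifted expression for $W\sb 1$ in $(b(W\sb 1)-1)y + b(W\sb 1)W\sb 2$ (with $b = \xi$) produces, after collecting terms in $y$ using centrality, precisely the second component above. The mutual-commutativity hypothesis transfers directly between the two descriptions. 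The only real obstacle is the commutator manipulation in step (i) and its inverse in step (ii); once one recognizes that $t$, $y$, $W\sb 1$, $W\sb 2$ are central and $y+W\sb 2$ is invertible, the equivalence of the defining constraints is transparent, and the rest of the proof is a mechanical unwinding of definitions.
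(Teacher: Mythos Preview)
Your proof is correct. The paper states this proposition without proof, treating it as an immediate consequence of the definitions (in analogy with the commutative case, Lemma \ref{40208a}, where the isomorphism $\hat{G}\sb{II} \simeq \hat{G}\sb 2$ is verified by the same coefficient-extraction argument you give). Your three-step verification---well-definedness via the commutator reduction $[u\sb 1, u\sb 2] = [e(t+W\sb 1)+f,\, b(W\sb 1)](y+W\sb 2)$, bijectivity by reading off coefficients, and compatibility with the partial products---fills in exactly what the paper leaves implicit.

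One small point: the paper's written definition of $\widehat{QG}\sb{II\,q}$ does not explicitly list the relation $ef = qfe$, though the analogous Definition \ref{4.5.2a} for $\widehat{QG}\sb{III\,q}$ does. This is evidently an omission, since the multiplication in Lemma \ref{4.5.2f} presupposes it (the product matrix $GH$ must again satisfy the $q$-commutation relation, as in Lemma \ref{14320a}). Your proof implicitly assumes this condition is present on both sides, which is the intended reading; you might note explicitly that $ef = qfe$ transfers trivially under the map.
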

 %%%%%%%%%%%%%%%%%%%%%%%%%%%%%%%%%%%%%%%%%%%%%%%%%%%%%%%%%%%%%%%%%%%
 Thanks to 
Propositions \ref{6.8i}, \ref{4.5.1c} and Conjecture \ref{6.8j}, we are in
 the similar situation as in the commutative deformations in \ref{4.5.1d}. 
\begin{theorem} 
We have an inclusion 
$$
\mathcal{NCF}\sb{L/k} \hookrightarrow \widehat{QG}\sb {II\, q } 
$$
of functors on the category $(NCAlg/L\n )$ taking values in the 
category of sets, where 
\begin{equation}\label{4.5.1a}
L/k = (\com (t, \, t^\alpha ), \, \sigma ,\, \theta ^* )/ \com .
\end{equation}
Let us assume Conjecture \ref{6.8j}. Then the inclusion \eqref{4.5.1a} is bijection so that we can identify the functors 
$$
\mathcal{NCF}\sb{L/k} \simeq \widehat{QG}\sb {II\, q }. 
$$
The quantum formal group $\widehat{QG}\sb {II\, q }$ 
operates on the functor 
 $\mathcal{NCF}\sb{L/k}$
in an appropriate sense, through the initial conditions. 
(cf. The commutativity condition in Lemma \ref{4.5.2f}.) 
So we may say that 
the quantum formal Galois group 
$$
{\rm NC}\infgal (L/k) \simeq \widehat{QG}\sb {II\, q}. 
$$
\end{theorem}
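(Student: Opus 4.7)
The plan is to assemble the pieces already in place rather than start from scratch. The functorial inclusion $\mathcal{NCF}_{L/k} \hookrightarrow \widehat{QG}_{II\,q}$ is obtained by composing the inclusion of Proposition \ref{6.8i}, which sends an infinitesimal deformation $\varphi \in \mathcal{NCF}_{L/k}(A)$ to the pair $\big(\begin{bmatrix} e & f \\ 0 & 1 \end{bmatrix},\, b_0(W_1)\big)$ in $\widehat{QG}_{2\,q}(A)$, with the isomorphism $\widehat{QG}_{2\,q} \simeq \widehat{QG}_{II\,q}$ furnished by Proposition \ref{4.5.1c}. This composition is functorial in $A \in (NCAlg/L^\natural)$ because both constituent maps are; moreover, the target description in $\widehat{QG}_{II\,q}$ is precisely the coordinate transformation $(W_1,W_2) \mapsto ((e-1)t + f + eW_1,\, (b_0(W_1)-1)y + b_0(W_1)W_2)$ read off from Proposition \ref{402b}.

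For the second assertion, under Conjecture \ref{6.8j} the inclusion of Proposition \ref{6.8i} is a bijection for every $A$, and so composing with the isomorphism of Proposition \ref{4.5.1c} yields the claimed bijection $\mathcal{NCF}_{L/k} \simeq \widehat{QG}_{II\,q}$ of functors. No additional verification is needed beyond noting that these are equivalences on each $A$ compatible with the functor structure.

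For the third assertion, the action of $\widehat{QG}_{II\,q}$ on $\mathcal{NCF}_{L/k}$ is defined exactly as in the discussion of Section \ref{9.28e} for the First Example and as in Remark \ref{10.24a}. Given $\varphi \in \mathcal{NCF}_{L/k}(A)$ corresponding under the inclusion to the coordinate transformation $u_\varphi = ((e-1)t + f + eW_1,\, (b_0(W_1)-1)y + b_0(W_1)W_2)$, and an element $v \in \widehat{QG}_{II\,q}(A)$ whose coefficient set is mutually commutative with $S_{u_\varphi}$ in the sense of Lemma \ref{4.5.2f}, one sets $v\cdot \varphi$ to be the deformation whose associated coordinate transformation is the composite $u_\varphi \ast v$. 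Lemma \ref{4.5.2f} ensures $u_\varphi \ast v$ lies in $\widehat{QG}_{II\,q}(A)$, and associativity and unit axioms follow from the group structure on composition of coordinate transformations established in Lemma \ref{4.5.2f} (together with the classical fact cited from Umemura \cite{ume96.2}). Because the bijection $\mathcal{NCF}_{L/k} \simeq \widehat{QG}_{II\,q}$ intertwines this action with the regular translation action of $\widehat{QG}_{II\,q}$ on itself, the pair $(\widehat{QG}_{II\,q},\,\mathcal{NCF}_{L/k})$ is in the natural sense a torsor, justifying the identification ${\rm NC}\infgal(L/k) \simeq \widehat{QG}_{II\,q}$.

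The principal obstacle is not the inclusion, which follows from earlier work, but the careful bookkeeping of the mutual commutativity hypothesis: one must check that the commutation constraint $[e(t+W_1)+f,\, b_0(W_1)] = 0$ from Lemma \ref{10.23a} which defines membership in $\widehat{QG}_{II\,q}$ is preserved under the partial composition law, and that this is exactly the constraint needed so that the composite transformation arises again from some infinitesimal deformation of $\iota$ on $\eL$. This is essentially the content already carried out in Lemma \ref{14320a}; the new wrinkle is only to transport everything through the isomorphism of Proposition \ref{4.5.1c} and to interpret the action at the level of initial-condition transformations rather than matrix pairs.
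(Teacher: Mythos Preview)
Your proposal is correct and follows the same approach as the paper: the theorem is stated there as a summary of the preceding results, with the only justification being the sentence ``Thanks to Propositions \ref{6.8i}, \ref{4.5.1c} and Conjecture \ref{6.8j}, we are in the similar situation as in the commutative deformations in \ref{4.5.1d}'' immediately before the statement. Your write-up simply spells out how these pieces fit together, which the paper leaves implicit.
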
 

%%%%%%%%%%%%%%%%%5
\subsection{Summary on the Galois structures of the field extension $\com(t,\, t^{\alpha})/\com$ }\label{151007b}
Let us summarize our results on the field extension $(\com(t, \, t^{\alpha})/\com)$. 
\begin{enumerate}
\renewcommand{\labelenumi}{(\arabic{enumi})}
\item Difference field extension $(\com(t,\, t^{\alpha}),\, \sigma)/\com$. This is a Picard-Vessiot extension with Galois group $\G\sb{m\, \com} \times \G\sb{m\,\com}$. 
\item Differential field extension $(\com(t,\, t^{\alpha}),\, d/dt)/\com$. This is not a Picard-Vessiot extension. The Galois group 
$$
\infgal(L/k)\colon (CAlg/L\n ) \to (Grp)
$$
 is isomorphic to $\hat{\G}\sb{m\, L\n} \times\sb{L\n} \hat{\G}\sb{a \,L\n}$, 
 where $\hat{\G}\sb{m \, L\n}$ and 
 $\hat{\G}\sb {a \, L\n}$ are formal completion of the multiplicative group and the additive group. 
 So as a group functor on the category 
 $(CAlg/L\n)$, we have 
 $$ 
 \hat{\G }\sb{ m \, L\n}(A) =\{ b\in A \, | \, b-1 \text{\it \/ is nilpotent } \},
 $$
 the group law being the multiplication 
 and 
 $$
 \hat{\G }\sb{ a \, L\n}(A) =\{ b\in A \, | \, b 
 \text{\it \/ is nilpotent} \}
 $$
 is the additive group 
 for a commutative $L\n$-algebra $A$.
 
\item Commutative deformation of \QSI extension $(\com(t,\, t^{\alpha}),\, \sigma, \, \theta^{\ast})/\com$. 
If $q$ is not a root of unity, 
$\infgal(L/k)$ is an infinite-dimensional formal group such that we have 
\[
 0 \rightarrow \widehat{A[[W\sb{1}]]^{\ast}} \rightarrow \infgal(L/k)(A) \rightarrow \hat{\G}\sb{m}(A) \rightarrow 0,
\]
where $\widehat{A[[W\sb{1}]]^{\ast}}$ denotes the multiplicative group 
\[
 \left\{ a \in A[[W\sb{1}]]\, \left| \, \text{\itshape all the coefficients of power series $a-1$ are nilpotent}
\right. \right\}
\]
%and $\hat{\G}\sb{m}(A)$ is the multiplicative group
%\[
%\{ a\in A \, |\, \text{\itshape $a-1$ is nilpotent} \},
%]
modulo Conjecture \ref{6.8j}. 
\item Non-commutative Galois group. If $q$ is not a root of unity, the Galois group ${\rm NC}\infgal (L/k)$ is isomorphic to the quantum formal group 
$\widehat{QG}\sb{II \, q}$:
\[
{\rm NC}\infgal (L/k) \simeq \widehat{QG}\sb{II \, q}
\]
modulo Conjecture \ref{6.8j}. 
\par
We should be careful about the group law. 
Quantum formal group structure in $\widehat{QG}\sb {II \,q }$ coincides with the group structure defined from the initial conditions as in Remark \ref{10.24a}. 
%So we might say that quantum Galois group is the quantum formal group $\widehat{QG}\sb{2 \, q}$:
%$$
%{\rm NC}\infgal (L/k) \simeq \widehat{QG}\sb{II\, q}.
%$$
\item 
Let us assume $q$ is not a root of unity.
If we have a $q$-difference field extension 
$(L, \, \sigma )/(k, \, \sigma )$ such that $t \in 
L$ with $\sigma (t) = qt$, then we 
can define the operator 
$\theta ^{(1)}\colon L \to L$ 
by setting 
$$\theta ^{(1)}(a) := \frac{\sigma (a) -a}{qt -t}.$$
We also assume the field $k$ is $\theta ^{(1)}$ invariant. 
Defining the operator 
 $
\theta ^{(n)}\colon L \to L
 $
 by 
 
\begin{align}
\theta^{(0)} & = \Id \\ 
 \theta ^{(n)} & = \frac{1}{[n]\sb q !}(\theta ^{(1)})^n
 \end{align}
 for every positive integer $n$ 
 so that we have a \QSI field extension 
 $(L, \, \sigma , \, \theta ^*)/(k, \, \sigma , \, \theta ^*)$.
 \par  
Here arises a natural question
 of comparing the Galois groups of 
 the difference field extension 
 $ (L, \, \sigma )/(k, \, \sigma )$
 and \QSI field extension 
 $(L, \, \sigma , \, \theta^* )/(k, \, \sigma , \, \theta ^* )$. 
 \par
 As the \QSI field extension is constructed from the 
 difference field extension in a more or less trivial way, 
 one might imagine that they coincide or they are not much different. 
 \par
 This contradicts Conjecture \ref{6.8j}. 
 Let us take our example $\com (t , \, t^\alpha )/\com $. 
 Assume Conjecture \ref{6.8j} is true. Then the 
 Galois group for the \QSI extension is 
 $\widehat{QG}\sb {II q L\n}$ that is infinite-dimensional,
 whereas 
the Galois group is of 
 the difference field extension is of 
 dimension $2$.
\end{enumerate}
%%%%%%%%%%%%%%%%%%%%

\section{The Third Example, the field extension 
$\com (t, \, \log\, t )/ \com $}\label{10.4c}
We assume that $q$ is a complex number not equal to $0$. 
Let us study the field extension $L/k:=\com(t, \, \log\, t)/\com$ from various view points as in Sections \ref{10.4a} and \ref{10.4b}. 
\subsection{$q$-difference field extension $\com(t,\, \log\, t)/\com$ }\label{6.8k}
We consider $q$-difference operator $\sigma\colon L \to L$ such that $\sigma$ is the $\com$-automorphism of the field $L$ satisfying 
\begin{equation}\label{6.7a}
 \sigma(t) = qt \ \text{\it \/ and } \ \sigma(\log \, t) = \log \,t + \log\, q . 
\end{equation}
\par
It follows from \eqref{6.7a} that
if $q$ is not a root of unity, then the field of constants of the difference field 
$( \com (t, \, \log\, t), \, \sigma )$ is $\com $ and hence 
 $(\com(t, \, \log\, t),\sigma)/ \com$ is a Picard-Vessiot extension with Galois group $\G\sb{m\,\com} \times\sb{\com} \G\sb{a \, \com}$. 
\subsection{Differential field extension $(\com(t, \, \log\, t), d/dt)/\com$ }
As we have
\[
 \frac{dt}{dt} = 1 \ \text{\it \/ and }\ \frac{d\log\, t}{dt} = \frac{1}{t}, 
\]
 both differential field extensions $\com(t,\, \log\, t)/\com(t)$ and $\com(t)/\com$ are Picard-Vessiot extensions with Galois group $\G\sb{a\, \com}$. The differential extension $\com(t, \, \log\, t)/\com$ is not, however, a Picard-Vessiot extension. Therefore, we need general differential Galois theory \cite{ume96.2} to speak of the Galois group of the differential field extension $\com(t, \, \log\, t)/ \com$. \\
The universal Taylor morphism 
\[
 \iota \colon L \rightarrow L\n[[X]]
\]
sends
\begin{align}
\iota(t) &= t+X,\label{6.7c}\\
\iota(\log\, t) &= \log\, t +\sum\sb{n=0}^{\infty}(-1)^{n+1}\frac{1}{n}\left( \frac{X}{t} \right)^{n} \in L\n[[X]] .\label{6.7d}
\end{align}
Writing $\log\, t$ by $y$, we take $\partial/\partial t,\, \partial/\partial y$ as a basis of $L\n=\com(t,y)\n$-vector space $\mathrm{Der}(L\n/k\n)$ of $k\n$-derivations of $L\n$. It follows from \eqref{6.7c}, \eqref{6.7d} that 
\[
 \eL = \text{ \it \/ a localization of the algebra }L\s \left[t+X, \, \sum\sb{n=1}^{\infty}(-1)^{n+1}\frac{1}{n}\left( \frac{X}{t} \right)^{n} \right] \subset L\s[[X]].
\]
We argue as in \ref{9.28e} and Section
 \ref{10.4b}.
 For a commutative algebra $A \in ob(CAlg/L\n)$ and $\varphi \in \mathcal{F}\sb{L/k}(A)$, there exist 
nilpotent elements 
$a, b \in A $ such that
\begin{align*}
 \varphi(t+X) &= t+ W\sb{1} + X + a, \\
 \varphi \left( \sum\sb{n=1}^{\infty}(-1)^{n+1}\frac{1}{n}\left( \frac{X}{t+W\sb{1}}\right)^{n} \right) 
 &= \sum\sb{n=1}^{\infty}(-1)^{n+1}\frac{1}{n}\left( \frac{X}{t+W\sb{1}+ a}\right)^{n} + b
.\end{align*}
Therefore 
we arrived at the dynamical system 
\begin{equation}\label{10.31a}
\left\{ \begin{array}{l} t, \\ y,  \end{array} \right. \mapsto 
 \left\{\begin{array} {l}
\phi (t) = t + X +W\sb 1 +a, \vspace{1ex}
 \\
\displaystyle{\phi (y) = y+ \sum\sb{n=1}^{\infty}(-1)^{n+1}\frac{1}{n}\left( \frac{X}{t+W\sb{1}+ a}\right)^{n} + b .}
 \end{array} \right. 
\end{equation}
In terms of initial conditions, dynamical system 
\eqref{10.31a} reads 
$$
\begin{pmatrix}
 t \\ y 
\end{pmatrix}  \mapsto \,
\begin{pmatrix}t +a \\ 
y+ b \end{pmatrix},
$$
where $a, \, b $ are nilpotent elements of $A$. 
So we conclude 
$$
\infgal (L/k)(A) = \hat{\G}\sb a (A) \times \hat{\G}\sb a (A)
$$
for every commutative $L\n$-algebra $A$.
Consequently we get 
\[
 \infgal(L/k) \simeq (\hat{\G}\sb{a \com} \times \hat{\G}\sb{a \com} ) \otimes\sb {\com} L\n . 
\]
\subsection{\QSI field extension $(\com(t,\, \log\, t),\, \sigma, \, 
\sigi , \, \, \theta^{\ast})/\com$ }
For the automorphism $\sigma \colon \com(t, \, \log\, t) \rightarrow \com(t, \, \log\, t)$ in Subsection \ref{6.8k} 
we set
 $$
 \theta^{(0)} = \Id\sb{\com(t, \, \log\, t)} \text{\it \/ and \/}
\theta^{(1)} = \frac{\sigma - \Id\sb{\com(t, \, \log\, t)}}{(q-1)t}
$$
so that $\theta^{(1)} \colon \com(t, \, \log\, t) \rightarrow \com(t, \, \log\, t)$ is a $\com$-linear map. We further introduce 
\[
 \theta^{(i)} := \frac{1}{[i]\sb{q}!}(\theta^{(1)})^{i}\colon \com(t, \, \log\, t) \rightarrow \com(t, \, \log\, t)
\]
that is a $\com$-linear map for $i = 1,\,2,\,3, \, \cdots$. 
Hence if we denote the set $\{ \theta^{(i)} \}\sb{i \in \N}$ by $\theta^{\ast}$, then $(\com(t,\, \log\, t),\, \sigma, \, \sigi , \, \theta^{\ast})$ is a \QSI ring. \par 
The universal Hopf morphism 
\[
\iota \colon \com(t, \, \log\, t) \rightarrow F(\Z, L\s)[[X]]
\]
sends, by Proposition \ref{a4.1}, $t$ and $y$ respectively to
\begin{align*}
 \iota(t) &= tQ + X,\\
 \iota(y) &= y + (\log\, q) Z + \frac{\log\, q }{q-1} \sum\sb{n=1}^{\infty} X^{n}(-1)^{n+1}\frac{1}{[n]\sb{q}q^{n(n-1)/2}
 }(tQ)^{-n}
\end{align*}
that we identify with 
\begin{align*}
 y + W\sb{2} + \, (\log\, q)\,Z + \frac{\log\, q }{q-1} \sum\sb{n=1}^{\infty} X^{n}(-1)^{n+1}\frac{1}{[n]\sb{q}q^{n(n-1)/2}}(t+W\sb{1})^{-n}Q^{-n}
\end{align*}
that is an element of $F(\Z, L^{\n}[[W\sb{1},W\sb{2}]])[[X]]$, where we set 
\[
Z := \begin{bmatrix}
\cdots&-1&0&1&2& \cdots \\
\cdots&-1&0&1&2& \cdots
\end{bmatrix} \in F(\Z, \Z)
. \]
In particular we have 
\begin{equation}\label{140306a}
\frac{\partial \iota (y)}{\partial W\sb 2}= 1.
\end{equation}
We identify further $t \in L^{\s}$ with $t + W\sb{1} \in L\n[[W\sb{1},W\sb{2}]]$ and hence
\[
 \sum\sb{n=1}^{\infty}X^{n}(-1)^{n+1}\frac{1}{[n]\sb{q} q^{n(n-1)/2}}(tQ)^{-n} \in F(\Z, L^{\s})[[X]]
\]
with 
\[
 \sum\sb{n=1}^{\infty}X^{n}(-1)^{n+1}\frac{1}{[n]\sb{q}q^{n(n-1)/2}}(t+W\sb{1})^{-n} Q^{-n}\in F(\Z,L\n[[W\sb{1},W\sb{2}]])[[X]]. 
\]
\subsubsection{Commutative deformations $\mathcal{CF}\sb{L/k}$ for $(\com(t,\, \log\, t),\, \sigma, \, \theta^{\ast})/\com$}
Now the argument of Section \ref{10.4b} allows us to describe infinitesimal deformations on the category of commutative $L\n$-algebras $(CAlg/L\n )$. Let $\varphi \colon \eL \rightarrow F(\Z, A[[W\sb{1},W\sb{2}]])[[X]]$ be an infinitesimal deformation of the canonical morphism $\iota \colon \eL \rightarrow F(\Z, A[[W\sb{1},W\sb{2}]])[[X]]$ for $A \in ob\, (CAlg/L\n)$. 
Then there exist $e \in A$
%and $b(W\sb{1}) \in A[[W\sb{1}]]$ 
 such that $e-1$
 is nilpotent and such that 
 % and all the coefficients of the power series $b(W\sb{1})$ are nilpotent and such that
\[
 \varphi\left( (t + W\sb{1}) Q + X \right) = e(t + W\sb{1})Q + X,
% [varphi (\sum\sb{n=1}^{\infty} X^{n}(-1)^{n+1}\frac{1}{[n]\sb{q}}(t+W\sb{1}))^{-n}Q^{-n}) &=\sum\sb{n=1}^{\infty} X^{n}(-1)^{n+1}\frac{1}{[n]\sb{q}}(e(t+W\sb{1}))^{-n}Q^{-n}) + b(W\sb{1}). 
\] 
as we learned in the First Example. 
To determine the image $\cZ:=\varphi (y)$, we argue as in the Second Example. 
We have 
\begin{align}\label{140305a}
\sigma (y) &= y + \log\, q, \\ \label{140305b}
\theta^{(1)}(y) &= \frac{\log \, q}{(q-1)t} .
\end{align}
Since the deformation $\varphi$ is \QSI morphism, the two equations above 
give us relations 
\begin{align}\label{140305c}
\hat{\Sigma} (\cZ) &= \cZ + \log\, q, \\ \label{140305d}
\hat{\Theta}^{(1)}(\cZ) &=
\frac{\log \, q}{(q-1)( (t +W\sb 1)e Q + X)} .
%%%%%%%%%%%
\end{align}
We determine the expansion of the element $\cZ$: 
$$
\cZ = \sum \sb {n=0} ^\infty X^n a \sb n \in F(\Z , \, A[[W\sb 1, \, W\sb 2 ]] )[[X]]
$$ 
so that 
$$
a\sb n \in F(\Z, \, A[[W\sb 1 , W\sb 2]]) \text{\it \/ for every \/} n\in \N.
$$
It follows from \eqref{140306a} and \eqref{140305c}
$$
a\sb 0 = y + W\sb 2 + b(W\sb 1 ) + (\log \, q) N \in F(\Z , \, A[[W\sb 1, W\sb 2 ]]) ,
$$
where $b(W\sb 1)$ is an element of $A[[W\sb 1]]$
such that all the coefficients of the power series $b(W\sb 1)$ are nilpotent.
On the other hand \eqref{140305d} tells us 
\begin{align}\label{4322a}
a\sb 1 &= \frac{\log \, q }{q-1} \, \frac{1}{(t+W\sb 1)eQ},\\ 
a\sb {n +1} &= - \frac{[n]\sb q}{[n+1]\sb q}\,\frac{1}{(t+W\sb 1)eQq^{n}}a\sb n 
\qquad \text{\it \/ for \/} n \ge 1. 
\end{align}
Hence 
\begin{equation} 
a\sb {n} = (-1)^{n+1}\left(\frac{\log q}{q-1}\right)\frac{1}{[n]\sb q q^{n(n-1)/2}}(t+ W\sb 1)^{-n}(eQ)^{-n} \qquad
\text{\it \/ for \/}n \ge 1. 
\end{equation}
So we get 
\begin{equation}\label{4322b}
\begin{split}
\cZ= &y + W\sb{2} + b(W\sb 1) +\, (\log\, q)\,N \\
&+\frac{\log q }{q-1} 
\sum\sb{n=1}^{\infty} X^{n}(-1)^{n+1}\frac{1}{[n]\sb{q} q^{n(n-1)/2}}(t+W\sb{1})^{-n}(eQ)^{-n}, 
\end{split}
\end{equation}
which is an element of $F(\Z, A[[W\sb{1},W\sb{2}]])[[X]]$.
\begin{proposition}\label{6.8m}
For every commutative $L\n$-algebra $A \in ob (CAlg/L\n )$, 
We have a functorial injection
\begin{multline*}
 \mathcal{CF}\sb{L/k}(A) \rightarrow \hat{G}\sb{3 }(A) :=
 \{ (e,\, b(W\sb{1})) \in A\times A[[W\sb 1]]\, | \, 
e-1 \text{ and } \\
\text{\it\/ all the coefficients 
 of $b(W\sb{1})$ are nilpotent}
 \} 
\end{multline*}
sending an element 
$$
\varphi \in \mathcal{CF}\sb {L/k}(A) \text{ to } (e, \, b(W
\sb 1) ) \in \hat{G}\sb{3 }(A).
$$ 
\end{proposition}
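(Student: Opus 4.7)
The plan is to imitate the commutative deformation analysis of the First and Second Examples, applied now to the two generators $\iota(t)$ and $\iota(y) = \iota(\log t)$ of the Galois hull. The crucial new input is the pair of \QSI relations \eqref{140305a}, \eqref{140305b}, which after applying $\varphi$ translate to the functional equations \eqref{140305c}, \eqref{140305d} for $\cZ := \varphi(\iota(y))$.

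First I would apply, verbatim, the argument of Sublemma \ref{5.15} from the First Example to determine $\varphi$ on the sub-algebra generated over $L\s$ by $\iota(t) = tQ + X$. Since $A$ is commutative and $q \ne 1$, the coefficient ``$f$'' that appears there is killed by the identity $(1-q)f = 0$, so $\varphi(Q) = eQ$ and $\varphi(X) = X$ for some $e \in A$ with $e-1$ nilpotent, hence $\varphi(\iota(t)) = e(t+W\sb 1)Q + X$.

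Second, I expand $\cZ = \sum\sb{n \ge 0} X^n a\sb n$ with $a\sb n \in F(\Z, A[[W\sb 1, W\sb 2]])$ and read off the coefficients from \eqref{140305c}, \eqref{140305d}. The shift equation $\hat{\Sigma}(\cZ) = \cZ + \log q$, combined with $\hat{\Sigma}(X) = qX$, forces $\hat{\Sigma}(a\sb n) = q^{-n} a\sb n$ for $n \ge 1$ and pins down the constant-in-$X$ term as $a\sb 0 = y + W\sb 2 + b(W\sb 1) + (\log q)\, N$, where the slope-$1$ dependence on $W\sb 2$ is forced by \eqref{140306a}, and $b(W\sb 1) \in A[[W\sb 1]]$ has nilpotent coefficients because $\varphi$ is an infinitesimal deformation of $\iota$. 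The $q$-derivation equation \eqref{140305d} then yields the recurrence \eqref{4322a} for $a\sb n$, $n \ge 1$, determining them explicitly in terms of $e$ and producing the closed form \eqref{4322b}.

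Finally, to see that the assignment $\varphi \mapsto (e, b(W\sb 1))$ is a functorial injection, I would argue that $\eL$ is (a localization of) the sub-algebra of $F(\Z, L\s)[[X]]$ generated over $\K = L\s$ by $\iota(t)$ and $\iota(y)$ and closed under $\D$; since any $\varphi \in \mathcal{CF}\sb{L/k}(A)$ is a $\D$-compatible algebra morphism restricting to $\iota$ on $\K$, it is determined by $\varphi(\iota(t))$ and $\varphi(\iota(y))$, hence by the pair $(e, b(W\sb 1))$. The main subtleties will be (i)~isolating $b(W\sb 1)$ cleanly from the constant-in-$X$ term of $\cZ$ while keeping track of the $\Z$-valued summand $(\log q)\, N$, and (ii)~checking that the compatibility of $\varphi$ with the derivations $\partial/\partial t$ and $\partial/\partial y$ does not impose any further relations on the coefficients of $b(W\sb 1)$---precisely the kind of issue that surfaced in Conjecture \ref{5.24j} of the Second Example, and which here is what prevents one from asserting surjectivity in addition to injectivity.
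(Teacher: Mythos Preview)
Your proposal is correct and follows essentially the same approach as the paper: the paper's ``proof'' of this Proposition is precisely the computation displayed in the paragraphs preceding it (equations \eqref{140305c}--\eqref{4322b}), which you have reproduced faithfully, and your argument for injectivity via generators of $\eL$ parallels Lemma \ref{402.3} from the Second Example.
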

\begin{conjecture}\label{6.8l}
If $q$ is not a root of unity, then
the injection in Proposition \ref{6.8m} is a bijection. 
\end{conjecture}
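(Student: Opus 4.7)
The plan is to construct, for an arbitrary pair $(e, b(W_1)) \in \hat{G}\sb{3}(A)$ with $e-1$ and all coefficients of $b(W_1)$ nilpotent, an infinitesimal deformation $\varphi \in \mathcal{CF}\sb{L/k}(A)$ whose image under the map in Proposition \ref{6.8m} is exactly $(e, b(W_1))$. The construction mirrors the surjectivity argument of Sublemma \ref{5.15}(3), but now accommodates a continuous parameter $b(W_1) \in A[[W_1]]$ in place of a single scalar. First I would make $\eL$ concrete by the method of Lemma \ref{5.24n}, showing that $\eL$ is a localization of the $L\s$-sub-algebra of $F(\Z, L\s)[[X]]$ generated by $Q$, $X$, the function $Z$, and the partial derivatives $\partial^l \iota(y)/\partial t^l$ for $l \in \N$; closedness under $\hat{\Sigma}, \hat{\Theta}^*, \partial/\partial t, \partial/\partial y$ follows routinely from \eqref{140305a}--\eqref{140305b} and from the commutation of $\partial/\partial t$ with $\hat{\Sigma}$ and $\hat{\Theta}^{(1)}$.

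Next I would define a continuous $L\s$-algebra morphism
\[
\tilde\varphi \colon \tilde{\eL} \longrightarrow F(\Z, A[[W\sb{1}, W\sb{2}]])[[X]]
\]
on a $\D$-stable over-ring $\tilde{\eL} \supset \eL$ in which the right-hand side of \eqref{4322b} makes sense, by the prescription $\tilde\varphi(\iota(t)) = e(t + W\sb{1})Q + X$, $\tilde\varphi(\iota(y)) = \cZ$ with $\cZ$ given by \eqref{4322b}, and $\tilde\varphi(\partial^l \iota(y)/\partial t^l) = \partial^l \cZ / \partial W\sb{1}^l$ for $l \geq 1$. Compatibility with $\hat{\Sigma}$ follows from $\hat{\Sigma}(Q) = qQ$, $\hat{\Sigma}(Z) = Z + 1$ and the equation \eqref{140305c}; compatibility with $\hat{\Theta}^{(1)}$ is exactly the recursion \eqref{4322a} already used to derive $\cZ$; and compatibility with $\partial/\partial W\sb{1}, \partial/\partial W\sb{2}$ is automatic since these operators act only on Taylor coefficients. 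Restricting $\tilde\varphi$ to $\eL$ yields the desired $\varphi$, and the nilpotency of $e-1$ and of the coefficients of $b(W_1)$ forces $\varphi \equiv \iota$ modulo nilpotents, so that $\varphi \in \mathcal{CF}\sb{L/k}(A)$ maps to $(e, b(W\sb{1}))$ by construction.

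The main obstacle, and the reason the statement remains a conjecture rather than a theorem, lies in the well-definedness of $\tilde\varphi$ on the sub-algebra generated by the partial derivatives $\partial^l \iota(y)/\partial t^l$: one must verify that there is no hidden polynomial relation over $L\s$ among
\[
Q, \ X, \ \partial^l \iota(y)/\partial t^l \quad (l \in \N)
\]
that would force the prescribed $b(W\sb{1})$ to satisfy additional constraints beyond the nilpotency of its coefficients. This is the exact analog of the algebraic-independence heuristic explained in Remark \ref{5.24l} in the context of the Second Example, and when $q$ is not a root of unity it is plausible on the same experiential grounds; however, a rigorous proof appears to require a genuine transcendence argument inside $F(\Z, L\n[[W\sb{1}, W\sb{2}]])[[X]]$ that rules out the $q$-specific cancellations which do occur when $q$ is a root of unity. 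I expect this last step, rather than the compatibility verifications in paragraph two, to be where the essential work of a full proof must take place.
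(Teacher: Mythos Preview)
The statement under discussion is a \emph{conjecture} in the paper, not a theorem; the paper offers no proof of it. Your proposal is not really a proof either, and you are honest about this: you correctly isolate the missing ingredient, namely the absence of nontrivial algebraic relations over $L\s$ among $Q$, $X$, and the iterated $\partial/\partial t$-derivatives of $\iota(y)$, and you flag this as the step that would require a genuine transcendence argument. This is precisely the heuristic the paper articulates for the analogous Conjecture~\ref{5.24j} in the Second Example (see Remark~\ref{5.24l} and Lemma~\ref{5.24n}), and your transfer of that heuristic to the Third Example is the natural one.

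So there is nothing to compare against: the paper leaves the conjecture open, and your write-up is best read as a sketch of the expected proof architecture together with an accurate identification of the gap. One small caution on the constructive part: your prescription of $\tilde\varphi$ on an over-ring $\tilde\eL$ is stated somewhat loosely, and even granting the conjectural algebraic independence you would still need to check that the localization step (passing from the polynomial generators to the actual Galois hull $\eL$, which involves inverting elements such as $\iota(t+c)$) does not introduce further obstructions; this is handled in the First Example by the explicit $\tilde\eL$ of Sublemma~\ref{5.15}(3), and an analogous explicit description would be needed here. But the essential point stands: neither you nor the paper has a proof, and you have correctly located why.
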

$\hat{G}\sb{3 }$ is a group functor on $(CAlg/L\n)$. In fact, for $A\in ob(Alg/L\n)$, 
 we define the product of two elements
$$(e, \, b(W\sb{1})),\, (g, \, c(W\sb{1})) \in \hat{G}\sb{3 }(A)$$ 
 by 
\[
 (e,\, b(W\sb{1})) * (g, \, c(W\sb{1})) := (eg, \, b((g-1)t+gW_{1})+ c(W\sb{1})). 
\]
Then, 
the product is, in fact, an element of $\hat{G}\sb {3 }(A)$, the product is associative, 
the 
unit element of the group law is $(I\sb 2,\, 0) \in \hat{G}\sb{3 }
(A)$ and the inverse $(e, \, b(W\sb{1}))^{-1} = (e^{-1}, \, -b(e^{-1}W\sb{1}+ (e^{-1} -1)t))$. 
\par
So if Conjecture \ref{6.8l} is true, we have a splitting exact sequence 
\[
 0 \rightarrow A[[W\sb{1}]]\sb{+} \rightarrow \infgal(L/k)(A) \rightarrow \hat{\G}\sb{m\, L\n}(A) \rightarrow 1,
\]
where $A[[W\sb{1}]]\sb{+}$ denote the additive group of the power series in $A[[W\sb{1}]]$ whose coefficients are nilpotent element. 
%%%%%%%%%%%%%%%%%%%%%%%%%%%%%%%%%%%%%%%%%%%%%%%%%%%%%%%%%%%%%%%%%%%%%%%%%%%%%%%%%%%%%%%%%%%%%
\subsubsection{Non-commutative deformations $\NCF$ for $(C (t,\, \log\, t),\, \sigma,\, \sigi , \, \theta^{\ast})/\com$}

The arguments in Section \ref{10.4b} allows us to prove analogous results 
on the non-commutative deformations 
for the \QSI field extension $(\com(t,\, \log\, t),\, \sigma,\, \sigi , \, \theta^{\ast})/\com$. We write assertions without giving detailed proofs. For, since the proofs are same, it is easy to find complete proofs. 
\par 
As in the Second Example, doing calculations 
\eqref{4322a},..., \eqref{4322b} in the non-commutative case, we can determine the set
$\mathcal{NCF}\sb{(\com (t, \log t ), \sigma ,\sigi , \theta^*)/ \com}(A)
$.
 \begin{proposition}
 For a not necessarily commutative $L\n$-algebra $A
 \in ob (NCAlg/L\n)$, we can describe an infinitesimal deformation 
$$
 \varphi \in \mathcal{NCF}\sb{(\com (t, \log t ), \sigma ,\sigi , \theta^*)/ \com} (A).
 $$ 
 Namely putting $y:= \log \, t$, we have 
 \begin{align*} 
 \varphi (t) &= (e(t+ W\sb 1) + f)Q + X, \\
 \varphi (y) &= y + W\sb 2 + b(W\sb 1) + (\log q) Z \\
& + \frac{\log q}{q-1}\sum \sb {n= 1} ^\infty
 X^{n}(-1)^{n+1}\frac{1}{[n]\sb{q} q^{n(n-1)/2}}[e(t+W\sb{1})+ f]^{-n}Q^{-n}
\end{align*}
that are elements of 
$ F(\Z, A[[W\sb{1},W\sb{2}]])[[X]]$, where 
$e, \, f \in A$ and $b(W\sb 1 )\in A[[W\sb 1]] $ satisfying the following conditions.
\begin{enumerate}
\renewcommand{\labelenumi}{(\arabic{enumi})}
\item ef =qfe. 
\item $e - 1$ and $f$ are nilpotent elements of $A$.
\item All the coefficients of the power series $b(W\sb 1) $ are nilpotent. 
\item $[e( t + W\sb 1 ) + f, b(W\sb 1)] = 0$.
\end{enumerate}
 \end{proposition}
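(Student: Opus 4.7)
The plan is to adapt the proof from the Second Example (Section \ref{11.23a}) almost verbatim, replacing the multiplicative recurrence \eqref{11.16a} by the logarithmic recurrence \eqref{4322a}--\eqref{4322b}. Four points must be checked: (a) the form of $\varphi(t)$, giving (1)--(2); (b) the form of $\varphi(y)$, giving (3); (c) the commutativity constraint (4); and (d) the converse, that every $(e,f,b(W_1))$ satisfying (1)--(4) arises from a unique deformation.

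For (a), the restriction of $\varphi$ to the sub-algebra $L\s \langle \iota(t)\rangle\sb{alg}$ is precisely the First Example of Section \ref{10.4a}; Lemma \ref{8.30c} produces $e, f \in A$ with $ef = qfe$ and both $e-1, f$ nilpotent, such that $\varphi(Q) = eQ$, $\varphi(X) = X + fQ$, and hence $\varphi(t) = (e(t+W_1)+f)Q + X$. For (b), expand $\mathcal{Z} := \varphi(y) = \sum_{n \ge 0} X^n a_n$ in $F(\Z, A[[W_1, W_2]])[[X]]$. The $\QSI$-compatibility of $\varphi$ converts \eqref{140305a}--\eqref{140305b} into
\begin{equation*}
\hat{\Sigma}(\mathcal{Z}) = \mathcal{Z} + \log q, \qquad \hat{\Theta}^{(1)}(\mathcal{Z}) = \frac{\log q}{(q-1)\varphi(t)}.
\end{equation*}
Equation \eqref{140306a} transported along $\varphi$ gives $\partial \mathcal{Z}/\partial W_2 = 1$; combined with the first equation, this forces $a_0 = y + W_2 + b(W_1) + (\log q) Z$ for some $b(W_1) \in A[[W_1]]$. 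Congruence $\varphi \equiv \iota$ modulo nilpotents then gives (3). Solving the recurrence produced by the second equation, using $QX = qXQ$ to invert powers of $\varphi(t)$, yields the announced formula for $a_n$, $n \ge 1$.

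The main obstacle is (c). The identity $ty = yt$ in $L$ forces $\varphi(t)\varphi(y) = \varphi(y)\varphi(t)$ in $F(\Z, A[[W_1, W_2]])[[X]]$. The summand $y + W_2 + (\log q) Z$ of $\mathcal{Z}$ commutes with $\varphi(t)$ for structural reasons ($y, W_2$ are in the center, and $Z$ commutes with $t, Q, X, e, f$), so the commutator reduces to a system of coefficient equations in $X$ involving $b(W_1)$ and the explicit $a_n$ for $n \ge 1$. The key point is that this system collapses to the single equation $[e(t+W_1)+f, \, b(W_1)] = 0$. This is the logarithmic analogue of Lemma \ref{9.13b}; I expect to match coefficients of $X^\ell$ and invoke the $q$-binomial identity of Lemma \ref{6.2g} to cancel all contributions coming from the $a_n$ with $n \ge 1$, leaving only the $\ell = 0$ constraint. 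The combinatorics will be slightly more delicate than in Section \ref{11.23a} because of the logarithmic factors $1/([n]_q q^{n(n-1)/2})$, but the structural mechanism of Lemma \ref{6.2g} should remain the same.

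For (d), given $(e, f, b(W_1))$ satisfying (1)--(4), the displayed formulae define a $\QSI$-morphism on the sub-algebra generated by $\iota(t), \iota(y)$ and $L\s$, compatible with $\hat{\Sigma}$, $\hat{\Theta}^*$, $\partial/\partial t$, $\partial/\partial y$ by direct verification of \eqref{140305c}--\eqref{140305d}. Extension to the full Galois hull $\eL$ and uniqueness follow exactly as in Sublemma \ref{5.15}(3): localizations at elements of the form $\iota(t+c)$ and $\iota(y-c)$ are handled by expanding inverses formally as in \eqref{146.6a}, and commutativity $\varphi(t)\varphi(y) = \varphi(y)\varphi(t)$ together with condition (4) guarantees that these formal expansions are well defined in the non-commutative setting. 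Thus the map $\varphi \mapsto (e, f, b(W_1))$ is a bijection, as claimed.
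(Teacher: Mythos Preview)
Your outline for parts (a)--(c) follows the paper's intended argument: the paper explicitly says the proof is the same as in Section \ref{11.23a}, redoing the recurrence \eqref{4322a}--\eqref{4322b} in the non-commutative setting, and that condition (4) comes from $ty=yt$. So the overall strategy is right.

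There is, however, a concrete error in your step (c). You assert that $Z$ commutes with $X$, but in the twisted power series ring $(F(\Z,A[[W]]),\Sigma)[[X]]$ one has $ZX = X\Sigma(Z) = X(Z+1)$, so $[Z,X]=X\neq 0$. Thus the term $(\log q)Z$ does \emph{not} commute with $\varphi(t)$ on its own. The correct observation is that the entire expression $\tilde{\iota}(y):=\varphi(y)-b(W_1)$ commutes with $\varphi(t)$: since $\alpha:=e(t+W_1)+f$ commutes with $Q$, $X$, and $Z$ exactly as $t+W_1$ does, the identity $[\iota(t),\iota(y)]=0$ (which is a formal consequence of these commutation relations) survives the substitution $t+W_1\mapsto\alpha$. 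This leaves $[\varphi(t),b(W_1)]=[e(t+W_1)+f,\,b(W_1)]\,Q$, giving (4) directly---no analogue of Lemma \ref{6.2g} is needed here, so the combinatorics are actually simpler than in the Second Example, not ``slightly more delicate.''

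More seriously, your part (d) overreaches. The proposition only asserts that every deformation $\varphi$ has the displayed form with $(e,f,b(W_1))$ satisfying (1)--(4); it does \emph{not} claim that every such triple arises from a deformation. That converse is precisely Conjecture \ref{6.8p}, which the paper leaves open. Your proposed argument---extending from generators to $\eL$ ``exactly as in Sublemma \ref{5.15}(3)''---does not go through: by Lemma \ref{5.24n} (and its analogue here) the Galois hull contains all iterated derivatives $\partial^{\,l}\iota(y)/\partial t^{\,l}$, and the possibility of hidden algebraic relations among these is exactly what obstructs the surjectivity claim (see Remark \ref{5.24l}). Drop part (d) and the final ``bijection'' sentence; what remains is the content of the proposition.
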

 The commutativity condition (4) comes from the commutativity relation between the elements $t$ and $y = \log t$ in the field $L$.
 \begin{definition}
We introduce a functor 
\[
\widehat{QG}\sb{3 \, q} \colon \NCA \rightarrow (Set)
\]
by setting 
\begin{multline*} 
\widehat{QG}\sb{3 \, q}(A):=\{(G,\, \xi (W\sb{1}))\in \gH\sb{q\, L\n}(A)\times A[[W\sb{1}]] \, | \, 
\text{\it\/ (1) $G = \begin{bmatrix}
e&f\\
0&1
\end{bmatrix} \in \widehat{\gH}\sb q (A)$ so that } \\
ef = qfe$, \, $e-1, f \in A \text{ are nilpotent. }
\text{\it\/(2) All the coefficients of $\xi (W\sb{1})$}\\
\text{\it\/ are nilpotent. }
\text{\it\/(3) $[e(t + W\sb{1})+f, \, \xi (W\sb{1})] = 0$.}
\}
\end{multline*}
\end{definition}
$\widehat{QG}\sb{3 \, q}$ is a quantum formal group. Namely, for 
$$
(G,\, \xi(W\sb{1})),\, (H, \, \eta(W\sb{1})) \in \widehat{QG}\sb{3 \, q}(A)
$$
 such that the two sub-sets 
\begin{align*}
&\text{\it 
\{all the entries of matrix $G$, all the coefficients of the power series $\xi (W\sb{1})$\},
} \\
&\text{\it
 \{all the entries of matrix $H$, all the coefficients of the power series $\eta (W\sb{1})$\}
 }
 \end{align*}
 of $A$ are mutually commutative, we define their product by 
\[
(G, \, \xi (W\sb{1}) ) * (H, \, \eta (W\sb{1})) := (GH, \, \xi ((g-1)t + h+gW_{1}) + \eta (W\sb{1})), 
%\widehat{QG}\sb{III \, q}(A),
 \]
 where 
 $$
 H=\begin{bmatrix} 
 g & h \\ 
 0 & 1 
  \end{bmatrix}.
 $$
Then, the argument of Lemma \ref{14320a} shows that the product of two elements is, in fact, an element in the set $\widehat{QG}\sb{3 \, q}(A)$ and the product is associative. The unit element is $(I \sb{2}, \, 0) \in \widehat{QG}\sb{3 \, q}(A)$. The inverse 
$$
(G, \, \xi (W\sb 1))^{-1}
= (G^{-1}, \, -
\xi 
(
(e^{-1} -1)t -e^{-1}f +e^{-1}W\sb 1
)
 \in \widehat{QG}\sb{3 \, q^{-1}}(A), 
$$
where
$$
G=\begin{bmatrix}
e & f \\
0 &1
\end{bmatrix}. 
$$
\begin{proposition}\label{6.8o}
We have a functorial injection
\[
\NCF(A) \rightarrow \widehat{QG}\sb{3 \, q}(A)
\]
that sends $\varphi \in \NCF(A)$ to $( \begin{bmatrix}
e & f \\
0& 1
\end{bmatrix}, \, b(W\sb{1})
)$. Here
\begin{align}
\varphi ((t+ W\sb{1})Q+X) = & (e(t+W\sb{1}) + f)Q + X,\\
\varphi (\iota (y)) = & 
\varphi (y + W\sb 2 + \log q N 
\nonumber \\
& + \frac{\log q}{q-1} \sum\sb{n=1}^{\infty}X^{n} (-1)^{n+1} \frac{1}{[n]\sb{q}q^{n(n-1)/2}}((t+W\sb{1}) ^{-n}Q^{-n} ) 
 ) \\
 \nonumber 
= & y+ W\sb 2 + b(W\sb 1) +\log q N \\
 & + \frac{\log q}{q -1}\sum\sb{n=1}^{\infty}X^{n} (-1)^{n+1} \frac{1}{[n]\sb{q}q^{n(n-1)/2}}(e(t+W\sb{1}) + f)^{-n}Q^{-n}.
\end{align}
\end{proposition}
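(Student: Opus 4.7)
The plan is to mimic the proof pattern already established for Proposition \ref{6.8i} in the Second Example, with $y = \log t$ replacing $y = t^\alpha$, and with the key analytic input being the two identities $\sigma(y) = y + \log\,q$ and $\theta^{(1)}(y) = (\log\,q)/((q-1)t)$ in place of $\sigma(y) = q^\alpha y$ and $\theta^{(1)}(y) = [\alpha]\sb q y/t$.

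First, I would handle the $t$-component. The sub-algebra of $\eL$ generated over $L\s$ by $\iota(t) = tQ + X$ and closed under $\hat{\Sigma},\hat{\Theta}^*,\partial/\partial t,\partial/\partial y$ is exactly the one studied in the First Example (Proposition \ref{10.17a}), so the argument proving Lemma \ref{8.30c} applies verbatim: there exist $e, f \in A$ with $ef = qfe$ and $e-1, f$ nilpotent such that $\varphi(Q) = eQ$ and $\varphi(X) = X + fQ$, whence $\varphi((t+W\sb 1)Q+X) = (e(t+W\sb 1)+f)Q+X$. This produces the matrix $G \in \hat{\gH}\sb q(A)$.

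Second, I would determine $\mathcal{Z} := \varphi(\iota(y))$. Since $\varphi$ is a $\D$-morphism, applying $\varphi$ to $\iota$ of the two defining relations for $y$ yields the constraints
\[
\hat{\Sigma}(\mathcal{Z}) = \mathcal{Z} + \log\,q, \qquad \hat{\Theta}^{(1)}(\mathcal{Z}) = \frac{\log\,q}{(q-1)\bigl((e(t+W\sb 1)+f)Q+X\bigr)},
\]
together with $\partial \mathcal{Z}/\partial W\sb 2 = 1$ inherited from $\partial \iota(y)/\partial W\sb 2 = 1$. Expanding $\mathcal{Z} = \sum X^n a\sb n$ and comparing the $X^0$-term forces $a\sb 0 = y + W\sb 2 + b(W\sb 1) + (\log\,q)\,N$ with $b(W\sb 1) \in A[[W\sb 1]]$ whose coefficients are nilpotent; the higher $a\sb n$ are then pinned down inductively by the same recursion as in \eqref{4322a}--\eqref{4322b}, now in the non-commutative twisted power series ring, which reproduces the formula in the Proposition. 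The commutator $[t,y]=0$ in $L$ transports via $\iota$ and $\varphi$ to $[\varphi(\iota(t)),\varphi(\iota(y))]=0$; applying the analogue of Lemma \ref{9.13b} (which in the present setting is even simpler, since the coefficient structure is additive in $y$ rather than multiplicative) reduces this commutator to the single condition $[e(t+W\sb 1)+f,\, b(W\sb 1)]=0$, i.e. condition (3) in the definition of $\widehat{QG}\sb{3\,q}(A)$. So $(G,b(W\sb 1))$ indeed lies in $\widehat{QG}\sb{3\,q}(A)$.

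Finally, injectivity of the map $\varphi \mapsto (G,b(W\sb 1))$ follows because $\eL$ is generated over $\K = L\s$ by the images $\iota(t),\iota(y)$ together with localizations (and $\D$-derivatives thereof, which are determined by $\varphi(\iota(t)),\varphi(\iota(y))$ once $\varphi$ is known to commute with $\D$). Hence $\varphi$ is determined by the pair $(G,b(W\sb 1))$. Functoriality in $A$ is routine. The main obstacle I anticipate is the bookkeeping in the inductive determination of the $a\sb n$: one must pass the inverse of the non-central element $(e(t+W\sb 1)+f)Q + X$ past $Q$ using the commutation relation \eqref{a11.1} and verify that the commutator constraint $[e(t+W\sb 1)+f,b(W\sb 1)] = 0$ is exactly what guarantees consistency of the recursion across all levels, just as in Lemma \ref{9.13b} for the Second Example.
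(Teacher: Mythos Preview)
Your proposal is correct and follows precisely the route the paper indicates: the paper states explicitly that ``the arguments in Section \ref{10.4b} allow us to prove analogous results'' and that ``the proofs are same,'' so the intended argument is exactly the transplant of the Second Example's method (Lemma \ref{8.30c} for the $t$-component, the recursion \eqref{4322a}--\eqref{4322b} for the $y$-component, and the analogue of Lemma \ref{9.13b} for the commutator constraint) that you outline. One small clarification on your final paragraph: the commutator condition $[e(t+W_1)+f,\,b(W_1)]=0$ is not needed for the recursion determining the $a_n$, $n\ge 1$, to be consistent---that recursion is forced uniquely by the $\hat{\Theta}^{(1)}$-equation regardless---but rather it is the extra constraint coming from $[\varphi(\iota(t)),\varphi(\iota(y))]=0$, exactly as you state correctly a few lines earlier and as the paper records just before the definition of $\widehat{QG}_{3,q}$.
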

We also have a Conjecture. 
\begin{conjecture}\label{6.8p}
If $q$ is not a root of unity, then 
the injection in Proposition \ref{6.8o} is a bijection. 
So
\[
\NCF \simeq \widehat{QG}\sb{3 \, q}.
\]
\end{conjecture}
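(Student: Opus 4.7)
The plan is to mirror, in the setting of $L/k = (\com(t,\log t),\sigma,\sigma^{-1},\theta^*)/\com$, the strategy developed for Conjecture \ref{6.8j} in the Second Example, adapted to the additive rather than multiplicative nature of $\iota(y) = \iota(\log t)$. Injectivity is already established in Proposition \ref{6.8o}, so only surjectivity remains. Given $(G,\xi(W\sb 1)) \in \widehat{QG}\sb{3\,q}(A)$ with $G=\bigl[\begin{smallmatrix} e & f \\ 0 & 1\end{smallmatrix}\bigr]$ satisfying $ef=qfe$, $e-1$ and $f$ nilpotent, every coefficient of $\xi(W\sb 1)$ nilpotent, and $[e(t+W\sb 1)+f,\xi(W\sb 1)]=0$, I would construct the corresponding $\varphi \in \mathcal{NCF}\sb{L/k}(A)$ by prescribing $\varphi(\iota(t))$ and $\varphi(\iota(y))$ using the explicit formulas of Proposition \ref{6.8o} with $b(W\sb 1)=\xi(W\sb 1)$, then extend through the operators $\hat\Sigma$, $\hat\Theta^{(i)}$, $\partial/\partial t$, $\partial/\partial y$ and through the localizations defining $\eL$.

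First I would verify, as in the First Example, that the assignment $\varphi(Q)=eQ$, $\varphi(X)=X+fQ$ on the sub-algebra $L\s\langle Q,Q^{-1},X\rangle\sb{alg}$ respects the commutation $QX=qXQ$; the relation $ef=qfe$ in $A$ is precisely what guarantees this. Second, I would extend $\varphi$ to the $X$-adically complete algebra
\[
 \tilde\eL := \Bigl\{\sum\sb{n\ge 0} X^n a\sb n \in F(\Z,L\s)[[X]] \,\Bigm|\, a\sb n \in L\s(Q)\Bigr\}
\]
(analogous to the $\tilde\eL$ of Sublemma \ref{5.15}) by the formulas $\varphi(h(tQ)) = h(e(t+W\sb 1)Q + fQ)$ expanded through the commutation \eqref{a11.1}. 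The constraint $[e(t+W\sb 1)+f,\xi(W\sb 1)]=0$ plays exactly the role of Lemma \ref{9.13b}: it is equivalent to saying that the images $\varphi(\iota(t))$ and $\varphi(\iota(y))$ commute in $F(\Z,A[[W\sb 1,W\sb 2]])[[X]]$, which is necessary because $t$ and $\log t$ commute in $L$. Compatibility with $\hat\Sigma$ and $\hat\Theta^{(1)}$ is then verified by the same direct computation that derived \eqref{140305c} and \eqref{140305d}.

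The hard part, and the reason the statement remains a conjecture, is the same as the one flagged in Remark \ref{5.24l} for the Second Example: one must show that the Galois hull $\eL$ carries no hidden algebraic relations over $L\s$ beyond those already used, so that the prescription above actually descends from $\tilde\eL$ to a well-defined $\mathcal{D}$-morphism on $\eL$. Concretely, by the analogue of Lemma \ref{5.24n}, $\eL$ is generated over $L\s$ by $Q,Q^{-1},X,(tQ+X)^{-1}$ together with the derivatives $\partial^l\iota(y)/\partial t^l$ for $l\in\N$; one must prove that, provided $q^n\neq 1$ for all $n\ge 1$, these derivatives are \emph{algebraically free} over the sub-algebra generated by the preceding generators. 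The non-vanishing and generic independence of the coefficients $(-1)^{n+1}/([n]\sb q\,q^{n(n-1)/2})$ appearing in $\iota(y)$ strongly suggests this freeness, but I do not see a short structural argument; a serious attempt would presumably proceed by a weight/degree filtration on $F(\Z,L\s)[[X]]$ with respect to $Q$ and $X$, and by exhibiting an explicit transcendence basis inductively in $n$.

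Once algebraic freeness is granted, consistency is automatic: any choice of nilpotent-coefficient power series $\xi(W\sb 1)$ satisfying only the commutativity constraint yields a consistent $\mathcal{D}$-morphism $\varphi\colon \eL\to F(\Z,A[[W\sb 1,W\sb 2]])[[X]]$ congruent to $\iota$ modulo nilpotents and agreeing with $\iota$ on $\K$, proving surjectivity and hence the bijection $\mathcal{NCF}\sb{L/k}\simeq \widehat{QG}\sb{3\,q}$. I expect the same algebraic-freeness lemma would simultaneously settle Conjectures \ref{5.24j}, \ref{6.8j} and \ref{6.8l}.
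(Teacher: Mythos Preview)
The statement you are attempting to prove is labeled in the paper as a \emph{Conjecture}, not a theorem, and the paper provides no proof of it. There is therefore nothing to compare your proposal against: the authors explicitly leave this open, exactly for the reason you yourself isolate---namely, the need to establish that the iterated $t$-derivatives of $\iota(y)$ satisfy no hidden algebraic relations over $L\s\langle Q,Q^{-1},X,(tQ+X)^{-1}\rangle_{alg}$ when $q$ is not a root of unity. Your diagnosis of the obstruction matches the paper's own discussion (the analogue of Remark \ref{5.24l} and Lemma \ref{5.24n} carried over to the Third Example), and your outline of the reduction to that single algebraic-independence lemma is in line with how the paper frames the parallel Conjectures \ref{5.24j}, \ref{6.8j}, and \ref{6.8l}.

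That said, your proposal is honest about its status: it is a strategy, not a proof. The extension of $\varphi$ to $\tilde\eL$ and the verification of compatibility with $\hat\Sigma$, $\hat\Theta^{(1)}$, and the commutation constraint are routine and correctly sketched. But the algebraic-freeness step is the entire content of the conjecture, and your suggestion of a weight/degree filtration argument is only a heuristic; no such argument is supplied in the paper, and you do not supply one either. So your proposal correctly identifies what would need to be done and why the authors stopped short, but it does not resolve the conjecture.
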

%The Conjecture \ref{6.8p} allows us to determine the quantum 
%Galois group of the \QSI field extension 
%$\com ( t, \log t )/ \com $.
%\begin{proposition}%%%%%%%%%%%%%%
%If the conjecture above is true and if $q$ is not a root of unity, then
%the quantum Galois group of the \QSI extension 
%$\com (t, \log t)/\com $
%is the quantum formal group $\widehat{QG}\sb{3 \, q}$. 
%\end{proposition}
\begin{remark}
The argument in 
\ref{11.23a}
allows us to prove that Conjecture \ref{6.8p} implies Conjecture \ref{6.8l}.
\end{remark}
We can also define the quantum formal group $\widehat{QG}\sb {III \, q}$ in terms of non-commutative coordinate transformations as in the Second Example. 
\begin{definition}\label{4.5.2a}
We define a functor 
$$
\widehat{QG}\sb{III\, q}: (NCAlg/L\n) \to (Set) 
$$
by setting 
\begin{align*}
\widehat{QG}\sb{III\, q}(A): = \{ 
&\left( (e -1)t + f + eW\sb 1, 
\, W\sb 2 + b(W\sb 1) \right) \in A[[W\sb 1, W\sb 2 ]] \times 
 A[[W\sb 1, W\sb 2 ]] \\
 &| \, e-1, \, f, \text{ and all the coefficients of the power series }b(W\sb 1) \text{ are} \\
 &\text{ nilpotent 
 satisfying } ef =qfe, \, [ (e -1)t + f+eW\sb 1, 
\, W\sb 2 + b(W\sb 1) ]= 0 \}.
\end{align*}
We regard an element 
$$
\varphi = \left(\varphi\sb 1(W\sb 1, W\sb 2),\, 
\varphi\sb 2(W\sb 1, W\sb 2)\right) \in \widehat{QG}\sb{III\, q}(A)
$$
as an infinitesimal coordinate transformation $\Phi $
$$
(W\sb 1, \, W\sb 2) \mapsto \left(\varphi\sb 1(W\sb 1, W\sb 2),\, \varphi\sb 2(W\sb 1, W\sb 2)\right)
$$
with non-commutative coefficients. 
The product in the quantum formal group $\widehat{QG}\sb{III\, q}$ 
is the composition of coordinate transformations if they satisfy a commutation relation so that the product is defined. 
To be more concrete, 
let 
$$
\left( (e-1)t + f +eW\sb 1, 
\, W\sb 2 + b(W\sb 1) \right) \text{ and } 
\left( (g-1)t + h + gW\sb 1, 
\, W\sb 2 + c(W\sb 1) \right)
$$
be two elements of $\widehat{QG}\sb{III\, q}(A)$ such that the following two sub-sets of the ring $A$ is mutually commutative:
\begin{enumerate}
\renewcommand{\labelenumi}{(\arabic{enumi})}
\item $\{ \, e, \, f\, \}\, \cup$ the set of coefficients of the power series $b(W\sb 1)$,
\item 
$\{ \, g, \, h\, \}\, \cup$ the set of coefficients of the power series $c(W\sb 1)$,
\end{enumerate}
then the product is 
\begin{align*}
(
 (e-1)t + f +&eW\sb 1, 
\, W\sb 2 + b(W\sb 1) 
)
*
\left( 
 (g-1)t + h+gW\sb 1, 
\, W\sb 2 + c(W\sb 1) 
\right) \\
 &=\left( (eg -1)t + eh +f + egW\sb 1, 
\, W\sb 2+ b( (g-1)t + h + gW\sb 1) +c(W\sb 1) \right) 
\end{align*}
which is certainly an element of $\widehat{QG}\sb{III\, q}(A)$.
\end{definition}
 Though we reversed the procedure, the quantum formal group 
 $\widehat{QG}\sb {3\, q}$ arises from $\widehat{QG}\sb{III\, q}$ 
 and we arrived at the last object as a natural extension of Lie-Ritt functor in \cite{ume96.2} of coordinate transformations in the space of initial conditions. 
 \begin{proposition}\label{4325a}
 For every algebra $A \in (NCAlg/L\n)$, 
we have a functorial isomorphism of quantum formal 
groups
$$
\widehat{QG}\sb{3\, q}(A) \to \widehat{QG}\sb{III\, q}(A)
$$
sending an element $$( \begin{bmatrix}
e & f \\
0& 1
\end{bmatrix}, \, b(W\sb{1})
) \in \widehat{QG}\sb{3\, q}(A) \text{ to } ( (e-1)t +f + eW\sb 1, \, W\sb 2 + b(W\sb{ 1}))\in \widehat{QG}\sb{III\, q}(A).
$$
 \end{proposition}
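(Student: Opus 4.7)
The plan is to verify the stated map is well-defined, bijective and compatible with the partial multiplication on both sides, and then deduce that it is an isomorphism of quantum formal groups. Throughout, I will exploit the fact that, since $A\in ob(NCAlg/L\n)$, the element $t\in L\n$ and the formal variables $W\sb 1, W\sb 2$ lie in the center of $A[[W\sb 1,W\sb 2]]$.

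\medskip

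First I would check well-definedness. Given $(G, b(W\sb 1))\in \widehat{QG}\sb{3\,q}(A)$ with $G=\begin{bmatrix} e & f \\ 0 & 1\end{bmatrix}$, the source conditions are: $ef=qfe$; $e-1$, $f$ and all coefficients of $b(W\sb 1)$ are nilpotent; and $[e(t+W\sb 1)+f,\, b(W\sb 1)]=0$. The target conditions in Definition \ref{4.5.2a} require the same nilpotency, the same relation $ef=qfe$, and the commutator identity $[(e-1)t+f+eW\sb 1,\, W\sb 2+b(W\sb 1)]=0$. The key observation is the algebraic identity
\begin{equation*}
(e-1)t+f+eW\sb 1 \;=\; e(t+W\sb 1)+f - t,
\end{equation*}
so that, since $t$ is central and $W\sb 2$ commutes with $W\sb 1$ and with every coefficient of $b(W\sb 1)$ and with $e,f$ (again by centrality of $W\sb 2$ in $A[[W\sb 1,W\sb 2]]$), we obtain
\begin{equation*}
[(e-1)t+f+eW\sb 1,\, W\sb 2+b(W\sb 1)] \;=\; [e(t+W\sb 1)+f,\, b(W\sb 1)] \;=\; 0.
\end{equation*}
This establishes that the image satisfies the defining conditions of $\widehat{QG}\sb{III\,q}(A)$.

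\medskip

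Next I would construct the inverse. Given $(\varphi\sb 1,\varphi\sb 2)\in \widehat{QG}\sb{III\,q}(A)$ with $\varphi\sb 1=(e-1)t+f+eW\sb 1$ and $\varphi\sb 2=W\sb 2+b(W\sb 1)$, the data $e,f,b(W\sb 1)$ are uniquely extracted by reading off the coefficient of $W\sb 1$ in $\varphi\sb 1$, then the constant term (which yields $(e-1)t+f$ and hence $f$), and finally $\varphi\sb 2-W\sb 2=b(W\sb 1)$. The nilpotency conditions and the relations $ef=qfe$ and $[e(t+W\sb 1)+f,b(W\sb 1)]=0$ transfer back by the same identity used above. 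Functoriality in $A$ is automatic because the map is described by polynomial/power-series operations with coefficients in $L\n$.

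\medskip

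Finally I would verify compatibility with the partial products. The mutual commutativity condition governing when a product is defined in $\widehat{QG}\sb{3\,q}(A)$ (namely, that $\{e,f\}$ together with the coefficients of $\xi(W\sb 1)$ commute pointwise with $\{g,h\}$ together with the coefficients of $\eta(W\sb 1)$) is word-for-word the condition imposed in Definition \ref{4.5.2a}, since the only additional entries of the matrices $G,H$ are $0$ and $1$, which are central. Under these hypotheses, the product in $\widehat{QG}\sb{3\,q}$ is
\begin{equation*}
(G,\xi(W\sb 1)) * (H,\eta(W\sb 1)) \;=\; \bigl(GH,\; \xi((g-1)t+h+gW\sb 1)+\eta(W\sb 1)\bigr),
\end{equation*}
whose image under the stated map has first component $(eg-1)t+(eh+f)+egW\sb 1$ (the first column of $GH$) and second component $W\sb 2+\xi((g-1)t+h+gW\sb 1)+\eta(W\sb 1)$; this is exactly the product defined in $\widehat{QG}\sb{III\,q}(A)$ in Definition \ref{4.5.2a}. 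Matching of unit elements (send $(I\sb 2,0)$ to $(W\sb 1,W\sb 2)$) and of inverses is then immediate from the explicit formulas already given. I expect the main (though essentially bookkeeping) obstacle to be the careful separation of central variables $t, W\sb 1, W\sb 2$ from the noncommutative coefficients in $A$ when verifying the commutator transfer; once this is handled cleanly, the isomorphism of quantum formal groups follows.
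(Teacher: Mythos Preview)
Your proof is correct and complete. The paper itself does not give a proof of this proposition; it is stated without argument (just as the analogous Proposition~\ref{4.5.1c} for the Second Example is), the assertion being regarded as a routine translation between the two presentations of the same quantum formal group. Your verification of well-definedness via the identity $(e-1)t+f+eW\sb 1 = e(t+W\sb 1)+f - t$, together with the explicit check of compatibility with the partial products and units, spells out exactly what the paper leaves implicit.
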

Looking at Propositions \ref{6.8o}, \ref{4325a} and Conjecture \ref{6.8p}, 
we find that we are in the same situation 
as in 
\ref{11.23a}, 
where we studied non-commutative deformations of the 
Second Example. 
 \begin{theorem} %%
 We have an inclusion 
$$
\mathcal{NCF}\sb{L/k} \hookrightarrow \widehat{QG}\sb {III\, q } 
$$
of functors on the category $(NCAlg/L\n )$ taking values in the 
category of sets, where 
\begin{equation}\label{4.5.1x}
L/k = (\com (t, \, \log t ), \, \sigma ,\, \sigi ,\, \theta ^* )/ \com .
\end{equation}
If we assume Conjecture \ref{6.8p}, then the inclusion \eqref{4.5.1x} is bijection so that we can identify the functors 
$$
\mathcal{NCF}\sb{L/k} \simeq \widehat{QG}\sb {III\, q }. 
$$
The quantum formal group $\widehat{QG}\sb {III\, q }$ 
operates on the functor 
 $\mathcal{NCF}\sb{L/k}$
in an appropriate sense, 
through the initial conditions. 
(cf. The commutativity condition in Definition \ref{4.5.2a}.)
So we may say that 
the quantum formal Galois group 
$$
{\rm NC}\infgal (L/k) \simeq \widehat{QG}\sb {III\, q}. 
$$
 \end{theorem}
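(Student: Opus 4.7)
The plan is to reduce everything to results already established for the Third Example and to transport them along the isomorphism between the two presentations of the quantum formal group. First I would invoke Proposition \ref{6.8o}, which gives a functorial injection
$$
\mathcal{NCF}\sb{L/k}(A) \hookrightarrow \widehat{QG}\sb{3\, q}(A)
$$
sending an infinitesimal deformation $\varphi$ to the pair $\bigl(\bigl[\begin{smallmatrix}e&f\\0&1\end{smallmatrix}\bigr],\,b(W\sb{1})\bigr)$ extracted from $\varphi(\iota(t))$ and $\varphi(\iota(y))$. Composing this with the isomorphism of Proposition \ref{4325a} yields the desired inclusion $\mathcal{NCF}\sb{L/k}\hookrightarrow\widehat{QG}\sb{III\,q}$; concretely, the image is the coordinate transformation $(W\sb 1,W\sb 2)\mapsto((e-1)t+f+eW\sb 1,\,W\sb 2+b(W\sb 1))$.

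Second, under Conjecture \ref{6.8p}, the map in Proposition \ref{6.8o} is a bijection, and since Proposition \ref{4325a} is an unconditional isomorphism, the composite is also a bijection. This gives the identification $\mathcal{NCF}\sb{L/k}\simeq\widehat{QG}\sb{III\,q}$.

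Third, to see the action of $\widehat{QG}\sb{III\,q}$ on $\mathcal{NCF}\sb{L/k}$ through transformations of initial conditions, I would mimic the argument used in the Second Example (Section \ref{11.23a}, Remark \ref{10.24a}). Given $\varphi\in\mathcal{NCF}\sb{L/k}(A)$ encoded by $\Phi\sb{\varphi}=((e-1)t+f+eW\sb 1,\,W\sb 2+b(W\sb 1))$ and $\psi\in\widehat{QG}\sb{III\,q}(A)$ encoded by $\Psi\sb{\psi}=((g-1)t+h+gW\sb 1,\,W\sb 2+c(W\sb 1))$, I would define $\psi\cdot\varphi$ to correspond to the composite of coordinate transformations $\Psi\sb{\psi}\circ\Phi\sb{\varphi}$, as in Definition \ref{4.5.2a}; this is legitimate precisely when the coefficient subsets associated to $\psi$ and $\varphi$ commute inside $A$, and Lemma \ref{14320a} (applied with one factor playing the role of $\Phi\sb{\varphi}$) shows that the composite belongs to $\widehat{QG}\sb{III\,q}(A)$. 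One then verifies that this operation is compatible with the canonical morphism $\iota$ (so the result is indeed in $\mathcal{NCF}\sb{L/k}(A)$) by tracing through the explicit forms of $\iota(t)$ and $\iota(y)$ computed in the previous subsections.

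The main obstacle is not the first (bookkeeping) part but the second: obtaining the bijection really does require Conjecture \ref{6.8p}, whose content is the absence of non-trivial algebraic relations among the partial derivatives $\partial\sp n Y\sb 0/\partial t\sp n$ (the analogue of the heuristic discussed in Remark \ref{5.24l}); without it one can only get the inclusion. A secondary nuisance is that the product in $\widehat{QG}\sb{III\,q}$ is only partially defined—it requires the mutual commutativity condition from Definition \ref{4.5.2a}—so the phrase ``operates on the functor in an appropriate sense'' must be interpreted accordingly, exactly as in Remark \ref{10.24a} for the First Example.
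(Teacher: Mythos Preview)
Your proposal is correct and follows exactly the approach the paper itself takes: the paper states this theorem without a separate proof, noting just before it that ``Looking at Propositions \ref{6.8o}, \ref{4325a} and Conjecture \ref{6.8p}, we find that we are in the same situation as in \ref{11.23a}'' (the non-commutative deformations of the Second Example). Your plan of composing Proposition \ref{6.8o} with Proposition \ref{4325a}, invoking Conjecture \ref{6.8p} for bijectivity, and transporting the group action via initial conditions in parallel with the Second Example is precisely what the paper intends.

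One small imprecision: the heuristic you cite from Remark \ref{5.24l} concerns the element $Y_0$ of the Second Example, and the Third Example has no direct analogue spelled out in the paper; Conjecture \ref{6.8p} is simply asserted there without an accompanying heuristic justification, so your parenthetical about $\partial^n Y_0/\partial t^n$ should be read as a loose analogy rather than a literal reference.
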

 
\subsection{Summary on the Galois structures of the field extension $\com(t,\, \log t)/\com$ }\label{151007c}
Let us summarize our results on the field extension $\com(t, \, \log t )/\com$. 
\begin{enumerate}
\renewcommand{\labelenumi}{(\arabic{enumi})}
\item Difference field extension $(\com(t,\, \log t),\, \sigma)/\com$. This is a Picard-Vessiot extension with Galois group $\G\sb{a\, \com} \times \G\sb{a\,\com}$. 
\item Differential field extension $(\com(t,\, \log t),\, d/dt)/\com$. This is not a Picard-Vessiot extension. The Galois group 
$$
\infgal(L/k)\colon (CAlg/L\n ) \to (Grp)
$$
 is isomorphic to $\hat{\G}\sb{a L\n} \times\sb{L\n} \hat{\G}\sb{a L\n}$, 
 where $\hat{\G}\sb{a L\n}$
 is the formal completion of the additive group. 
 So as a group functor on the category 
 $(CAlg/L\n)$, we have 
 $$
 \hat{\G }\sb{ a L\n}(A) =\{ b\in A \, | \, b 
 \text{ is nilpotent} \},
 $$ 
 the group law being the addition 
 and hence 
 $$
 \infgal (L/k)(A) = \{ (a,\, b )\, | \, \text{ $a, \, b$ are nilpotent elements of $A$ } \}
 $$ 
 for a commutative $L\n$-algebra $A$.
 
\item Commutative deformations of \QSI extension $(\com(t,\, \log t ), \sigma , \sigi , \theta^{\ast})/\com$. 
If $q$ is not a root of unity, 
$\infgal(L/k)$ is an infinite-dimensional formal group such that we have a splitting sequence 
\[
 0 \rightarrow A[[W\sb{1}]]\sb{+} \rightarrow \infgal(L/k)(A) \rightarrow \hat{\G}\sb{m}(A) \rightarrow 0,
\]
where $A[[W\sb{1}]]\sb{+}$ denotes the additive group 
\[
 \left\{ a \in A[[W\sb{1}]]\, \left| \, \text{ all the coefficients of power series $a$ are nilpotent}
\right. \right\}
\]
%and $\hat{\G}\sb{m}(A)$ is the multiplicative group
%\[
%\{ a\in A \, |\, \text{\itshape $a-1$ is nilpotent} \},
%\]
modulo Conjecture \ref{6.8p}. 
\item Non-commutative Galois group. If $q$ is not a root of unity, the Quantum Galois group 
${\rm NC}\infgal (L/k)$ 
 is isomorphic to a quantum formal group 
 $\widehat{QG} \sb{III\, q}$:
\[
{\rm NC}\infgal (L/k) \simeq \widehat{QG}\sb{III \, q}.
\]
modulo Conjecture \ref{6.8p}. 
\par
We should be careful about the group law. 
Quantum formal group structure in $\widehat{QG}\sb {III \,q }$ coincides with the group structure defined from the initial conditions as in Proposition \ref{4325a}. 
 %So we may say that non-commutative Galois group is the quantum formal group $\widehat{QG}\sb{3 \, q}$. 
 %%%%%%%%%%%%%%%%%%%%%%%%%%%%%%%%%%%%%%%%%
\end{enumerate}
\section{General scope of quantized Galois theory for \QSI field extensions}\label{14.5.29a}
 After we worked with three examples of \QSI field extensions 
$$
\com ( t )/\com, \quad \com ( t, \, t^\alpha )/\com \quad \text{\it \/ and \/ } \quad 
\com (t, \, \log t) /\com,
$$
there arises naturally, in our mind, the idea of formulating general quantized 
Galois theory for \QSI field extensions. 
The simplest differential Example \ref{14.5.29b} is also very inspiring. 
As we are going to see, it seems to work. 
%There is, however, 
%a subtle point: the definition of the quantum Galois group ${\rm NC}\infgal (L/k)$ for a \QSI field extension $L/k$. 
\subsection{Outline of the theory}
Let $L/k$ be a \QSI field extension such that the abstract field extension $L\n/k\n$ is of finite type. %We assume that the field of constants $C\sb k$ is the field $C$ ovver which the Hopf algebra is defined. We do not use this assumption.
 Galois theory for \QSI filed extensions is a particular case of Hopf Galois theory 
in Section \ref{4.5.5a}. So as we learned in \ref{4.5.5b},  
we have the universal Hopf morphism
$$
\iota : L \to F(\Z , \, L\n )[[X]].
$$
We choose a basis 
$$
\{ D\sb 1 , \, D\sb 2, \cdots , 
D\sb d \}
$$
of mutually commutative derivations of the $L\n$-vector space 
${\rm Der}(L\n/k\n)$ 
of $k\n$-derivations of $L\n$. We constructed the 
Galois hull $\eL /\K$ in Definition \ref{4.5.4b}. 
So we have the canonical morphism 
\begin{equation}\label{4.5.7a}
\iota : \eL \to F(\Z , \, L\n [[W\sb 1, \, W\sb 2, \cdots , W\sb d ]])[[X]].
\end{equation}
The rings $\eL$ and $\K$ are invariant under 
the set of operators 
\begin{equation}\label{4.5.5c}
\mathcal{D} := \{ \hat{\Sigma}, \, {\hat{\Theta} }^*, \, \frac{\partial}{\partial W\sb i } 
\, (1\le i \le d) \, \}
\end{equation}
on $F(\Z ,\, L\n [[W]] )[[X]]$. 
\par
In general, the Galois hull $\eL /\K$ is not commutative. 
So we measure it by infinitesimal deformations of the canonical morphism \eqref{4.5.7a} over the category $(NCAlg/L\n )$ of 
not necessarily commutative $L\n$-algebras.
We set in 
 Definition \ref{4.5.7b}
 \begin{align*}
 \mathcal{NCF}\sb {L/k}(A) = \{ \varphi :& \eL \to F(\Z , \, A[[W]])[[X]] \, |\, \varphi \text{\it \/ is an 
infinitesimal \/}\\
 &\text{\it \/ deformation over $\K$ 
 compatible with $\mathcal{D}$
 of canonical morphism \eqref{4.5.7a}} \} 
 \end{align*}
 so that we get the functor
 $$
 \mathcal{NCF}\sb {L/k} : (NCAlg/L\n ) \to (Set).
 $$
 Now we compare the differential case and \QSI case to understand their similarity and difference. 

\begin{enumerate}
\renewcommand{\labelenumi}{(\arabic{enumi})} 
\item Differential case 
\begin{enumerate}
\renewcommand{\labelenumi}{(\arabic{enumi})}
\item The Galois hull $\eL /\K$ is an extension of commutative algebras.
\item It is sufficient to consider commutative deformation functor $\mathcal{F}\sb {L/k}$ of the Galois hull $\eL/\K$ over the category $(CAlg/ L\n )$ of commutative $L\n $-algebras. 
\item The Galois group $\infgal (L/k) $ is a kind of generalization of algebraic group. In fact, it is at least a group functor on the category $(Calg/ L\n)$. 
\item Indeed the group functor 
$\infgal (L/k)$ 
 is given as the functor of automorphisms of the Galois hull $\eL /\K$. 
\end{enumerate}
\item \QSI case
\begin{enumerate}
 \renewcommand{\labelenumi}{(\arabic{enumi})}
 %\begin{enumerate}
 %\renewcommand{\labelenumi}{(\arabic{enumi})}
\item 
Galois hull $\eL/\K$ is not always an extension of commutative algebras. 
\item We have to consider the non-commutative deformation functor 
$\mathcal{NCF} \sb {L/k}$ over the category $(NCAlg/L\n )$
of not necessarily commutative $L\n$-algebras. 
\item The Galois group should be a quantum group that we can not interpret in terms of group functor. 
\end{enumerate}
\end{enumerate}
The comparison above shows that we have to find a counterpart of (d) in the \QSI case. The three examples suggest the following solution. 
\par 
{\bf Solution that we propose.} Let $y\sb 1 , \, y\sb 2 ,\cdots , y\sb d $ be a transcendence basis of the abstract field extension $L\n /k\n$. We set by 
$$
\iota (y\sb i ) = Y\sb i (W\sb 1,\, W\sb 2, \cdots , W\sb d ; X)
\in F(\Z , \, L\n [[W]])[[X]] \text{\it \/ for \/} 1\le i \le d.
$$ 
\begin{questions} \label{14.5.31a}
%\begin{emumerate}
%\item 
(1) For an $L\n$-algebra $A \in ob(NCAlg /L\n)$, let 
$$
f: \eL \to F(\Z , \, A[[W]])[[X]]
$$
be an infinitesimal deformation of the canonical morphism 
$\iota$. Then there exist an 
infinitesimal coordinate transformation 
$$
\Phi = (\varphi\sb 1 (W),\, \varphi \sb 2 (W), \, \cdots , \varphi \sb d (W) ) \in A[[W\sb 1, \, W\sb 2, \, \cdots , W\sb d ]]^d 
$$
with coefficients in the not necessarily commutative algebra $A$ 
such that 
$$
f(Y\sb i) = Y\sb i (\Phi (W); \, X) \text{ for every } 1\le i \le d.
$$
(2) Assume that Question (1) is affirmatively answered. Then we have a functorial morphism 
\begin{align}\label{4.5.8a}
\mathcal{NCF}\sb{L/k}&(A) \to \notag 
\\
\{ \Phi &\in A[[W]]^d \, | \, W \mapsto \Phi (W) \text{ is an infinitesimal coordinate transformation}\}
\end{align}
sending $f$ to $\Phi$ using the notation of (1). We set 
$$
{\rm Q}\infgal (L/k) (A) := \text{ the image of map \eqref{4.5.8a}} 
$$
%\mathcal{D}$ 
%$$
so that 
$$
{\rm Q}\infgal (L/k) : (NCAlg/L\n)
 \to \, (Set)
 $$ 
 is a functor. 
 Our second question is if the functor ${\rm Q}\infgal (L/k)$
is a quantum formal group. 

\noindent{(3)} Assume that Question (1) 
has an affirmative answer. Since the identity transformation is 
in ${\rm Q}\infgal (L/k)$, Question (2) reduces to the following 
concrete question. Let 
$f, \, g$ be elements of $\mathcal{NCF}\sb {L/k}(A)$ and let $\Phi$ and $\Psi$ be the corresponding coordinate transformations to $f$ and $g$ respectively. If the set of the coefficients of 
$\Phi$ and the set of the coefficients of $\Psi$ is mutually commutative, then does the composite coordinate transformation 
$\Phi \circ \Psi$ arise from an infinitesimal deformation 
$h \in \mathcal{NCF}\sb{L/k}(A)$?

\end{questions}
\par %
In view of Corollary \ref{140927b}, the universal deformation or the universal coaction seems to solve the Questions. It seems that we are very close to the solutions.

%\end{enumerate}

%%%%%%%%%%%%%%%%%%%%
%%%%%%%%%%%%%%%%%%%%%%%%%%%%%%%%%%%%%%%%%%%%%%%%%%%%%%%%
\part{Quantization of Picard-Vessiot theory}\label{14.6.24b}
%\section{Introduction to the second part} \label{14.6.24a} 
Keeping the notation of the Part I, we denote by $C$ a field of characteristic $0$. 
\par 
As we explained in the introduction, we believed for a long time that it was impossible to 
quantize Picard-Vessiot theory, Galois theory for linear difference or differential equations. 
Namely, there was no Galois theory for linear difference-differential equations, of which the Galois group is a quantum group that is, in general, neither commutative nor co-commutative. 
Our mistake came from a misunderstanding of preceding 
works, Hardouin \cite{har10} and Masuoka and Yanagawa \cite{masy}. 
They studied linear \QSI equations, \itqsi equations for short, 
 under two assumptions 
on \itqsi base field $K$ and 
\itqsi module $M$: 
\begin{enumerate} 
\renewcommand{\labelenumi}{(\arabic{enumi})} 
 \item 
 The base field $K$ contains $C (t)$. 
 \item 
 On the $K [\sigma , \, \sigi , \, \theta^* ]$-module $M$ the equality 
\[
\theta ^{(1)} = \frac{1}{(q-1)t}( \sigma - \Id \sb M).
\]
\end{enumerate}
holds. 
Under these conditions, 
 their Picard-Vessiot extension is 
realized in the category of commutative \itqsi algebras.
The second assumption seems too restrictive as clearly explained in \cite{masy}. 
If we drop one of these conditions, there are many linear 
\itqsi equations whose Picard-Vessiot ring is not commutative and the Galois group is a quantum group
that is neither commutative nor co-commutative. 
\par 
We analyze only one favorite example \eqref{176} over the base field $C $ in detail, 
 which is equivalent to the non-linear equation 
in Section \ref{10.4a}.
We add three more example in Section \ref{141223a}. 
Looking at these examples, the reader's imagination would go far away, as 
Cartier \cite{car13} did it for every qsi linear equation with constant coefficients. 
\par 
In the favorite example, we have a Picard-Vessiot ring
$R$ that is non-commutative, simple \itqsi ring 
(Observation \ref{obs3} and Lemma \ref{1016a}). 
The Picard-Vessiot ring $R$ is a torsor of a quantum group (Observation \ref{obs5}). 
As for equivalence of rigid tensor categories, we note Expectation \ref{160808a} and prove a modified version in Part \ref{part_III}. 
We have the imperfect Galois correspondence (Observation \ref{1018c}). 
We prove the uniqueness of the Picard-Vessiot ring for certain Examples in Section \ref{141223b}. 
Picard-Vessiot ring is, however, not unique in general as we see in Section \ref{sec:non-uniqueness}. 
 \par 
 We are grateful to K.~Amano for 
useful discussions. 
%%%%%%%%%%%%%%%%%%%%%%%%%%%%%%%%%%%%%%%%%
\section{Field extension $C (t) /C $ from classical and quantum view points}\label{14.6.25a}%\label{140116a} 
In Section \ref{10.4a}, %of the previous paper \cite{saiume13.2},
we studied a non-linear 
\QSI 
 equation, which we call 
\itqsi equation for short, 
\begin{equation}\label{171}
\theta^{(1)}(y) =1,\qquad \sigma(y) =qy, \qquad 
 \sigi (y) = q^{-1}y, 
\end{equation}
where $q$ is an element of the field $C$ not equal to $0$ nor $1$. 
Let $t$ be a variable over the constant base field $C$. 
We assume to simplify the situation that $q$ is not a root of unity. 
We denote by $\sigma \colon C (t) \to C (t)$ the $C $-automorphism of the field $C (t)$ 
of rational functions 
sending $t$ to $qt$. 
We introduce the 
$C $-linear operator $\theta^{(1)} \colon C (t) \to C (t)$
 %as a $\com$-linear map 
 %given 
 by 
\[
\theta^{(1)}\left( f(t)\right) := \frac{f(qt)-f(t)}{(q-1)t} \quad\text{\it \/ for every $f(t) \in C (t)$. }
\]
We set
\[
\theta^{(m)} := 
\begin{cases}
\Id_{C (t)}& \text{\it \/ for $m=0$}\\
\frac{1}{[m]_{q}!}\left(\theta^{(1)}\right)^{m} & \text{\it \/ for $m=1,\,2,\,\cdots$. }
\end{cases}
\]
As we assume that $q$ is not a root of unity, 
the number $[m]\sb q$ 
in the formula is 
not equal to $0$ and hence 
the formula determines the family $\theta ^{*}=
\{ \theta ^ {(i)}\, | \, i \in \N \, \}$ of operators. 
So $(C (t),\, \sigma , \, \sigi ,\, \theta^{\ast})$ is a \itqsi field. See Section \ref{10.4a} and $y=t$ is a solution for system \eqref{171}.

The system \eqref{171} is non-linear in the sense that for two solutions $y_{1},\, y_{2}$ of 
\eqref{171}, a $C $-linear combination $c_{1}y_{1}+c_{2}y_{2}\,(c_{1},\,c_{2} \in C )$ is not a solution of the system in general. 

However, the system is very close to a linear system. 
To illustrate this, let us look at the differential field extension $(C (t), \partial \sb t )/ (C , \, \partial \sb t )$, where we denote the derivation 
$d/dt$ by $\partial_{t}$ 
%We consider the differential field $(\com(t),\,\partial_{t})$, where $t$ is a variable over the complex number field $\com$ and $\partial_{t} = d/dt$ is the differential operator. 
The variable $t \in C(t)$ satisfies a non-linear differential equation
\begin{equation}\label{172}
\partial_{t} t -1 = 0.
\end{equation}
The differential field extension $(C (t),\, \partial_{t})/(C ,\,\partial_{t})$ is, however, the Picard-Vessiot extension for the linear differential equation
\begin{equation}\label{173}
\partial^{2}_{t} t = 0.
\end{equation}
To understand the relation between \eqref{172} and \eqref{173}, we introduce the 
$2$-dimensional 
$C $-vector space 
\[
E := C t \oplus C \subset C [t].
\]
The vector space $E$ is closed under the action of the 
 derivation $\partial \sb t$ so that $E$ is a 
 $C [\partial\sb t]$-module.
%that is a $\com[\partial_{t}]$-module such that the dimension 
%of the module $E$ as a $\com$-vector space 
%is $2$. 
Solving the differential equation associated with the $C [\partial_{t}]$-module $E$ is to find a differential algebra $(L,\, \partial_{t})/C $ and 
a $C [\partial_{t}]$-module morphism
\[
\varphi \colon E \rightarrow L. 
\]
Writing $\varphi(t) = f_{1},\, \varphi(1)=f_{2}$ that are elements of $L$, we have 
\[
\begin{bmatrix}
\partial_{t} f_{1}\\
\partial_{t} f_{2}
\end{bmatrix} = \begin{bmatrix}
0&1\\
0&0
\end{bmatrix} \begin{bmatrix}
f_{1}\\
f_{2}
\end{bmatrix}.
\]
Since $\partial_{t} t = 1,\, \partial_{t} 1= 0$, 
in the differential field $(C (t),\, \partial_{t})/C $, we find two 
solutions
$%\begin{bmatrix}
^ t (t , \, 1)
%\end{bmatrix}
$ and $
%\begin{bmatrix}
^t (1, \, 0 )
%\end{bmatrix}
$ 
that are two column vectors in $C (t)^2$ satisfying 
\begin{equation}\label{175}
\partial_{t} \begin{bmatrix}
t & 1\\
1 & 0
\end{bmatrix}=\begin{bmatrix}
0 & 1\\
0 & 0
\end{bmatrix}
\begin{bmatrix}
t & 1\\
1 & 0
\end{bmatrix}
\end{equation} 
and 
$$
\begin{vmatrix}
t & 1\\
1 & 0
\end{vmatrix}
\not= 0.
$$
Namely, $C (t)/C $ is the Picard-Vessiot extension for linear differential equation \eqref{175}. 
\par
We can argue similarly for the \itqsi field extension $(C (t),\, \sigma,\, \sigi,\, \theta^{\ast})/C $. 
You will find a subtle difference between the differential case and the
 \itqsi case. 
Quantization of Galois group arises from here.
\par 
Let us set 
$$
M = C t \oplus C \subset C [t]
$$
 that is a $ C [\sigma,\, \sigi , \, \theta^{\ast}
 ]$-module. 
Maybe to avoid the confusion that you might have in Remark \ref{mas} below, 
writing $m\sb 1 = t$ and $m\sb 2 =1$, 
 we had better define formally 
 $$
 M = C m\sb 1 \oplus C m\sb 2
 $$ 
 as a $C $-vector space on which $\sigma$ and $\theta ^{(1)}$ operate by 
\begin{equation} 
%\begin{align}
\begin{bmatrix}
\sigma (m_{1}) \\ 
\sigma (m_{2})
\end{bmatrix} = \begin{bmatrix}
q & 0\\
0 & 1
\end{bmatrix}\begin{bmatrix}
 m_{1}\\
 m_{2}
\end{bmatrix}, \, 
\begin{bmatrix}
\sigi (m_{1}) \\ 
\sigi (m_{2})
\end{bmatrix} = \begin{bmatrix}
q^{-1} & 0\\
0 & 1
\end{bmatrix}\begin{bmatrix}
 m_{1}\\
 m_{2}
\end{bmatrix}, \,
     \label{176}
\begin{bmatrix}
\theta^{(1)}( m_{1})\\
\theta^{(1)} (m_{2})
\end{bmatrix} = \begin{bmatrix}
0 & 1\\
0 & 0
\end{bmatrix}\begin{bmatrix}
 m_{1}\\
 m_{2}
\end{bmatrix}. %\label{177}
%\end{align}
\end{equation}
Since in \eqref{176} the first equation is equivalent to the second, we consider the first and third equations. 
Solving $ C [\sigma,\, \sigi , \, \theta^{\ast}]$-module $M$ is equivalent to 
find elements $f\sb 1 , \, f\sb 2 $ in a \itqsi algebra 
$(A, \, \sigma ,\, \sigi, \, \theta ^ * )$ satisfying 
the system of linear difference-differential equation
\begin{equation}
\begin{bmatrix}
\sigma (f_{1})\\
\sigma (f_{2})
\end{bmatrix} = \begin{bmatrix}
q & 0\\
0 & 1
\end{bmatrix}\begin{bmatrix}
 f_{1}\\
 f_{2}
\end{bmatrix}, \qquad \label{176b}
\begin{bmatrix}
\theta^{(1)}( f_{1})\\
\theta^{(1)} (f_{2})
\end{bmatrix} = \begin{bmatrix}
0 & 1\\
0 & 0
\end{bmatrix}\begin{bmatrix}
 f_{1}\\
 f_{2}
\end{bmatrix} 
\end{equation}
in the \itqsi algebra $A$.

%%%%%Lemma 
\begin{lemma} \label{lem1}
Let $(L,\, \sigma,\, \sigi,\, \theta^{\ast})/C $ be a \itqsi field extension. If a $2\times 2$ matrix $Y=(y_{ij}) \in M_{2}(L)$ satisfies 
a system of difference-differential equations 
\begin{equation}
\sigma Y = \begin{bmatrix}
q & 0\\
0 & 1
\end{bmatrix}Y \textit{ and } %\qquad
\label{1720} 
\theta^{(1)}Y = \begin{bmatrix}
0 & 1\\
0 & 0
\end{bmatrix}Y, %\label{1721}
\end{equation}
then $\det Y = 0$. 
\end{lemma}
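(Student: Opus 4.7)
\medskip

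\noindent\textbf{Proof plan.} The plan is to read off the individual scalar equations encoded by the two matrix identities, then use the axioms of a \itqsi field together with commutativity of $L$ to show that $\det Y$ must vanish.

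First I would unpack \eqref{1720} entry by entry. Writing $Y=(y_{ij})$, the first identity gives $\sigma(y_{1j})=qy_{1j}$ and $\sigma(y_{2j})=y_{2j}$ for $j=1,2$; the second gives $\theta^{(1)}(y_{11})=y_{21}$, $\theta^{(1)}(y_{12})=y_{22}$, and $\theta^{(1)}(y_{21})=\theta^{(1)}(y_{22})=0$. In particular $y_{21}$ and $y_{22}$ are constants of $L$ in the sense of Definition~\ref{a3.3}. Next, set $d:=\det Y=y_{11}y_{22}-y_{12}y_{21}$. Using axiom (3) of Definition~\ref{a3.3} for $i=1$, which reads
\[
\theta^{(1)}(ab)=\theta^{(1)}(a)\,b+\sigma(a)\,\theta^{(1)}(b),
\]
and the fact that $\theta^{(1)}(y_{2j})=0$, a direct computation yields $\theta^{(1)}(d)=y_{21}y_{22}-y_{22}y_{21}=0$ and $\sigma(d)=qd$.

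The key step is the following commutativity trick. Since $L$ is a field, $ab=ba$, so applying the Leibniz-type rule above both to $ab$ and to $ba$ and equating gives the universal identity
\begin{equation}\label{plancomm}
\theta^{(1)}(a)\bigl(b-\sigma(b)\bigr)=\theta^{(1)}(b)\bigl(a-\sigma(a)\bigr)\qquad\text{for all }a,b\in L.
\end{equation}
Specializing \eqref{plancomm} to $a=d$ and using $\theta^{(1)}(d)=0$ together with $\sigma(d)=qd$, the left-hand side vanishes while the right-hand side becomes $(1-q)\,d\,\theta^{(1)}(b)$. Since $q\ne 1$, this forces $d\,\theta^{(1)}(b)=0$ for every $b\in L$.

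Finally I would dispose of the two cases. If $d\ne 0$, then $L$ being a field gives $\theta^{(1)}\equiv 0$, whence $y_{21}=\theta^{(1)}(y_{11})=0$ and $y_{22}=\theta^{(1)}(y_{12})=0$, so $d=y_{11}y_{22}-y_{12}y_{21}=0$, a contradiction. Hence $d=0$, which is the claim. The only mildly subtle point—where I would be most careful in the write-up—is the derivation of \eqref{plancomm}; everything else is straightforward bookkeeping with the axioms.
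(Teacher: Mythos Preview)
Your proof is correct. The core device---applying the twisted Leibniz rule $\theta^{(1)}(ab)=\theta^{(1)}(a)b+\sigma(a)\theta^{(1)}(b)$ to a product both ways and using commutativity of $L$---is exactly what the paper uses, but the paper applies it more directly. Instead of first computing $\theta^{(1)}(d)=0$, $\sigma(d)=qd$, deriving the universal identity \eqref{plancomm}, and then arguing by contradiction, the paper simply computes $\theta^{(1)}(y_{11}y_{12})$ and $\theta^{(1)}(y_{12}y_{11})$ separately: the first gives $y_{21}y_{12}+qy_{11}y_{22}$, the second gives $y_{22}y_{11}+qy_{12}y_{21}$, and equating yields $(q-1)\det Y=0$ in one line. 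Your route has the minor advantage of isolating the general identity \eqref{plancomm}, which is a nice structural observation about \itqsi fields; the paper's route is shorter and avoids the detour through properties of $d$ and the contradiction step.
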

\begin{proof}
It follows from \eqref{1720} 
\begin{equation}\label{1722}
\sigma(y_{11}) = qy_{11},\; \sigma(y_{12})=qy_{12},\; \sigma(y_{21}) = y_{21},\; \sigma(y_{22}) = y_{22}
\end{equation}
and 
\begin{equation}\label{1723}
\theta^{(1)}(y_{11}) = y_{21},\, \theta^{(1)}(y_{12})= y_{22},\, \theta^{(1)}(y_{21}) = 0,\, \theta^{(1)}(y_{22}) = 0. 
\end{equation}
It follows from \eqref{1722} and \eqref{1723}
\begin{equation} \label{1724}
\theta^{(1)}(y_{11}y_{12}) = \theta^{(1)}(y_{11})y_{12} + \sigma(y_{11})\theta^{(1)}(y_{12}) = y_{21}y_{12} + qy_{11}y_{22}
\end{equation}
and similarly 
\begin{equation}\label{1725}
\theta^{(1)}(y_{12}y_{11}) = y_{22}y_{11} + qy_{12}y_{21}. 
\end{equation}
As $y_{11}y_{12}=y_{12}y_{11}$, equating \eqref{1724} and \eqref{1725}, we get
\[
(q-1)(y_{11}y_{22}-y_{12}y_{21}) = 0
\]
so that $\det Y = 0$. 
\end{proof}
\begin{cor}\label{cor2}
Let $(K,\, \sigma,\, \sigi,\, \theta^{\ast})$ be a \itqsi field over $C $. Then the \itqsi linear equation
\begin{equation}\label{1011a}
\sigma Y = \begin{bmatrix}
q & 0\\
0 & 1
\end{bmatrix}Y \, \text{\it \/ and }\, 
\theta^{(1)}Y = \begin{bmatrix}
0 & 1\\
0 & 0
\end{bmatrix}Y
\end{equation}
has no \itqsi Picard-Vessiot extension
$L/K$ in 
 the following sense. There exists a solution 
$Y \in \GL \sb 2 (L)$ to \eqref{1011a} such that the abstract field $L$ is generated by the entries of the matrix $Y$ over $K$. The field of constants of the \itqsi over-field $L$ coincides with 
the field of constants of the base 
field $K$. 
 \end{cor}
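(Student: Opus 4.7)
The plan is to derive the statement as an immediate contradiction from Lemma~\ref{lem1}. Assume, for contradiction, that a qsi Picard--Vessiot extension $L/K$ exists in the sense spelled out in the statement; then there is a matrix $Y \in \GL_{2}(L)$ solving the system \eqref{1011a}, whose entries generate the abstract field $L$ over $K$ and preserve the field of constants. Since $L$ is a (commutative) qsi field over $C$, Lemma~\ref{lem1} applies verbatim to $Y$ and forces $\det Y = 0$. This contradicts $Y \in \GL_{2}(L)$, giving the desired non-existence.

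The only point worth flagging is the crucial role of commutativity in Lemma~\ref{lem1}. Its proof identifies $\theta^{(1)}(y_{11}y_{12})$ with $\theta^{(1)}(y_{12}y_{11})$ via the Leibniz-type rule in Definition~\ref{a3.3}~(3), a step that rests on the equality $y_{11}y_{12} = y_{12}y_{11}$ and is false in a general non-commutative qsi algebra. Consequently the corollary is strictly an assertion about commutative qsi fields, and it is precisely this obstruction that motivates the enlargement of the Picard--Vessiot framework to non-commutative rings carried out in the subsequent sections.

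I do not expect any genuine obstacle: once Lemma~\ref{lem1} is in hand, the corollary is a one-line deduction. The substance lies in the interpretation rather than the proof. Namely, this non-existence result pinpoints the failure of classical Picard--Vessiot theory for the very simple linear qsi system attached to the $C[\sigma,\sigi,\theta^{\ast}]$-module~$M$ of \eqref{176}, thereby justifying the passage to a non-commutative Picard--Vessiot ring on which a genuinely quantum Galois group will act in the rest of Part~II.
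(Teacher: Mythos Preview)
Your proposal is correct and matches the paper's own proof, which simply states ``This is a consequence of Lemma~\ref{lem1}.'' Your contradiction argument is exactly the intended deduction, and your added remarks on the role of commutativity and the motivation for passing to non-commutative Picard--Vessiot rings are accurate commentary that the paper itself develops in the surrounding discussion.
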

\begin{proof}
This is a consequence of Lemma \ref{lem1}.
\end{proof}
\begin{remark}\label{mas} 
We note 
that 
 Corollary \ref{cor2} is compatible with Remark 4.4 and Theorem 4.7 of Hardouin \cite{har10}. See also Masuoka and Yanagawa \cite{masy}. 
They assure 
the existence of Picard-Vessiot extension for a 
$K[ \sigma , \, \sigi,\, \theta ^ *]$-module $N$ if the following two conditions are satisfied; 
\begin{enumerate}
\renewcommand{\labelenumi}{(\arabic{enumi})} 
 \item 
 The \itqsi base field $K$ contains $(C (t), \, \sigma ,\sigi,\, \, \theta^* )$, 
 \item
 The operation of $\sigma$ and $\theta^{(1)} $ on the module $N$ 
 as well as on the base field $K$, 
 satisfy the relation 
$$
\theta ^{(1)} = \frac{1}{(q-1) t}(\sigma - \Id \sb N).
$$ 
\end{enumerate}
In fact, even if the base field $K$ contains 
$(C (t), \, \sigma , \,\sigi,\, \theta ^ * )$, in 
$K\otimes \sb C M$, we have by definition of 
the 
$ C [\sigma , \, \sigi , \, \theta ^*]$-module $M$, 
$$
\theta ^{(1)} (m\sb 1 ) = m\sb 2 \not= 
\frac{1}{t}m\sb 1 = 
\frac{1}{(q-1)t}
\left( \sigma (m\sb 1) - m\sb 1\right) .
$$ 
So 
$K\otimes \sb C M$
does not satisfy the second condition above.

\end{remark}
%%%%%%%%%%%%%%%%%%%%%%%%%%%%%%%%%%%%%%%%%%%%%%%%%%%
\section{Quantum normalization of $( C (t) ,\, \sigma , \,
\sigi,\, \theta ^{*})/ C $}\label{14.6.25b}
We started from the \itqsi field extension $C (t)/C $. The column vector 
%%%%%%176 177
$^t(t,\, 1) \in C (t)^{2}$ is a solution to the system of equations \eqref{176}, i.e. we have
\[
\begin{bmatrix}
\sigma (t)\\
\sigma (1)
\end{bmatrix} = \begin{bmatrix}
q & 0\\
0 & 1
\end{bmatrix}\begin{bmatrix}
t\\
1
\end{bmatrix}, \\
\qquad
\begin{bmatrix}
\theta^{(1)} (t)\\
\theta^{(1)} (1)
\end{bmatrix} = \begin{bmatrix}
0 & 1\\
0 & 0
\end{bmatrix}\begin{bmatrix}
t\\
1
\end{bmatrix} .
\]
By 
applying to the \itqsi field extension $( C (t) , \sigma , \,\sigi ,\, 
 \theta ^{*})/ C $, 
the general procedure of 
\cite{ume96.2}, 
\cite{hei10} that is believed to lead us to the normalization,
we arrived at the Galois hull 
 $\eL = C (t)\langle Q,\, Q^{-1},\, X\rangle \sb{alg}$ modulo localization. 
This suggests an appropriate model of the non-commutative 
\itqsi ring extension $C (t)\langle Q,\, Q^{-1}\rangle\sb{alg}/C $ is a (maybe the), \itqsi Picard-Vessiot ring of the system of equations \eqref{176}. More precisely, $Q$ is a variable over $C (t)$ satisfying the commutation relation
\[
Qt=qtQ. 
\]
We understand $R=C \langle t,\, Q,\,Q^{-1} \rangle \sb{alg}$ as a sub-ring of 
$$
S=C [[t,\, Q]][t^{-1},\,Q^{-1}].
$$ 
We know that, in the previous line, 
the usage of $\langle \, \rangle \sb{alg}$ is more logical than $[ \; ]$, but as it is too heavy, we do not adopt it. 
The ring $S$ is a non-commutative \itqsi algebra by setting 
\[
\sigma(Q) = qQ,\, \theta^{(1)}(Q) = 0\, \textit{ and }\, \sigma(t) = qt,\, \theta^{(1)}(t) = 1
\]
and $R=C \langle t,\, Q,\,Q^{-1}\rangle \sb{alg}$ is a \itqsi sub-algebra. 
Thus we get a \itqsi ring extension 
$$
(R,\, \sigma , \, \sigi ,\, \theta^{\ast})/C = (C \langle t,\, Q,\, Q^{-1}\rangle \sb{alg},\, \sigma , \, \sigi ,\, \theta^{\ast})/C .
$$ 
We examine that $(R,\, \sigma , \, \sigi ,\, \theta^{\ast})/C $ is a non-commutative Picard-Vessiot ring for the systems of equations \eqref{176}. 
\begin{observation}\label{obs1}
The left $R$-module $M$ has two solutions in the 
\itqsi ring $R$ 
linearly independent over $C $. In fact, setting
\begin{equation}
Y:= \begin{bmatrix}
Q & t \\
0 & 1
\end{bmatrix} \in M_{2}(R), 
\end{equation}
we have 
\begin{equation}\label{140929a}
\sigma Y = \begin{bmatrix}
q & 0\\
0 & 1
\end{bmatrix}Y \textit{ and }
\theta^{(1)}Y = \begin{bmatrix}
0 & 1\\
0 & 0
\end{bmatrix}Y. 
\end{equation}
So the column vectors $^{t}(Q,\, 0),\, ^{t}(t,\, 1) \in R^{2}$ are $C $-linearly independent solution of the system of equations \eqref{176}. 
\end{observation}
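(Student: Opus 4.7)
The statement to verify is a direct computational check together with a brief independence argument, so my plan is essentially to unpack the two matrix identities entry by entry and then read off linear independence from the triangular shape of $Y$. All four ingredients needed are already in place: the ring $R = C\langle t,Q,Q^{-1}\rangle_{\mathrm{alg}}$ inherits its \itqsi structure from $S = C[[t,Q]][t^{-1},Q^{-1}]$ by the explicit rules $\sigma(t)=qt$, $\sigma(Q)=qQ$, $\theta^{(1)}(t)=1$, $\theta^{(1)}(Q)=0$ declared just above the statement, together with $\sigma(1)=1$ and $\theta^{(1)}(1)=0$ from Definition \ref{a3.3} (1).

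First I would verify the identity $\sigma Y = \mathrm{diag}(q,1)\,Y$. Applying $\sigma$ entrywise to $Y$ gives
\[
\sigma Y = \begin{bmatrix} \sigma(Q) & \sigma(t) \\ 0 & 1 \end{bmatrix} = \begin{bmatrix} qQ & qt \\ 0 & 1 \end{bmatrix},
\]
while
\[
\begin{bmatrix} q & 0 \\ 0 & 1 \end{bmatrix}Y = \begin{bmatrix} qQ & qt \\ 0 & 1 \end{bmatrix},
\]
so the two sides agree. Next I would check the second equation in \eqref{140929a}. Since $\theta^{(1)}(t) = (qt - t)/((q-1)t) = 1$ and $\theta^{(1)}(Q) = 0$, applying $\theta^{(1)}$ entrywise gives
\[
\theta^{(1)}Y = \begin{bmatrix} 0 & 1 \\ 0 & 0 \end{bmatrix},
\]
while
\[
\begin{bmatrix} 0 & 1 \\ 0 & 0 \end{bmatrix}Y = \begin{bmatrix} 0 & 1 \\ 0 & 0 \end{bmatrix},
\]
so equality holds. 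This shows that the two column vectors $\,{}^t(Q,0)$ and $\,{}^t(t,1)$ of $Y$ are each solutions of the system \eqref{176b} attached to the $C[\sigma,\sigma^{-1},\theta^{*}]$-module $M$.

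Finally, for $C$-linear independence, suppose $c_{1}\,{}^t(Q,0) + c_{2}\,{}^t(t,1) = 0$ with $c_1,c_2\in C$. The second component yields $c_2=0$, hence $c_1 Q = 0$ in $R$; since $Q$ is a unit of $R$ (and in particular a non-zero-divisor in the domain $S\supset R$), this forces $c_1=0$. This completes the verification. The whole argument is routine, and I expect no genuine obstacle: the only thing worth flagging is the sign convention for $\theta^{(1)}$, i.e.\ remembering that $\theta^{(1)}(t)=1$ rather than, say, $t$ or $qt$, which is why the off-diagonal entry in the second matrix identity comes out as a bare $1$ consistent with the defining system \eqref{176}.
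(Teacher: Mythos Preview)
Your proposal is correct and matches the paper's approach: the paper does not supply a separate proof for this Observation, treating the two matrix identities and the linear independence as immediate from the declared rules $\sigma(Q)=qQ$, $\sigma(t)=qt$, $\theta^{(1)}(Q)=0$, $\theta^{(1)}(t)=1$, and your entrywise verification is precisely the intended routine check. The only minor remark is that your aside ``$\theta^{(1)}(t) = (qt - t)/((q-1)t) = 1$'' is a mnemonic from the commutative field $C(t)$ rather than the actual definition on the non-commutative ring $R$, where $\theta^{(1)}(t)=1$ is simply declared; but you already acknowledged this at the outset, so nothing is wrong.
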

\begin{observation}\label{obs2}
The ring $R= C \langle t,\, Q,\, Q^{-1}\rangle \sb{alg}$ has no zero-divisors. We can consider the ring $K$ of total fractions of $R=C \langle t,\, Q,\, Q^{-1}\rangle\sb{alg}$. 
\end{observation}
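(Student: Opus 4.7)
The plan is to identify $R = C\langle t, Q, Q^{-1}\rangle_{alg}$, subject to the relation $Qt = qtQ$, with a skew Laurent polynomial ring over $C[t]$ and then to invoke (or reprove by a leading-term argument) the standard fact that such a ring over a domain is again a domain. Let $\tau \colon C[t] \to C[t]$ denote the $C$-algebra automorphism determined by $\tau(t) = qt$; since $q \neq 0$, $\tau$ is indeed an automorphism. The defining commutation relation $Qt = qtQ$ extends by induction in the degree of $f \in C[t]$ to $Qf(t) = \tau(f(t))\,Q$, and likewise $Q^{-1}f(t) = \tau^{-1}(f(t))\,Q^{-1}$. Hence $R$ satisfies the universal relations of the skew Laurent polynomial ring $C[t][Q, Q^{-1}; \tau]$.

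Next I would show that every element of $R$ admits a unique expression
\[
\alpha \;=\; \sum_{n \in \Z} a_n(t)\, Q^n, \qquad a_n(t) \in C[t],
\]
with only finitely many $a_n$ nonzero. Existence follows by pushing all occurrences of $Q^{\pm 1}$ to the right past $t$ using the relation above. For uniqueness, one constructs $C[t][Q, Q^{-1}; \tau]$ abstractly as the free left $C[t]$-module $\bigoplus_{n \in \Z} C[t]\cdot Q^n$ with the multiplication dictated by $(f Q^n)(g Q^m) = f\tau^n(g) Q^{n+m}$; this is visibly associative, and there is a surjective $C$-algebra morphism from $R$ to this model sending generators to generators, whose inverse is provided by the normal form, giving the identification.

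With the normal form in hand, suppose $\alpha = \sum_n a_n(t) Q^n$ and $\beta = \sum_m b_m(t) Q^m$ are two nonzero elements of $R$. Let $N := \max\{n : a_n \neq 0\}$ and $M := \max\{m : b_m \neq 0\}$. A direct computation from the multiplication rule shows that the coefficient of $Q^{N+M}$ in $\alpha\beta$ is exactly
\[
a_N(t)\, \tau^N(b_M(t)) \;=\; a_N(t)\, b_M(q^N t).
\]
Since $C[t]$ is an integral domain, $a_N \neq 0$, and the substitution $t \mapsto q^N t$ is a $C$-algebra automorphism of $C[t]$ (again because $q \neq 0$), the product $a_N(t)\, b_M(q^N t)$ is a nonzero polynomial. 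Hence $\alpha\beta \neq 0$, and $R$ is a domain.

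The only real obstacle is establishing the uniqueness of the normal form, i.e.\ ruling out hidden relations coming from compatibility of $Qt = qtQ$ with $QQ^{-1} = 1$; this is handled cleanly by the skew Laurent polynomial model above (or, equivalently, by the Diamond Lemma applied to the rewriting system $Qt \to qtQ$, $Q^{-1}t \to q^{-1}tQ^{-1}$, $QQ^{-1} \to 1$, $Q^{-1}Q \to 1$, whose critical pairs all resolve using $q \neq 0$). Once this is verified, constructing the ring of total fractions $K$ is immediate, and in fact one obtains an Ore domain (the set $C[t]\setminus\{0\}$ is an Ore set with respect to $\tau$), so the total ring of fractions exists as a skew field in the standard sense.
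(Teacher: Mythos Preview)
Your proof is correct but proceeds along a genuinely different line from the paper's. The paper's argument is a single sentence: it observes that $R$ sits inside the ambient ring $S = C[[t,Q]][t^{-1},Q^{-1}]$ already introduced just before the Observation, and asserts that in $S$ every nonzero element is invertible; since a subring of a ring without zero-divisors is itself without zero-divisors, $R$ is a domain, and the ring of fractions $K$ can be taken inside $S$. Your approach instead identifies $R$ intrinsically with the skew Laurent polynomial ring $C[t][Q,Q^{-1};\tau]$, establishes a normal form, and runs a leading-$Q$-degree argument. What your route buys is self-containment: you never need the larger twisted power-series ring $S$, nor the (not entirely trivial) claim that $S$ is a division ring. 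What the paper's route buys is brevity, and it simultaneously furnishes an explicit overfield in which to locate the total ring of fractions, which is used in the very next Observation \ref{obs3} to identify the constants of $K$. Your closing remark that $R$ is an Ore domain recovers the existence of $K$ as a skew field by a different mechanism.
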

\begin{proof}
In fact, we have $R \subset C [[t,\, Q]][t^{-1},\,Q^{-1}]$. In the latter ring every non-zero element is invertible. 
\end{proof}
\begin{observation}\label{obs3}
Let $K$ be the ring of total fractions of $R$. The ring of \itqsi constants $C_{K}$ coincides with $C $. The ring of $\theta^{\ast}$ constants of $C [[t,\, Q]][t^{-1},\,Q^{-1}]$ is $C (Q)$. Moreover 
as we assume that $q$ is not a root of unity, 
the ring of $\sigma$-constants of $C (Q)$ is equal to $C $. 
\end{observation}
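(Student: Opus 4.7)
The plan is to establish the three assertions in sequence.

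\emph{Step 1 (the $\theta^\ast$-constants of $S := C[[t,Q]][t^{-1}, Q^{-1}]$).} Using $Qt = qtQ$ to push every $Q$ to the right, every element of $S$ has a unique normal form
\[
f = \sum_{i \geq -N,\, j \geq -M} c_{ij}\, t^i Q^j, \qquad c_{ij} \in C.
\]
The twisted Leibniz rule together with $\theta^{(1)}(t)=1$ and $\theta^{(1)}(Q)=0$ yields $\theta^{(1)}(t^iQ^j) = [i]_q\, t^{i-1}Q^j$. Because $q$ is not a root of unity, $[i]_q \neq 0$ for every $i \neq 0$, so $\theta^{(1)}(f) = 0$ forces $c_{ij} = 0$ whenever $i \neq 0$. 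From Definition \ref{a3.3}(4) one obtains by induction $\theta^{(i)} = (\theta^{(1)})^i/[i]_q!$ (with $[i]_q! \neq 0$), so vanishing of all $\theta^{(i)}$, $i \geq 1$, is equivalent to vanishing of $\theta^{(1)}$. Consequently the $\theta^\ast$-constants of $S$ are the Laurent sub-ring $C[[Q]][Q^{-1}]$, which is what the statement denotes by $C(Q)$.

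\emph{Step 2 (the $\sigma$-constants of $C(Q)$).} For $g = \sum c_j Q^j$, the relation $\sigma(Q) = qQ$ gives $\sigma(g) = \sum c_j q^j Q^j$, so $\sigma(g) = g$ forces $(q^j - 1)c_j = 0$ for every $j$. Since $q$ is not a root of unity, $c_j = 0$ for $j \neq 0$, whence $g \in C$.

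\emph{Step 3 (the \itqsi-constants of $K$).} To reduce the first assertion to the preceding two, I embed $K$ into a division ring in which the computations of Steps 1 and 2 apply verbatim. Viewing $R$ as the skew polynomial ring $C[Q,Q^{-1}][t;\sigma_0]$ with $\sigma_0(Q) = q^{-1}Q$, one sees that $R$ is a left and right Noetherian Ore domain whose classical skew field of fractions is $K$. Localizing first at $C[Q,Q^{-1}]\setminus 0$ produces the skew polynomial ring $C(Q)[t;\sigma_0]$ over a field, and thence a canonical embedding
\[
K \hookrightarrow D := C(Q)((t;\sigma_0))
\]
into the skew Laurent series division ring. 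Both $\sigma$ and $\theta^{(1)}$ extend continuously from $R$ to $D$, and running the argument of Step 1 with coefficients in the field $C(Q)$ in place of $C$ identifies the $\theta^\ast$-constants of $D$ with $C(Q)$; Step 2 then pins down the common $(\sigma, \theta^\ast)$-constants of $D$ as $C$. Pulling back along the embedding yields $C_K \subseteq C_D = C$, while the reverse inclusion is obvious.

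\emph{Main obstacle.} The ring $S$ itself is not a division ring --- for example $t+Q \in S$ has no inverse in $S$, because the recursion extracted from $(t+Q)g = 1$ forces simultaneously infinitely many negative powers of $t$ and infinitely many positive powers of $Q$, which is incompatible with the finiteness of negative powers in the normal form of elements of $S$. One cannot therefore realize $K$ simply as $K \cap S$. Replacing $S$ by the skew Laurent series division ring $D$ over $C(Q)$ is the move that allows the constants computation to transfer from $S$ to $K$.
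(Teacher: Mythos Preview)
Your Steps 1 and 2 follow the paper's own strategy exactly: the paper offers no separate proof for this Observation, but the second and third sentences of the statement \emph{are} the argument---compute the $\theta^{\ast}$-constants inside $S=C[[t,Q]][t^{-1},Q^{-1}]$ to get $C(Q)$, then the $\sigma$-constants of $C(Q)$ to get $C$, and conclude via $K\subset S$.

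The genuine difference is in your Step 3, and it is an improvement. The paper's argument relies on the claim made in the proof of Observation~\ref{obs2} that ``in the latter ring [$S$] every non-zero element is invertible''; you correctly observe that this is false (your example $t+Q$ is decisive, and already $t+Q\in R$, so $K$ cannot embed in $S$). You repair this by replacing $S$ with the skew Laurent series division ring $D=C(Q)((t;\sigma_0))$, where the same coefficient-by-coefficient computation goes through and $K$ genuinely embeds because $R$ is a Noetherian (hence Ore) domain. This buys you a rigorous containment $C_K\subset C_D=C$ that the paper's route does not actually deliver. One small point: in Step~3 the symbol $C(Q)$ now means the rational function field rather than the Laurent series ring of Step~1, so Step~2 applies after the further embedding $C(Q)\hookrightarrow C((Q))$, or by the direct argument that a rational function fixed under $Q\mapsto qQ$ with $q$ not a root of unity must be constant; either way the conclusion stands.
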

\begin{lemma}\label{1016a}
The non-commutative \itqsi algebra $R$ is simple. There is no \itqsi bilateral ideal of $R$ except for the zero-ideal and $R$.
\end{lemma}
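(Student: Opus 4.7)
The plan is to reduce an arbitrary nonzero two-sided \itqsi ideal $I$ of $R$ to contain a unit, using the operators $\theta^{(i)}$ and $\sigma$ to progressively simplify elements. The key preliminary computation is that the commutation relation $Qt=qtQ$ lets us write every element of $R$ uniquely in the normal form $\sum_{i\ge 0,\,j\in\Z} c_{ij}\,t^{i}Q^{j}$ (a finite sum), and that the twisted Leibniz rule for $\theta^{(1)}$, combined with $\theta^{(1)}(t)=1$ and $\theta^{(1)}(Q)=0$, yields
\[
\theta^{(1)}(t^{i}Q^{j})=[i]_{q}\,t^{i-1}Q^{j}.
\]
In particular $\theta^{(1)}$ lowers the $t$-degree by one and leaves the $Q$-power alone, while $\sigma(t^{i}Q^{j})=q^{i+j}t^{i}Q^{j}$ preserves the normal form.

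Let $I$ be a nonzero \itqsi bilateral ideal of $R$ and pick $0\ne f\in I$; let $n$ be the largest power of $t$ occurring in the normal form of $f$, and write $f=\sum_{j}c_{nj}t^{n}Q^{j}+(\text{lower }t\text{-degree})$ with at least one $c_{nj}\ne 0$. Applying $\theta^{(n)}=\frac{1}{[n]_{q}!}(\theta^{(1)})^{n}$ (which is legitimate since $q$ is not a root of unity, so $[n]_{q}!\ne 0$) and using the formula above, we obtain the nonzero element
\[
g:=\theta^{(n)}(f)=\sum_{j}c_{nj}\,Q^{j}\in I,
\]
which depends only on $Q$. This is the first reduction: we have produced a nonzero Laurent polynomial in $Q$ inside $I$.

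Next I would isolate a single monomial. Suppose the support of $g$ is $\{j_{1}<j_{2}<\cdots<j_{k}\}$ with all $c_{nj_{l}}\ne 0$. Since $\sigma(Q^{j})=q^{j}Q^{j}$, applying $\sigma$ repeatedly gives
\[
\sigma^{m}(g)=\sum_{l=1}^{k}c_{nj_{l}}q^{mj_{l}}Q^{j_{l}}\in I,\qquad m=0,1,\dots,k-1.
\]
The coefficient matrix $(q^{mj_{l}})_{0\le m\le k-1,\,1\le l\le k}$ is Vandermonde in the values $q^{j_{1}},\dots,q^{j_{k}}$, and these are pairwise distinct because $q$ is not a root of unity and the $j_{l}$ are distinct integers. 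Hence the matrix is invertible over $C$, and a $C$-linear combination of $g,\sigma(g),\dots,\sigma^{k-1}(g)$ produces a single nonzero monomial $cQ^{j}\in I$ with $c\in C^{\times}$. Multiplying on the left by $c^{-1}Q^{-j}\in R$ yields $1\in I$, so $I=R$.

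The argument uses two-sided ideal structure only mildly (left multiplication by $Q^{-j}$ suffices), and relies essentially on the non-vanishing of $[n]_{q}!$ and of the Vandermonde determinant, both of which are guaranteed by the standing assumption that $q$ is not a root of unity. The only delicate point to verify carefully in the write-up is the stability of the normal form under $\theta^{(1)}$ in the twisted (non-commutative) setting, i.e. the computation $\theta^{(1)}(t^{i}Q^{j})=[i]_{q}t^{i-1}Q^{j}$; once this is in hand, the rest is a clean two-step reduction (kill $t$ with $\theta^{(n)}$, then kill extra $Q$-monomials with $\sigma$).
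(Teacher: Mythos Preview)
Your proof is correct and follows essentially the same two-step strategy as the paper: first apply $\theta^{(n)}$ to kill the $t$-variable and obtain a nonzero Laurent polynomial in $Q$, then use the $\sigma$-eigenspace decomposition $\sigma(Q^j)=q^jQ^j$ to reduce to a single monomial. The only difference is in the second step: the paper normalizes to a polynomial $h=1+b_1Q+\cdots+b_sQ^s$ and then inducts on $s$ via $\tfrac{1}{q^{s}-1}\bigl(q^{s}h-\sigma(h)\bigr)$, which lowers the top degree while preserving the constant term $1$; you instead apply $\sigma^0,\sigma^1,\dots,\sigma^{k-1}$ and invoke the Vandermonde determinant in the distinct eigenvalues $q^{j_1},\dots,q^{j_k}$ to isolate a monomial directly. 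Both executions are standard and equally valid; your Vandermonde argument is slightly more direct, while the paper's inductive version avoids assembling a matrix and is marginally more elementary.
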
 
\begin{proof}
Let $I$ be a non-zero \itqsi bilateral ideal of $R$. 
We take an element 
$$
 0 \not= f:= a\sb 0 + ta\sb 1 + \cdots + t^n\, a\sb n \in I,
$$
where $a\sb i \in C [ Q, \, Q^{-1}]$ for $ 0 \le i \le n$. We may assume $a\sb n \not= 0$. 
Applying $\theta^{(n)}$ to the element $f$, we conclude that 
$ 0 \not= a\sb n \in C [Q, \, Q^{-1} ]$ is in the ideal $I$. Multiplying a monomial $bQ^l$ with $b\in C $, we find 
a polynomial 
$h = 1 + b\sb 1 Q + \cdots + b\sb s Q ^s \in C [Q]$ 
with $b\sb s \not= 0$ 
is in the ideal $I$. 
We show that $1$ is in $I$ by induction on $s$.
If $s\, =\, 0$, then there is nothing to prove. 
Assume that the assertion is proved for $ s \le m$. We have to show the assertion for $s = m+1$. 
Then,
since $Q^i$ is an eigenvector of the operator 
$\sigma$ with 
eigenvalue $q^i$ for $i \in \N$, 
$$
\frac{1}{q^{m+1} -1}(q^{m+1}h \, - \, \sigma (h)) 
= 1 + c\sb 1 Q + \cdots + c\sb {m}Q^m \in C [Q]
$$
is an element of $I$ and by induction hypothesis 
$1$ is in the ideal $I$. 
%applying the operators 
%$\sigma ^ j $ to $h$ for $ 0 \le j \le s$, 
%we can express $1$ as a $\com$-linear 
%combination of the $\sigma ^i (h)$'s 
%$( \, 0\, \le i \, \le s )$. 
%and hence 
%$1 \in I$. 
\end{proof} 
\begin{observation}\label{obs4}
The extension $R/C $ trivializes the $ C [\sigma,\, \sigi , \, \theta^{\ast}]$-module $M$. Namely, there exist \itqsi constants $c_{1},\,c_{2} \in R \otimes_{C }M$ such that there exists a left $R$-\itqsi module isomorphism
\[
 R \otimes_{C }M \simeq Rc_{1}\oplus Rc_{2}. 
\]
\end{observation}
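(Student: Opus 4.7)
The plan is to produce the two \itqsi constants explicitly from the fundamental solution matrix $Y$ of Observation \ref{obs1} by inverting it, and then to check that its columns, applied to the standard basis of $M$, give an $R$-basis of $R\otimes_{C}M$ consisting of \itqsi constants.

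First I would compute $Y^{-1}$. Using the commutation $Qt=qtQ$ (equivalently $Q^{-1}t = q^{-1}tQ^{-1}$), a direct check shows
\[
Y^{-1} = \begin{bmatrix} Q^{-1} & -Q^{-1}t \\ 0 & 1 \end{bmatrix} \in M_{2}(R),
\]
and I would then define
\[
c_{1} := Q^{-1}\otimes m_{1} - (Q^{-1}t)\otimes m_{2}, \qquad c_{2} := 1\otimes m_{2}
\]
as the entries of $Y^{-1}\binom{1\otimes m_{1}}{1\otimes m_{2}}$ in $R\otimes_{C}M$.

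Second I would verify that $c_{1},c_{2}$ are \itqsi constants. On the tensor product \itqsi module $R\otimes_{C}M$, the operators act by $\sigma(r\otimes m) = \sigma(r)\otimes\sigma(m)$ and, by Definition \ref{a3.3}(3),
\[
\theta^{(1)}(r\otimes m) = \theta^{(1)}(r)\otimes m + \sigma(r)\otimes\theta^{(1)}(m).
\]
The verification reduces to the elementary identities $\sigma(Q^{-1})=q^{-1}Q^{-1}$, $\theta^{(1)}(Q^{-1})=0$, $\sigma(Q^{-1}t)=Q^{-1}t$, $\theta^{(1)}(Q^{-1}t)=q^{-1}Q^{-1}$ (all obtained from $\sigma(Q)=qQ$, $\theta^{(1)}(Q)=0$, $\sigma(t)=qt$, $\theta^{(1)}(t)=1$ by the Leibniz rule), combined with the given action on $M$. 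For $c_{1}$ the key cancellation is $\theta^{(1)}(c_{1}) = q^{-1}Q^{-1}\otimes m_{2} - q^{-1}Q^{-1}\otimes m_{2} = 0$. That $\theta^{(i)}(c_{j}) = 0$ for every $i\geq 1$ then follows from $\theta^{(i)} = \frac{1}{[i]_{q}!}(\theta^{(1)})^{i}$, valid on any \itqsi module since $q$ is not a root of unity.

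Third I would exhibit the decomposition. The identities
\[
Qc_{1} + tc_{2} = 1\otimes m_{1}, \qquad c_{2} = 1\otimes m_{2},
\]
(the first using $Q\cdot Q^{-1}t = t$) show that $c_{1},c_{2}$ generate $R\otimes_{C}M$ as a left $R$-module. Linear independence is immediate: if $ac_{1}+bc_{2}=0$, then in the free $R$-basis $\{1\otimes m_{1},\,1\otimes m_{2}\}$ we find $aQ^{-1}=0$, hence $a=0$ (since $Q^{-1}$ is a unit), and then $b=0$. Because $c_{1}$ and $c_{2}$ are \itqsi constants, each summand $Rc_{j}$ is closed under $\sigma$ and all $\theta^{(i)}$, so the resulting decomposition $R\otimes_{C}M\simeq Rc_{1}\oplus Rc_{2}$ is an isomorphism of left $R$-\itqsi modules.

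There is no real obstacle; the computation is entirely elementary. The only point requiring care is the non-commutative arithmetic of $R$: the identity $Q\cdot(Q^{-1}t) = t$ (and not, say, $q^{-1}t$) must not be confused with the different identity $Q^{-1}\cdot t = q^{-1}tQ^{-1}$ needed to formally invert $Y$, and the Leibniz rule with its $\sigma$-twist must be applied strictly in the correct order.
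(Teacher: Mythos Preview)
Your proof is correct and takes essentially the same approach as the paper: the constants $c_1 = Q^{-1}m_1 - Q^{-1}t\, m_2$ and $c_2 = m_2$ are exactly those the paper writes down, and the key cancellation $\theta^{(1)}(c_1)=q^{-1}Q^{-1}m_2 - q^{-1}Q^{-1}m_2=0$ is the same computation. You are somewhat more thorough, explaining that $c_1,c_2$ arise from the rows of $Y^{-1}$ and spelling out the generation and linear-independence steps that the paper leaves implicit.
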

\begin{proof}
In fact, it is sufficient to set
\[
c_{1} := Q^{-1}m_{1}-Q^{-1}tm_{2},\,\qquad c_{2} := m_{2}. 
\]
Then 
\[
\sigma(c_{1}) = c_{1},\, \qquad \sigma(c_{2}) = c_{2},\, \qquad \theta^{(1)}(c_{2}) = 0
\]
and
\[
\theta^{(1)}(c_{1}) = q^{-1}Q^{-1}\theta^{(1)}(m_{1})
-q^{-1}Q^{-1}m_{2}= q^{-1}Q^{-1}m_{2}
-q^{-1}Q^{-1}m_{2} = 0. 
\]
So we have an $(R,\, \sigma , \, \sigi ,\, \theta^{\ast})$-module isomorphism $ R \otimes_{C }M \simeq Rc_{1}\oplus Rc_{2}$. 
\end{proof}
\begin{observation}\label{obs5} %%%%%
The Hopf algebra $\mathfrak{H}_{q}= C \langle u,\, u^{-1},\, v\rangle$ with $uv=q\,vu$ co-acts from right on the non-commutative algebra 
$R$. Namely, we have an algebra morphism 
\begin{equation} \label{14.7.2a}
R \to 
R \otimes \sb C \mathfrak{H}\sb q
\end{equation}
sending 
\[
t \mapsto t\otimes 1 \, +\, Q \otimes v,\qquad Q\mapsto Q \otimes u, \qquad Q^{-1} \mapsto Q^{-1}\otimes u^{-1}.
\] 
Morphism \eqref{14.7.2a} is compatible with $ C [\sigma , \, \sigi , \, \theta^{(1)}]$-module structures, where $\sigma, \, \sigi $ and $\theta^{(1)}$ operate on 
the Hopf algebra 
$\gH \sb q$ trivially. 
\end{observation}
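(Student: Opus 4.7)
\textbf{Proof plan for Observation \ref{obs5}.}

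The plan is to verify three things in order: (a) the assignment on generators extends to a well-defined unital $C$-algebra morphism $\rho\colon R\to R\otimes_{C}\gH_{q}$; (b) the map $\rho$ is a right $\gH_{q}$-comodule structure, i.e.\ satisfies coassociativity $(\rho\otimes\Id)\rho=(\Id\otimes\Delta)\rho$ and counitality $(\Id\otimes\epsilon)\rho=\Id$; (c) $\rho$ intertwines the action of $\sigma,\sigma^{-1},\theta^{(1)}$ on $R$ with their action on $R\otimes_C\gH_q$, where these operators act trivially on the second tensor factor.

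For (a), since $R=C\langle t,Q,Q^{-1}\rangle_{\mathrm{alg}}$ is presented by the relations $QQ^{-1}=Q^{-1}Q=1$ and $Qt=qtQ$, I only have to check that these relations are respected by the prescribed images. The first is immediate because $u$ is invertible in $\gH_q$ with inverse $u^{-1}$. For $Qt=qtQ$, I compute
\begin{align*}
\rho(Q)\rho(t) &= (Q\otimes u)(t\otimes 1+Q\otimes v) = Qt\otimes u+Q^{2}\otimes uv,\\
q\,\rho(t)\rho(Q) &= q(t\otimes 1+Q\otimes v)(Q\otimes u) = qtQ\otimes u+qQ^{2}\otimes vu.
\end{align*}
The two expressions agree using $Qt=qtQ$ in $R$ and $uv=qvu$ in $\gH_{q}$. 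This matching is the whole point: the quantum relation in $\gH_q$ is forced by the commutation in $R$, and is the only delicate step.

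For (b), coassociativity is checked on generators. On $Q$ and $Q^{-1}$ both sides give $Q^{\pm 1}\otimes u^{\pm 1}\otimes u^{\pm 1}$ from $\Delta(u^{\pm 1})=u^{\pm 1}\otimes u^{\pm 1}$. On $t$ one computes
\[
(\rho\otimes\Id)\rho(t)=t\otimes 1\otimes 1+Q\otimes v\otimes 1+Q\otimes u\otimes v,
\]
which matches $(\Id\otimes\Delta)\rho(t)=t\otimes\Delta(1)+Q\otimes\Delta(v)$ via $\Delta(v)=u\otimes v+v\otimes 1$. Counitality follows from $\epsilon(u^{\pm 1})=1$, $\epsilon(v)=0$.

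For (c), $\sigma$, $\sigma^{-1}$ and $\theta^{(1)}$ act as $0$ on $u^{\pm 1}$ and $v$ in $\gH_q$ (trivial action), so I must verify $\rho\circ D=(D\otimes\Id)\circ\rho$ for $D\in\{\sigma,\sigma^{-1},\theta^{(1)}\}$ applied to the three generators $t,Q,Q^{-1}$. This is a short direct check: for $\sigma$ one uses $\sigma(t)=qt$, $\sigma(Q)=qQ$ and the $C$-linearity of $\rho$; for $\theta^{(1)}$ one uses $\theta^{(1)}(t)=1$, $\theta^{(1)}(Q)=0=\theta^{(1)}(Q^{-1})$, so that $\rho(\theta^{(1)}(t))=1\otimes 1=\theta^{(1)}(t)\otimes 1+\theta^{(1)}(Q)\otimes v=(\theta^{(1)}\otimes\Id)\rho(t)$. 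Extension to arbitrary elements then follows from the Leibniz-type rule for $\theta^{(1)}$ and multiplicativity of $\sigma^{\pm 1}$, together with multiplicativity of $\rho$ proved in (a). I expect step (a)—confirming that the defining relation of $R$ is preserved, which fixes the shape of $\gH_q$ uniquely—to be the conceptually central point, while (b) and (c) are routine verifications on generators.
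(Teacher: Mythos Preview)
Your proof is correct; this is precisely the ``simple direct calculation'' that the paper explicitly acknowledges suffices immediately after the statement, while calling it ``very much unsatisfactory.'' One small slip in wording: in part (c) you say $\sigma,\sigma^{-1}$ ``act as $0$'' on $\gH_q$, but the trivial \itqsi structure means $\sigma=\Id$ on $\gH_q$ and only $\theta^{(i)}=0$ for $i\ge1$; your formula $(D\otimes\Id)$ and the checks that follow are nonetheless correct, since $\sigma\otimes\sigma=\sigma\otimes\Id$ and $\sigma\otimes\theta^{(1)}+\theta^{(1)}\otimes 1=\theta^{(1)}\otimes\Id$ under this convention.

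The paper takes a different, more structural route. Rather than verifying relations term by term, it works with the fundamental-solution matrix $Y=\begin{bmatrix}Q&t\\0&1\end{bmatrix}$: Lemma~\ref{140913a} characterizes $C$-\itqsi morphisms $R\to T$ as invertible matrices $\begin{bmatrix}a&b\\0&1\end{bmatrix}\in M_2(T)$ satisfying \eqref{140929a} together with $ab=qba$; Corollary~\ref{140913b} then shows that right-multiplying such a solution by a constant matrix $H'=\begin{bmatrix}u'&v'\\0&1\end{bmatrix}$ with $u'v'=qv'u'$ and entries commuting with those of $\varphi(Y)$ produces a new \itqsi morphism. Taking $T=R\otimes_C\gH_q$, $\varphi_0$ the canonical inclusion, and $H'=\begin{bmatrix}u&v\\0&1\end{bmatrix}$ recovers exactly your map $\rho$. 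What this organization buys, and your direct check does not, is an explanation of \emph{where} $\gH_q$ comes from: Corollary~\ref{140927b} shows $\gH_q$ represents the functor $S\mapsto\Hom_{C\text{-}qsi}(R,R\otimes_C S)$, and Observation~\ref{140927a} derives the co-multiplication on $\gH_q$ from composition of these matrix twists. Your approach establishes the Observation cleanly; the paper's approach additionally answers the two questions it poses right after the statement about the origin of the algebra and Hopf structures on $\gH_q$.
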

%%%%%%%%%%%%%%%%%%%
We can prove the assertion of Observation \ref{obs5} by a simple direct calculation, which is very much unsatisfactory. 
For, we are eager to know where the Hopf algebra $\gH\sb q $ comes from. We answer this question in two steps:
\begin{enumerate}
\renewcommand{\labelenumi}{(\arabic{enumi})} 
 \item 
 Characterization of the non-commutative algebra $\gH \sb q$.
 \item 
 Origin of the co-multiplication structure on the Hopf algebra $\gH \sb q$. 
 \end{enumerate}
 We answer question (1) in Corollary \ref{140927b}, 
and question (2) in Observation \ref{140927a}. To this end, we admit the algebra structure of $\gH \sb q$ and characterize it. 

Let us first 
fix some notations. 
For a not necessarily commutative $C $-\itqsi 
algebra $T$ and for a morphism 
$\varphi : R \to T$ of \itqsi algebras over $C $, we set 
$$
\varphi (Y) = 
\begin{bmatrix} 
\varphi (Q) & \varphi (t) \\
 0 &   1
 \end{bmatrix}.
$$
So $\varphi (Y) $ is an invertible
element in the matrix ring 
 $M\sb 2(T)$, the inverse being given by 
$$
\varphi (Y) ^{-1} =
\begin{bmatrix}
a^ {-1} & -a^{-1}b \\
0 & 1
\end{bmatrix}, 
$$
where we set $a = \varphi (Q)$ and $ b =\varphi (t)$ 
so that we have 
$$
\varphi (Y) =
\begin{bmatrix}
a & b \\
0 & 1
\end{bmatrix}.
$$
\par 
We have seen above the following Lemma.
\begin{lemma}\label{140913a}
For a not necessarily commutative $C $-qsi algebra $T$, there exists 
a $C $-qsi algebra morphism $\varphi :R \to T$ 
such that 
$$
\varphi (Y) =\begin{bmatrix} 
a & b \\
0 & 1
\end{bmatrix}
$$
%sending $Q$ to $a$ and $t$ to $b$ 
if and only if the following three conditions are satisfied: 
\begin{enumerate}
\renewcommand{\labelenumi}{(\arabic{enumi})} 
 \item 
 We have a commutation relation
 $$
 ab = qba, 
$$
\item 
the elements $a, \, b$ satisfies difference differential equations 
$$
\sigma (a) = qa, \qquad \theta^{(1)} (a) = 0, \qquad \sigma (b) = qb,
\qquad \theta^{(1)} (b) =1, 
$$
\item the element 
$a$ is invertible in the ring $T$ or equivalently the matrix 
$$
\begin{bmatrix} 
a & b \\
0 & 1 
\end{bmatrix}
$$
is invertible in the ring $M\sb 2 (T)$.
\end{enumerate}
\end{lemma}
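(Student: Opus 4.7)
The forward implication is essentially a direct verification. Given a qsi algebra morphism $\varphi \colon R \to T$ with $\varphi(Y)$ of the indicated form, we have $a = \varphi(Q)$ and $b = \varphi(t)$, so condition (1) results from applying $\varphi$ to the defining relation $Qt = qtQ$ of $R$; condition (2) follows from $\varphi$ intertwining $\sigma$ and $\theta^{(1)}$ at the generators $Q$ and $t$ and using the explicit qsi structure on $R$; and condition (3) holds because $\varphi(Q^{-1})$ is a two-sided inverse for $a$ in $T$.

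For the converse, the plan is to build $\varphi$ in two stages. First, the universal property of the free $C$-algebra yields a unital $C$-algebra morphism $\widetilde{\varphi}\colon C\langle Q, Q^{-1}, t\rangle \to T$ sending $Q,\, Q^{-1},\, t$ to $a,\, a^{-1},\, b$; this is well defined by (3). By (1) the element $Qt - qtQ$ maps to $ab - qba = 0$, so $\widetilde{\varphi}$ descends to an algebra morphism $\varphi \colon R \to T$. To complete the proof one must verify compatibility with the qsi structure, i.e.\ $\varphi \circ \sigma = \sigma \circ \varphi$ and $\varphi \circ \theta^{(i)} = \theta^{(i)} \circ \varphi$ for every $i \ge 0$. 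Because $\varphi$ is an algebra morphism, and both $\sigma$ on the source and on the target satisfy the multiplicative rule while $\theta^{(1)}$ satisfies the same twisted Leibniz rule on both sides, the locus where each of these identities holds is a $C$-subalgebra of $R$; it therefore suffices to test on a generating set $\{Q, Q^{-1}, t\}$.

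On $Q$ and $t$ the verification is immediate from (2). On $Q^{-1}$, I would extract the required identities from $Q Q^{-1} = 1$: applying $\sigma$ gives $qQ \cdot \sigma(Q^{-1}) = 1$, hence $\sigma(Q^{-1}) = q^{-1}Q^{-1}$, and the same computation carried out in $T$ (using $\sigma(a) = qa$ from (2)) gives $\sigma(a^{-1}) = q^{-1}a^{-1}$; applying $\theta^{(1)}$ and using $\theta^{(1)}(Q) = 0$ gives $qQ \cdot \theta^{(1)}(Q^{-1}) = 0$, hence $\theta^{(1)}(Q^{-1}) = 0$, and likewise $\theta^{(1)}(a^{-1}) = 0$ in $T$. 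The higher operators $\theta^{(i)}$ for $i \ge 2$ impose no separate condition: since $q$ is not a root of unity, every $[i]_{q}!$ is invertible, and axiom (4) of Definition \ref{a3.3} forces $\theta^{(i)} = \frac{1}{[i]_{q}!}(\theta^{(1)})^{i}$ in any qsi algebra, so compatibility with $\theta^{(1)}$ propagates automatically. The only mildly delicate point in the whole argument is the bookkeeping of twisted Leibniz rules on inverse elements, resolved uniformly by the short computations sketched above; no serious obstacle remains.
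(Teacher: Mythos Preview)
Your proof is correct and follows the same approach as the paper. In fact the paper does not give a separate proof of this lemma at all: it simply writes ``We have seen above the following Lemma,'' treating the statement as a summary of the preceding construction of $R=C\langle t,Q,Q^{-1}\rangle_{alg}$ with relation $Qt=qtQ$ and its explicit \itqsi structure; your write-up spells out the universal-property argument and the generator check that the paper leaves implicit.
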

%%%%%%%%%%%%%%%%%%%
%We are going to see where the Hopf algebra structure arises from.
%For a \itqsi algebra $S$ and a \itqsi algebra 
%morphism $\varphi :R \to T$ over $\com$, 
%we denote 
%$$
%\varphi (Y) = 
%\begin{bmatrix}
%\varphi (Q) & \varphi (t) \\
%0   & 1
%\end{bmatrix}
%$$
%so that $\varphi (Y)$ is an invertible matrix in $M\sb 2 (S)$. 
%%%%%%%%
\begin{cor}\label{140913b}
Let $\varphi : R \to T$ be a \itqsi algebra morphism over $C $. 
Using the notation above, let 
$$
H' = 
\begin{bmatrix}
u^\prime & v^\prime \\
0 & 1
\end{bmatrix}
\in M\sb 2 (C\sb T)
$$ 
be an invertible element in the matrix ring 
$ M\sb 2(C\sb T)$ 
satisfying the following two conditions.
\begin{enumerate}
\renewcommand{\labelenumi}{(\arabic{enumi})} 
 \item 
$u^\prime v^\prime = qv^\prime u^\prime$ and the element $u^\prime $ is invertible in the ring $C\sb T$ of constants of $T$. 
\item 
 The set $\{ u^\prime, \, v^\prime \}$ and 
 the set of entries of the matrix 
 $\varphi (Y)$ are mutually commutative.
\end{enumerate}
 Then, there exists 
a \itqsi algebra morphism $\psi \in \Hom\sb{C\text{-}qsi} (R, \, T)$ 
over $C $ such that 
$$
\psi (Y) = \varphi (Y)H'.
 $$
\end{cor}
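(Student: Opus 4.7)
My plan is to reduce the Corollary to a direct verification of the three hypotheses of Lemma \ref{140913a}, applied to the two elements sitting in the $(1,1)$ and $(1,2)$ entries of the product matrix $\varphi(Y)H'$. Write $a := \varphi(Q)$, $b := \varphi(t)$, and compute
\[
\varphi(Y)H' = \begin{bmatrix} a & b \\ 0 & 1 \end{bmatrix}\begin{bmatrix} u' & v' \\ 0 & 1 \end{bmatrix} = \begin{bmatrix} au' & av' + b \\ 0 & 1 \end{bmatrix}.
\]
The candidate morphism $\psi$ is the one obtained by setting $\psi(Q) := au'$ and $\psi(t) := av' + b$ (and extending to $R$). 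By Lemma \ref{140913a}, such a $\psi$ exists and is a qsi $C$-algebra morphism $R \to T$ if and only if $au'$ and $av'+b$ satisfy the commutation relation $(au')(av'+b) = q(av'+b)(au')$, the prescribed difference-differential equations, and $au'$ is invertible in $T$.

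The first step is to verify the commutation relation. Using that $\{u',v'\}$ commutes entrywise with $\{a,b\}$, one expands both sides and reduces them to an identity that uses only $ab=qba$ (which holds because $\varphi$ is an algebra morphism out of $R$) and $u'v'=qv'u'$; both sides come out to $a^2u'v' + ab\,u'$. The second step is the difference-differential part: since $u',v'$ are constants of $T$ they are annihilated by $\theta^{(1)}$ and fixed by $\sigma$, and using the Leibniz rule of Definition \ref{a3.3}(3), namely $\theta^{(1)}(xy) = \theta^{(1)}(x)y + \sigma(x)\theta^{(1)}(y)$, together with $\sigma(a)=qa$, $\theta^{(1)}(a)=0$, $\sigma(b)=qb$, $\theta^{(1)}(b)=1$, one reads off $\sigma(au')=q(au')$, $\theta^{(1)}(au')=0$, $\sigma(av'+b)=q(av'+b)$, and $\theta^{(1)}(av'+b)=1$. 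The third step, invertibility of $au'$, is immediate: $Q$ is invertible in $R$, so $a=\varphi(Q)\in T^{\times}$, and $u'$ is invertible in $C_T$ by hypothesis.

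Having verified the three hypotheses of Lemma \ref{140913a}, we obtain the desired qsi algebra morphism $\psi: R \to T$, and by construction $\psi(Y) = \varphi(Y)H'$, proving the Corollary. There is no real obstacle here; the only point that requires some care is the commutation relation $(au')(av'+b) = q(av'+b)(au')$, where one must track carefully that moving $u'$ past $a$ (free), moving $v'$ past $a$ (free), and converting $u'v'$ into $qv'u'$ combine with $ab=qba$ to give matching expressions. Everything else is a two-line check on the generators.
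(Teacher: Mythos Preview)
Your proof is correct and follows exactly the same approach as the paper: reduce to verifying the three conditions of Lemma~\ref{140913a} for the entries of $\varphi(Y)H'$, then invoke that lemma. The paper's proof is a terse two-sentence version of what you wrote; your explicit verification of the commutation relation and the difference--differential identities is the unpacking of the paper's phrase ``This, together with the assumption (1) and (2) in this Corollary, implies that the matrix $\varphi(Y)H'$ satisfies the conditions of Lemma~\ref{140913a}.''
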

\begin{proof}
By Lemma \ref{140913a}, 
the matrix 
$$
\varphi (Y) 
= 
\begin{bmatrix} 
a & b \\
0 & 1 
\end{bmatrix}
$$
satisfies conditions of Lemma \ref{140913a}. This, together with the assumption (1) and (2) in this Corollary, implies 
that the matrix 
$$
\varphi (Y) H'
$$
satisfies the conditions of Lemma \ref{140913a}. Now the assertion follows from Lemma \ref{140913a}.
\end{proof}
%To show that we have the morphism $R \to R \otimes \sb C \gH \sb q$ in the Observation, we had better notice Corollary \ref{140913b} above and apply it to 
%$T = R\otimes \sb C \mathfrak{H}\sb q$ and the inclusion morphism 
%$$
%\varphi \sb 0 : R \to R\otimes \sb C \gH
%\sb q, \qquad a \mapsto a\otimes 1. 
%$$
%\end{proof}
In particular if we take $T = R \otimes \sb C \gH\sb q$ and 
$$
H'= 
\begin{bmatrix}
u & v \\
0 & 1 
\end{bmatrix}
$$
and 
$$
\varphi
\sb 0 (Y) =
Y=
\begin{bmatrix} 
Q & t \\
0 & 1 
\end{bmatrix}
=
\begin{bmatrix} 
Q \otimes 1 & t\otimes 1 \\
0 & 1 
\end{bmatrix},
$$
then the conditions of Corollary are satisfied and we get the morphism 
$R \to R \otimes \sb C \gH\sb q$ in the Observation \ref{obs5}. 
%The following Corollary describes in the quantum case, 
%that with respect to the right co-action of the Hopf algebra $\gH \sb q$, the Picard-Vessiot ring $R$ looks as if it were a $\gH\sb q$-torsor. See \eqref{160502a} bellow. 
\par 
It also characterizes the algebra $\gH\sb q$. We notice an $S \in ob(NCAlg / C)$ has a trivial \itqsi-algebra structure over $C$ that if we set $\sigma = \Id\sb{S},\ \theta^{(0)} = \Id\sb{S}$ and $\theta^{(i)} = 0 $ for $i \geq 1$. 
Namely, if we consider a 
functor 
$$
F: (NCAlg /C ) \to (Set) 
$$ 
on the category of not necessarily commutative $C $-algebras defined 
by 
$$
F(S) = \Hom \sb{C\text{-}qsi} ( R, \, R\otimes \sb C S ) \text{\it \/ for \/}
 S \in ob(NCAlg/C ),
$$
then the functor $F$ is representable by the algebra $\gH\sb q$.
\begin{cor}\label{140927b} 
For an object $S$ of the category $(NCAlg/C )$, we have 
\begin{equation}\label{160502a}
\Hom \sb{C\text{-}qsi} ( R, \, R \otimes \sb{C}S ) \simeq 
\Hom \sb {C\text{-}alg} ( \gH \sb q , S),
\end{equation}
where the left hand side denotes the set of \itqsi algebra morphisms over $C $ and 
the right hand side is the set of $C $-algebra morphisms. 
\end{cor}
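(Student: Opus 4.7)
The plan is to construct mutually inverse maps between the two Hom sets, using Lemma~\ref{140913a} and Corollary~\ref{140913b} applied to $T=R\otimes_C S$, viewed as a \itqsi algebra with $\sigma,\sigi$ and the $\theta^{(i)}$ acting on the $R$-tensorand only (and trivially on $S$). The crucial preliminary is to identify the ring $C_T$ of \itqsi constants of $T$ with $1\otimes S=S$: writing an element in the normal form $\sum t^iQ^j\otimes s_{ij}$, the relation $\sigma(t^iQ^j)=q^{i+j}t^iQ^j$ forces $\sigma$-invariant elements to be supported on $i+j=0$, and then the product rule gives $\theta^{(1)}(t^iQ^{-i})=[i]_qt^{i-1}Q^{-i}$, so the hypothesis that $q$ is not a root of unity forces $s_i=0$ for $i\geq1$, leaving only $1\otimes s_0$.

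From $\psi$ to $\varphi$: Given $\psi\in\Hom_{C\text{-}\mathrm{alg}}(\gH_q,S)$, form
$$
H':=\begin{bmatrix}\psi(u)&\psi(v)\\ 0&1\end{bmatrix}\in M_2(S)=M_2(C_T).
$$
The defining relations of $\gH_q$ yield $\psi(u)\psi(v)=q\psi(v)\psi(u)$ and the invertibility of $\psi(u)$ in $S$; since $1\otimes S$ is central against $R\otimes1$ in $T$, the entries of $H'$ commute with those of $Y\otimes1$. Applying Corollary~\ref{140913b} with the canonical qsi morphism $\varphi_0\colon R\to T$, $r\mapsto r\otimes 1$, produces a \itqsi morphism $\varphi=\varphi_\psi\colon R\to T$ characterized by $\varphi(Y)=(Y\otimes1)H'$, i.e.\ $\varphi(Q)=Q\otimes\psi(u)$ and $\varphi(t)=t\otimes1+Q\otimes\psi(v)$.

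From $\varphi$ to $\psi$: Given $\varphi\in\Hom_{C\text{-}\mathrm{qsi}}(R,T)$, write $\varphi(Y)=\bigl[\begin{smallmatrix}a&b\\ 0&1\end{smallmatrix}\bigr]$ as in Lemma~\ref{140913a} and set
$$
H:=(Y\otimes1)^{-1}\varphi(Y)=\begin{bmatrix}Q^{-1}a&Q^{-1}(b-t)\\ 0&1\end{bmatrix}=:\begin{bmatrix}u'&v'\\ 0&1\end{bmatrix}.
$$
A short computation using $\sigma(a)=qa$, $\theta^{(1)}(a)=0$, $\sigma(b)=qb$, $\theta^{(1)}(b)=1$ (Lemma~\ref{140913a}) together with $\sigma(Q^{-1})=q^{-1}Q^{-1}$ and $\theta^{(1)}(Q^{-1})=0$ shows that $u',v'$ are \itqsi constants, hence by the preliminary lie in $S$. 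Invertibility of $u'$ in $S$ follows because its two-sided inverse $a^{-1}Q\in T$ is again a constant. The quantum-plane relation $u'v'=qv'u'$ is then deduced from $ab=qba$ by writing $a=Qu'$, $b=Qv'+t$ and exploiting the fact that $u',v'$ commute with $Q$ and $t$. Defining $\psi=\psi_\varphi\colon\gH_q\to S$ by $\psi(u)=u'$, $\psi(v)=v'$ gives a $C$-algebra morphism.

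To finish, I will verify that $\psi\mapsto\varphi_\psi\mapsto\psi_{\varphi_\psi}$ and $\varphi\mapsto\psi_\varphi\mapsto\varphi_{\psi_\varphi}$ are identities by direct substitution in the formulas above (each is essentially a one-line check once $\varphi(Y)=(Y\otimes1)H'$ is in hand). The main obstacle is the constants computation $C_T=S$, which is where the hypothesis on $q$ enters and where the non-commutative tensor structure has to be handled carefully; everything else is formal bookkeeping inside Lemma~\ref{140913a} and Corollary~\ref{140913b}.
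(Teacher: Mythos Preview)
Your proof is correct and follows essentially the same approach as the paper's: both directions are built from Lemma~\ref{140913a} and Corollary~\ref{140913b} applied with $\varphi_0:r\mapsto r\otimes 1$, and the reverse map proceeds by showing the entries of $(Y\otimes 1)^{-1}\varphi(Y)$ are constants satisfying the quantum-plane relation (the paper isolates this last step as Sub-lemma~\ref{140929d}). You are in fact more careful than the paper in that you spell out the computation $C_{R\otimes_C S}=S$ and the invertibility of $u'$ inside $S$, both of which the paper simply asserts.
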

\begin{proof}
If we notice $C\sb{R \otimes \sb C S} = S$ and take as $\varphi 
: R \to R \otimes \sb C S$ the canonical inclusion 
$$
\varphi \sb 0 :R \to R \otimes \sb C S, \qquad a \mapsto a \otimes 1, 
$$
 it follows from Corollary \ref{140913b} that 
we have a map 
$$
\Hom \sb {C\text{-}alg} ( \gH \sb q , \, S) \to \Hom \sb {C\text{-}qsi} (R , \, R\otimes \sb C S ) .
$$
that sends $\pi \in \Hom \sb{C\text{-}alg} 
(\gH\sb q, \, S )$
 to $\psi \in \Hom \sb{C\text{-}qsi} ( R , \, R\otimes \sb C S)$ such that 
$$
\psi (Y) = \varphi \sb 0 (Y) 
\begin {bmatrix}
\pi (u) & \pi (v) \\ 
0 & 1 
\end{bmatrix}.
$$
To get the mapping of the other direction, 
let $\psi : R \to R \otimes \sb C \gH\sb q$ be a \itqsi morphism over $C $. 
Then using the morphism $\varphi \sb 0$ above, since both 
$\varphi\sb0 (Y)$ and $\psi(Y)$ are solutions to the linear \itqsi equations \eqref{176}, 
an easy calculation shows that 
the entries of the matrix 
$$
H ' := \varphi (Y) ^{-1}\psi (Y) \in M\sb 2 (R\otimes \sb C \gH \sb q )
$$
are constants so that 
$$
H' \in M\sb 2 (S) \subset M\sb 2 (R\otimes \sb C S ). 
$$ 
We single out a 
Sub-lemma because we later use the same argument. 
\begin{sublemma}\label{140929d}
We have the commutation relation 
$$
 u'v'= qv'u' 
$$
among the entries of the matrix 
$$
H' :=
\begin{bmatrix}
u' & v' \\
0 & 1 
\end{bmatrix}.
$$
\end{sublemma}
\begin{proof}[Proof of Sub-lemma]
Let us set 
$$
\varphi\sb 0 (Y)
=
\begin{bmatrix}
a & b \\
0 & 1 
\end{bmatrix}, \text{ and }
\psi (Y) =
\begin{bmatrix}
a' & b' \\
0 & 1 
\end{bmatrix}.
 $$
 So we have 
 %\begine{quation}
 \begin{align}\label{140929b}
 ab &= qba \qquad &a'b'&=q b'a' \\
 \label{140929c}
 a' &= au' \qquad &b'&= av'+ b .
 %\enad{equation}
 \end{align} 
 Since the set $\{ u', \, v' \} \subset S$ and $\{ a, \, b \} \subset R$ are mutually commutative in $R\otimes \sb C S$, 
 substituting equations \eqref{140929c} into the second equation of \eqref{140929b} and then using the first equation of \eqref{140929b}, 
 Sublemma follows. 
\end{proof}
By Sub-lemma, we get a morphism 
$\pi \sb {\psi}: \gH\sb q \to S$ sending $u$ to $u'$ and $v$ to $v'$.  
So $\psi \mapsto \pi\sb \psi$ gives the mapping of the other direction.
\end{proof}
\begin{remark}
%As we are in the non-commutative case, the converse of the Corollary is false. 
%In fact, 
For two \itqsi morphisms $\varphi , \, \psi : R \to T$ over $C $, 
let us set 
$$
\varphi (Y) = 
\begin{bmatrix} 
a & b \\
0 & 1
\end{bmatrix}, \qquad 
\psi (Y) =
\begin{bmatrix}
c & d \\
0 & 1
\end{bmatrix}. 
$$ 
It follows from difference differential equations 
%%%%%%%%%%%%%%
\begin{align*}
\sigma \left(
\begin{bmatrix}
a & b \\
0 & 1 
\end{bmatrix}\right)
& =
 \begin{bmatrix}
q & 0 \\
0 & 1 
\end{bmatrix} 
\begin{bmatrix}
a & b \\
0 & 1 
\end{bmatrix} , \qquad 
&\theta^{(1)}\left(
\begin{bmatrix}
a & b \\
0 & 1 
\end{bmatrix}\right)
 =
 \begin{bmatrix}
0 & 1 \\
0 & 0 
\end{bmatrix} 
\begin{bmatrix}
a & b \\
0 & 1 
\end{bmatrix} \\
%%%%%%%%%%%%
\sigma \left(
\begin{bmatrix}
c & d \\
0 & 1 
\end{bmatrix}\right)
& =
 \begin{bmatrix}
q & 0 \\
0 & 1 
\end{bmatrix} 
\begin{bmatrix}
c & d \\
0 & 1 
\end{bmatrix} , \qquad 
&\theta^{(1)}\left(
\begin{bmatrix}
c & d \\
0 & 1 
\end{bmatrix}\right)
 =
 \begin{bmatrix}
0 & 1 \\
0 & 0 
\end{bmatrix} 
\begin{bmatrix}
c & d \\
0 & 1 
\end{bmatrix} 
\end{align*}
%%%%%%%%%%%%%%%%%%%%%%%%%%%%%
that the entries of the matrix 
$$
\begin{bmatrix}
u^\prime & v^\prime \\
0 & 1 
\end{bmatrix} 
:= \varphi (Y)^{-1} \psi (Y) 
= 
\begin{bmatrix}
a & b \\
0 & 1
\end{bmatrix}^{-1}
\begin{bmatrix}
c & d \\
0 & 1
\end{bmatrix}
=
\begin{bmatrix}
a^{-1}c & a^{-1}d -a^{-1}b \\
0 & 1
\end{bmatrix}
$$
are constants. Namely, 
$$
\begin{bmatrix}
u^\prime & v^\prime \\
0 & 1 
 \end{bmatrix}
 \in M\sb 2 (C\sb T ).
$$
So equivalently 
$$ 
\psi (Y) = \varphi (Y) 
\begin{bmatrix}
u^\prime & v^\prime \\
0 & 1 
 \end{bmatrix}.
$$
The entries of the matrix do not 
necessarily 
 satisfy the commutation relation $u^\prime v^\prime = q u^\prime v^\prime$.
\end{remark}
\par 
For algebras $A, \, S \in ob(NCAlg / C )$, we set 
$$
A(S) := \Hom \sb {C\text{-}alg}( A , \, S)
$$
that is the set of $C $-algebra morphisms. 
\begin{observation}[Origin of co-multiplication 
of the Hopf algebra $\gH\sb q$]\label{140927a}
The co-multi-plication $\Delta : \gH \sb q \to \gH \sb q \otimes \sb C \gH \sb q$ comes from 
the multiplications of matrices. 
%the composition of morphisms. 
More precisely, to construct an algebra morphism $\Delta : \gH \sb q \to \gH \sb q \otimes \sb C \gH \sb q$, 
it is sufficient to give a functorial morphism 
\begin{equation}\label{140927c}
\gH\sb q \otimes \sb C \gH \sb q (S) \to \gH\sb q (S) \qquad \text{ for } S\in ob(NCAlg/C ). 
\end{equation}
An element of $\gH\sb q\otimes \sb C \gH \sb q (S)$
being given, it determines a pair $(\pi \sb 1, \, \pi \sb 2 )$ of morphisms 
$\pi \sb 1, \, \pi \sb 2 : \gH \sb q \to S$ 
such that the images $\pi \sb 1 (\gH\sb q )$ 
and $\pi \sb 2 (\gH\sb q )$ are 
mutually commutative.  
This condition is equivalent to 
mutually commutativity of 
the set of the entries
$\{ u\sb 1', \, v\sb 1' \}$ and 
$\{ u\sb 2' , \, v\sb 2' \}$ 
of the matrices
$$
H\sb 1 ' :=
\begin{bmatrix}
u\sb 1'
& v\sb 1' \\
0 & 1 
\end{bmatrix}
:=
\begin{bmatrix}
\pi \sb 1 (u) & \pi \sb 1(v) \\
0 & 1 
\end{bmatrix} , 
\qquad 
H\sb 2' :=
\begin{bmatrix}
u\sb 2'
& v\sb 2' \\
0 & 1 
\end{bmatrix}
:=
\begin{bmatrix}
\pi \sb 2 (u) & \pi \sb 2(v) \\
0 & 1 
\end{bmatrix} .
$$
\end{observation}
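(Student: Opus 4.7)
The plan is to make the heuristic ``matrix multiplication gives co-multiplication'' into a precise functorial construction and then invoke Yoneda. First I would fix notation: for $S \in \mathrm{ob}(NCAlg/C)$, I identify $\gH_q(S) = \mathrm{Hom}_{C\text{-}\mathrm{alg}}(\gH_q, S)$ with the set of matrices
\[
H' = \begin{bmatrix} u' & v' \\ 0 & 1 \end{bmatrix}
\]
with $u' \in S^\times$, $v' \in S$, and $u'v' = qv'u'$; this is a reformulation of the defining presentation of $\gH_q$. An element of $(\gH_q \otimes_C \gH_q)(S) = \mathrm{Hom}_{C\text{-}\mathrm{alg}}(\gH_q \otimes_C \gH_q, S)$ is a pair $(\pi_1,\pi_2)$ of $C$-algebra maps $\pi_i : \gH_q \to S$ whose images mutually commute, which in the matrix picture is exactly the mutual commutativity of $\{u_1', v_1'\}$ and $\{u_2', v_2'\}$ described in the statement.

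Next, I would \emph{define} the functorial map \eqref{140927c} by
\[
(H_1', H_2') \longmapsto H_1' H_2' \;=\; \begin{bmatrix} u_1' u_2' & u_1' v_2' + v_1' \\ 0 & 1 \end{bmatrix}.
\]
The core verification is that the right-hand side again lies in $\gH_q(S)$. The entry $u_1' u_2'$ is a product of units, hence a unit. For the relation, I would compute, using only the two relations $u_i' v_i' = q v_i' u_i'$ and the mutual commutativity,
\begin{align*}
(u_1'u_2')(u_1'v_2' + v_1') &= u_1'(u_2'u_1')v_2' + u_1'u_2'v_1' \\
&= u_1'u_1'(u_2'v_2') + u_1'v_1'u_2' \\
&= qu_1'u_1'v_2'u_2' + qv_1'u_1'u_2' \\
&= qu_1'v_2'u_1'u_2' + qv_1'u_1'u_2' \\
&= q(u_1'v_2' + v_1')(u_1'u_2'),
\end{align*}
which is the desired $q$-commutation relation. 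Functoriality in $S$ is immediate from the matrix description, so by Yoneda's lemma the assignment comes from a unique $C$-algebra map $\Delta : \gH_q \to \gH_q \otimes_C \gH_q$. Reading off $\Delta$ from the universal case $S = \gH_q \otimes_C \gH_q$ with $\pi_1 = \mathrm{id} \otimes \eta$, $\pi_2 = \eta \otimes \mathrm{id}$ recovers $\Delta(u) = u \otimes u$ and $\Delta(v) = u \otimes v + v \otimes 1$, agreeing with the formulae used earlier in the paper.

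Finally I would note that co-associativity of $\Delta$ follows tautologically from the associativity of matrix multiplication (applied to three mutually commuting factors), and that the co-unit $\epsilon$ corresponds functorially to the identity matrix $I_2$, while the antipode corresponds to the matrix inverse $H'^{-1}$, so the whole Hopf algebra structure of $\gH_q$ is controlled by the group-like structure of ``triangular matrices with $q$-commuting entries'' under multiplication. The main (and essentially only) obstacle is the commutation-relation calculation above; the subtle point is that one needs \emph{both} the commutation between $\{u_1',v_1'\}$ and $\{u_2',v_2'\}$ \emph{and} the individual $q$-commutation in each factor, and that without the mutual commutativity hypothesis the product $H_1'H_2'$ would fail to land in $\gH_q(S)$ — which is precisely why $\Delta$ is a $C$-algebra map of $\gH_q$ only after tensoring, and why quantum-group-valued products require the commutativity proviso highlighted in Remark \ref{10.24a}.
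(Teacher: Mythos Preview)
Your argument is correct; the direct matrix computation you give is essentially the content of Lemma~4.11 earlier in the paper, and your Yoneda conclusion is valid.

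However, the paper's own argument immediately following Observation~\ref{140927a} takes a different route. Rather than verifying the $q$-commutation of the entries of $H_1'H_2'$ by hand, it works through the Picard-Vessiot ring $R$: starting from the canonical inclusion $\varphi_0:R\to R\otimes_C S$, it applies Corollary~\ref{140913b} twice to obtain qsi morphisms $\psi_1,\psi_2:R\to R\otimes_C S$ with $\psi_1(Y)=\varphi_0(Y)H_1'$ and $\psi_2(Y)=\varphi_0(Y)(H_1'H_2')$. The needed relation $u_3'v_3'=qv_3'u_3'$ for the entries of $H_1'H_2'$ is then extracted via Sub-lemma~\ref{140929d}, which derives it from the fact that both $\varphi_0(Y)$ and $\psi_2(Y)$ satisfy the relations of Lemma~\ref{140913a}. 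In other words, the paper deduces the commutation relation from the universal property $\Hom_{C\text{-}qsi}(R,R\otimes_C S)\simeq\Hom_{C\text{-}alg}(\gH_q,S)$ of Corollary~\ref{140927b}, thereby exhibiting the co-multiplication as arising from composition of deformations of $R$. Your approach is shorter and self-contained; the paper's approach is more conceptual in that it makes explicit that $\Delta$ is forced by the Galois-theoretic structure rather than merely compatible with it.
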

We show that there exists a morphism $\pi \sb 3 : \gH \sb q \to S$
such that 
$$
H\sb 1'H\sb 2 ' =
\begin{bmatrix}
\pi \sb 3 (u) & \pi \sb 3 (v) \\
0 & 1
\end{bmatrix}.
$$
In fact, by Corollary \ref{140913b}, there exists a morphism $\psi \sb 1: R \to R \otimes \sb C S$ such that $$\psi\sb 1 (Y ) = \varphi \sb 0(Y) H\sb 1'.$$ 
 Then since the entries of 
$H\sb 2'$ and 
the union 
$$
 \text{(the entries of $ \varphi\sb 0 (Y)$)} \, \cup \, \text{(the entries of 
$H^\prime \sb 1$)}
$$
 are mutually commutative and consequently 
 the entries of $H\sb 2'$ and the entries of the product $\varphi \sb 0 (Y)H\sb 1'$ are mutually commutative, 
 by Corollary 
\ref{140913b}, there exists a morphism $\psi\sb 2 : R \to R\otimes \sb C S$ such that 
$$
\psi \sb 2 (Y) = (\varphi\sb0 (Y)H\sb 1 ')H\sb 2 '. = \varphi\sb 0 (Y)(H\sb 1' H\sb 2').
$$ 
So if we note that the entries of $H\sb 1' H\sb 2'$ and the entries of the matrix $\varphi \sb 0 (Y)$ are mutually commutative, writing 
$$
H\sb 1' H\sb 2' =
\begin{bmatrix}
u\sb3' & v\sb3' \\
0 & 1 
\end{bmatrix} , 
$$ 
the argument of the proof of Sub-lemma \ref{140929d} shows us that, 
we have 
$
u\sb 3' v\sb3'=qv\sb3' u\sb 3 '
$.
Hence 
there exists a morphism $\pi \sb 3 : \gH\sb q \to S$ sending $u$ to $u\sb 3'$ and $v$ to $v\sb 3'$. 
Now the mapping $(\pi \sb 1 , \, \pi \sb 2) \mapsto \pi\sb 3$ defines the morphism 
\eqref{140927c}. 
\begin{proposition}\label{160609}
The right co-action
\[
\rho \colon R \to R \otimes_{C} \gH_{q}
\]
is an $\gH_{q}$ torsor in the following sense. The $C$-algebra morphism $\rho$ defines a $C$-linear map
\begin{equation}\label{160608a}
\varphi \colon R \otimes_{C} R \to R\otimes_{C} \gH_{q} 
\end{equation}
such that $\varphi(ab) = (a\otimes 1)\rho (b)$ for $a, b \in R$. The $C$-linear map $\varphi$ is an isomorphism of the $C$-vector spaces. 
\end{proposition}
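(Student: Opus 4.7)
The map $\varphi$ is tautologically left $R$-linear when $R$ acts on the first factor of both $R\otimes\sb C R$ and $R\otimes\sb C\gH\sb q$, since $\varphi((r\otimes 1)(a\otimes b))=(ra\otimes 1)\rho(b)=(r\otimes 1)\varphi(a\otimes b)$. Both source and target are free left $R$-modules: the source has $R$-basis $\{1\otimes t^k Q^l:k\ge 0,\ l\in\Z\}$ from the normal-ordered $C$-basis of $R$ permitted by $Qt=qtQ$, and the target has $R$-basis $\{1\otimes v^i u^j:i\ge 0,\ j\in\Z\}$ from the analogous $C$-basis of $\gH\sb q$. My plan is to compute the matrix of $\varphi$ in these bases and to show that it is block upper-triangular in an ascending filtration, with invertible diagonal entries.

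Setting $x:=t\otimes 1$ and $y:=Q\otimes v$ in $R\otimes\sb C\gH\sb q$, I compute $yx=Qt\otimes v=qtQ\otimes v=q\,xy$. Since $\rho$ is an algebra morphism with $\rho(t)=x+y$ and $\rho(Q^l)=Q^l\otimes u^l$ for every $l\in\Z$, the $q$-binomial theorem yields the key formula
\[
\rho(t^k Q^l)=(x+y)^k(Q\otimes u)^l=\sum\sb{i=0}^k \binom{k}{i}\sb q\, t^{k-i}Q^{i+l}\otimes v^i u^l.
\]
In particular $\rho(t^k Q^l)$ involves only basis vectors $1\otimes v^i u^l$ with $0\le i\le k$, and the coefficient of $1\otimes v^k u^l$ is $Q^{k+l}$.

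For each $l\in\Z$, let $V\sb l\subset R\otimes\sb C R$ and $W\sb l\subset R\otimes\sb C\gH\sb q$ be the free left $R$-submodules with bases $\{1\otimes t^k Q^l\}\sb{k\ge 0}$ and $\{1\otimes v^i u^l\}\sb{i\ge 0}$, respectively. Then $R\otimes\sb C R=\bigoplus\sb{l}V\sb l$ and $R\otimes\sb C\gH\sb q=\bigoplus\sb{l}W\sb l$, and the formula above shows $\varphi(V\sb l)\subseteq W\sb l$; so it suffices to prove each restriction $\varphi\sb l:=\varphi|\sb{V\sb l}\colon V\sb l\to W\sb l$ is an isomorphism of left $R$-modules. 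I introduce the ascending filtrations $F^N V\sb l:=\bigoplus\sb{k=0}^N R\cdot(1\otimes t^k Q^l)$ and $F^N W\sb l:=\bigoplus\sb{i=0}^N R\cdot(1\otimes v^i u^l)$. The formula gives $\varphi\sb l(F^N V\sb l)\subseteq F^N W\sb l$, and the induced map on the top graded piece $F^N V\sb l/F^{N-1}V\sb l\to F^N W\sb l/F^{N-1}W\sb l$ sends the class of $1\otimes t^N Q^l$ to the class of $Q^{N+l}\cdot(1\otimes v^N u^l)$. Identifying both quotients with $R$ in the obvious way, this is the left $R$-linear map $r\mapsto rQ^{N+l}$, which is an isomorphism because $Q^{N+l}$ is a unit in $R$.

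The rest is routine: induction on $N$ via the five-lemma applied to the short exact sequences $0\to F^{N-1}\to F^N\to F^N/F^{N-1}\to 0$ on both sides gives bijectivity of $\varphi\sb l|\sb{F^N V\sb l}$ for every $N$, and passing to the colimit $V\sb l=\bigcup\sb N F^N V\sb l$, $W\sb l=\bigcup\sb N F^N W\sb l$ yields bijectivity of $\varphi\sb l$, hence of $\varphi=\bigoplus\sb l\varphi\sb l$. The only genuine (but mild) technical point is handling the infinite bases; the filtration argument is well-behaved because every element of $V\sb l$ or $W\sb l$ lies in some $F^N$ and each column of the matrix of $\rho$ in the chosen bases has finite support.
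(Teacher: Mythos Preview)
Your proof is correct and follows essentially the same approach as the paper's: both observe that $\varphi$ is left $R$-linear (for the action on the first tensor factor) and then show bijectivity by exhibiting a triangular structure with invertible diagonal entries---powers of the unit $Q$---for the matrix of $\varphi$ in suitable $R$-bases of source and target. The paper uses the bases $\{1\otimes Q^m t^n\}$ and $\{1\otimes u^m v^n\}$ and simply asserts that the triangular images form an $R$-basis, whereas you use the opposite orderings $\{1\otimes t^k Q^l\}$ and $\{1\otimes v^i u^l\}$, make the $q$-binomial expansion explicit, and spell out the filtration/five-lemma step; these are cosmetic differences only.
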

\begin{proof}
The $C$-linear morphism $\varphi$ is, in fact, a left $R = (R\otimes_{C} 1)$-module morphism. The $C$-linear algebra morphism $\rho \colon R \to R\otimes_{C} \gH_{q}$ sends $C$-linear basis $\{ Q^{m}t^{n}\}_{m \in \Z, n \in \N}$ of $R$ to
\begin{align*}
\rho(Q^{m} t^{n}) &= \varphi(1 \otimes Q^{m}t^{n}) \\
&= (Q\otimes u)^{m}(t\otimes 1 + Q\otimes v)^{n} \\
&= (Q^{m+n}\otimes 1)(1 \otimes u^{m})\times \\
&\qquad (1\otimes v^{n} + \text{\it \/ an $R$-linear combination of $1\otimes v^{i}$ for $0 \leq i \leq n-1$}). 
\end{align*}
Since $Q$ is an invertible element of $R$, 
the latters form an $R = (R\otimes_{C} 1)$-linear basis of $R\otimes_{C} \gH_{q}$. So the $R$-linear map $\varphi \colon R\otimes_{C} R \to R\otimes_{C} \gH_{q}$ sends the $R = (R\otimes_{C} 1)$ linear basis $\{ 1\otimes Q^{m}t^{n} \}_{m \in \Z, n \in \N}$ to the other $R= (R\otimes_{C} 1)$-linear basis of $R\otimes_{C} \gH_{q}$. 

So the $R=(R\otimes 1)$-module morphism $\varphi \colon R\otimes_{C} R \to R\otimes_{C} \gH_{q}$ is an isomorphism. 
\end{proof}
%%%%%%%%%%%%%%%%%%%%%%%%%%%%%%%%%%%%%%%
%Let ma 
%$\varphi , \, \psi \in \gH \sb q (S)$. 
%It follows from Corollary \ref{} that they determine morphisms 
%\varphi ^\prime ,\, \psi ^\prime : R \to R \otimes \sb \com S$. 
%%The composite of \itqsi morphisms 
%$$
%\varphi ^\prime ; R \to R \otimes\sb \com S \text{ and }
% \psi \otimes \Id\sb S : R\otimes \sb \com S \to (R \otimes \com (
%S) \otimes \sb \com S 
%$$
 %gives us a \itqsi morphis
 %$$
 %\eta : R \to R \otimes \sb \com S \otimes \sb \com S.
 %$$
 % Let $\pi^\prime : \gH \to S \otimes \sb \com S$ be the morphism 
 % corresponding to $\eta $ be Corollary \ref{}. 
 % We set $\pi := \pi ^\prime \circ \pi \sb{12}$, where 
%$ \pi \sb {12} S\otimes \sb\com S \to S \otimes \sb \com S $
%is the switching factor automorphism. So 
%$\pi : \gH \sb q \to R \otimes R$. 
%%%%%%%%%%%%%%%%%%%%%%%%%%%%%%%%%%%%%%%%%%%%%%%%%%%%%%
\section{On the uniqueness of the Picard-Vessiot ring}\label{141223b}
We show that our Picard-Vessiot ring $R/C $ is unique. 

Let us start with a Lemma 
 on the $\R$-module $\R ^n$ of column 
vectors for not necessarily commutative $C $-algebra $\R$. The Lemma is trivial if the ring is commutative. We give a proof of the Lemma so that the reader could understand the logical 
structure of the whole argument. 
\begin{lemma}\label{140829b}
 Let $Y= ( \by\sb 1, \, \by \sb2, \, \ldots , \by\sb n ) \in 
 M\sb n (\R )$ be an $n \times n$-matrix with entries in the ring $\R$ so that the $\by\sb i$'s are column vectors 
for $1\le i \le n$. The following conditions (1), (2) and (3) on the matrix $Y$ are equivalent. 
%We assume two conditions. 
\begin{enumerate}
\renewcommand{\labelenumi}{(\arabic{enumi})}
\item (1.1)The column vectors $\by\sb i $'s ($1 \le i \le n $) generate the right $\R$-module $\R^n$. \\ (1.2) The column vectors $\by\sb i $'s ($1\le i \le n $) are 
right $\R$-linearly independent or they are 
linearly independent 
elements in the right $\R$-module $\R^n$.
\item We have the direct sum decomposition of the right $\R$-module 
 $$
\R^n = \bigoplus \sb{i= 1} ^ n \by\sb i \R . 
 $$
\item The matrix $Y$ is invertible in the ring $ M\sb n (\R)$. 
\end{enumerate}
\end{lemma}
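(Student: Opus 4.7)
The plan is to split the proof into the pair of equivalences (1) $\Leftrightarrow$ (2) and (1) $\Leftrightarrow$ (3), treating the latter as the substantive point. Throughout, I will identify the matrix $Y$ with the right $\R$-linear endomorphism $\R^n \to \R^n$ sending a column vector $\mathbf{a} = {}^t(a_1,\ldots,a_n)$ to $Y\mathbf{a} = \sum_{i=1}^n \by_i a_i$; under this identification, condition (1.1) says $Y$ is surjective and condition (1.2) says $Y$ is injective.

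The implication (1) $\Leftrightarrow$ (2) is purely formal: the existence part of the direct sum decomposition is precisely the generating property (1.1), while the uniqueness part (that $\sum \by_i a_i = 0$ forces every $a_i = 0$) is precisely the linear independence (1.2). So I would dispose of this in a single paragraph.

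For (3) $\Rightarrow$ (1), given an inverse $Z \in M_n(\R)$ of $Y$: surjectivity of $Y$ follows because every $\mathbf{v} \in \R^n$ is written as $\mathbf{v} = Y(Z\mathbf{v})$, yielding (1.1); and injectivity follows because $Y\mathbf{a} = 0$ gives $\mathbf{a} = ZY\mathbf{a} = 0$, yielding (1.2). For the converse (1) $\Rightarrow$ (3), the surjectivity (1.1) applied to each standard basis vector $\mathbf{e}_j$ produces a column vector $\mathbf{z}_j$ with $Y\mathbf{z}_j = \mathbf{e}_j$; assembling these gives a matrix $Z$ with $YZ = I_n$, i.e.\ a right inverse for $Y$. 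The key non-commutative step — which I expect to be the main (mild) obstacle — is upgrading this right inverse to a two-sided inverse. For this I would compute $Y(ZY - I_n) = (YZ)Y - Y = 0$, so each column of $ZY - I_n$ lies in the kernel of $Y$; by the injectivity coming from (1.2), we conclude $ZY = I_n$, and hence $Z$ is a two-sided inverse, giving (3).

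Thus the whole proof reduces to the standard dictionary between matrices and module endomorphisms, plus the trick $Y(ZY - I_n) = 0$ to combine one-sided invertibility with injectivity. No commutativity or finite-dimensional linear algebra (e.g.\ rank arguments or determinants) is used, so the argument remains valid for arbitrary not necessarily commutative $C$-algebras $\R$.
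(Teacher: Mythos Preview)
Your proposal is correct and matches the paper's proof essentially line for line: the paper also dismisses (1) $\Leftrightarrow$ (2) as evident, obtains a right inverse $Z$ from (1.1), and then uses exactly the computation $Y(ZY - I_n)=0$ together with (1.2) to conclude $ZY=I_n$. Your framing via the right $\R$-linear endomorphism $\mathbf{a}\mapsto Y\mathbf{a}$ is a slight polish but not a different idea.
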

\begin{proof}
The equivalence of (1) and (2) is evident. We prove that (1) implies (3). 
In fact, 
let us set 
$$
e\sb 1 := {}^t (1, \, 0, \ldots , \, 0), \, 
e\sb 2 := {}^t (0, \, 1, \, 0, \ldots , \, 0), \ldots , 
e\sb n := {}^t (0, \, 0, \ldots ,0, \, 1) \in \R^n.
$$
If we assume (1),
since the vectors $e\sb i$'s that are elements of $\R^n$ are right $\R$-linear combinations of the 
column vectors $\by\sb j$'s, 
there exists a matrix $Z \in M\sb n (\R)$ such that 
\begin{equation}\label{140826a}
YZ= I \sb n 
\end{equation}
 or the matrix $Y$ has a right inverse in $ M\sb n (\R)$. 
Multiplying $Y$ on \eqref{140826a} from left, we get 
$$
YZY =Y.
$$
So we have 
\begin{equation}\label{140827a}
Y(ZY - I \sb n) =0.
\end{equation} 
We notice here that (1.2) implies that 
if we have 
$$
Y u =0 \ \text{\it for } u ={} ^t(u\sb1, \, u\sb 2 , \ldots , u\sb n ) \in \R^n, 
$$
then $u=0$. Therefore \eqref{140827a} implies $ZY -I\sb n =0$ and consequently 
$ZY = I\sb n$.
 So $Z$ is the inverse of $Y$ and the condition (3) is satisfied. 
 We now assume the condition (3). Then for every 
 element $v\in \R^n$ a 
 liner equation 
 $$
 Yx = u, \text{\it where } x\in \R^n \text{\it is an unknown column vector in $R^n$, } 
 $$
  has the unique solution $x=Y^{-1}v\in \R^n$ so that (2) is satisfied. 
\end{proof}
Dually we can prove the following result for the left $\R$-module ${}^t\R^n$ of 
row vectors. 
\begin{cor}\label{140901a}
Let $Y= {}^t ( \by\sb 1, \, \by \sb2, \, \ldots , \by\sb n ) \in 
 M\sb n (\R )$ be an $n \times n$-matrix with entries in the ring $\R$ so that the $\by\sb i$'s are column vectors. The following conditions (1), (2) and (3) on the matrix $Y$ are equivalent. 
%We assume two conditions. 
\begin{enumerate}
\renewcommand{\labelenumi}{(\arabic{enumi})}
\item (1.1)The row vectors ${}^t\by\sb i $'s ($1 \le i \le n $) generate 
the 
left $\R$-module ${}^t\R^n$. \\ 
(1.2) The row vectors ${}^t\by\sb i $'s ($1\le i \le n $) are 
left $\R$-linearly independent. 
\item We have the direct sum decomposition of the left $\R$-module 
 $$
{}^t\R^n = \bigoplus \sb{i= 1} ^ n \R\,{}^t\by\sb i . 
 $$
\item The matrix $Y$ is invertible in the ring $ M\sb n (\R)$. 
\end{enumerate}
\end{cor}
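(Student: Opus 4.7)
\smallskip

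\noindent\textbf{Proof proposal.} The plan is to mirror the proof of Lemma \ref{140829b} systematically, replacing column vectors by row vectors and right $\R$-multiplication by left $\R$-multiplication. The equivalence of (1) and (2) is immediate from the definition of an internal direct sum of submodules, so the substantive content is the equivalence of (1) and (3). A transpose-based reduction to Lemma \ref{140829b} is tempting, but in the non-commutative setting $(AB)^t = B^t A^t$ holds as matrices but the entries sit in the \emph{same} ring $\R$, not in $\R^{op}$, so one must argue directly (or, equivalently, pass to $M_n(\R^{op})$, which amounts to the same thing).

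For the implication (3) $\Rightarrow$ (1), given $Y^{-1} \in M_n(\R)$ and any row vector $u \in {}^t\R^n$, I would write $u = (uY^{-1}) Y$, which exhibits $u$ as a left $\R$-linear combination of the rows ${}^t\by\sb i$, giving (1.1). For (1.2), if $a \in {}^t\R^n$ satisfies $aY = 0$, then multiplying by $Y^{-1}$ on the right yields $a = 0$. For the converse direction (1) $\Rightarrow$ (3), condition (1.1) applied to the standard row basis vectors ${}^t e\sb i$ produces a matrix $Z \in M_n(\R)$ with $ZY = I_n$, i.e., $Y$ admits a left inverse. To promote this to a two-sided inverse, I would use (1.2): from $ZY = I_n$ one gets $Y(ZY) = Y$, hence $(YZ - I_n)Y = 0$; reading each row of the matrix $YZ - I_n$, this says that a left $\R$-linear combination of the rows ${}^t\by\sb i$ equals zero, and (1.2) forces that combination to be trivial, so $YZ = I_n$.

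The main ingredient the argument relies on, and the only place where non-commutativity requires care, is the observation that left $\R$-linear independence of the rows of $Y$ is exactly the injectivity of the map ${}^t\R^n \to {}^t\R^n$, $a \mapsto aY$ (right multiplication by $Y$ acting on row vectors). Once this is in place, the rest is bookkeeping that parallels the column-vector argument verbatim. I do not anticipate any genuine obstacle; the only pitfall is keeping the sides of multiplication straight throughout, since the ring $\R$ is non-commutative and a careless transpose would silently swap $\R$ for $\R^{op}$.
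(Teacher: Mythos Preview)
Your proposal is correct and is precisely the dual argument the paper has in mind: the paper does not spell out a separate proof but simply states that one argues dually to Lemma~\ref{140829b}, which is exactly what you have written out. Your care about not naively transposing (and thereby landing in $\R^{op}$) is well placed, though unnecessary here since you argue directly.
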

\par 
Let $M$ be a left $ C [\sigma , \sigi , \, \theta ^* ]$-module that 
is of finite dimension $n$ as a $C $-vector space. Let 
 $\{ m\sb 1, \, m\sb 2, \ldots , m\sb n \} $
be a basis of the $C $-vector space $M$. Setting 
$m = {}^t (m\sb 1, \, m\sb 2, \ldots , m\sb n )$, 
 there exist matrices 
 $A\in \mathrm{GL}\sb n (C ), \, B \in M\sb n (C ) $ satisfying 
\begin{equation}\label{140903a}
\sigma (m) = Am, \text{ and } \theta^{(1)} (m) =Bm. 
\end{equation}
As we have seen in Section \ref{14.6.25b}, the left $C [\sigma , \, \sigi , \, \theta ^*]$-module $M$ defines a system of 
\QSI equation 
\begin{equation}\label{140829a}
\sigma (y) =A y, \qquad \theta ^ {(1)}( y) = By\ \text{\it for an unknown column vector $y$ of length $n$. }
\end{equation}
We are interested in solutions $y\in \R^n$ for a \QSI algebra $\R$ over $C $. 
\begin{definition}\label{140901b}
 For \itqsi algebra $\R$ over $C $, 
we say that a set $\{ \by\sb 1, \, \by\sb 2, \, \ldots , \by\sb n \}$ 
of solutions to \eqref{140829a} so that 
$\by\sb i \in \R^n $ for $1\le i \le n$,  
is a fundamental system of solutions to \eqref{140829a} if the matrix 
$Y=(\by\sb 1 , \, \by \sb 2, \, \ldots , \by \sb n ) \in M\sb n(R) $ satisfies 
the equivalent conditions of Lemma \ref{140829b}.
\end{definition}
The dual to a fundamental system is a trivializing matrix of the \QSI module $M$.
 \begin{definition}\label{140901c}
We assume that the \QSI module $\R \otimes \sb C M$ is trivialized over a 
$C $-\QSI
 ring $\R$. %over $\com$. 
 Namely, there exist elements 
 $$
 c\sb 1, \, c\sb 2, \, \ldots , c\sb n \in \R\otimes \sb C M
 $$
 such that 
 $$
 \sigma (c\sb i) = c\sb i,\qquad \theta^{(1)}(c\sb i ) = 0 \text{ for every } 1\le i \le n 
 $$
and such that we have $\R$-module decomposition
$$
\R \otimes \sb C M = \bigoplus \sb{i=1}^n 
\R c\sb i.
$$
So writing the elements $c\sb i$'s as a left $\R$-linear combination of the basis 
$$
\{m\sb 1, \, m\sb 2, \ldots , m\sb n\}, 
$$ 
we get a matrix 
$
Y\in M\sb n (\R)
%(\by \sb{ij})\sb{1\le i, j \le n} \in \R^n 
$ 
 such that 
 $$
 {}^t (c\sb 1, \, c\sb 2, \ldots , c\sb n ) = Y\,{}^t(m\sb 1, m\sb 2, \ldots , m\sb n).
 $$
 We call the matrix $Y\in M\sb n (\R)$ a trivializing matrix of 
 \QSI module $M$ over $\R$.
 \end{definition}
 \begin{lemma}
A trivializing matrix over $\R$ is 
invertible in the matrix ring $ M\sb n (\R )$. 
\end{lemma}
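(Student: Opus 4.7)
The plan is to recast the trivialization hypothesis directly as condition (2) of Corollary \ref{140901a} applied to the rows of $Y$, at which point invertibility in $M_n(\R)$ is immediate from the cited equivalence. Nothing beyond bookkeeping with left versus right module structures is needed, so the main work is in lining up the correct sides for the non-commutative ring $\R$.

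First I would fix notation for the identification of $\R\otimes_C M$ with the left $\R$-module ${}^t\R^n$ of row vectors. Because $\{m_1,m_2,\ldots,m_n\}$ is a $C$-basis of $M$, the elements $\{1\otimes m_1,\ldots,1\otimes m_n\}$ form a free left $\R$-module basis of $\R\otimes_C M$, and the $\R$-linear map
\[
\Phi\colon \R\otimes_C M \xrightarrow{\ \sim\ } {}^t\R^n,\qquad
\sum_{i=1}^n a_i\,(1\otimes m_i)\ \longmapsto\ {}^t(a_1,a_2,\ldots,a_n),
\]
is an isomorphism of left $\R$-modules. By the very definition of the trivializing matrix $Y=(y_{ij})$, writing $c_i=\sum_j y_{ij}(1\otimes m_j)$, the image $\Phi(c_i)$ is precisely the $i$-th row of $Y$; denoting this row by ${}^t\mathbf{y}_i$, we have $\Phi(c_i)={}^t\mathbf{y}_i$ for every $1\le i\le n$.

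Next I would transport the hypothesis through $\Phi$. The assumed decomposition
\[
\R\otimes_C M \;=\; \bigoplus_{i=1}^n \R c_i
\]
becomes, under $\Phi$, the direct sum decomposition
\[
{}^t\R^n \;=\; \bigoplus_{i=1}^n \R\,{}^t\mathbf{y}_i,
\]
which is exactly condition (2) of Corollary \ref{140901a} for the matrix $Y={}^t({}^t\mathbf{y}_1,\ldots,{}^t\mathbf{y}_n)$. Applying Corollary \ref{140901a} we conclude that $Y$ is invertible in the matrix ring $M_n(\R)$, as required.

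The only conceptual point to watch is that the $c_i$'s generate $\R\otimes_C M$ as a \emph{left} $\R$-module, so one must use the row-vector version (Corollary \ref{140901a}) rather than the column-vector version (Lemma \ref{140829b}); if one mistakenly chose the latter, the non-commutativity of $\R$ would make the arithmetic of coefficients inconsistent. Apart from this choice of sidedness, the argument is a direct translation.
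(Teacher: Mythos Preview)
Your proposal is correct and follows essentially the same approach as the paper, which simply says ``It is sufficient to follow the argument of the proof of Corollary \ref{140901a}.'' You have made explicit the identification of $\R\otimes_C M$ with ${}^t\R^n$ that the paper leaves to the reader, correctly observed that the $c_i$'s correspond to the rows of $Y$, and then invoked the equivalence $(2)\Leftrightarrow(3)$ of Corollary \ref{140901a}; this is exactly the intended argument, just written out in full. (A tiny notational slip: you write $\Phi(\ldots)={}^t(a_1,\ldots,a_n)$ for an element of the \emph{row}-vector module ${}^t\R^n$, but the meaning is clear.)
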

\begin{proof}
It is sufficient to follow the argument of 
the proof of 
Corollary \ref{140901a}.
\end{proof}
Now we make clear the relation between fundamental system and trivializing matrix.
\begin{proposition}\label{160113a}
The following four conditions on an invertible matrix $Y \in M\sb n (\R)$ are equivalent. 
We denote $Y^{-1}$ by $Z$ or $Y = Z^{-1}$.
\begin{enumerate}
\renewcommand{\labelenumi}{(\arabic{enumi})}
\item The matrix $Y$ satisfies \QSI equations 
\begin{equation}\label{140901d}
\sigma (Y) = AY, \qquad \theta^{(1)}(Y) = BY,
\end{equation}
$A, \, B$ being the matrices in \eqref{140903a}.
\item The matrix $Y$ is a fundamental system of solutions of $M$. 
\item The matrix $Z$ satisfies \QSI equations 
\begin{equation}\label{140901e}
 \sigma (Z) = ZA^{-1}, \qquad \theta ^{(1)} (Z) = -ZA^{-1}B
 \end{equation}
 \item 
 The matrix $Z$ is a trivializing matrix for $M$ over $\R$. 
\end{enumerate}
\end{proposition}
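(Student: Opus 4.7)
The strategy is to handle the two pairs $(1)\Leftrightarrow(2)$ and $(3)\Leftrightarrow(4)$ by unwinding the definitions, and then to connect $(1)$ with $(3)$ by differentiating the identity $YZ=I_n$. The only delicate point is bookkeeping in the non-commutative setting: since $\mathcal{R}$ is not assumed commutative, left actions versus right actions and the exact form of the $q$-skew Leibniz rule must be respected throughout.

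First I would treat $(1)\Leftrightarrow(2)$. By Definition~\ref{140901b} the set of columns $\{\by_1,\dots,\by_n\}$ of $Y$ is a fundamental system precisely when each $\by_i$ satisfies the system \eqref{140829a} and the matrix $Y$ is invertible in $M_n(\mathcal{R})$. Writing \eqref{140829a} column by column is exactly the matrix equation \eqref{140901d}, so $(1)\Leftrightarrow(2)$ is just a reformulation under the standing assumption that $Y$ is invertible (Lemma~\ref{140829b} guarantees that invertibility is equivalent to the linear independence/generation conditions on the columns).

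Next I would establish $(3)\Leftrightarrow(4)$. Let $m={}^t(m_1,\dots,m_n)$ and set ${}^t(c_1,\dots,c_n)=Zm$ in $\mathcal{R}\otimes_C M$. By Definition~\ref{140901c}, $Z$ is trivializing iff the $c_i$'s are $\sigma$-invariant and $\theta^{(1)}$-killed (invertibility of $Z$ is already built into the assumption). Using $\sigma(m)=Am$ we compute
\[
\sigma(Zm)=\sigma(Z)\sigma(m)=\sigma(Z)Am,
\]
so $\sigma(Zm)=Zm$ is equivalent to $\sigma(Z)A=Z$, i.e.\ $\sigma(Z)=ZA^{-1}$. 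For the derivation, the $q$-skew Leibniz rule of Definition~\ref{a3.3} for $i=1$ reads $\theta^{(1)}(ab)=\theta^{(1)}(a)b+\sigma(a)\theta^{(1)}(b)$, whence
\[
\theta^{(1)}(Zm)=\theta^{(1)}(Z)m+\sigma(Z)\theta^{(1)}(m)=\theta^{(1)}(Z)m+\sigma(Z)Bm.
\]
Combined with the relation $\sigma(Z)=ZA^{-1}$, the vanishing of $\theta^{(1)}(Zm)$ is equivalent to $\theta^{(1)}(Z)=-ZA^{-1}B$. This yields $(3)\Leftrightarrow(4)$.

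Finally I would prove $(1)\Leftrightarrow(3)$ by applying $\sigma$ and $\theta^{(1)}$ to the identity $YZ=I_n$. From $\sigma(Y)\sigma(Z)=I_n$, assuming $(1)$ gives $AY\sigma(Z)=I_n$, so $\sigma(Z)=Y^{-1}A^{-1}=ZA^{-1}$; conversely, starting from $(3)$, the same computation read backwards produces $\sigma(Y)=AY$. For the $\theta$-relation, the Leibniz rule applied to $YZ=I_n$ yields $\theta^{(1)}(Y)Z+\sigma(Y)\theta^{(1)}(Z)=0$. Substituting $\theta^{(1)}(Y)=BY$ and $\sigma(Y)=AY$ gives $BYZ+AY\theta^{(1)}(Z)=0$, i.e.\ $\theta^{(1)}(Z)=-ZA^{-1}B$; the reverse substitution shows that $(3)$ implies $\theta^{(1)}(Y)=BY$. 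Chaining these equivalences closes the loop $(1)\Leftrightarrow(2)$, $(1)\Leftrightarrow(3)$, $(3)\Leftrightarrow(4)$. The main thing to watch is that every use of the Leibniz rule respects the order of factors and that $A$, being an element of $\mathrm{GL}_n(C)$ with $C$ central, commutes past $\sigma$ and $\theta^{(1)}$ so that $\sigma(A)=A$ and $\theta^{(1)}(A)=0$; beyond this, no further obstacle arises.
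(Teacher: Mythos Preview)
Your proposal is correct and follows essentially the same route as the paper: the equivalences $(1)\Leftrightarrow(2)$ and $(3)\Leftrightarrow(4)$ are unpacked from Definitions~\ref{140901b} and~\ref{140901c} together with Lemma~\ref{140829b}/Corollary~\ref{140901a}, and $(1)\Leftrightarrow(3)$ is obtained by applying $\sigma$ and $\theta^{(1)}$ to the identity relating $Y$ and $Z$. The only cosmetic difference is that the paper differentiates $ZY=I_n$ rather than $YZ=I_n$, which of course yields the same conclusions since $Y$ has a two-sided inverse.
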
 
\begin{proof}
The equivalence of conditions (1) and (2) follows from Lemma \ref{140829b} and Definition
\ref{140901b}. 
To prove the equivalence of (3) and (4), 
 we set 
$$
 {}^t (c\sb 1, \, c\sb 2, \ \ldots , c\sb n ):= Z{}^t(m\sb 1, \, m\sb 2, \ldots , m\sb n) , 
$$
where the $m\sb i$'s are the basis of $M$ chosen above, 
so that 
$$
c\sb i = \sum \sb{l= 1}^n z\sb {il}m\sb l \ \text{\it for every $ 1\le i \le n$.}
$$
It is convenient to introduce 
$$
\mathbf{c} :={}^t (c\sb 1, \, c\sb 2, \ldots , c\sb n ),\ \text{\it and }
\mathbf{m} :={}^t (m\sb 1, \, m\sb 2, \ldots , m\sb n ).
$$
%if we write $ \mathbf{c} = {}^t (c\sb 1, \, \ldots , c\sb n)$,¡¡
 So we have 
 \begin{equation}\label{140901f}
 \mathbf{c}=Z \mathbf{m}.
 \end{equation}
 Now we assume Condition (3) and show Condition (4). 
To this end, we prove that the $c\sb i$'s that are elements of $\R \otimes \sb C M$, are constants. In fact, if we apply $\sigma$ to \eqref{140901f}, 
it follows from the first equation in \eqref{140901d}, 
\begin{align*}
\sigma (\mathbf{c}) &= \sigma (Z) \sigma (\mathbf{m}) \\
     &= (ZA^{-1})(A\mathbf{m}) \\
     &= Z\mathbf{m} \\
     &=\mathbf{c}.
\end{align*} 
Namely $\sigma (\mathbf{c}) = \mathbf{c}$. 
Now we apply $\theta^{(1)}$ to \eqref{140901f} to get 
\begin{align*}
\theta ^{(1)}(\mathbf{c}) &= \theta^{(1)}(Z)\mathbf{m} + \sigma (Z) \theta^{(1)}(\mathbf{m}) \\
   &= (-ZA^{-1}B)\mathbf{m} + (ZA^{-1})B\mathbf{m} \\    
   &=0. 
\end{align*}
So $\theta ^{(1)}(\mathbf{c}) =0$ and $\mathbf{c}$ is a constant. Hence $Z$ is a trivializing matrix by Definition \ref{140901c} and the argument in the proof of Corollary \ref{140901a}. 
Conversely, we start from Condition (4). 
If we recall 
\begin{equation}\label{140902a}
\mathbf{c} := Z\mathbf{m}, 
\end{equation}
then, as we assume Condition (4), $\mathbf{c}$ is a constant. Applying $\sigma$ 
and $\theta ^{(1)}$ to \eqref{140902a}, we get Condition (3).
%Then, 
%\eqref{}.
% Now the equivalence of (3) and (4) is a result of the argument of the proof of Corollary \ref{140901a} and 
%Definition \ref{140901c}.
\par 
It remains to show the equivalence of (1) and (3).
Let us assume (1) and show (3). 
If we apply the automorphism $\sigma$ 
to the equality $ZY=\mathrm{I} \sb n$, 
the first equality in \eqref{140901d} implies the first equality of \eqref{140901e}. 
On the otherhand, 
applying $\theta ^{(1)}$ to the equality 
$ZY= I\sb n$, 
we get
$$
\theta ^{(1)}(Z) Y+ \sigma (Z)\theta ^{(1)}(Y) = 0.
$$ 
It follows from equation \eqref{140901d} 
\begin{equation}
\theta^{(1)}(Z)Y + ZA^{-1}BY=0.
\end{equation}
Since the matrix $Y$ is invertible, we conclude 
$$
\theta ^{(1)}Z= -ZA^{-1}B.
$$ 
So the matrix $Z$ satisfies Condition (3). 
The proof of the converse that Condition 
(3) implies (1) is similar. Applying first $\sigma$ and then $\theta^{(1)}$ to $ZY =I \sb n$, we immediately get Condition (1). 
\end{proof}
We are ready to characterize the Picard-Vessiot ring $R/C $. 
Besides the properties we mentioned above, 
we have a $C $-morphism or a $C $-valued point of the abstract ring $R\n$ 
\begin{equation}\label{140821a}
R\n \to C \text{ sending } Q^{\pm 1} \mapsto 1, \quad X \mapsto 0.
\end{equation} 
We sometimes call it a $C $-rational point. 
\begin{lemma}\label{140905c}
Let $\mathcal{R}$ be a simple \QSI algebra over $C $. If the abstract algebra
$\mathcal{R} \n$ has a $C $-valued point, then the ring of constants of 
$\mathcal{R}$ coincides with $C $. 
\end{lemma}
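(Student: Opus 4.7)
The plan is to run the classical Picard--Vessiot constants-are-simple argument, adapted to the non-commutative \itqsi setting. Let $c \in \mathcal{R}$ be an arbitrary constant, so $\sigma(c)=c$ and $\theta^{(i)}(c)=0$ for every $i \ge 1$. Let $\epsilon \colon \mathcal{R}\n \to C$ be the given $C$-valued point and set $\lambda := \epsilon(c) \in C \subset \mathcal{R}$. Since $C$ lies in the constants, $c' := c - \lambda$ is again a constant, and clearly $\epsilon(c')=0$. The goal is to show $c' = 0$.

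Form the two-sided ideal $I := \mathcal{R}\,c'\,\mathcal{R}$ of $\mathcal{R}$ generated by $c'$. The first main step is to verify that $I$ is a \itqsi ideal, that is, $\sigma(I) \subset I$, $\sigi(I) \subset I$, and $\theta^{(i)}(I) \subset I$ for every $i \in \N$. For $\sigma^{\pm 1}$ this is immediate because $\sigma(c')=c'$ and $\sigma$ is a ring automorphism. For the derivations, the key observation is that since $c'$ is a constant, the Leibniz-type formula
\[
\theta^{(i)}(xy) = \sum_{l+m=i} \sigma^{m}(\theta^{(l)}(x))\,\theta^{(m)}(y)
\]
applied to $x=c'$ collapses: $\theta^{(l)}(c')=0$ for $l \ge 1$ and $\sigma^{m}(c')=c'$, so $\theta^{(i)}(c'\,b) = c'\,\theta^{(i)}(b)$. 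Applying the Leibniz rule once more with $x=a$ and $y=c'b$ shows that $\theta^{(i)}(a c' b)$ is an $\mathcal{R}$-linear combination of terms of the form $\sigma^{m}(\theta^{(l)}(a))\,c'\,\theta^{(m)}(b)$, all of which lie in $I$. Thus $I$ is a two-sided \itqsi ideal.

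Since $\mathcal{R}$ is assumed to be simple as a \itqsi algebra, either $I = 0$ or $I = \mathcal{R}$. The second possibility is excluded by the $C$-valued point: because $\epsilon$ is a ring morphism on $\mathcal{R}\n$ and $\epsilon(c')=0$, we have $\epsilon(I) \subset \epsilon(\mathcal{R})\,\epsilon(c')\,\epsilon(\mathcal{R}) = 0$; but $\epsilon(\mathcal{R}) = C$ contains $1$, so $I = \mathcal{R}$ would give $1 = \epsilon(1) \in \epsilon(I) = 0$, a contradiction. Hence $I = 0$, which forces $c' = 0$ and thus $c = \lambda \in C$. This shows the constants of $\mathcal{R}$ are exactly $C$.

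The only step that requires any care is the verification that $I$ is stable under every $\theta^{(i)}$; this is the point where the non-commutativity of $\mathcal{R}$ makes the one-sided principal ideal insufficient and forces the use of the two-sided ideal. Everything else is formal: the definition of a constant handles $\sigma$, simplicity gives the dichotomy, and the $C$-rational point separates the two alternatives.
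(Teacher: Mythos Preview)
Your proof is correct and follows essentially the same approach as the paper: subtract the value at the $C$-rational point from the constant, generate the two-sided ideal, use simplicity to force the dichotomy, and use the rational point to exclude $I=\mathcal{R}$. The only difference is cosmetic---you argue directly while the paper argues by contradiction, and you spell out the Leibniz-rule verification that the ideal is $\theta^{(i)}$-stable, which the paper leaves implicit in the phrase ``because the ideal $I$ is generated by the constant $f-c$''.
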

\begin{proof}
Assume to the contrary. Then there would be a constant $f \in \mathcal {R}$
that is not an element of $C $. Let 
$$
\varphi: \mathcal{R} \to C 
$$
be the $C $-valued point. We set $c:= \varphi (f)$ that is an element 
of $C $. 
So the element $f-c \not=0$ is a constant of \QSI algebra $\mathcal{R}$. 
Therefore the bilateral ideal $I$ generated by $f-c$ is a
\QSI bilateral ideal of $\mathcal{R}$ because the ideal $I$ is generated by the constant $f-c$. As the ideal $I$ contains $f-c\not= 0$, the simplicity of $\mathcal{R}$ implies $I = \mathcal{R}$. So there would be a positive integer $n$ and elements $a\sb i , \, 
b\sb i \in \mathcal{R}$ for $0 \le i \le n$ such that 
\begin{equation} \label{140821b}
\sum \sb {i= 1}^n a\sb i (f-c)b\sb i = 1.
\end{equation}
Applying the morphism $\varphi$, we would have $0=1$ in $C $ 
by \eqref{140821b}
that is a contradiction. 
\end{proof}
%The next Definition of a certain minimality is indispensable for the characterization of the extension $R/\com $. 
%%%%%%%%%%%%%%%%%%%%%%%%%%%%%%%%%%%%%%%
%\begin{definition} We keep the notation above so that 
%$M$ is a $\com [\sigma , \, \theta^*]$-module that is finite dimensional as a 
%$\com$-vector space. 
%We assume $M$ has a fundamental system solution over 
%$\R$.
%is trivialized over a \QSI algebra $\R$. 
%If for every 
%fundamental
%system of solutions 
%$Y\in M\sb n (\R )$
%for $M$ so that the matrix $Y$ is invertible in $M\sb n %(\R)$, the abstract ring $\R\n$ is generated by the %entries of $Y$ and $Y^{-1}$ 
%over $\com$, then we say that $\R$ is minimally %generated 
%by fundamental systems of solutions of $M$. 
 %\end{definition}
 So far in this Section, we studied general 
 $ C [\sigma, \, \sigi ,\, \theta ^{*}]$-module $M$. From now on, we come back to the $ C [\sigma, \, \sigi , \, \theta ^{*}]$-module $M$ in Section \ref{14.6.25b} so that 
 writing $\mathbf{m} ={}^t (m\sb 1, \, m\sb2 )$, 
 \begin{align}
 M &=C m\sb 1 \oplus C m\sb 2, \\ \label{140905a}
 \sigma (\bfm) &= A\bfm , \qquad \theta ^{(1)}(\bfm)
 =B\bfm , 
 \end{align}
 where 
 $$
 A= \begin{bmatrix}
q & 0\\
0 & 1
\end{bmatrix} \text{ and } 
 B = \begin{bmatrix}
0 & 1\\
0 & 0
\end{bmatrix}.
$$
\par 
\begin{theorem}\label{th1}
Using the notation above, we can characterize the Picard-Vessiot ring $R/ C $ for $M$ constructed in Section \ref{14.6.25b}, in the following way. \par
Let $\R/C $ be a \QSI extension satisfying the following conditions. 
\begin {enumerate} 
\renewcommand{\labelenumi}{(\arabic{enumi})}
\item There exists a fundamental system of solutions 
$\mathbf{Y} \in 
 M\sb 2 (\R)$ for $M$ such that 
$$
\R= C \langle \mathbf{Y}, \, \mathbf{Y}^{-1} \rangle\sb{alg}.
$$
\item The \QSI algebra $\R$ is simple. 
\item 
There exists a $C $-rational point of the abstract $C $-algebra $\R\n$.
 \end{enumerate}
 Then the \QSI algebra $\R$ is $C $-isomorphic to the Picard-Vessiot ring $R$. 
\end{theorem}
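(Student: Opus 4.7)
My plan is to construct a $q$-SI $\sigma$-differential $C$-algebra morphism $\varphi\colon R\to\R$ sending the canonical fundamental system $Y_{0}=\begin{bmatrix}Q&t\\0&1\end{bmatrix}$ of $R$ to a normalized form of $\mathbf{Y}$, and then to deduce that $\varphi$ is a bijection: surjectivity will follow from hypothesis (1), and injectivity from the simplicity of $R$ established in Lemma \ref{1016a}. The decisive point will be to force the $q$-commutation $ab=qba$ between the two nontrivial entries of a normalized $\mathbf{Y}$, and for this the equality $C_{\R}=C$ --- guaranteed by hypotheses (2), (3) and Lemma \ref{140905c} --- is crucial. Write $\mathbf{Y}=(y_{ij})$; the equations $\sigma(\mathbf{Y})=A\mathbf{Y}$ and $\theta^{(1)}(\mathbf{Y})=B\mathbf{Y}$ with $A=\mathrm{diag}(q,1)$ and $B$ as in \eqref{176} immediately yield $\sigma(y_{2j})=y_{2j}$ and $\theta^{(1)}(y_{2j})=0$, so $y_{21},y_{22}\in C_{\R}=C$. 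Invertibility of $\mathbf{Y}$ forces $(y_{21},y_{22})\ne(0,0)$, and right multiplication by a suitable element of $\mathrm{GL}_{2}(C)$ --- which does not change $\R=C\langle\mathbf{Y},\mathbf{Y}^{-1}\rangle\sb{alg}$ --- brings $\mathbf{Y}$ to $\begin{bmatrix}a&b\\0&1\end{bmatrix}$ with $a$ invertible in $\R$ and $\sigma(a)=qa,\ \theta^{(1)}(a)=0,\ \sigma(b)=qb,\ \theta^{(1)}(b)=1$.

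\textbf{Gauging the commutation relation.} A direct application of the Leibniz rule of Definition \ref{a3.3} gives $\sigma(ab-qba)=q^{2}(ab-qba)$ and $\theta^{(1)}(ab-qba)=0$. Since $\sigma(a^{-2})=q^{-2}a^{-2}$ and $\theta^{(1)}(a^{-2})=0$, the element $a^{-2}(ab-qba)$ is a $\sigma,\theta^{(1)}$-constant in $\R$, so it lies in $C_{\R}=C$; call it $\lambda$, so that $ab-qba=\lambda a^{2}$. Put $\mu:=\lambda/(q-1)\in C$ (using $q\ne 1$) and $b':=b+\mu a$. The relations $\sigma(b')=qb'$ and $\theta^{(1)}(b')=1$ persist, and a direct computation gives $ab'-qb'a=(\lambda+(1-q)\mu)a^{2}=0$. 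Since $b'-b=\mu a$, one still has $\R=C\langle a,a^{-1},b'\rangle\sb{alg}$.

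\textbf{Construction and bijectivity of $\varphi$.} The pair $(a,b')$ now fulfils exactly the hypotheses of Lemma \ref{140913a}, so that lemma yields a unique \itqsi $C$-algebra morphism $\varphi\colon R\to\R$ with $\varphi(Q)=a$ and $\varphi(t)=b'$. Its image contains the generators $a,a^{-1},b'$ of $\R$, so $\varphi$ is surjective; its kernel is a \itqsi bilateral ideal of $R$, which by the simplicity established in Lemma \ref{1016a} is either $0$ or $R$, the latter being excluded by $\varphi(1)=1$. Hence $\varphi$ is a \itqsi $C$-isomorphism $R\to\R$.

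\textbf{Where the difficulty lies.} The only non-formal step is the middle one: the data of an invertible fundamental system in $\R$ does not by itself know about the non-commutative relation $Qt=qtQ$ built into $R$, and the defect $ab-qba$ need not vanish for an arbitrary normalization. What rescues the argument is that this defect is a common eigenvector of $\sigma$ (with eigenvalue $q^{2}$) and of $\theta^{(1)}$ (with eigenvalue $0$), so its ratio with $a^{2}$ is a genuine \itqsi constant; hypotheses (2) and (3) together collapse this constant to a scalar in $C$, and that scalar is exactly the amount by which $b$ must be shifted --- inside the same Galois orbit, without disturbing hypothesis (1) --- to recover the $q$-commutation relation of the Picard-Vessiot ring.
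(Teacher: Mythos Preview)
Your proof is correct and follows essentially the same route as the paper's: normalize the bottom row of $\mathbf{Y}$ to $(0,1)$ using $C_{\R}=C$, gauge away the commutation defect by shifting $b$ along $a$, and then invoke Lemma~\ref{140913a} and the simplicity of $R$ (Lemma~\ref{1016a}) to conclude. The only difference is cosmetic: you measure the defect by the constant $\lambda=a^{-2}(ab-qba)$, whereas the paper uses $f=qa^{-1}b-ba^{-1}$; both yield the same shift $b\mapsto b+({\rm scalar})\,a$ that forces $ab'=qb'a$.
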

\begin{proof}
Let us express $\mathbf{Y}$ in the matrix form: 
$$
\mathbf{Y}= \begin{bmatrix}
a & b \\
c& d
\end{bmatrix} \in M\sb 2 (\R).
$$
Hence by \eqref{140905a}, the matrix $\mathbf{Y}$ satisfies 
\begin{equation*}
\begin{bmatrix}
\sigma (a) & \sigma (b) \\ 
\sigma (c) & \sigma (d)
 \end{bmatrix}=
\begin{bmatrix} 
q & 0 \\ 
0 & 1
 \end{bmatrix}
\begin{bmatrix}
a & b \\ 
c & d
 \end{bmatrix},
 \qquad 
 \begin{bmatrix}
\theta^{(1)} (a) & \theta^{(1)} (b) \\ 
\theta^{(1)} (c) & \theta^{(1)} (d)
 \end{bmatrix}=
 \begin{bmatrix} 
0 & 1 \\ 
0 & 0
 \end{bmatrix}
\begin{bmatrix}
a & b \\ 
c & d
 \end{bmatrix}.
\end{equation*} 
or to be more concrete
\begin{alignat}{4}\label{140905b}
&\sigma (a)= q a, \qquad & \theta^{(1)}(a) =c, \qquad
& \sigma (c) =c, \qquad &\theta ^{(1)}(c) = 0, \\ 
\label{140905bb}
&\sigma (b)= q b, \qquad &\theta^{(1)}(b) =d, \qquad 
&\sigma (d) =d , \qquad &\theta ^{(1)}(d) = 0. 
\end{alignat}
It follows from \eqref{140905b} and \eqref{140905bb}
that 
$c, \, d$ are constants of $\R$.
 By Lemma \ref{140905c} and assumption (2) on $\R$, the ring $C\sb{\R}$ of constants of $\R$ coincides with $C $. So $c, \, d$ are complex numbers and hence
 by replacing the column vectors of the matrix $\mathbf{Y}$ by their appropriate $C $-linear combinations if necessary, 
 we may assume that $c= 0$ and $d=1$ so that 
$$
\mathbf{Y} = \begin{bmatrix}
a & b \\
0 & 1
\end{bmatrix}.
$$
Consequently the set of equations \eqref{140905b} and \eqref{140905bb} reduces to 
\begin{equation}\label{140905d}
\sigma (a) =qa, \qquad \theta^{(1)}(a) =0,\qquad \sigma (b) = qb, \qquad
\theta^{(1)}(b) =1. 
\end{equation}
Since the matrix $\mathbf{Y}$ is invertible, $a$ is an invertible element of the ring $\R$. We show that 
$ f:=qa^{-1}b -ba^{-1} \in \R $ is a constant. 
In fact, since the complex number $q$ is in the center 
of $\R$, it follows from \eqref{140905d} that 
\begin{align*}
\sigma (f) &= q\sigma (a^{-1})\sigma (b) - \sigma (b)\sigma 
(a^{-1}) \\
 & = qa^{-1}q^{-1}qb - qb a^{-1}q^{-1} \\
 & =qa^{-1}b - ba^{-1} \\
 &= f 
\end{align*}
and 
\begin{align*}
\theta ^{(1)} (f) & = q\sigma (a^{-1})\theta^{(1)}(b) - 
\theta ^{(1)} (b)a^{-1} \\
 & = qa^{-1}q^{-1}1 - 1a^{-1} \\
& = 0.
\end{align*}
Therefore $f$ is a complex number. Now we denote by $g$ 
the complex number 
$$
\frac{f}{1-q}.
$$
and set 
$$
b^\prime : = b + ga. 
$$
Then 
$$
\mathbf{Y}^\prime := 
\begin{bmatrix}
a & b^\prime \\
0 & 1
\end{bmatrix} 
$$
is a fundamental system of solutions
so that 
\begin{equation}\label{140906a}
\sigma (a) = qa, \qquad \sigma (a^{-1}) =q^{-1}a^{-1}, \qquad 
 \sigma (b^\prime )= q b^\prime , \qquad \theta^{(1)} (b^\prime ) = 1 
\end{equation}
 and we have 
\begin{equation}\label{140905f}
%\mathbf{Y}\
\R = C \langle \mathbf{Y} , \, \mathbf{Y}^{-1}
\rangle \sb{alg} = C \langle \mathbf{Y}^\prime , \, \mathbf{Y}^{\prime -1} \rangle \sb{alg} = C \langle a, \, b^\prime , \,
a^{-1} \rangle \sb{alg}
\end{equation}
and moreover
 we have 
\begin{equation}\label{140905h}
a b^\prime = qb^\prime a .
\end{equation}
We have seen in Section \ref{14.6.25b}
that $R=\langle Q, Q^{-1}, \, t\rangle \sb{alg}$ and the relations among the generators $Q, \, Q^{-1}, \, t$ are reduced to 
$$
Q Q^{-1}= Q^{-1}Q=1, \qquad qtQ -Qt=0, \qquad \text{ $C $ commutes with $Q, \, Q^{-1}$ and $t$.}
$$
Thus, there exists a $C $-morphism $\varphi : R \to \R$ 
of abstract $C $-algebras by \eqref{140905h}. It follows from \eqref{140906a} and difference differential equations for $Q, \, t$, the morphism $\varphi$ is \QSI morphism. 
By \eqref{140905f}, the morphism $\varphi$ is surjective.
Since $R$ is simple \itqsi algebra, the kernel of the \itqsi morphism $\varphi$ is $0$ and the morphism is injective. 
Therefore the \itqsi morphism $\varphi$ is an isomorphism. 
\end{proof}
%%%%%%%%%%%%%%%%%%
%%%%%%%%%%%%%%%%%%%%%%%%%%%%%%%%%%%%%%%%%
%%%%%%%%%%%%%%
\section{Generalized Tannaka equivalence of categories. }
Let us review classical Picard-Vessiot theory formulated by Tannaka equivalence of two rigid tensor categories. 
Let $k$ be a differential field of characteristic $0$ and we assume the field $C = C_{k}$ of constants of $k$ is algebraically closed. 
We denote by $\mathcal{D} = k[\partial]$ the ring of linear differential operators. 
We denote by $(\mathcal{D}\text{-}mod)$ the category of left $\mathcal{D}$-modules that are finite-dimensional $k$-vector spaces. 
We know $(\mathcal{D}\text{-}mod)$ is a rigid tensor category. 
Namely, for two objects $M_{1}, M_{2} \in ob((\mathcal{D}\text{-}mod))$, the tensor product $M_{1}\otimes_{k}M_{2}$ and the internal homomorphism $\Hom_{k}(M_{1}, M_{2})$ exist in $ob((\mathcal{D}\text{-}mod))$. 
Let $\mathcal{G}$ be a commutative Hopf algebra. Then similarly the category of right $\mathcal{G}$-co-modules that are 
finite-dimensional as $C$-vector spaces form a rigid tensor category. 
Let $H$ be an object of the category $(\mathcal{D}\text{-}mod)$. Let $G$ be the Galois group of Picard-Vessiot ring of the system of linear differential equations determined by $H$. So $G$ is an affine group scheme over $C$. Hence the coordinate ring $C[G]$ is a Hopf algebra. 
\begin{theorem}[van der Put and Singer \cite{put-singer}, Theorem 2.33 in 2.4]
The rigid tensor sub-category $\{\{ H \}\}$ of the rigid tensor category $(\mathcal{D}\text{-}mod)$ generated by $H$ is equivalent to the rigid tensor category $(comod\text{-}C[G])$ of right $C[G]$-co-modules that are finite-dimensional $C$-vector spaces. 
\end{theorem}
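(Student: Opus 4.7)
The plan is to carry out the classical Tannakian reconstruction, using the Picard–Vessiot ring $R$ of $H$ as the trivializing object that furnishes a canonical fiber functor. First, I would fix $R$, the Picard–Vessiot ring of the $\mathcal{D}$-module $H$, so by construction $R^{\partial} = C$ and there is an isomorphism of $R[\partial]$-modules $R \otimes_k H \simeq R \otimes_C \omega(H)$ where $\omega(H) := (R\otimes_k H)^{\partial}$ is a finite-dimensional $C$-vector space. The functor
\[
 \omega \colon \{\{H\}\} \to (\mathrm{Vec}_C), \qquad M \mapsto (R\otimes_k M)^{\partial}
\]
is well defined on all of $\{\{H\}\}$ because the subcategory generated by $H$ under $\otimes$, duals, subobjects and quotients consists of objects trivialized by $R$; in particular the natural map $R\otimes_C \omega(M) \to R\otimes_k M$ is an isomorphism of $R[\partial]$-modules for every $M \in \{\{H\}\}$. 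This $\omega$ is $C$-linear, exact, faithful, and $\otimes$-compatible.

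Second, I would install the $G$-action, which is what promotes $\omega$ to a functor into $(\mathrm{comod}\text{-}C[G])$. By definition $G = \underline{\mathrm{Aut}}^{\otimes}_k(R/k)$ as an affine $C$-group scheme, and for any $C$-algebra $A$ an element $g \in G(A)$ acts on $A\otimes_C \omega(M) = (A\otimes_C R \otimes_k M)^{\partial}$ through its action on $R$. This gives $\omega(M)$ the structure of a finite-dimensional $G$-representation, equivalently a right $C[G]$-comodule $\rho_M \colon \omega(M) \to \omega(M) \otimes_C C[G]$, functorially and compatibly with tensor products, duals and the trivial object. Thus $\omega$ lifts to a $C$-linear tensor functor
\[
 \bar\omega \colon \{\{H\}\} \longrightarrow (\mathrm{comod}\text{-}C[G]).
\]

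Third, I would verify full faithfulness. For $M, N \in \{\{H\}\}$ the canonical map
\[
 \mathrm{Hom}_{\mathcal{D}}(M,N) \longrightarrow \mathrm{Hom}_{C[G]}(\omega(M),\omega(N))
\]
factors through $\mathrm{Hom}_k(M,N)^{\partial}$ and through the $G$-invariants of $\mathrm{Hom}_C(\omega(M),\omega(N))$; the key input is that for an object $P = \mathrm{Hom}_k(M,N) \in \{\{H\}\}$ one has $P^{\partial} = \omega(P)^{G}$, which follows from $R^{\partial}=C$ together with the fact that $R^{G}=k$ (a standard consequence of simplicity of $R$ as a $\mathcal{D}$-ring and the torsor property $R\otimes_k R \simeq R \otimes_C C[G]$). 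Identifying $\omega(P) = \mathrm{Hom}_C(\omega(M),\omega(N))$ as a $G$-module gives the required bijection.

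The main obstacle, and the step I expect to be hardest, is essential surjectivity: every finite-dimensional right $C[G]$-comodule must be shown to lie in the essential image of $\bar\omega$. The strategy is to use that $G$ embeds as a closed subgroup of $GL(\omega(H))$ via its faithful action on $\omega(H)$, so by Chevalley's theorem every finite-dimensional $G$-representation $V$ is a subquotient of a finite direct sum of tensor products of copies of $\omega(H)$ and its dual $\omega(H)^{\vee} = \omega(H^{\vee})$. Because $\bar\omega$ commutes with $\otimes$, $\oplus$, duals, and is exact, the corresponding subquotient can be realized inside a matching construction $T \in \{\{H\}\}$ with $\bar\omega(T)$ containing $V$; producing the $\mathcal{D}$-submodule of $T$ whose image under $\omega$ is exactly $V$ uses the Galois correspondence between $G$-stable subspaces of $\omega(T)$ and $\mathcal{D}$-submodules of $T$, which is itself a consequence of the torsor isomorphism $R\otimes_k R \simeq R\otimes_C C[G]$ descended along $R/k$. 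Combining full faithfulness with this essential surjectivity yields the desired equivalence of rigid tensor categories.
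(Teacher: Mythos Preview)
The paper does not provide its own proof of this theorem: it is stated as a citation to van der Put and Singer \cite{put-singer}, Theorem 2.33, and used only to motivate the subsequent quantized analogue (Expectation \ref{160808a}). So there is no in-paper proof to compare against.

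That said, your sketch follows the standard Tannakian route that van der Put and Singer themselves use, and the outline is sound. A few small points worth tightening if you want this to stand as a proof rather than a sketch: (i) for essential surjectivity you invoke Chevalley's theorem to realize an arbitrary finite-dimensional $G$-representation as a subquotient of tensor powers of $\omega(H)$ and $\omega(H)^{\vee}$; in the commutative Hopf algebra setting one actually gets subobjects, not merely subquotients, and the argument requires the determinant representation (i.e., $\det \omega(H)^{-1}$) to appear, which is absorbed into $\omega(H)^{\vee}$ only after noting $\det$ is a direct summand of a tensor power; (ii) the step ``producing the $\mathcal{D}$-submodule of $T$ whose image under $\omega$ is exactly $V$'' is where faithfully flat descent along $R/k$ is genuinely used, and deserves to be stated as such rather than folded into the torsor isomorphism; (iii) the identity $R^{G}=k$ that you use for full faithfulness is correct but is itself a nontrivial consequence of simplicity of $R$ together with the torsor property, so it should be flagged as a lemma rather than asserted in passing.
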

The result of preceding Sections suggest that we might as well expect a similar result for the Hopf algebra $C[\sigma,\ \sigi,\ \theta^{\ast}]$. 
To be more precise in the classical differential case, we take the Hopf algebra $C[\mathbb{G}_{a}]$ that is commutative and co-commutative, and the base field $k$. 
In the qsi case, the Hopf algebra $C[\sigma,\ \sigi,\ \theta^{\ast}]$ is neither commutative nor co-commutative, and the base field $k$ coincides with $C$. 
We studied a concrete example of qsi module $M$ in Section \ref{14.6.25b}. 
We denote by $(C[\sigma,\ \sigi,\ \theta^{\ast}]\text{-}mod)$ the rigid tensor category of left $C[\sigma,\ \sigi,\ \theta^{\ast}]$-modules that are finite-dimensional $C$-vector spaces. 
For two objects $(M_{1},\ \sigma,\ \sigi,\ \theta^{\ast})$ and $(M_{2},\ \sigma,\ \sigi,\ \theta^{\ast})$, $M_{1}\otimes M_{2}$ and the internal homomorphisms $\mathrm{Hom}(M_{1}, M_{2})$ exist. 
In fact, let $N :=\mathrm{Hom}(M_{1}, M_{2})$ be the set of $C $-linear maps from $M_{1}$ to $M_{2}$. 
It sufficient to consider two $C $-linear maps
\[
\sigma_{h} \colon N \rightarrow N \text{\it \/ and \/ } \theta^{(1)}_{h} \colon N \rightarrow N 
\]
given by
\[
\sigma_{h}(f) := \sigma_{2} \circ f \circ \sigma_{1}^{-1} \text{\it \/ and \/ } \theta^{(1)}_{h}(f):= -(\sigma_{2}f)\circ\theta^{(1)}_{1} + \theta^{(1)}_{2}\circ f.
\]
So we have $q \sigma_{h}\circ \theta^{(1)}_{h} = \theta^{(1)}_{h} \circ \sigma$. 
Since $q$ is not a root of unity, we define $\theta^{(m)}_{h}$ in an evident manner
\[
\theta^{(m)}_{h} = \begin{cases}
\Id_{N}, \quad & \text{ for } m=0, \\
\frac{1}{[m]_{q}!}\left(\theta^{(1)}_{h}\right)^{m}, \quad 
&\text{ for } m \geq 1.
\end{cases}
\]
Since $ C [\sigma,\, \sigi , \,\theta^{\ast}]$ is a Hopf algebra, for two objects $M_{1},\, M_{2} \in ob((C[\sigma,\ \sigi,\ \theta^{\ast}]\text{-}mod))$ the tensor product $M_{1}\otimes_{C }M_{2}$ is defined as an object of $(C[\sigma,\ \sigi,\ \theta^{\ast}]\text{-}mod)$. 
However, as $C[\sigma, \, \sigi,\, \theta^{\ast}]$ is not co-commutative, we do not have, in general, $M_{1}\otimes_{C }M_{2} \simeq M_{2}\otimes_{C }M_{1}$. 
%%%

We consider the rigid tensor category $(comod\text{-}\mathfrak{H}_{q})$ of right $\mathfrak{H}_{q}$-co-modules that are 
finite-dimensional $C$-vector spaces. 
The results of Sections \ref{14.6.25b} and \ref{141223b} would imply the following 
\begin{expectation}\label{160808a}
We denote by $\{\{ M \}\}$ the rigid tensor sub-category of $(C[\sigma, \sigi, \theta^{\ast}]\text{-}mod)$ generated by the left qsi-module $M$. 
Then the rigid tensor category $\{\{ M \}\}$ is equivalent to the rigid tensor category $(comod\text{-}\mathfrak{H}_{q})$ of right $\mathfrak{H}_{q}$-co-modules that are finite-dimensional $C$-vector spaces. 
\end{expectation}
The Expectation is too naive. It is false but it is not so absurd. 
Since the Hopf algebra $C[\sigma,\ \sigi,\ \theta^{\ast}]$ is neither commutative nor co-commutative, the arguments for the commutative and co-commutative Hopf algebra $C[\mathbb{G}_{a}]$ require subtle modifications. 
We prove a corrected version of Expectation \ref{160808a} in Sections \ref{sec:tensor-equivalence} and \ref{sec:review_examples} of Part \ref{part_III}. %%%%%%%%%%%%%%%%
\begin{observation}\label{1018c}
We have an imperfect Galois correspondence between the elements of the two sets. 
\begin{enumerate}
\renewcommand{\labelenumi}{(\arabic{enumi})}
\item The set of 
quotient 
$C $-Hopf algebras of $\mathfrak{H}_{q}$:
\[ \mathfrak{H}_{q},\,
\mathfrak{H}_{q}/I, \; 
 C \]
 with the sequences of the quotient 
 morphisms %corresponding to 
inclusions 
%\eqref{1113a}
%\begin{equation}
%\mathfrak{H}_{q} \to \com 
%\textit{ and } 
\begin{equation}
 \mathfrak{H}_{q} \to \mathfrak{H}_{q}/I \to C ,
\end{equation}
where 
 $I$ is the bilateral ideal of the Hopf algebra $\gH \sb q$ generated by $v$. 
\end{enumerate}
\item 
The sub-set of intermediate \itqsi division rings of $K/C $: 
$$
C , \, C (t), \, K 
$$
with inclusions 
\begin{equation}\label{1113a}
C \subset C (t) \subset K. 
%\textit{ and }
%\com \subset \com (Q) \subset K.
\end{equation} 
%\end{enumerate}
%\[\com \subset \com(t) \subset K\]
The intermediate \itqsi division rings $C (Q)$ 
is not written as the ring of invariants of a Hopf ideal so that our Galois correspondence is imperfect. 
\end{observation}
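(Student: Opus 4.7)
The plan is to verify the correspondence by identifying each Hopf quotient with the sub-division ring of co-invariants, and then to exhibit $C(Q)$ as an intermediate qsi division ring not arising in this way.

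First I would check that the bilateral ideal $I \subset \mathfrak{H}_q$ generated by $v$ is a Hopf ideal, so that $\mathfrak{H}_q/I$ inherits a Hopf algebra structure. Since
\[
\Delta(v) = u \otimes v + v \otimes 1 \in \mathfrak{H}_q \otimes I + I \otimes \mathfrak{H}_q, \qquad \epsilon(v) = 0, \qquad S(v) = -u^{-1}v \in I,
\]
this is routine. The quotient $\mathfrak{H}_q/I \simeq C[\bar u, \bar u^{-1}] = C[\mathbb{G}_m]$ is the commutative Hopf algebra of the multiplicative group, and the augmentation $\mathfrak{H}_q \twoheadrightarrow C$ factors through it, so the chain $\mathfrak{H}_q \twoheadrightarrow \mathfrak{H}_q/I \twoheadrightarrow C$ of Hopf quotients is realized.

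Next I would determine the co-invariants in $K$ for each of these quotient co-actions. Using the $C$-basis $\{Q^m t^n\}_{m\in\mathbb{Z},\, n\in\mathbb{N}}$ of $R$ together with the commutation $t^a Q^b = q^{-ab} Q^b t^a$ and the $q$-binomial theorem applied to the pair $t \otimes 1,\ Q \otimes v$ of $q$-commuting elements in $R \otimes_C \mathfrak{H}_q$, the right co-action of Proposition \ref{160609} expands as
\[
\rho(Q^m t^n) \;=\; \sum_{k=0}^{n}\binom{n}{k}_q q^{-(n-k)k}\, Q^{m+k} t^{n-k} \otimes u^m v^k.
\]
Writing $r = \sum c_{m,n} Q^m t^n$ and invoking $C$-linear independence of $\{Q^a t^b \otimes u^{m'} v^k\}$ in $R \otimes_C \mathfrak{H}_q$, the equation $\rho(r) = r \otimes 1$ forces $c_{m,n} = 0$ for $m \neq 0$ (from the $k=0$ contributions), and then $c_{0,n}\binom{n}{k}_q = 0$ for $k \geq 1$; since $q$ is not a root of unity, $\binom{n}{k}_q \neq 0$, so $c_{0,n} = 0$ for $n \geq 1$, giving $R^{\rho} = C$. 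Passing to Ore fractions yields $K^{\rho} = C$. For the intermediate quotient $\rho_I \colon R \to R \otimes \mathfrak{H}_q/I$ the formula collapses to $\rho_I(Q^m t^n) = Q^m t^n \otimes \bar u^m$, so $R^{\rho_I} = C[t]$ and $K^{\rho_I} = C(t)$. The augmentation quotient yields the trivial co-action with all of $K$ as invariants. This produces the three correspondences in the asserted chain.

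Finally I would verify that $C(Q)$ is a qsi sub-division ring of $K$ — from $\sigma(Q) = qQ$, $\sigi(Q) = q^{-1}Q$, $\theta^{(1)}(Q) = 0$, the subfield $C(Q)$ is stable under the entire qsi structure — and then show that no Hopf quotient has $C(Q)$ as its ring of co-invariants. Indeed, suppose $\pi \colon \mathfrak{H}_q \twoheadrightarrow H'$ is a Hopf quotient with $Q$ lying in the co-invariants of the induced co-action $\rho_{H'}$. Then $Q \otimes 1 = \rho_{H'}(Q) = Q \otimes \pi(u)$ forces $\pi(u) = 1$ in $H'$, and the defining relation $uv = qvu$ descends to $\pi(v) = q\,\pi(v)$; since $q \neq 1$, this entails $\pi(v) = 0$, so $\pi$ factors through the augmentation, and its co-invariants are all of $K \supsetneq C(Q)$. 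The main obstacle is conceptual rather than computational: the non-commutative relation $uv = qvu$ rigidly couples the quotient of $u$ to that of $v$, so the group-like element $Q$ cannot be detached from $t$ at the level of Hopf quotients, and this is precisely the sense in which the correspondence is imperfect.
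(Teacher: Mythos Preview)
Your proof is correct and considerably more detailed than what the paper offers: the paper states this Observation without proof, following it only with the remark that $K/C$ and $K/C(t)$ are qsi Picard-Vessiot extensions with Galois groups $\mathfrak{H}_q$ and $C[\mathbb{G}_{a\,C}]$, respectively. There is thus no argument in the paper to compare your approach against.

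Your computation of the co-action on the basis $Q^m t^n$ via the $q$-binomial expansion is right, and the co-invariant calculations for the three Hopf quotients are clean. The final step---showing that any Hopf quotient with $Q$ co-invariant must be the counit---is the heart of the ``imperfect'' claim, and your argument (that $\pi(u)=1$ forces $\pi(v)=q\pi(v)$, hence $\pi(v)=0$) pinpoints exactly why the non-co-commutativity of $\mathfrak{H}_q$ breaks the correspondence: the relation $uv=qvu$ rigidly ties $u$ to $v$, so one cannot isolate the ``$Q$-direction'' at the level of Hopf quotients.

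One small technical point you glide over: the passage ``passing to Ore fractions yields $K^{\rho}=C$'' presumes that the co-action $\rho\colon R\to R\otimes_C\mathfrak{H}_q$ extends to $K$. This is not entirely automatic for non-commutative localizations, though here it works because $\rho$ sends the Ore set $R\setminus\{0\}$ into invertible elements of $K\otimes_C\mathfrak{H}_q$ (one can check this on the generators, or invoke the torsor isomorphism of Proposition~\ref{160609}). Given the informal nature of the Observation in the paper this is not a serious gap, but you might acknowledge it.
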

The extensions 
$$
K/C \, \text{ and }K/ C (t) 
%\textit{ and } \com (Q) / \com 
$$
are \itqsi Picard-Vessiot extensions with Galois groups
\begin{equation*}
\mathrm{Gal}\, (K/C ) \simeq \mathfrak{H}_{q}, \quad 
\mathrm{Gal}\, (K/C (t)) \simeq 
C [\G\sb{a \,C }] 
\end{equation*}
Here we denote by $C [G]$ the Hopf algebra of the 
coordinate ring of an affine group scheme $G$ over $C $. 
%%%%%%%%%%%%%%%%%%%%%%%%%%%%%%%%%%%%%%%%%%%%%%%%%%%%%%
%%%%%%%%%%%%%%%%%%%%%%%%%%%%%%%%

\section{Further examples and generalizations}\label{141223a}
Looking at the Example above, 
analogous to theory of linear differential equations with constant coefficients, 
Pierre Cartier \cite{car13} discovered that 
one can generalize the results to every 
qsi linear equations over $C $. Let us see other Examples to understand what happens better. 
% \cite{mas14}. 
\begin{example}
Let us consider two $3 \times 3$ matrices 
$$ 
A = 
\begin{bmatrix}
q & 1 & 0 \\
0 & q & 0 \\
0 & 0 & 1
\end{bmatrix}
 , \qquad 
 B =
 \begin{bmatrix}
0 & 0 & 1 \\
0 & 0 & 0 \\
0 & 0 & 0
\end{bmatrix}.
$$
so that $AB =qBA$. 
%%%%%%%%%%%%%
As in the previous Section, we consider 
%%%%%%%%%%%%%%

% and we get 
%$\com [\sigma , \sigma ^{-1}, \, \theta ^{(1)}]$-module of rank $3$ over $\com$,
%%%%%%%%%%%%%%%%%%%%%
%a linear \itqsi equation
\begin{equation}\label{141210a}
\sigma Y = A Y \text{ and } \theta ^{(1)}Y= BY
\end{equation}
over $C $, where $Y$ is a $3\times 3$ unknown matrix. 
\end{example} 
The linear \itqsi equation is equivalent to considering a 
3-dimensional 
vector space $V$ equipped with q \itqsi-module structure defined by the $C $-algebra morphism 
$$
 C [\sigma , \sigma ^{-1}, \theta ^* ] \to M\sb 3 ( C ) = {\rm End}(V), \qquad \sigma ^{\pm 1} \mapsto {}^tA^{\pm 1}, \, \theta^{(1)} \mapsto {}^tB.
 $$ 
\par
The first task is to solve linear \itqsi equation 
\eqref{141210a}
 in \itqsi algebra 
$F(\Z , \, C )[[t]]$.
 To this end, we set 
 \begin{equation}\label{141211a}
 Y:= \sum \sb{ i = 0} ^\infty t^ i A\sb i \in 
 M\sb 3 ( F(\Z , \, C )[[t]]) 
 = M\sb 3 (F(\Z , C ))[[ t ]]
 \end{equation} 
so that $A\sb i \in M\sb 3 (F(\Z , \, C ))$
for every $i \in \N$.
We may also identify 
$$
 M\sb 3 (F(\Z , \, C )) = F( \Z , \, M\sb 3 (C )).
$$
Therefore $A\sb i $ is a function on the set $\Z$ taking values in the set $M\sb 3 (C )$ of matrices. So 
$$
A\sb i = 
\begin{bmatrix} 
\cdots & -2 & -1 & 0 & 1 & 2 & \cdots \\
\cdots & a\sb{-2}^{(i)} & a\sb{-1}^{(i)}& a\sb 0 ^{(i)} & a\sb 1 ^{(i)} & a\sb 2 ^{(i)} & \cdots 
\end{bmatrix}
$$
with 
$a\sb j ^{(i)} \in M\sb 3 (C )$ for every $i\in \N, \, j \in \Z$. 
Substituting \eqref{141211a} into \eqref{141210a} and 
comparing coefficients of $t^i$, we get 
recurrence relations among the $A\sb i$'s 
\begin{equation}\label{141211b}
\sigma (A\sb i) = q^{-i}A A\sb i \qquad \theta^{(1)}( A\sb {i + 1}) =\frac{1}{[i+1]\sb q}BA\sb i
\end{equation}
If we solve recurrence relations \eqref{141211b} with the initial condition $a\sb 0 ^{(0)} = I\sb 3$, since $B^2= 0$, 
$A\sb i =0$ for $i \ge 2$ and 
$$
A\sb 0 = 
\begin{bmatrix}
\cdots & -2 & -1 &0 & 1 & 2 & \cdots \\ 
\cdots & A^{-2} & A^{-1} & I\sb 3 & A & A^2 & \cdots 
\end{bmatrix} 
=\begin{bmatrix}
Q & q^{-1}Z Q & 0 \\
0 & Q & 0 \\
0 & 0 & 1 
\end{bmatrix}, 
\quad 
A\sb 1 = BA\sb 0 = 
\begin{bmatrix}
0 & 0 & 1 \\
0 & 0 & 0 \\
0 & 0 & 0
%0 & 1 & 2 & \cdots \\ 
%B & BA & BA^2 & \cdots 
\end{bmatrix} .
$$
So 
$$
Y = A\sb 0 + t BA\sb 0 =
\begin{bmatrix} 
Q & q^{-1}ZQ & t \\
0 & Q & 0   \\ 
0 & 0 & 1
\end{bmatrix},
$$
where $Z$ is an element of the ring of functions $F(\Z , \, C )$ taking the value $n$ at $n \in \N$ so that
$$
Z = 
\begin{bmatrix}
\cdots & -2 & -1 & 0 & 1 & 2 & \cdots \\
\cdots & -2 & -1 & 0 & 1 & 2 & \cdots 
 \end{bmatrix}
$$
The solution $Y$ is an invertible element in the matrix ring $M\sb 3 (F(\Z , \, C )[[t]])$. 
We introduce a \itqsi $C $-algebra 
$R$ generated by the entries of the matrices $Y$ and $Y^{-1}$ in the \itqsi $C $-algebra 
$F(\Z , C )[[t]]$. To be more concrete 
$$
R := C \langle Q , \, Q^{-1}, Z, \, t \rangle \sb {alg}. 
$$ 
The commutation relations among the generators are 
\begin{equation}\label{141212a}
QQ^{-1}= Q^{-1}Q = 1, \qquad qtQ= Qt, \qquad t(Z+ 1)=Zt \qquad ZQ- QZ.
\end{equation}
and the operators act
as
\begin{align} \label{141212b}
\sigma (t)&= qt, & \qquad \theta ^{(1)}(t)&= 1, \\ 
\label{141212c}
\sigma Q &= qQ, & \qquad \theta ^{(1)} (Q) &= 0, \\ 
\label{141212d} 
\sigma (Q^{-1}) &= q^{-1}Q ^{-1}, &\qquad \qquad \theta ^{(1)} (Q^{-1})
&= 0, \\
\label{141212e}
 \sigma Z &= Z + 1, & \qquad 
\theta ^{(1)}(Z) &= 0.
\end{align}
Then the arguments in the previous Example show that 
the ring $R$ trivializes the \itqsi module defined by the matrices $A$ and $B$, the \itqsi ring $R$ is simple and that 
the ring of constants $C \sb R = C $. The abstract $C $-algebra $R^{\n}$ has a $C $-algebra morphism $R\n \to C $. 
So we may call it the Picard-Vessiot ring of the \itqsi module. 
Of course we can prove % \marginpar{we can prove(?)}
 the uniqueness.

Now we are going to speak of the Galois group of \itqsi equation \eqref{141210a}.
The argument of the previous Section, 
\eqref{141212a} and the actions of the operators \eqref{141212b}, 
\eqref{141212c}, \eqref{141212d} and \eqref{141212e}
allow us to prove the following result. 
\begin{lemma} \label{141212g}
The following conditions for
a $C $-algebra $T$ and four elements $e, \, e', \, f, \, g
\in T$ are equivalent. 
\begin{enumerate}
\renewcommand{\labelenumi}{(\arabic{enumi})}
\item 
There exists a 
a $C $-qsi morphism 
$$
\varphi :R \to R \otimes \sb C T 
$$
such that 
$$
\varphi(Q ) = eQ, \qquad \varphi(Q^{-1}) 
= e' Q^{-1}, \qquad 
\varphi (Z) = Z + f,\qquad
\varphi (t) = t + gQ .
$$
\item The four elements satisfy the following relations. 
\begin{equation}\label{141212f}
ee'= e' e =1, \qquad eg = qfg, \qquad ef =fe, \qquad fg- gf = g. 
\end{equation}
\end{enumerate}
\end{lemma}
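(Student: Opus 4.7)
The plan is to prove the equivalence by exploiting the fact that $R$ is presented as a $C$-algebra on the four generators $Q, Q^{-1}, Z, t$ modulo exactly the four relations in \eqref{141212a}, with its qsi structure prescribed by \eqref{141212b}--\eqref{141212e}. The target $R \otimes_C T$ is given the qsi structure that makes $1 \otimes T$ consist of constants; equivalently $\sigma$ and $\theta^{(1)}$ on $R \otimes_C T$ act only on the first tensor factor. Under this convention, a qsi morphism $R \to R \otimes_C T$ is the same as a $C$-algebra morphism whose values on the generators intertwine $\sigma$ and $\theta^{(1)}$. Moreover, the subalgebras $R \otimes 1$ and $1 \otimes T$ centralize each other, which is the key combinatorial tool for the whole argument.

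For the implication $(1)\Rightarrow (2)$, I would apply the hypothetical $\varphi$ to each of the four relations of \eqref{141212a} and expand using the commutativity of $R \otimes 1$ and $1 \otimes T$. The relation $QQ^{-1} = Q^{-1}Q = 1$ immediately gives $ee' = e'e = 1$. The relation $qtQ = Qt$, after substituting $\varphi(t) = t + gQ$ and $\varphi(Q) = eQ$, produces an identity whose only nontrivial term is a multiple of $Q^2$, from which one reads off the $q$-twisted commutation of $e$ with $g$. The relation $ZQ = QZ$ similarly collapses to $ef = fe$. Finally the relation $t(Z+1) = Zt$, after using $Zt - tZ = t$ on one side and $ZQ = QZ$ on the other, reduces to the bracket identity $fg - gf = g$.

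For the converse $(2)\Rightarrow (1)$, I would first use the universal property of the presentation: the four relations in \eqref{141212f} are precisely those needed to verify that the elements $eQ,\, e'Q^{-1},\, Z+f,\, t+gQ$ of $R \otimes_C T$ satisfy the four defining relations of $R$, so there is a unique $C$-algebra morphism $\varphi\colon R \to R \otimes_C T$ with the prescribed values. It remains to check that $\varphi$ commutes with $\sigma$ and $\theta^{(1)}$ on each generator, which is a routine direct check using \eqref{141212b}--\eqref{141212e} together with the fact that $e, e', f, g$ are constants in $R \otimes_C T$; compatibility with the higher $\theta^{(i)}$ then follows automatically since each $\theta^{(i)}$ is a $C$-polynomial in $\theta^{(1)}$.

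The main obstacle is not conceptual but bookkeeping: one must carefully track the order of tensor factors and the direction of the $q$-commutation across eight small verifications. The most delicate one is the passage from $t(Z+1) = Zt$ to $fg - gf = g$, because it mixes the additive shift $f$ in $\varphi(Z)$ with the multiplicative coefficient $g$ in $\varphi(t)$ and yields the non-standard bracket relation governing a quantum analogue of the ``$ax+b$'' structure already visible in Sections \ref{10.4a} and \ref{10.4b}. All the other checks are strictly parallel to the computations in the First Example and in Lemma \ref{140913a}.
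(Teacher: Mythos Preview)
Your approach is correct and matches the paper's. The paper does not give a self-contained proof of this lemma; it merely states that ``the argument of the previous Section, \eqref{141212a} and the actions of the operators \eqref{141212b}, \eqref{141212c}, \eqref{141212d} and \eqref{141212e} allow us to prove'' it, referring back to the method of Lemma~\ref{140913a} and Corollary~\ref{140927b}. Your plan---applying a putative $\varphi$ to each defining relation of $R$ to extract \eqref{141212f}, and conversely using the presentation of $R$ together with the qsi structure \eqref{141212b}--\eqref{141212e} to build $\varphi$---is exactly that argument spelled out in detail for the new generators $Z$ and the new relation $t(Z+1)=Zt$.
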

Lemma \ref{141212g} tells us the universal co-action. To see the co-algebra structure, 
let $\varphi \sb 1 : R \to R\otimes \sb C T$ be the $C $-qsi morphism determined by four elements $e\sb 1, \, e\sb 1',\, f\sb 1,\, g\sb 1 \in T$ satisfying relations \eqref{141212f}. We take another $C $-qsi algebra morphism 
$\varphi \sb 2 : R \to R\otimes \sb C T$ defied by four elements 
$e\sb 2 ,  e\sb 2', f\sb2 , g \sb2\in T$ satisfying relations \eqref{141212f}. 
We assume that the subsets $\{\, e\sb 1, \, e\sb 1',\, f\sb 1, \, g\sb 1 \}$ and $\{\, e\sb 2, \, e\sb 2',\, f\sb 2, \, g\sb 2 \}$ of $T$ 
are 
mutually commutative. Let us compose $\varphi \sb 1$ and $\varphi\sb 2$. 
\begin{align*}\label{141213a}
Q &\mapsto e\sb 1 Q \qquad &\mapsto \qquad e\sb 2 (e\sb 1 Q) &= (e\sb 1 e\sb 2)Q ,
\\
Q^{-1} &\mapsto e\sb 1 ^{-1} Q^{-1} \qquad &\mapsto \qquad e\sb 2 ^{-1}(e\sb 1^{-1} Q^{-1}) &= (e\sb 1^{-1} e\sb 2^{-1})Q, \\
Z &\mapsto Z + f\sb 1\qquad &\mapsto \qquad (Z + f\sb 1) + f\sb 2 & = Z + (f\sb 1 + f\sb 2 ) ,
\\ 
t &\mapsto t + g\sb 1 Q \qquad &\mapsto \qquad (t + g\sb 1 Q) + g\sb 2 e\sb 1 Q &= 
t + (e\sb 1 g\sb 2 + g\sb 1)Q .
\end{align*}
Let us now set 
$$
A:= C \langle e, \, e' , \, f, \, g \rangle \sb {alg}, 
$$ 
where we {\it \/ assume that the elements} $e, \, e' , \, f, \, g $ {\it \/ satisfy only relations \eqref{141212f}\/} so that we¡¡have an isomorphism 
$$
R\n \simeq A, \qquad Q \mapsto e, 
Q^{-1} \mapsto e' , \, 
\, 
Z \mapsto f, 
\, t \mapsto g
$$ 
as abstract $C $-algebras. 
It follows from the result above of the composition of $\varphi\sb 1 $ and $\varphi \sb 2$ that 
$$
\Delta : A \to A \otimes \sb C A 
$$
with 
$$
\Delta (e) = e\otimes e, \quad \Delta (e') = e' \otimes e', \quad \Delta (f) = f\otimes 1 + 1\otimes f , \quad \Delta (g) = g\otimes 1 + e\otimes g
$$ 
defines a $C $-algebra morphism and together with 
a $C $-algebra morphism 
$$
\epsilon : A \to C , \text{ with } \epsilon (e) =\epsilon (e')=1, \, \epsilon (f)=\epsilon (g) = 0
$$
makes $A$ a Hopf algebra over $C $.
\par
{ \it The Galois group of the rank} $3$ {\/ \it qsi module is the Hopf algebra\/} $A$.
\par 
We add another example.
\begin{example}
We consider matrices 
$$
A=
\begin{bmatrix}
lq & 0 \\
0 & l 
\end{bmatrix}, \qquad 
B= 
\begin{bmatrix}
0 & 1 \\
0 & 0
\end{bmatrix}
\in M\sb 2 (C ) , 
$$
where $l$ is an element of the field $C$. Since $AB =q BA$, 
the $C $-algebra morphism 
$$
 C [\sigma , \sigma ^{-1}, \theta ^* ] \to M\sb 2 (C ) = {\rm End}(V), \qquad \sigma ^{\pm 1} \mapsto {}^tA^{\pm 1}, \, \theta^{(1)} \mapsto {}^tB
$$
defines on a 2-dimensional $C $-vector space $V$ a 
$2$-dimensional \itqsi module structure. We assume that $q$ and $l$ are linearly independent over $\Q$.
\end{example}
 We do not give details here as it is useless to repeat the arguments. 
 \begin{enumerate}
\renewcommand{\labelenumi}{(\arabic{enumi})}
 \item
 The solution matrix in $ M\sb 2 (F(\Z , \, C )[[t]])$ is 
 $$
 \begin{bmatrix}
L Q & t \\
0 & L 
\end{bmatrix}, 
 $$
 where 
 $$
 L = 
 \begin{bmatrix}
\cdots & -1 & 0 & 1 & 2 & \cdots \\
\cdots & l^ {-1} & 1 & l & l^2 & \cdots 
 \end{bmatrix} 
 \in F(\Z , \, C ).
 $$
 \item 
 The Picard-Vessiot ring is 
 $$
 C \langle Q, \, Q^{-1}, \, L, \, L^{-1}, \, t \rangle \sb{alg} 
 $$
 with commutation relations
 $$
 QQ^{-1}= Q^{-1}Q = 1, \quad 
 L L^{-1}= L^{-1}L = 1, \quad 
 QL=LQ, \quad 
 Qt=qtQ, \quad Lt= ltL.
 $$
 Actions of operators: 
 \begin{align*}
 \sigma Q &=& qQ, 
 \quad 
 \sigma (Q^{-1}) &=& q^{-1}Q^{-1}, 
 \quad 
 \sigma L &=& lL , 
 \quad 
 \sigma (L^{-1}) &=& l^{-1}L^{-1}, 
 \quad 
 \sigma (t) &=& qt \\
%%%%%%%%%
 \theta^{(1)} Q &=& 0, 
 \quad 
 \theta^{(1)} (Q^{-1}) &=& 0, 
 \quad 
 \theta^{(1)}(L)&=& 0 , 
 \quad 
 \theta^{(1)}(L^{-1})&=& 0, 
 \quad 
 \theta^{(1)} (t) &= & 1 .
%%%%%%%%%
 \end{align*}
\item 
The Galois group is the Hopf algebra 
$$
\gH\sb q := C \langle e, \, e ^{-1}, \, g, \, h, \, 
h^{-1}
\rangle \sb{alg},
$$
satisfying commutation relations
$$
ee^{-1}= e^{-1}e =1, \quad 
hh^{-1}= h^{-1}h= 1, 
\quad eg =q ge, \quad hg =lgh. 
$$ 
 Co-algebra structure $\Delta :\gH \sb q \to \gH\sb q \otimes \sb C \gH\sb q$:
 $$
 \Delta (e^{\pm 1}) =e^{\pm 1} \otimes e^{\pm 1}, \,
 %\Delta (e^{-1}) =e^{-1} \otimes e^{-1}, \,
 \Delta (h^{\pm 1}) =h^{\pm 1}\otimes h^{\pm 1}, \,
 %\Delta (h^{-1}) =h^{-1}\otimes h^{-1}, \, 
 \Delta (g)=g\otimes 1 + e \otimes g.
 $$
 The co-unit $\epsilon : \gH\sb q \to C $ is given by 
 $$
 \epsilon (e) = \epsilon ( e^{-1}) =\epsilon (h) = \epsilon (h^{-1}) = 1, \qquad \epsilon (g) = 0.
 $$ 
 \end{enumerate} 
The last example is inspired of work of Masatoshi Noumi \cite{nou92} on the quantization of hypergeometric functions. His idea is that $q$-hypergeometric functions should live on the quantized Grassmannians. Namely, he quantizes the framework of Gelfand of defining general hypergeometric functions. 
\begin{example} Let $V$ be the natural $2$-dimensional representation of $U\sb{q}(sl\sb 2)$ over $C $. Hence $V$ is a left
 $U\sb{q}(sl\sb 2)$-module. So we can speak of the Picard-Vessiot extension $R/C $ attached to the left $U\sb q (sl\sb 2)$-module $V$. 
 The argument in the Examples so far studied allows us to 
 guess that $R$ is given by 
$$
R:= C \langle a, b, \, c, \, d \rangle \sb {alg}.
$$  
 with relations
 $$
 ab = qba, \ bd =qdb, \ 
 ac =qca, \ cd = qdc, \ 
 bc = cb, \ 
 ad - da = (q + q^{-1})bc, \ 
 ad -qbc = 1.
 $$
 \par
 Imagine a matrix 
 $$
 \begin{bmatrix}
 a & b \\
 c & d
 \end{bmatrix}
 $$
 and on the space of matrices, the quantum group or Hopf algebra 
 $U\sb q (sl\sb 2)$ operates from right. 
 \end{example}
 Let us recall the definitions. 
 The Hopf algebra $U\sb q (sl\sb 2) = C \langle q^{\pm \frac{H}{2}}, \, X, \, Y\rangle$ is generated by four elements 
 $$
 1,\quad q^{\frac{H}{2}}, \quad q^{-\frac{H}{2}}, \quad X , \quad Y
 $$
 over $C $ satisfying the 
commutation relations 
 $$
 q^{\frac{H}{2}} q^{-\frac{H}{2}}= q^{-\frac{H}{2}}q^{\frac{H}{2}} = 1, \, %[q^H, \, X]=H, \, 
 q^{\frac{H}{2}} X q^{-\frac{H}{2}} = q^2 X ,
 q^{\frac{H}{2}} Y q^{-\frac{H}{2}} = q^{-2} Y , 
 \, [X, \, Y ]= \frac{q^{H} - q^{-H}}{q-q^{-1}}.
 $$
 The co-algebra structure $\Delta : U\sb q (sl\sb 2) \to 
 U\sb q (sl\sb 2) \otimes \sb C U\sb q (sl\sb 2)$
 is given by 
 $$
 \Delta ( q^{\pm \frac{H}{2}} ) = q^{\pm \frac{H}{2}} \otimes q^{\pm \frac{H}{2}} , \quad 
\Delta (X) = X \otimes 1 + q^{\frac{H}{2}} \otimes X, \quad 
 \Delta (Y) = Y\otimes q^{-\frac{H}{2}} + 1\otimes Y.
 $$
 We define the co-unit $\epsilon : U\sb q (sl\sb 2 ) \to C $ by 
 $$
 \epsilon (q^{\pm \frac{H}{2}})= 1 , \quad \epsilon (X) =\epsilon (Y) = 0.
 $$
 See S. Majid \cite{maj95}, 3.2, for example. 
 \par 
 The $C $-algebra $R$ is a $U\sb q (sl\sb 2)$-module algebra by the action 
 of $U\sb q(sl\sb 2)$ on $R$ defined by
 $$
 q^{\pm \frac{H}{2}}.
 \begin{bmatrix}
 a & b \\
 c & d 
 \end{bmatrix} 
 =
 \begin{bmatrix}
 q^{\pm \frac{1}{2}}a & q^{\mp \frac{1}{2}}b \\
 q^{\pm\frac{1}{2}} & q^{\mp \frac{1}{2} }d
 \end{bmatrix}, \quad 
 %%%%%%%%%%%%%%%%%%%%%%%%%
 X.
 \begin{bmatrix}
 a & b \\
 c & d 
 \end{bmatrix} 
 =
 \begin{bmatrix}
 0 & a \\
 0 & c
 \end{bmatrix}, 
 \quad 
 Y.
 \begin{bmatrix}
 a & b \\
 c & d 
 \end{bmatrix} 
 =
 \begin{bmatrix}
 b & 0 \\
 d & 0
 \end{bmatrix}.
 $$
 We had not exactly examined but we believed
 \begin{enumerate}
 \renewcommand{\labelenumi}{(\arabic{enumi})}

 \item 
 The algebra extension $R/C $ is the Picard-Vessiot extension for the $U\sb q(sl\sb 2)$-module $V$. 
 \item 
 The Galois group is 
 the Hopf algebra on the abstract $C $-algebra $R$ 
 with adjunction of 
 the co-algebra structure 
 defined by 
 $$
 \Delta (a)= a \otimes a + b \otimes c , \quad 
 \Delta (b)= a \otimes b + b \otimes d , \quad , 
 \Delta (c)= c \otimes a + d \otimes b , \quad 
 \Delta (d)= c \otimes b + d \otimes d 
 $$ 
 and the co-unit 
 $\epsilon : R \to C $ with
 $$
 \epsilon (a) = \epsilon (d) = 1, \, 
 \epsilon (b) = \epsilon (c) = 0.
 $$
 \end{enumerate}
Indeed, the exact consequence that follows from a general theory. See Remark \ref{rem:enveloping}. 
%%%%%%%%%%%%%%%%%%%%%%%%%%%%%%%%%%%%%%%%%
\part{Hopf-algebraic interpretations}\label{part_III}

This part is devoted to giving Hopf-algebraic interpretations to some of those results 
on linear equations with constant coefficients which have been obtained so far. 
The restriction ``constant coefficients" makes the situation quite simple, since differential 
modules are then quantized simply to modules over a Hopf algebra, say $\mathcal{H}$. 
It is shown in Section \ref{sec:co-rep_Hopf} that given a finite-dimensional
$\mathcal{H}$-module $M$, the left rigid, abelian tensor category $\{ \{ M\} \}$ generated by 
$M$ is isomorphic, by a standard duality, to the category $(comod\text{-}\mathcal{H}^{\circ}_{\pi})$ of 
finite-dimensional co-modules over what we call
the co-representation Hopf algebra $\mathcal{H}^{\circ}_{\pi}$, where $\pi$ indicates
the matrix representation of $\mathcal{H}$ associated with $M$. Therefore, this Hopf algebra 
$\mathcal{H}^{\circ}_{\pi}$ plays the role of the Picard-Vessiot quantum group of $M$. 
In Section \ref{sec:PV_ring_properties}
we define the notion of Picard-Vessiot rings for finite-dimensional $\mathcal{H}$-modules, and
prove that the Hopf algebra $\mathcal{H}^{\circ}_{\pi}$ is a Picard-Vessiot ring for $M$. 
We also show that (part of) the properties of Picard-Vessiot rings are enjoyed by some objects
that generalize $\mathcal{H}^{\circ}_{\pi}$; this suggests that Picard-Vessiot rings for a given $M$
may not be unique, in contrast to classical Picard-Vessiot Theory. 
The non-uniques is confirmed by explicit examples in the last Section 
\ref{sec:non-uniqueness}. Another contrast to the classical theory arises for the
tensor-equivalence $\{ \{ M\} \}\approx (comod\text{-}\mathcal{H}^{\circ}_{\pi})$ mentioned above. 
Since $\mathcal{H}^{\circ}_{\pi}$ is the trivial 
$\mathcal{H}^{\circ}_{\pi}$-torsor one might expect 
that the tensor-equivalence can be realized by the torsor as in the classical theory.
But this is not the case in general, as is explained in Section \ref{sec:tensor-equivalence};
the result modifies correctly Expectation \ref{160808a}.
The remaining Section \ref{sec:review_examples} reviews some examples from Part II, 
refining the results using the arguments in the present part. 

Throughout Part III we work over a fixed, arbitrary field $C$.
Vector spaces, \mbox{(co-)}algebras and Hopf algebras are supposed to be those over $C$. Given a
vector space $V$, we let $V^*$ denote the dual vector space. 
The unadorned $\otimes$ denotes the tensor product over $C$. 

\section{The co-representation Hopf algebra $\mathcal{H}^{\circ}_{\pi}$}\label{sec:co-rep_Hopf}
Let $\mathcal{H}$ be a Hopf algebra. Let $\mathcal{H}^{\circ}$ denote the dual Hopf algebra 
\cite[Section 6.2]{swe69}.
Thus, $\mathcal{H}^{\circ}$ is the filtered union of the finite-dimensional co-algebras $(\mathcal{H}/I)^*$
in $\mathcal{H}^*$, 
\[ \mathcal{H}^{\circ}=\bigcup_{I} \, (\mathcal{H}/I)^*, \]
where $I$ runs over the set of all co-finite ideals of $\mathcal{H}$; note that $(\mathcal{H}/I)^*$ is the
co-algebra dual to the finite-dimensional quotient algebra $\mathcal{H}/I$ of $\mathcal{H}$. 

For a representation-theoretic interpretation of $\mathcal{H}^{\circ}$, note that each $I$ coincides
with the kernel of some matrix representation, $\pi : \mathcal{H}\to M_n(C)$, of $\mathcal{H}$.
Then one recalls that $(\mathcal{H}/I)^*$ coincides with the image
\[ \mathrm{cf}(\pi)=\mathrm{Im}(\pi^*) \]
of the dual $\pi^*:M_n(C)^* \to \mathcal{H}^*$ of $\pi$. 
Let $\{ e_{ij}^*\}$ denote the dual basis of the basis $\{ e_{ij} \}$ of $M_n(C)$ consisting of the matrix
units $e_{ij}$, and set $c_{ij} =\pi^*(e_{ij}^*)$. Then $\mathrm{cf}(\pi)$ is a co-algebra
spanned by $c_{ij}$; it may be called
the \emph{coefficient co-algebra} of $\pi$, whose structure is determined by
\[ \Delta(c_{ij}) =\sum_{k=1}^n c_{ik}\otimes c_{kj},\quad \epsilon(c_{ij})=\delta_{ij}. \]
We present as
\begin{equation}\label{eq:co-rep_matrix}
\mathbf{Y}_{\pi} = \big[ c_{ij} \big]_{1\le i,j\le n}= 
\begin{bmatrix} 
c_{11}& c_{12} & \dots \\ 
c_{21} & c_{22} & \dots \\
\vdots & \vdots & \ddots 
\end{bmatrix}, 
\end{equation}
and call this the \emph{co-representation matrix} of $\pi$. 
One sees that $\mathrm{cf}(\pi)=\mathrm{cf}(\pi')$, if matrix representations $\pi$ and $\pi'$ are
equivalent. We have
\[ \mathcal{H}^{\circ}= \bigcup_{\pi}\, \mathrm{cf}(\pi), \]
where $\pi$ runs over the set of all equivalence classes of matrix representations of $\mathcal{H}$. 
Given a matrix representation $\pi$, 
the image $S(\mathrm{cf}(\pi))$ of $\mathrm{cf}(\pi)$ by the antipode $S$ equals the coefficient co-algebra 
$\mathrm{cf}(\pi^t)$ of the transpose $\pi^t$ of $\pi$. Given another matrix representation $\pi'$,
we have $\mathrm{cf}(\pi) \subset \mathrm{cf}(\pi')$ if and only if $\pi$ is a sub-quotient of
the direct sum $\pi'\oplus \dots \oplus \pi'$ of some copies of $\pi'$. 

Let 
$(\mathcal{H}\text{-}mod)$\ (resp., $(comod\text{-}\mathcal{H}^{\circ})$) 
denote the category of left $\mathcal{H}$-modules (resp., right $\mathcal{H}^{\circ}$-co-modules)
of finite dimension. These two categories are $C$-linear abelian tensor categories. They are both 
left rigid. Indeed, given an object $M$, the dual $M^*$ is naturally a \emph{right} $\mathcal{H}$-module
(resp., a \emph{left} $\mathcal{H}^{\circ}$-co-module). This, with the side switched through the antipode,
gives the left dual of $M$. We have a $C$-linear tensor-isomorphism 
\begin{equation}\label{eq:tensor-isom}
(\mathcal{H}\text{-}mod)\simeq (comod\text{-}\mathcal{H}^{\circ}),
\end{equation}
which is given by the following one-to-one correspondence between the structures defined on a fixed 
vector space $M$ of finite dimension. 
Given an $\mathcal{H}^{\circ}$-co-module structure 
$\rho : M \to M \otimes \mathcal{H}^{\circ}$, we present it as
\begin{equation}\label{eq:sigma_notation}
\rho(m)=\sum_{(m)} m_{(0)} \otimes m_{(1)},
\end{equation}
following \cite[Section 2.0]{swe69}. 
Then the corresponding $\mathcal{H}$-module structure is defined by
\[ hm = \sum_{(m)} m_{(0)} \, \langle m_{(1)}, h\rangle,\quad m \in M,\ h\in \mathcal{H}. \]
Conversely, given an $\mathcal{H}$-module structure on $M$, or a matrix representation 
$\pi : \mathcal{H} \to M_n(C)$ with respect to some basis $\{ m_i\}_{1\le i\le n}$ of $M$, the corresponding 
$\mathcal{H}^{\circ}$-co-module structure is defined by
\[ \rho(m_j) = \sum_{i=1}^n m_i \otimes c_{ij},\quad 1 \le j \le n, \]
where $\big[ c_{ij}\big]_{1\le i,j\le n}$ is the co-representation matrix of $\pi$. 
See \cite[Section 2.1]{swe69}. 

Let $M \in (\mathcal{H}\text{-}mod)$. 
Let $\{ \{ M \} \}$ denote the left rigid, abelian 
tensor sub-category of $(\mathcal{H}\text{-}mod)$ generated by $M$. It is a full sub-category
consisting of those objects which are sub-quotients of 
some direct sum $L_1 \oplus \dots \oplus L_s$, $s \ge 0$, of tensor products 
$L_i=X_{i1}\otimes \dots \otimes X_{i,t_i}$,
$t_i \ge 0$, where $X_{ij}$ is either 
$M$, $M^*$, $M^{**}$ or some further iterated left-dual $M^{**\dots *}$ of $M$. 

Choose a matrix representation $\pi$ associated with $M$, and define $\mathcal{H}^{\circ}_{\pi}$
to be the smallest Hopf sub-algebra of $\mathcal{H}^{\circ}$ that includes $\mathrm{cf}(\pi)$. If
$\big[ c_{ij} \big]_{1\le i,j\le n}$ is the co-representation matrix of $\pi$, then $\mathcal{H}^{\circ}_{\pi}$
is the sub-algebra of $\mathcal{H}^{\circ}$ which is generated by the images
\[ S^k(c_{ij}),\quad k\ge 0,\ 1\le i,j \le n \]
of all $c_{ij}$ by the iterated antipodes. We call $\mathcal{H}_{\pi}^{\circ}$ the \emph{co-representation
Hopf algebra} of $\pi$, or of $M$. 
The category $(comod\text{-}\mathcal{H}_{\pi}^{\circ})$ of finite-dimensional right 
$\mathcal{H}^{\circ}_{\pi}$-co-modules is regarded as a full sub-category of 
$(comod\text{-}\mathcal{H}^{\circ})$. 

We now reach the following standard result, which is easy to see. 

\begin{proposition}\label{prop:restricted_tensor-isom}
The isomorphism \eqref{eq:tensor-isom} restricts to the $C$-linear tensor-isomorphism
\begin{equation}\label{eq:restricted_tensor-isom}
\{ \{ M \} \} \simeq (comod\text{-}\mathcal{H}^{\circ}_{\pi}).
\end{equation}
\end{proposition}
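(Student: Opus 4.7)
The plan is to exploit the tensor-isomorphism \eqref{eq:tensor-isom} by tracking, for each finite-dimensional object $N\in(\mathcal{H}\text{-}mod)$, the coefficient co-algebra $\mathrm{cf}(N)\subset\mathcal{H}^{\circ}$, i.e.\ the smallest sub-coalgebra of $\mathcal{H}^{\circ}$ such that the structure map $\rho$ of the corresponding co-module satisfies $\rho(N)\subset N\otimes\mathrm{cf}(N)$. Since a finite-dimensional $\mathcal{H}^{\circ}$-co-module factors through $\mathcal{H}^{\circ}_{\pi}$ if and only if its coefficient co-algebra is contained in $\mathcal{H}^{\circ}_{\pi}$, the proposition reduces to the equivalence: a finite-dimensional $\mathcal{H}$-module $N$ belongs (up to isomorphism) to $\{ \{ M\} \}$ if and only if $\mathrm{cf}(N)\subset\mathcal{H}^{\circ}_{\pi}$. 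Once this is proved, the tensor structure on both sides of \eqref{eq:restricted_tensor-isom} is inherited from the ambient tensor-isomorphism \eqref{eq:tensor-isom}, and no further verification is required.

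First I will record the four elementary formulas
\[ \mathrm{cf}(N_1\oplus N_2)=\mathrm{cf}(N_1)+\mathrm{cf}(N_2),\quad \mathrm{cf}(N_1\otimes N_2)\subset \mathrm{cf}(N_1)\cdot \mathrm{cf}(N_2), \]
\[ \mathrm{cf}(N^*)=S(\mathrm{cf}(N)),\quad \mathrm{cf}(N')\subset\mathrm{cf}(N)\ \text{whenever $N'$ is a sub-quotient of $N$}, \]
where the product in the second identity is taken inside the algebra $\mathcal{H}^{\circ}$. All four are standard consequences of the formulas defining the convolution product and the antipode of $\mathcal{H}^{\circ}$, and in representation-theoretic language they merely say that the co-representation matrix of a tensor product (resp.\ direct sum, dual) is the Kronecker product (resp.\ block sum, transpose-inverse) of the given matrices. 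Because $\mathcal{H}^{\circ}_{\pi}$ is by construction closed under sum, product and antipode and contains $\mathrm{cf}(\pi)=\mathrm{cf}(M)$, a straightforward structural induction along the formation of $\{\{M\}\}$ then yields the forward implication: every $N\in\{\{M\}\}$ has $\mathrm{cf}(N)\subset\mathcal{H}^{\circ}_{\pi}$, so it corresponds under \eqref{eq:tensor-isom} to an object of $(comod\text{-}\mathcal{H}^{\circ}_{\pi})$.

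For the converse I will rely on the standard co-algebra-theoretic fact that a finite-dimensional right co-module $N$ over any co-algebra embeds as a sub-comodule into a finite direct sum of copies of $\mathrm{cf}(N)$ regarded as the regular right co-module over itself; this reduces the task to exhibiting $\mathrm{cf}(N)$ itself as a sub-co-module of (the co-module attached to) some object in $\{\{M\}\}$. Since $\mathcal{H}^{\circ}_{\pi}$ is generated as a $C$-algebra by $\bigcup_{k\ge 0}S^k(\mathrm{cf}(\pi))$, the finite-dimensional sub-coalgebra $\mathrm{cf}(N)$ is contained in a finite sum of products $S^{k_1}(\mathrm{cf}(\pi))\cdots S^{k_r}(\mathrm{cf}(\pi))$. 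Each factor $S^k(\mathrm{cf}(\pi))$ is the coefficient co-algebra of the $k$-th iterated left dual of $M$, and using the identity $\mathrm{cf}(L_1\otimes\cdots\otimes L_r)=\mathrm{cf}(L_1)\cdots\mathrm{cf}(L_r)$ (which holds as equality once one takes the spanning co-algebra on the right) together with the direct-sum formula, one sees that the finite sum of products above is precisely the coefficient co-algebra of a direct sum of tensor products of iterated left duals of $M$; this latter object visibly belongs to $\{\{M\}\}$, and so admits $N$ as a sub-quotient.

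The main technical point is step three: translating the purely algebraic inclusion $\mathrm{cf}(N)\subset\mathcal{H}^{\circ}_{\pi}$ into an actual co-module-theoretic realization of $N$ as a sub-quotient of a concrete object built by the operations defining $\{\{M\}\}$. This is a routine but careful piece of bookkeeping based on the correspondence between products in $\mathcal{H}^{\circ}$ and tensor products of representations, together with the embedding-into-coefficient-coalgebra fact recalled above; see \cite[Chapters 2 and 6]{swe69}. No essential obstacle is expected, but some care is required to keep the side on which the antipode enters correct when iterated left duals are tracked through the duality. Once both inclusions are in hand, the fact that \eqref{eq:tensor-isom} is already a $C$-linear tensor-isomorphism is inherited verbatim by its restriction, yielding the desired isomorphism \eqref{eq:restricted_tensor-isom}.
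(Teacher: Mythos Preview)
Your proposal is correct and follows the standard route implicit in the paper's setup; indeed, the paper itself gives no proof beyond the phrase ``easy to see,'' having already laid out the ingredients you use: the identification $S(\mathrm{cf}(\pi))=\mathrm{cf}(\pi^t)$ for duals, and the key fact that $\mathrm{cf}(\pi)\subset\mathrm{cf}(\pi')$ if and only if $\pi$ is a sub-quotient of a direct sum of copies of $\pi'$. Your argument simply unpacks these into the two-way inclusion $N\in\{\{M\}\}\Leftrightarrow\mathrm{cf}(N)\subset\mathcal{H}^{\circ}_{\pi}$, which is exactly the intended reasoning.
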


\section{Picard-Vessiot rings in the non-commutative situation}\label{sec:PV_ring_properties}

Let $M \in (\mathcal{H}\text{-}mod)$, and keep the notation as above. 
In view of Proposition \ref{prop:restricted_tensor-isom} we see 
\begin{itemize}
\item[(I)] $\mathcal{H}^{\circ}_{\pi}$ plays the role of the Picard-Vessiot quantum group of $M$.
\end{itemize}
Note that the isomorphism $\eqref{eq:tensor-isom}$ extends to a $C$-linear tensor-isomorphism between
the category of locally finite left $\mathcal{H}$-modules and the category $(Comod\text{-}\mathcal{H}^{\circ})$
of right $\mathcal{H}^{\circ}$-co-modules of possibly infinite dimension.
Since $\mathcal{H}^{\circ}_{\pi}$ is naturally in $(Comod\text{-}\mathcal{H}^{\circ})$, it is a left 
$\mathcal{H}$-module, and is indeed a left $\mathcal{H}$-module algebra.
The $\mathcal{H}$-module structure is explicitly given by
\begin{equation}\label{eq:explicit_module_struc}
h x = \sum_{(x)} x_{(1)}\, \langle x_{(2)}, h\rangle,\quad h\in \mathcal{H},\ x \in \mathcal{H}^{\circ}_{\pi}. 
\end{equation}

We say that $M$ \emph{is trivialized} by a left $\mathcal{H}$-module algebra $\mathcal{R}$ 
(or $\mathcal{R}$ \emph{trivializes} $M$),
if the $\mathcal{H}$-module 
$M \otimes \mathcal{R}$ of tensor product is isomorphic to the direct sum of some copies of $\mathcal{R}$. 
The same term was used in Observation \ref{obs4}, replacing the $M \otimes \mathcal{R}$ here 
with $\mathcal{R}\otimes M$ with conversely ordered tensor factors; it will be shown 
in Remark \ref{rem:bijective_antipode} (3) that the order does not matter under some assumption
that is satisfied by our examples.

\begin{proposition}\label{prop:PV_properties}
We have the following.
\begin{itemize}
\item[$\mathrm{(II)}$] The $\mathcal{H}$-module $M$ is trivialized by $\mathcal{H}^{\circ}_{\pi}$. 
\item[$\mathrm{(III)}$] The $\mathcal{H}$-module algebra $\mathcal{H}^{\circ}_{\pi}$ is simple in the sense
that it includes no non-trivial $\mathcal{H}$-stable ideals. Moreover, it 
includes no non-trivial $\mathcal{H}$-stable right ideals.
\end{itemize}
\end{proposition}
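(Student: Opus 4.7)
My plan for Proposition~\ref{prop:PV_properties} is to reduce both parts to short Hopf-algebraic manipulations via the tensor-equivalence of Proposition~\ref{prop:restricted_tensor-isom}, transporting everything to the right $\mathcal{H}^{\circ}$-co-module side, where the co-representation matrix $\mathbf{Y}_{\pi}=[c_{ij}]$ serves as the natural coordinate.

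For (II), trivialization of $M$ by $\mathcal{H}^{\circ}_{\pi}$ translates, under the equivalence, into the assertion that $M\otimes \mathcal{H}^{\circ}_{\pi}$, endowed with the diagonal right $\mathcal{H}^{\circ}$-co-action, is isomorphic to a direct sum of copies of $\mathcal{H}^{\circ}_{\pi}$ with its regular right co-action. Relative to the basis $\{m_1,\dots,m_n\}$ of $M$ adapted to $\pi$, I will introduce the candidate co-invariants
\[ u_k := \sum_{i=1}^n m_i \otimes S(c_{ik}) \in M \otimes \mathcal{H}^{\circ}_{\pi},\qquad 1\le k \le n, \]
and verify $\rho(u_k) = u_k\otimes 1$ by a short Sweedler-notation computation relying on $\Delta(S(c_{ik})) = \sum_l S(c_{lk})\otimes S(c_{il})$ together with the antipode identity $\sum_i c_{ji}S(c_{il}) = \epsilon(c_{jl}) = \delta_{jl}$. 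I will then exhibit the mutually inverse co-module maps
\[ \phi\colon V\otimes \mathcal{H}^{\circ}_{\pi} \to M\otimes \mathcal{H}^{\circ}_{\pi},\quad u_k\otimes x \mapsto u_k\cdot x, \]
\[ \psi\colon M\otimes \mathcal{H}^{\circ}_{\pi} \to V\otimes \mathcal{H}^{\circ}_{\pi},\quad m_j\otimes x \mapsto \sum_{k=1}^n u_k\otimes c_{kj}x, \]
where $V=\bigoplus_k Cu_k$ carries the trivial co-action. That $\phi\psi$ and $\psi\phi$ are the identity will reduce at once to the two antipode identities $\sum_k S(c_{ik})c_{kj} = \delta_{ij}$ and $\sum_k c_{li}S(c_{ik}) = \delta_{lk}$, and the fact that $\phi$, $\psi$ are morphisms of co-modules (not merely linear) is a computation of the same flavour. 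Transporting back through Proposition~\ref{prop:restricted_tensor-isom} yields the desired $\mathcal{H}$-module isomorphism $M\otimes \mathcal{H}^{\circ}_{\pi} \cong (\mathcal{H}^{\circ}_{\pi})^n$.

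For (III), the key observation is that, since $\mathcal{H}^{\circ}_{\pi}$ is a Hopf \emph{sub-}algebra of $\mathcal{H}^{\circ}$, the left $\mathcal{H}$-action on $\mathcal{H}^{\circ}_{\pi}$ recorded in \eqref{eq:explicit_module_struc} corresponds, under the tensor-equivalence, to the regular right $\mathcal{H}^{\circ}_{\pi}$-co-action, i.e., to the co-multiplication $\Delta$ of $\mathcal{H}^{\circ}_{\pi}$ itself. An $\mathcal{H}$-stable right ideal of $\mathcal{H}^{\circ}_{\pi}$ is therefore precisely a right ideal that is also a right co-ideal of the Hopf algebra $\mathcal{H}^{\circ}_{\pi}$. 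The claim thus reduces to the following general Hopf-algebraic fact: for any Hopf algebra $K$, the only right ideals $I\subset K$ with $\Delta(I)\subset I\otimes K$ are $0$ and $K$. I will prove this by noting that such an $I$ becomes a right--right Hopf module over $K$ (the compatibility $\Delta(xy)=\sum x_{(1)}y_{(1)}\otimes x_{(2)}y_{(2)}$ for $x\in I$, $y\in K$ is automatic from $\Delta$ being an algebra map and $I$ being a right ideal) and invoking the fundamental theorem of Hopf modules: it gives $I \cong I^{\mathrm{co}\text{-}K}\otimes K$, with $I^{\mathrm{co}\text{-}K} = I \cap K^{\mathrm{co}\text{-}K} = I\cap C\cdot 1$. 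Hence $I = 0$ if $1\notin I$ and $I = K$ otherwise, settling the stronger right-ideal version of (III); the two-sided ideal version follows a fortiori. The main technical obstacle, if any, lies in the bookkeeping for (II)---keeping track of which co-action is in play on each of $V\otimes \mathcal{H}^{\circ}_{\pi}$, $M\otimes \mathcal{H}^{\circ}_{\pi}$ and $\mathcal{H}^{\circ}_{\pi}$, and applying the antipode identities in the correct direction (distinguishing $\sum x_{(1)}S(x_{(2)})$ from $\sum S(x_{(1)})x_{(2)}$)---whereas (III) is essentially a one-line consequence of the fundamental theorem of Hopf modules once the correspondence is set up.
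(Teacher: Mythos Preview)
Your proposal is correct and follows essentially the same approach as the paper. For (II) the paper writes the isomorphism compactly in Sweedler notation as $m\otimes x\mapsto \sum m_{(0)}\otimes m_{(1)}x$ with inverse $m\otimes x\mapsto \sum m_{(0)}\otimes S(m_{(1)})x$, which is exactly your $\psi$ and $\phi$ unpacked in coordinates via the basis $u_k$; for (III) the paper likewise invokes the fundamental theorem of Hopf modules (citing Sweedler, Theorem~4.0.5), observing that any Hopf sub-algebra $\mathfrak{H}\subset\mathcal{H}^{\circ}$ is an $\mathfrak{H}$-Hopf module and hence has no non-trivial $\mathcal{H}$-stable right ideals---the same argument you spell out.
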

\begin{proof}
(II)\ Let $(M)\otimes \mathcal{H}^{\circ}_{\pi}\, (=(\mathcal{H}^{\circ}_{\pi})^{\dim M})$ denote the 
left $\mathcal{H}$-module for which $\mathcal{H}$ acts on the single factor $\mathcal{H}^{\circ}_{\pi}$. 
Then a desired isomorphism 
\begin{equation}\label{eq:desired_isom}
M \otimes \mathcal{H}^{\circ}_{\pi} 
\overset{\simeq}{\longrightarrow} (M) \otimes \mathcal{H}^{\circ}_{\pi}
\end{equation}
is given, with the notation \eqref{eq:sigma_notation}, by 
$m \otimes x \mapsto \sum_{(m)} m_{(0)}\otimes m_{(1)}x$; the inverse is given by
$m \otimes x \mapsto \sum_{(m)} m_{(0)}\otimes S(m_{(1)})x$.

(III)\ In general, every Hopf sub-algebra $\mathfrak{H}$ of $\mathcal{H}^{\circ}$ is regarded naturally as
a left $\mathcal{H}$-module algebra, and is simple. Indeed, $\mathfrak{H}$ is a $\mathfrak{H}$-Hopf module
\cite[Section 4.1]{swe69}, whence by \cite[Theorem 4.0.5]{swe69}, 
it includes no non-trivial $\mathcal{H}$-stable right ideals. 
\end{proof}

\begin{remark}\label{rem:PV_properties}
Properties (II) and (III) above suggest us to call $\mathcal{H}^\circ_{\pi}$ 
a Picard-Vessiot ring for $M$, as we will actually do; see Definition \ref{def:PV_ring}.
However, the algebra is generated by the entries of all
\[ 
\mathbf{Y}_0=\mathbf{Y}_{\pi}=\big[ c_{ij} \big] , \ 
\mathbf{Y}_1=\big[ S(c_{ij}) \big] , \
\mathbf{Y}_2=\big[ S^2(c_{ij}) \big] , \dots .
 \]
Note that if $k$ is even, $\mathbf{Y}_k$ and $\mathbf{Y}_{k+1}$ are inverse to each other,
while if $k$ is odd, the transposes $\mathbf{Y}_k^t$ and $\mathbf{Y}_{k+1}^t$ are inverse to each other.
Thus the circumstance is naturally different from the commutative situation in which the Picard-Vessiot 
ring is generated by the matrix entries of some $\mathbf{Y}$ and its inverse $\mathbf{Y}^{-1}$. 
As a more crucial difference we will show the non-uniqueness, or namely, Properties (II) and (III) 
can be shared with other left $\mathcal{H}$-module algebras; see Section \ref{sec:non-uniqueness}. 
\end{remark}

Proposition \ref{prop:PV_properties} is generalized by the following.

\begin{proposition}\label{prop:split_simple}
Let
$\mathfrak{H}$ be an arbitrary Hopf sub-algebra of $\mathcal{H}^{\circ}$ including
$\mathcal{H}^{\circ}_{\pi}$. Let $\mathcal{R}$ be a non-zero 
right $\mathfrak{H}$-co-module algebra, which is naturally
regarded as a left $\mathcal{H}$-module algebra. 
\begin{itemize}
\item[$\mathrm{(II)}'$] If $\mathcal{R}$ is cleft, then the $\mathcal{H}$-module $M$ 
is trivialized by $\mathcal{R}$. 
\item[$\mathrm{(III)}'$] If $\mathcal{R}$ is an $\mathfrak{H}$-torsor, then 
it includes no non-trivial $\mathcal{H}$-stable right ideals. 
\end{itemize}
\end{proposition}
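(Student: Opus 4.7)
The plan is to translate both claims into the category of right $\mathfrak{H}$-co-modules via the tensor-isomorphism \eqref{eq:tensor-isom} (and its obvious extension to $\mathcal{H}$-module algebras versus $\mathfrak{H}$-co-module algebras). Since the co-representation of $M$ lies in $\mathcal{H}^{\circ}_{\pi}\subseteq \mathfrak{H}$, we may regard $M$ as a finite-dimensional right $\mathfrak{H}$-co-module, and write $\rho_M(m)=\sum m_{(0)}\otimes m_{(1)}$. Under this translation, (II)$'$ becomes the statement that $M\otimes\mathcal{R}$ with the diagonal $\mathfrak{H}$-co-action is isomorphic, as a right $\mathfrak{H}$-co-module, to $(M)\otimes\mathcal{R}$ with co-action only on the second tensor factor, while (III)$'$ asserts that $\mathcal{R}$ has no non-trivial Hopf-sub-modules in the category $\mathcal{M}^{\mathfrak{H}}_{\mathcal{R}}$ of right $(\mathcal{R},\mathfrak{H})$-Hopf modules.

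For (II)$'$, I would invoke a cleaving map, i.e., a convolution-invertible right $\mathfrak{H}$-co-linear map $\phi:\mathfrak{H}\to\mathcal{R}$, and define mutually inverse maps
\[ F(m\otimes r) = \sum m_{(0)} \otimes \phi(m_{(1)})\,r, \qquad G(m\otimes r)=\sum m_{(0)}\otimes \phi^{-1}(m_{(1)})\,r. \]
When $\mathcal{R}=\mathfrak{H}$ and $\phi=\mathrm{id}_{\mathfrak{H}}$, so that $\phi^{-1}=S$, these specialise precisely to the formulas in the proof of Proposition \ref{prop:PV_properties}. That $F$ and $G$ are mutually inverse follows from the convolution identity $\phi\ast\phi^{-1}=\phi^{-1}\ast\phi=\eta_{\mathcal{R}}\epsilon_{\mathfrak{H}}$ together with co-associativity on $M$, and $\mathfrak{H}$-co-linearity of $F$ follows from the right $\mathfrak{H}$-co-linearity of $\phi$. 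A cleaner alternative, which I would present as a second route, uses the standard equivalence between cleftness and the existence of an isomorphism $\mathcal{R}\simeq\mathcal{R}^{co\mathfrak{H}}\otimes\mathfrak{H}$ of right $\mathfrak{H}$-co-modules, reducing the claim to the Fundamental Theorem isomorphism $M\otimes\mathfrak{H}\simeq (M)\otimes\mathfrak{H}$ already exploited in Proposition \ref{prop:PV_properties}(II).

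For (III)$'$, an $\mathcal{H}$-stable right ideal $J\subseteq\mathcal{R}$ is precisely a Hopf-sub-module of $\mathcal{R}$ in $\mathcal{M}^{\mathfrak{H}}_{\mathcal{R}}$. The torsor hypothesis makes $\mathcal{R}/C$ a Hopf-Galois extension, faithful flatness being automatic over the base field $C$, so Schneider's structure theorem supplies an equivalence $\mathcal{M}^{\mathfrak{H}}_{\mathcal{R}}\simeq C\text{-Vect}$ given on objects by $N\mapsto N^{co\mathfrak{H}}$. Under this equivalence, $\mathcal{R}$ corresponds to $\mathcal{R}^{co\mathfrak{H}}=C$, so the lattice of its Hopf-sub-modules is in bijection with the two $C$-sub-spaces of $C$, forcing $J=0$ or $J=\mathcal{R}$. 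The main obstacle in both halves is bookkeeping rather than any conceptual gap: for (II)$'$ the sigma-notation verification of $\mathfrak{H}$-co-linearity of $F$ (and the correct conventions around the convolution inverse $\phi^{-1}$) must be done with some care in the not-necessarily-commutative setting, while for (III)$'$ the main effort is to confirm that the precise hypotheses of the version of Schneider's theorem being invoked are indeed satisfied by $\mathcal{R}/C$.
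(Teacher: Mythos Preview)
Your proposal is correct and follows essentially the same route as the paper. For (II)$'$ the paper isolates exactly your map $F$ (and its inverse $G$) as a separate lemma, and for (III)$'$ it cites Montgomery's Theorem~8.5.6, which is the Schneider structure theorem you invoke; your observation that $\mathcal{R}^{co\,\mathfrak{H}}=C$ makes the faithful-flatness hypothesis automatic is the key point in applying it here.
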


To prove the Proposition in a generalized situation, suppose that $\mathfrak{H}$ is an arbitrary Hopf algebra,
and let $\mathcal{R}$ be a non--zero right $\mathfrak{H}$-co-module algebra. 

First, as for (III)$'$, we say that $\mathcal{R}$ is an $\mathfrak{H}$-\emph{torsor} 
(or an $\mathcal{H}$-\emph{Galois extension} \cite[Definition 8.1.1]{mon93} over $C$), 
if the map $\mathcal{R} \otimes \mathcal{R} \to \mathcal{R} \otimes \mathfrak{H}$ given by 
$x\otimes y\mapsto \sum_{(y)}xy_{(0)}\otimes y_{(1)}$, with the same notation as in \eqref{eq:sigma_notation},
is bijective. Note that $\mathfrak{H}$ itself is naturally an $\mathfrak{H}$-torsor, 
whence (III)$'$ generalizes (III) of Proposition \ref{prop:PV_properties}. 
This (III)$'$ follows since we see from \cite[Theorem 8.5.6]{mon93} that 
every $\mathfrak{H}$-torsor 
includes no non-trivial $\mathcal{H}$-stable right ideals.

Next, as for (II)$'$, recall from \cite[Definition 7.2.1]{mon93} that
$\mathcal{R}$ is said to be \emph{cleft}, if there exists an $\mathfrak{H}$-co-module
map $\phi : \mathfrak{H} \to \mathcal{R}$ that is invertible 
with respect to the convolution-product \cite[Definition 1.2.1]{mon93}; the inverse $\phi^{-1}$ 
is then characterized by
\[ 
\sum_{(x)} \phi(x_{(1)})\, \phi^{-1}(x_{(2)})=\epsilon(x)1= 
\sum_{(x)} \phi^{-1}(x_{(1)})\, \phi(x_{(2)}),\quad x \in \mathfrak{H}.
\]
Note that $\mathfrak{H}$ itself is cleft with respect to $\phi=\mathrm{Id}$, the identity, and
$\phi^{-1}=S$, the antipode.
Hence (II)$'$ generalizes (II) of Proposition \ref{prop:PV_properties}. 
We see that (II)$'$ follows from the next Lemma.

\begin{lemma}\label{lemma:split}
Suppose that $\mathcal{R}$ is cleft.
Given a right $\mathfrak{H}$-co-module $N$, an $\mathfrak{H}$-co-module isomorphism
\begin{equation}\label{eq:isom_cleft}
N \otimes \mathcal{R} 
\overset{\simeq}{\longrightarrow} 
(N) \otimes \mathcal{R}
\end{equation}
(analogous to \eqref{eq:desired_isom}) is given by
$n \otimes x \mapsto \sum_{(n)} n_{(0)}\otimes \phi(n_{(1)})x$, where 
$N \otimes \mathcal{R}$ denotes the $\mathfrak{H}$-co-module of tensor product, and
$(N) \otimes \mathcal{R}$ denotes the $\mathfrak{H}$-co-module for which $\mathfrak{H}$ co-acts
on the single factor $\mathcal{R}$. 
\end{lemma}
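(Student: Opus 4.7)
The plan is to produce the explicit two-sided inverse of the stated map, using the convolution-inverse $\phi^{-1}$ of the cleaving map, and then check that both maps are $\mathfrak{H}$-co-module homomorphisms by a direct Sweedler-notation computation. Write $\alpha : N\otimes \mathcal{R}\to (N)\otimes \mathcal{R}$ for the map of the statement, and define a candidate inverse $\beta : (N)\otimes \mathcal{R}\to N\otimes \mathcal{R}$ by
\[ \beta(n\otimes x)=\sum_{(n)} n_{(0)}\otimes \phi^{-1}(n_{(1)})\, x, \]
where, as in $\alpha$, the Sweedler indices refer to the given $\mathfrak{H}$-co-action on $N$ (even though on the codomain side the factor $N$ carries the trivial co-action).

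The core verification is that $\beta\alpha=\mathrm{id}$ and $\alpha\beta=\mathrm{id}$. For instance, applying $\beta$ to $\alpha(n\otimes x)=\sum n_{(0)}\otimes \phi(n_{(1)})\,x$ and using co-associativity of the $N$-co-action to rewrite $n_{(0)(0)}\otimes n_{(0)(1)}\otimes n_{(1)}$ as $n_{(0)}\otimes n_{(1)}\otimes n_{(2)}$, one obtains
\[ \sum_{(n)} n_{(0)}\otimes \phi^{-1}(n_{(1)})\, \phi(n_{(2)})\, x, \]
which collapses to $n\otimes x$ by the defining identity $\sum \phi^{-1}(h_{(1)})\phi(h_{(2)})=\epsilon(h)1$ of the convolution-inverse. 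The reverse composition is analogous, using the dual identity $\sum \phi(h_{(1)})\phi^{-1}(h_{(2)})=\epsilon(h)1$. So $\alpha$ is bijective as a linear map.

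It remains to verify that $\alpha$ (equivalently $\beta$) respects the co-actions. Here I would use that $\phi$ is a right $\mathfrak{H}$-co-module map from the right regular co-module $\mathfrak{H}$, so that $\sum \phi(h)_{(0)}\otimes \phi(h)_{(1)}=\sum \phi(h_{(1)})\otimes h_{(2)}$. Applying the diagonal co-action of $N\otimes \mathcal{R}$ first and then $\alpha\otimes\mathrm{id}$ yields
\[ \sum n_{(0)}\otimes \phi(n_{(1)})\, x_{(0)}\otimes n_{(2)}\, x_{(1)}, \]
and applying $\alpha$ first and then the co-action of $(N)\otimes \mathcal{R}$ (which only touches the $\mathcal{R}$-factor) yields the same expression after using the above co-module property of $\phi$ together with co-associativity. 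This proves compatibility with the co-actions.

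The only subtle point, and the step I expect to require a little care in the write-up rather than any real difficulty, is keeping the two different co-module structures on the two sides straight: the Sweedler indices on $n$ always refer to the $\mathfrak{H}$-co-action given on $N$, while the ambient co-action in $(N)\otimes \mathcal{R}$ ignores the $N$-factor. Once this bookkeeping is fixed, the three verifications (two compositions and one co-module identity) are formal consequences of the cleaving axioms and co-associativity, so no further input is needed.
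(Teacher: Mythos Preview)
Your proof is correct and takes exactly the same approach as the paper, which simply states that the inverse is given by $n\otimes x\mapsto \sum_{(n)} n_{(0)}\otimes \phi^{-1}(n_{(1)})\,x$ without further comment. Your write-up just fills in the routine verifications (the two compositions and the co-module compatibility) that the paper leaves implicit.
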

\begin{proof}
The inverse is given by
$n \otimes x \mapsto \sum_{(n)} n_{(0)}\otimes \phi^{-1}(n_{(1)})x$.
\end{proof}

It is known (see \cite[Theorem 8.2.4]{mon93})
that the cleft $\mathfrak{H}$-torsors coincide with each of the following two classes of
non-zero right $\mathfrak{H}$-co-module algebras: 
\begin{itemize}
\item
Those $\mathfrak{H}$-torsors which are isomorphic 
to $\mathfrak{H}$ as $\mathfrak{H}$-co-module;
\item
The cleft right $\mathfrak{H}$-co-module algebras $\mathcal{R}=(\mathcal{R}, \rho)$
such that
$ \mathcal{R}^{co\, \mathfrak{H}}=C$, where 
$ \mathcal{R}^{co\, \mathfrak{H}} = \{ x \in \mathcal{R} \mid \rho(x)=x \otimes 1 \} $
denotes the sub-algebra of $\mathcal{R}$ consisting of all $\mathfrak{H}$-co-invariants. 
\end{itemize}

We remark that given an $\mathfrak{H}$-torsor $\mathcal{R}$, an $\mathfrak{H}$-co-module
map $\mathfrak{H} \to \mathcal{R}$ is an isomorphism if and only if it is invertible
with respect to the convolution-product; see the proof of \cite[Theorem 8.2.4]{mon93}. 

One sees easily that 
if $\mathfrak{H}$ is finite-dimensional, then
every $\mathfrak{H}$-torsor is necessarily cleft. 

\begin{definition}\label{def:PV_ring}
Let $M \in (\mathcal{H}\text{-}mod)$.
We say that a non-zero left $\mathcal{H}$-module algebra $\mathcal{R}$ is a \emph{Picard-Vessiot ring}
for $M$, if it trivializes $M$, and is simple. 
\end{definition}

By Proposition \ref{prop:PV_properties}, the co-representation Hopf algebra $\mathcal{H}^{\circ}_{\pi}$ is
a Picard-Vessiot ring for $M$. 
Proposition \ref{prop:split_simple} tells us that if $\mathfrak{H}$ is a Hopf sub-algebra
of $\mathcal{H}^{\circ}$ which includes $\mathcal{H}^{\circ}_{\pi}$, every cleft $\mathfrak{H}$-torsor
is a Picard-Vessiot ring for $M$. 

\begin{remark}\label{rem:bijective_antipode}
(1)\
Let $\mathfrak{H}$ be a Hopf algebra in general. Assume that the antipode $S$ of $\mathfrak{H}$ is bijective.
Then the inverse $\overline{S}$ of $S$ is
the unique linear endomorphism of $\mathfrak{H}$ such that
\[ 
\sum_{(x)} x_{(2)}\, \overline{S}(x_{(1)})=\epsilon(x)1= 
\sum_{(x)} \overline{S}(x_{(2)})\, x_{(1)},\quad x \in \mathfrak{H}.
\]
The assumption is satisfied if $\mathfrak{H}$ is finite-dimensional, commutative, co-commutative or
pointed \cite[Section 8.0]{swe69}. 

(2)\
Given an algebra $\mathcal{R}$, let $\mathcal{R}^{op}$ denote the opposite algebra. 
Then $\mathfrak{H}^{op}$, given the original co-algebra structure on $\mathfrak{H}$, is a bi-algebra.
This $\mathfrak{H}^{op}$ is a Hopf algebra if and only if $S$ is bijective. Suppose that this is the case.
Then $\overline{S}$ is the antipode of $\mathfrak{H}^{op}$. 
Through $\mathcal{R}\leftrightarrow \mathcal{R}^{op}$,
the (cleft) $\mathfrak{H}$-torsors and the (cleft) $\mathfrak{H}^{op}$-torsors are in one-to-one
correspondence. Let $\mathcal{R}$ be a cleft $\mathfrak{H}$-torsor, and suppose that 
$\phi : \mathfrak{H}\to \mathcal{R}$ is an $\mathfrak{H}$-co-module map which is invertible
with respect to the convolution-product. 
Regarded as an $\mathfrak{H}^{op}$-co-module map $\mathfrak{H}^{op} \to \mathcal{R}^{op}$,
$\phi$ remains invertible (see the remark before Definition \ref{def:PV_ring}), 
whence there uniquely exists a linear map $\overline{\phi} : \mathfrak{H}\to \mathcal{R}$ such that
\[ 
\sum_{(x)} \phi(x_{(2)})\, \overline{\phi}(x_{(1)})=\epsilon(x)1= 
\sum_{(x)} \overline{\phi}(x_{(2)})\, \phi(x_{(1)}),\quad x \in \mathfrak{H}.
\]
Note that if $\mathcal{R}=\mathfrak{H}$ and $\phi=\mathrm{Id}$, then $\overline{\phi}$ coincides with
the $\overline{S}$ above.

(3)\ 
Suppose that we are in the situation of Proposition \ref{prop:split_simple} and Lemma \ref{lemma:split}. 
Assume that the antipode of $\mathfrak{H}$ is bijective, and let $\mathcal{R}$ be a cleft $\mathfrak{H}$-torsor
with $\phi$, $\overline{\phi}$ as above. 
Given a right $\mathfrak{H}$-co-module $N$, we have an $\mathfrak{H}$-co-module isomorphism
analogous to \eqref{eq:isom_cleft},
\begin{equation*}
\mathcal{R} \otimes N
\overset{\simeq}{\longrightarrow} 
(N) \otimes \mathcal{R},\quad x \otimes n \mapsto \sum_{(n)} n_{(0)}\otimes x\, \phi(n_{(1)}), 
\end{equation*}
whose inverse is given by $n \otimes x \mapsto \sum_{(n)} x\, \overline{\phi}(n_{(1)})\otimes n_{(0)}$. 
In particular, in the situation of Proposition \ref{prop:PV_properties}, 
if the antipode of $\mathcal{H}^{\circ}_{\pi}$ is bijective, then we have an $\mathcal{H}$-module
isomorphism analogous to \eqref{eq:desired_isom}, 
\[
\mathcal{H}^{\circ}_{\pi} \otimes M
\overset{\simeq}{\longrightarrow} (M) \otimes \mathcal{H}^{\circ}_{\pi},\quad
x \otimes m \mapsto \sum_{(m)} \mapsto m_{(0)} \otimes x m_{(1)}. 
\]
In the examples which we will review in Section \ref{sec:review_examples}, the
Hopf algebras $\mathcal{H}^{\circ}_{\pi}$ are pointed, so that we have the last 
isomorphism for those. 
\end{remark}

\section{Analogous tensor-equivalences}\label{sec:tensor-equivalence}

Let $M \in (\mathcal{H}\text{-}mod)$, and keep the notation as above.
Let us return to Property (I). Recall that in the classical theory, 
Picard-Vessiot rings give the tensor-equivalences as found in 
\cite[Theorem 8.11]{amaetal09}, for example; for this, essential is that those rings are torsors. 
From (I) one may expect an analogous tensor-equivalence 
$\{ \{ M \} \} \approx (comod\text{-}\mathcal{H}^{\circ}_{\pi})$, since 
$\mathcal{H}^{\circ}_{\pi}$ is indeed a (trivial) $\mathcal{H}^{\circ}_{\pi}$-torsor. 
We will see that under some mild assumption, there exists such a tensor-equivalence, for which, however, 
$\mathcal{H}^{\circ}_{\pi}$ does not act as a torsor any more.
The result gives the ``corrected version of Expectation \ref{160808a}" as referred to 
just after the Expectation.

Let us write $\mathcal{R}$ for the trivial $\mathcal{H}^{\circ}_{\pi}$-torsor $\mathcal{H}^{\circ}_{\pi}$,
to make its role clearer. Recall that $\mathcal{R}$ has the natural left $\mathcal{H}$-module structure
as given by \eqref{eq:explicit_module_struc}. 
Define a right $\mathcal{H}^{\circ}_{\pi}$-co-module structure 
$\varrho : \mathcal{R} \to \mathcal{R}\otimes \mathcal{H}^{\circ}_{\pi}$ on $\mathcal{R}$ by
\[
\varrho(x) = \sum_{(x)} x_{(2)}\otimes S(x_{(1)}),\quad x \in \mathcal{R}, 
\]
where $S$ denotes the antipode of $\mathcal{H}^{\circ}_{\pi}$. Note that 
$(\mathcal{R}, \varrho)$ is not an $\mathcal{H}^{\circ}_{\pi}$-torsor nor even a
right $\mathcal{H}^{\circ}_{\pi}$-co-module algebra in general. 
Given $N \in (\mathcal{H}\text{-}mod)$, endow the left $\mathcal{H}$-module $\mathcal{R}\otimes N$
with the right $\mathcal{H}^{\circ}_{\pi}$-co-module structure induced by $\varrho$. Since the
$\mathcal{H}$-module structure on $\mathcal{R}$ commutes with $\varrho$, the $\mathcal{H}$-invariants
$(\mathcal{R} \otimes N)^{\mathcal{H}}$ in $\mathcal{R} \otimes N$, consisting of those
elements on which $\mathcal{H}$-acts trivially through the co-unit, form 
an $\mathcal{H}^{\circ}_{\pi}$-sub-co-module.

\begin{lemma}\label{lem:analogous_equivalence}
Assume that the antipode of $\mathcal{H}^{\circ}_{\pi}$ is bijective. Then
$N \mapsto (\mathcal{R} \otimes N)^{\mathcal{H}}$ gives a tensor-equivalence
$\{ \{ M \} \} \approx (comod\text{-}\mathcal{H}^{\circ}_{\pi})$. 
\end{lemma}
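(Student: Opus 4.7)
The plan is to construct a natural monoidal isomorphism $\eta$ from $F$ to $G$, where $F$ denotes the tensor-equivalence $\{ \{ M\} \} \simeq (comod\text{-}\mathcal{H}^{\circ}_{\pi})$ of Proposition \ref{prop:restricted_tensor-isom} (regarded as the identity on objects under \eqref{eq:restricted_tensor-isom}) and $G$ is the functor $N \mapsto (\mathcal{R}\otimes N)^{\mathcal{H}}$. Since $F$ is already a tensor-equivalence, producing such an $\eta$ transfers the property to $G$. For each $N\in \{ \{ M\} \}$ with induced right co-action $\rho_N(n)=\sum n_{(0)}\otimes n_{(1)}$, I would set
\[ \eta_N : N\longrightarrow \mathcal{R}\otimes N,\qquad n\longmapsto \sum \overline{S}(n_{(1)})\otimes n_{(0)}, \]
using the inverse antipode $\overline{S}$ of $\mathcal{H}^{\circ}_{\pi}$, which exists by hypothesis.

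First I would check that $\eta_N(n) \in (\mathcal{R}\otimes N)^{\mathcal{H}}$. Expanding $h\cdot\eta_N(n)$ via \eqref{eq:explicit_module_struc} for the $\mathcal{H}$-action on $\mathcal{R}$ and the analogous action on $N$, then using $\Delta\circ\overline{S} = \tau\circ(\overline{S}\otimes\overline{S})\circ\Delta$ together with the co-associativity of $\rho_N$, the required identity $h\cdot\eta_N(n) = \epsilon(h)\eta_N(n)$ reduces to the Hopf identity $\sum \overline{S}(x_{(2)})\, x_{(1)} = \epsilon(x)\cdot 1$, equivalent to the usual antipode axiom via the bijectivity of $S$. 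This is exactly where the hypothesis on $\overline{S}$ enters: the na\"\i ve formula with $S$ in place of $\overline{S}$ would instead need $\sum S(x_{(2)})x_{(1)} = \epsilon(x)\cdot 1$, which generally fails in a Hopf algebra with non-involutive antipode. A parallel computation (again using $\Delta\circ\overline{S} = \tau\circ(\overline{S}\otimes\overline{S})\circ\Delta$) shows $\eta_N$ is a morphism of $\mathcal{H}^{\circ}_{\pi}$-co-modules with respect to the structure on $G(N)$ induced by $\varrho$; one first verifies that $\varrho$ and the $\mathcal{H}$-action on $\mathcal{R}$ commute, so this co-module structure really descends to $\mathcal{H}$-invariants. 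A left inverse is given by $\zeta_N = (\epsilon\otimes\mathrm{id}_N)|_{G(N)}$: the relation $\zeta_N\circ\eta_N = \mathrm{id}_N$ is immediate from $\epsilon\circ\overline{S} = \epsilon$ and the co-unit axiom. The reverse composition follows from a dimension count: by the left-rigidity of $\{ \{ M\} \}$ and the standard identification $\mathrm{Hom}^{\mathcal{H}^{\circ}_{\pi}}(W, \mathcal{H}^{\circ}_{\pi}) \cong W^{*}$ for finite-dimensional co-modules $W$, one has $G(N) = \mathrm{Hom}_{\mathcal{H}}(C, \mathcal{R}\otimes N) \cong \mathrm{Hom}_{\mathcal{H}}(N^{*}, \mathcal{R}) \cong (N^{*})^{*} \cong N$, so $\dim G(N) = \dim N$ and the injectivity of $\eta_N$ forces it to be bijective. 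Naturality in $N$ is evident.

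The hard part will be the monoidal compatibility of $\eta$. Since the co-action on a tensor product is
\[ \rho_{N_1\otimes N_2}(n_1\otimes n_2) = \sum (n_1)_{(0)}\otimes(n_2)_{(0)}\otimes(n_1)_{(1)}(n_2)_{(1)}, \]
with the product in the non-commutative algebra $\mathcal{H}^{\circ}_{\pi}$, the anti-multiplicativity $\overline{S}(yz) = \overline{S}(z)\overline{S}(y)$ yields
\[ \eta_{N_1\otimes N_2}(n_1\otimes n_2) = \sum \overline{S}((n_2)_{(1)})\overline{S}((n_1)_{(1)}) \otimes (n_1)_{(0)} \otimes (n_2)_{(0)}, \]
which coincides precisely with the image of $\eta_{N_1}(n_1)\otimes\eta_{N_2}(n_2)$ under the ``swap and reverse-multiply'' map $(x_1\otimes n_1)\otimes(x_2\otimes n_2) \mapsto x_2 x_1 \otimes n_1 \otimes n_2$. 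A subtle point is that this map is \emph{not} $\mathcal{H}$-equivariant on all of $\mathcal{R}\otimes N_1\otimes\mathcal{R}\otimes N_2$, owing jointly to the non-commutativity of $\mathcal{R}$ and the non-cocommutativity of $\mathcal{H}$; yet the displayed formula shows that it restricts to a well-defined morphism $G(N_1)\otimes G(N_2) \to G(N_1\otimes N_2)$, and the compatibility of this reversed ordering with the anti-multiplicativity of $\overline{S}$ is exactly what makes $\eta$ monoidal. Choosing this as the tensor structure on $G$ makes $\eta$ a monoidal natural isomorphism $F\simeq G$, and the tensor-equivalence property of $F$ transfers to $G$, completing the proof.
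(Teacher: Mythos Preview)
Your map $\eta_N$, its proposed inverse $\zeta_N=(\epsilon\otimes\mathrm{id})$, and the monoidal structure $(x\otimes n)\otimes(y\otimes n')\mapsto yx\otimes(n\otimes n')$ are exactly the ones the paper uses, so the overall architecture matches. The one genuine gap is your dimension count for surjectivity. The adjunction coming from left rigidity gives
\[
\mathrm{Hom}_{\mathcal{H}}(N^{*},\mathcal{R})\cong \mathrm{Hom}_{\mathcal{H}}(C,\,N\otimes\mathcal{R}),
\]
not $\mathrm{Hom}_{\mathcal{H}}(C,\,\mathcal{R}\otimes N)$; in a non-braided tensor category you cannot swap the factors, so the chain $G(N)\cong\mathrm{Hom}_{\mathcal{H}}(N^{*},\mathcal{R})\cong(N^{*})^{*}$ does not go through as written.

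The paper avoids this by simply checking that $\zeta_N$ is a two-sided inverse. You already have $\zeta_N\circ\eta_N=\mathrm{id}_N$. For the other direction, take $\sum_i x_i\otimes n_i\in(\mathcal{R}\otimes N)^{\mathcal{H}}$; the $\mathcal{H}$-invariance is equivalent to $\mathcal{H}^{\circ}_{\pi}$-co-invariance, i.e.
\[
\sum_i (x_i)_{(1)}\otimes (n_i)_{(0)}\otimes (x_i)_{(2)}(n_i)_{(1)}=\sum_i x_i\otimes n_i\otimes 1.
\]
Apply $\overline{S}$ to the third tensor slot and multiply it on the left into the first slot; using $\sum\overline{S}(x_{(2)})x_{(1)}=\epsilon(x)1$ this collapses to
\[
\sum_i \epsilon(x_i)\,\overline{S}\big((n_i)_{(1)}\big)\otimes (n_i)_{(0)}=\sum_i x_i\otimes n_i,
\]
which is precisely $\eta_N\big(\zeta_N(\sum_i x_i\otimes n_i)\big)=\sum_i x_i\otimes n_i$. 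This direct verification replaces your dimension argument and completes the proof along the paper's lines.
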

\begin{proof}
Let $\overline{S}$ denote the inverse of the antipode $S$ of $\mathcal{H}^{\circ}_{\pi}$. 
Let $N \in \{\{ M\} \}$, and regard it as an object 
also in $(comod\text{-}\mathcal{H}^{\circ}_{\pi})$ through \eqref{eq:tensor-isom}.
Using the notation \eqref{eq:sigma_notation}, we see that
\[ 
\alpha_N : N \to (\mathcal{R} \otimes N)^{\mathcal{H}},\quad
\alpha_N(n) = \sum_{(n)}\overline{S}(n_{(1)})\otimes n_{(0)}.
\]
is an isomorphism in $(comod\text{-}\mathcal{H}^{\circ}_{\pi})$.
Indeed, the last sum is $\mathcal{H}$-invariant since it is seen to be 
$\mathcal{H}^{\circ}_{\pi}$-co-invariant. The inverse $\alpha_N^{-1}$ 
associates $\sum_i \epsilon(x_i)\, n_i$ to an element 
$\sum_i x_i \otimes n_i \in (\mathcal{R} \otimes N)^{\mathcal{H}}$.
The isomorphism $\alpha_N$ is natural in $N$. It translates the tensor-isomorphism 
\eqref{eq:tensor-isom} into the tensor-equivalence 
$F: N \mapsto (\mathcal{R} \otimes N)^{\mathcal{H}}$, whose tensor structure
is given by $\alpha_C$ and
\[ 
F(N) \otimes F(N')
\to F(N\otimes N'),\quad 
\big(\sum_i x_i \otimes n_i\big)\otimes \big(\sum_j y_j \otimes n'_j\big)\mapsto
\sum_{i,j}y_jx_i \otimes (n_i\otimes n'_j)
\]
for $N, N'\in \{ \{ M \} \}$. Note that the last map, composed with $\alpha_N \otimes \alpha_N'$,
coincides with $\alpha_{N\otimes N'}$, and is, therefore, an isomorphism. 
\end{proof}

Assume that $\mathcal{H}$ is co-commutative, and so that
$\mathcal{H}^{\circ}_{\pi}=\mathcal{R}$ is commutative. Their antipodes are then involutions. 
Through the antipode of $\mathcal{H}^{\circ}_{\pi}$ transfer the structures of $\mathcal{R}$ 
onto its copy, which we denote by $\mathcal{R}'$.
Then $\mathcal{R}'$ has the original, 
natural right $\mathcal{H}^{\circ}_{\pi}$-co-module structure, under which it is the trivial 
$\mathcal{H}^{\circ}_{\pi}$-torsor. The left $\mathcal{H}$-module structure on $\mathcal{R}'$ is given by 
\[ 
h x = \sum_{(x)} \langle x_{(1)}, S(h) \rangle \, x_{(2)},\quad h \in \mathcal{H},\ x \in \mathcal{R}',
\]
where $S$ denotes the antipode of $\mathcal{H}$. 
The co-commutativity assumption ensures that $\mathcal{R}'$ is a left $\mathcal{H}$-module algebra, and 
it remains 
a Picard-Vessiot ring for $M$. The tensor-equivalence above now reads
$N \mapsto (\mathcal{R}'\otimes N)^{\mathcal{H}}$, which is, given by the torsor $\mathcal{R}'$, 
of the same form as the classical ones as found in 
\cite[Theorem 8.11]{amaetal09}.

\section{Reviewing some examples}\label{sec:review_examples}

To review two examples from Part II, let $\mathcal{H}$ be the Hopf algebra $\mathcal{H}_q$ defined by Definition 3.9.
Assume that the element $0 \ne q \in C$ is not a root of $1$, unless otherwise stated. 
We write $t$ for the element $t_1$ given in the Definition. Then
\[ \mathcal{H} = C[s,s^{-1} ]\otimes C[t] . \]
This means that $\mathcal{H}$ includes the Laurent polynomial algebra $C[s,s^{-1}]$ and
the polynomial algebra $C[t]$ so that the product map 
$C[s,s^{-1}]\otimes C[t] \to \mathcal{H}$ is bijective. 
The algebra structure on $\mathcal{H}$ is determined by the relation $ts=qst$. 
Obviously, $\mathcal{H}$ has
\[ v_{m,n} = s^m \frac{t^n}{[n]_q!},\quad m \in \mathbb{Z},\ n\in \mathbb{N} \]
as a basis. The co-algebra structure is given by
\begin{equation}\label{eq:co-alg_struc1}
\Delta(v_{m,n}) = \sum_{i+j=n} v_{m+j, i}\otimes v_{m,j},\quad \epsilon(v_{m,n}) = \delta_{n,0}.
\end{equation}
In particular, $s$ and $s^{-1}$ are grouplikes, and $t$ is $(s,1)$-primitive, or 
$\Delta(t)=s \otimes t+ t\otimes1$. Generated by those elements, 
$\mathcal{H}$ is a pointed Hopf algebra, whose antipode is necessarily bijective.
We remark that $s$ and $t$ are denoted in Section \ref{14.6.25b}
by $\sigma$ and $\theta^{(1)}$, respectively. 

Define elements $e,f,g \in \mathcal{H}^*$ by
\[ 
\langle e, v_{m,n} \rangle = q^m\, \delta_{n,0},\quad 
\langle f, v_{m,n} \rangle =m\, \delta_{n,0},\quad 
\langle g, v_{m,n} \rangle =\delta_{n,1}. 
\]
Then we see that
\begin{equation}\label{eq:Hopf_frakH} 
\mathfrak{H} = C[e,e^{-1},f]\otimes C[g] 
\end{equation}
in included in $\mathcal{H}^{\circ}$; this means, just as before, that the two commutative algebras 
are included in $\mathcal{H}^{\circ}$ so that the product map 
$C[e,e^{-1},f]\otimes C[g]\to \mathcal{H}^{\circ}$ is injective.
By using \eqref{eq:co-alg_struc1} we obtain
\[ eg = q\, ge, \quad [f,g] = g. \]
These relations show that $\mathfrak{H}$ is a sub-algebra of $\mathcal{H}^{\circ}$, 
determining its algebra structure. Since $e$\ (resp., $f$) is in fact a $C$-valued algebra map 
(resp., derivation) defined on the quotient Hopf algebra $\mathcal{H}/(t) = C[s,s^{-1}]$ of $\mathcal{H}$, 
it follows that 
\begin{equation}\label{eq:co-alg_struc2}
\Delta(e) = e \otimes e,\quad \epsilon(e) = 1,\qquad
\Delta(f) = 1 \otimes f + f \otimes 1,\quad \epsilon(f) = 0.
\end{equation}
(As an additional remark, $g^n$, $n\ge 0$, can be grasped as follows: 
the quotient co-algebra $\mathcal{Q}$ of $\mathcal{H}$ divided by the right ideal generated by $s-1$
is spanned by the $\infty$-divided power sequence $1, t, t^2/[2]_q!, \dots$. The $g^n$ are 
the elements of $\mathcal{Q}^*$ given by
$\langle g^n, t^{\ell}/[\ell]_q!\rangle =\delta_{n,\ell}$.) 
Since one computes
\[ \langle g, v_{m,n}v_{k,\ell} \rangle = \begin{cases} 1 &\text{if}~~n=0,\ \ell=1, \\
q^k &\text{if}~~n=1,\ \ell =0, \\
0 & \text{otherwise},
\end{cases}\]
it follows that 
\begin{equation}\label{eq:co-alg_struc3}
\Delta(g) = 1 \otimes g + g \otimes e,\quad \epsilon(g) = 0.
\end{equation}
By \eqref{eq:co-alg_struc2} and \eqref{eq:co-alg_struc3}, $\mathfrak{H}$ is a sub-bi-algebra of 
$\mathcal{H}^{\circ}$. Indeed, it is a Hopf sub-algebra, having the antipode determined by
\[ S(e) = e^{-1},\quad S(f)=-f, \quad S(g) = -ge^{-1}. \]
From this explicit form of the antipode we will see that for the following two examples of 
matrix representations $\pi$ of $\mathcal{H}$, the co-representation Hopf algebra $\mathcal{H}^{\circ}_{\pi}$
is generated by the entries of the co-representation matrix $\mathbf{Y}_{\pi}$ and $e^{-1}=S(e)$. 
Note that the Hopf algebra $\mathfrak{H}$ and those $\mathcal{H}^{\circ}_{\pi}$ 
which will be obtained as its Hopf sub-algebras are all pointed.

\begin{example}\label{ex:deg_2_matrix_rep}
Recall from Section \ref{14.6.25b}
the matrix representation
$\pi$ of $\mathcal{H}$ determined by
\begin{equation}\label{eq:matrix_rep}
\pi(s) = \begin{bmatrix} q & 0\\ 0 & 1\end{bmatrix},\quad 
\pi(t) = \begin{bmatrix} 0 & 0\\ 1 & 0\end{bmatrix}. 
\end{equation}
These matrices are the transposes of those in Section \ref{14.6.25b}, since the basis of the corresponding
$\mathcal{H}$-module is supposed here to be in a column vector whereas it is supposed in Section \ref{14.6.25b}
to be in a row vector.
Since one computes 
\[ \pi(v_{m,n}) = \begin{cases} \begin{bmatrix} q^m & 0\\ 0 & 1\end{bmatrix} & n=0, \\
\ \begin{bmatrix} 0 & 0 \\ 1 & 0 \end{bmatrix} & n=1, \\ 
\quad O & n \ge 2, \end{cases}
\]
it follows that
\[ \mathbf{Y}_{\pi} = \begin{bmatrix} e & 0 \\ g & 1\end{bmatrix}. \]
We conclude that $\mathcal{H}^{\circ}_{\pi}=C\langle e^{\pm 1}, g \rangle$, the Hopf sub-algebra
of $\mathfrak{H}$ generated by $e^{\pm 1}$ and $g$.

Direct computations show that the natural left $\mathcal{H}$-module structure on this 
$C\langle e^{\pm 1}, g \rangle$ is given by
\[
s(e^{\pm1},\ g)=(q^{\pm1}e^{\pm1},\ q\, g), \quad t(e^{\pm1},\ g)=(0,\ 1).
\]
One then sees that $e^{\pm1}\mapsto Q^{\pm1}$, $g \mapsto t$ give an isomorphism from 
$C\langle e^{\pm 1}, g \rangle$ to the $\mathcal{H}$-module algebra 
$R=C\langle t, Q, Q^{-1}\rangle_{alg}$ given in Observation \ref{obs2}.
As was shown by Proposition \ref{160609},
the Hopf algebra $\mathfrak{H}_q$ constructed in Lemma 4.12
co-acts on this last $R$ from the right, so that $R$ is an $\mathfrak{H}_q$-torsor, 
and the $\mathfrak{H}_q$-co-action commutes with the $\mathcal{H}$-action.
Note that the Hopf algebra $\mathcal{H}^{\circ}_{\pi}=C\langle e^{\pm 1}, g \rangle$ here obtained differs from $\mathfrak{H}_q$
only in that their co-multiplications are opposite to each other. The difference arises, as was seen 
in the preceding section, since in the quantized situation, 
the role of $\mathcal{H}^{\circ}_{\pi}$ as a Picard-Vessiot ring is not necessarily compatible with 
its role as a torsor which was expected to give the tensor equivalence 
\eqref{eq:restricted_tensor-isom}.
\end{example}

\begin{example}\label{ex:deg_3_matrix_rep}
Recall from Example 12.1
the matrix representation
$\pi$ of $\mathcal{H}$ determined by
\[ 
\pi(s) = \begin{bmatrix} q & 0 & 0\\ 1 & q & 0\\ 0 &0 &1\end{bmatrix},\quad 
\pi(t) = \begin{bmatrix} 0 & 0 &0 \\ 0 & 0 & 0\\ 1 & 0 & 0\end{bmatrix}. 
\]
Since one computes 
\[ 
\pi(v_{m,n}) = \begin{cases} \begin{bmatrix} q^m & 0 & 0\\ mq^{m-1}& q^m & 0\\ 0& 0 & 1\end{bmatrix} & n=0, \\
\ \begin{bmatrix} 0 & 0& 0 \\ 0 & 0 & 0\\ 1 & 0 & 0\end{bmatrix} & n=1, \\ 
\quad O & n \ge 2, \end{cases}
\]
it follows that
\[ 
\mathbf{Y}_{\pi} = \begin{bmatrix} e & 0 & 0 \\ \frac{1}{q}ef & e & 0 \\ g & 0 & 1\end{bmatrix}.
\]
We conclude that $\mathcal{H}^{\circ}_{\pi}=\mathfrak{H}$, the Hopf algebra 
given above by \eqref{eq:Hopf_frakH}.

One sees that this $\mathfrak{H}$ is isomorphic, as an $\mathcal{H}$-module algebra, to the 
$R = C\langle Q,Q^{-1},Z,t\rangle_{alg}$ obtained in Example 12.1
via $e^{\pm1} \mapsto Q^{\pm1}$, $f \mapsto Z$, $g \mapsto t$. 
As Hopf algebras, $\mathfrak{H}$ and the $A = C\langle e, e',f,g\rangle_{alg}$ resulting from 
Lemma \ref{141212g} 
coincide except in that their co-multiplications are opposite to
each other, again; see the last paragraph of the preceding Example. 
\end{example}

\begin{remark}\label{rem:enveloping}
Suppose that characteristic $\mathrm{char}\, C$ of $C$ is not $2$, and that $0 \ne q \in C$ 
is not a root of $1$. Let $\mathcal{H}=U_q(sl_2)$ be the quantized enveloping algebra of $sl_2$ as 
given in Example 12.4. This Example is 
essentially the attempt to determine the Hopf algebra $\mathcal{H}^{\circ}_{\pi}$ for
the fundamental representation $\pi$ of $\mathcal{H}$. It is known that $\mathcal{H}^{\circ}_{\pi}$
is the coordinate Hopf algebra $O_q(SL_2)$ of the quantized $SL_2$, in which the elements
$a, b, c, d$ given in the Example are the standard generators; the antipode of this $O_q(SL_2)$
is bijective. See \cite{tak92}, for example. 
\end{remark}

\section{Non-uniqueness of Picard-Vessiot rings}\label{sec:non-uniqueness}

We will see the non-uniqueness mentioned in Remark \ref{rem:PV_properties}.
The result does not contradict the Characterization Theorem, Theorem \ref{th1}; 
the difference
arises mainly from the additional assumption of the Theorem that
the Picard-Vessiot ring has $C$-rational points.

Let $\mathcal{H}=C[s,s^{-1} ]\otimes C[t]$ be the Hopf algebra given at the beginning of the preceding section.
Note that the Hopf algebra $\mathcal{H}^{\circ}_{\pi}= C\langle e^{\pm 1}, g \rangle$
obtained in Example \ref{ex:deg_2_matrix_rep} is isomorphic to $\mathcal{H}$;
in fact, $s \mapsto e^{-1}$ and $t\mapsto e^{-1}g$ define an isomorphism 
$\mathcal{H}\overset{\simeq}{\longrightarrow} \mathcal{H}^{\circ}_{\pi}$. 
Since this $\mathcal{H}$ is 
pointed, every $\mathcal{H}$-torsor is necessarily cleft. It is known
and easy to see that every $\mathcal{H}$-torsor is isomorphic to the trivial $\mathcal{H}$. 

Assume that $q$ is a primitive $N$-th root of $1$, where $N > 1$. Then $s^N-1$ and $t^N$ generate
a Hopf ideal of $\mathcal{H}$. The resulting quotient Hopf algebra $\mathcal{T}$, 
which is finite-dimensional and pointed, is called \emph{Taft's Hopf algebra}. 
Denote the natural images of $s$, $t$ in $\mathcal{T}$ by the same symbols. 

\begin{example}\label{ex:cleft_over_Taft}
Given an element $\lambda \in C$, 
let $\mathcal{R}_{\lambda}$ denote the algebra generated by two elements $s'$, $t'$, and defined by the
relations
\[
t's'=q\, s't',\quad s'^N=1,\quad t'^N=\lambda. 
\] 
One sees that 
\[
\rho(s')=s' \otimes s, \quad \rho(t')= s'\otimes t+t' \otimes 1
\]
defines an algebra map $\rho : \mathcal{R}_{\lambda} \to \mathcal{R}_{\lambda}\otimes \mathcal{T}$, by
which $\mathcal{R}_{\lambda}$ is a right $\mathcal{T}$-co-module algebra; it coincides with 
$\mathcal{T}$ if $\lambda=0$. It is essentially proved in \cite{mas94} that 
$\mathcal{R}_{\lambda}$ is a (necessarily, cleft) $\mathcal{T}$-torsor, and
$\mathcal{R}_{\lambda}\simeq \mathcal{R}_{\lambda'}$ if and only if $\lambda=\lambda'$. Moreover,
if $C^{\times}= (C^{\times})^N$, every $\mathcal{T}$-torsor is isomorphic to 
$\mathcal{R}_{\lambda}$ for some $\lambda$. 
\end{example}

By the same equations as in \eqref{eq:matrix_rep}
one can define a matrix representation $\pi$ of $\mathcal{T}$, for which we see that
$\mathcal{T}^{\circ}_{\pi}\simeq \mathcal{T}$. 
By Proposition \ref{prop:split_simple}, the mutually non-isomorphic $\mathcal{T}$-torsors
$\mathcal{R}_{\lambda}$, $\lambda \in C$, show that
Picard-Vessiot rings for $\pi$ are not unique. 
%%%%%%%%%%%%%%%%%%%%%%%%%%%%%%%
%%%%%%%%%%%%%%%%%%
 \bibliographystyle{plain.bst}
\bibliography{umemura2b}
\end{document}